\theoremstyle{plain}
\newtheorem{theorem}{Theorem}
\newtheorem{lemma}{Lemma}
\newtheorem{definition}{Definition}
\newtheorem{example}{Example}
\newtheorem{remark}{Remark}
\newtheorem{corollary}{Corollary}
\newtheorem{proposition}{Proposition}
\crefname{appendix}{App.}{Apps.}
\crefname{subsubsubappendix}{App.}{Apps.}
\crefname{equation}{}{}
\crefname{lemma}{Lem.}{Lems.}
\crefname{theorem}{Thm.}{Thms.}
\Crefname{theorem}{THM.}{THMS.}
\crefname{Corollary}{Cor.}{Cors.}
\crefname{algorithm}{Alg.}{Algs.}
\crefname{section}{Sec.}{Secs.}
\crefname{table}{Tab.}{Tabs.}
\crefname{remark}{Rem.}{Rems.}
\crefname{definition}{Def.}{Defs.}
\crefname{Proposition}{Prop.}{Props.}
\crefname{myremark}{Rem.}{Rems.}
\crefname{mylemma}{Lem.}{Lems.}
\Crefname{mylemma}{LEM.}{LEMS.}
\crefname{mydefinition}{Def.}{Defs.}
\crefname{myproposition}{Prop.}{Props.}
\Crefname{myproposition}{PROP.}{PROPS.}
\crefname{mycorollary}{Cor.}{Cors.}
\Crefname{mycorollary}{COR.}{CORS.}
\crefname{myassumption}{Assum.}{Assums.}
\crefname{figure}{Fig.}{Figs.}
\crefname{myexample}{Ex.}{Exs.}
\crefname{enumi}{}{}
\crefname{name}{}{} %
\newcommand{\idx}{\mc I}
\DeclareMathAlphabet{\mathbsf}{OT1}{cmss}{bx}{n}%
\DeclareMathAlphabet{\mathssf}{OT1}{cmss}{m}{sl}%
\newcommand{\mE}{\mathbb{E}}
\newcommand{\numperm}{B}
\newcommand{\defeq}{\triangleeq}
\newcommand{\set}[1]{\mc{#1}}
\newcommand{\eventnotag}{\mc{E}}
\newcommand{\event}[1][]{\eventnotag_{#1}}
\newcommand{\dirac}{\delta}
\newcommand{\mmd}[1][\kernel]{\mathrm{MMD}_{#1}}
\newcommand{\xset}{\mc{X}}
\newcommand{\yset}{\mc{Y}}
\newcommand{\zset}{\mc{Z}}
\newcommand{\kernel}{\mbf{k}}
\newcommand{\rkhs}{\mc{H}_{\kernel}}
\newcommand{\knorm}[2][\kernel]{\Vert{#2}\Vert_{#1}}
\newcommand{\kinner}[3][\kernel]{\inner{#2}{#3}_{#1}} %
\newcommand{\braces}[1]{\left\{ #1 \right \}}
\newcommand{\sbraces}[1]{\{ #1  \}}
\newcommand{\sig}{\sigma}
\def\balign#1\ealign{\begin{align}#1\end{align}}
\def\baligns#1\ealigns{\begin{align*}#1\end{align*}}
\def\balignat#1\ealign{\begin{alignat}#1\end{alignat}}
\def\balignats#1\ealigns{\begin{alignat*}#1\end{alignat*}}
\def\bitemize#1\eitemize{\begin{itemize}#1\end{itemize}}
\def\benumerate#1\eenumerate{\begin{enumerate}#1\end{enumerate}}
\newenvironment{talign*}
 {\csname align*\endcsname}
 {\endalign}
\newenvironment{talign}
 {\csname align\endcsname}
 {\endalign}
\def\balignst#1\ealignst{\begin{talign*}#1\end{talign*}}
\def\balignt#1\ealignt{\begin{talign}#1\end{talign}}
\newcommand{\qtext}[1]{\quad\text{#1}\quad} 
\newcommand{\stext}[1]{\text{ #1 }} 
\newcommand{\sstext}[1]{\ \ \text{#1}\ \ }
\let\originalleft\left
\let\originalright\right
\renewcommand{\left}{\mathopen{}\mathclose\bgroup\originalleft}
\renewcommand{\right}{\aftergroup\egroup\originalright}
\def\Holder{H\"older\xspace}
\def\vs{vs.\xspace}
\def\tinycitep*#1{{\tiny\citep*{#1}}}
\def\tinycitealt*#1{{\tiny\citealt*{#1}}}
\def\tinycite*#1{{\tiny\cite*{#1}}}
\def\smallcitep*#1{{\scriptsize\citep*{#1}}}
\def\smallcitealt*#1{{\scriptsize\citealt*{#1}}}
\def\smallcite*#1{{\scriptsize\cite*{#1}}}
\def\blue#1{\textcolor{blue}{{#1}}}
\def\mbf#1{\mathbf{#1}}
\def\mbb#1{\mathbb{#1}}
\def\mc#1{\mathcal{#1}}
\def\mrm#1{\mathrm{#1}}
\def\trm#1{\textrm{#1}}
\def\reals{\mathbb{R}} %
\def\R{\mathbb{R}}
\def\Z{\mathbb{Z}}
\def\Q{\mathbb{Q}}
\def\<{\left\langle} %
\def\>{\right\rangle}
\def\iff{\Leftrightarrow}
\def\implies{\quad\Longrightarrow\quad}
\def\defeq{\triangleq} %
\def\half{\frac{1}{2}}
\def\quarter{\frac{1}{4}}
\newcommand{\floor}[1]{\lfloor{#1}\rfloor}
\newcommand{\ceil}[1]{\lceil{#1}\rceil}
\newcommand{\boldone}{\mbf{1}} %
\def\norm#1{\left\|{#1}\right\|} %
\newcommand{\twonorm}[1]{\norm{#1}_2} %
\newcommand{\opnorm}[1]{\staticnorm{#1}_{\mathrm{op}}} %
\newcommand{\fronorm}[1]{\staticnorm{#1}_{F}} %
\def\staticnorm#1{\|{#1}\|} %
\newcommand{\statictwonorm}[1]{\staticnorm{#1}_2} %
\newcommand{\inner}[2]{\langle{#1},{#2}\rangle} %
\newcommand{\indicator}{\mbf{1}}
\def\indic#1{\indicator_{#1}} %
\def\E{\mbb{E}} %
\def\P{\mbb{P}} %
\def\Var{\mrm{Var}} %
\newcommand{\Gsn}{\mathcal{N}}
\newcommand{\Unif}{\textnormal{Unif}}
\newcommand{\iid}{\textrm{i.i.d.}\ }
\newcommand{\dist}{\sim}
\newcommand{\distiid}{\overset{\textup{\tiny\iid}}{\dist}}
\newenvironment{proof-sketch}{\noindent\textbf{Proof Sketch}
  \hspace*{1em}}{\qed\bigskip\\}
\newenvironment{proof-idea}{\noindent\textbf{Proof Idea}
  \hspace*{1em}}{\qed\bigskip\\}
\newenvironment{proof-of-lemma}[1][{}]{\noindent\textbf{Proof of Lemma {#1}}
  \hspace*{1em}}{\qed\\}
\newenvironment{proof-of-theorem}[1][{}]{\noindent\textbf{Proof of Theorem {#1}}
  \hspace*{1em}}{\qed\\}
\newenvironment{proof-attempt}{\noindent\textbf{Proof Attempt}
  \hspace*{1em}}{\qed\bigskip\\}
\newcommand{\nullhypo}{\mc H_{0}}
\newcommand{\althypo}[1][1]{\mc H_{#1}}
\newcommand{\feat}{f} %
\newcommand{\Feat}{F} %
\newcommand{\ncref}[1]{\cref{#1}: \nameref*{#1}} %
  \def\xspace{}%
  \def\textsc#1{#1}%
  \def\hyperref#1]{\@firstofone}%
  \let\cref\crtcref
  \let\Cref\crtCref
  \def\sqrt#1{sqrt(#1)}%
  \def\log{log}
  \def\delta{δ}
\newcommand{\pcref}[1]{%
  \NoCaseChange{Proof of 
  \texorpdfstring{%
    \ncref{#1}%
  }{%
    \crtcref{#1}: \protect\nameref*{#1}%
  }}%
} %
\newcommand{\X}{\mbb X}
\newcommand{\Y}{\mbb Y}
\newcommand{\Yi}[1][i]{\Y^{(#1)}}
\newcommand{\Zi}[1][i]{\Z^{(#1)}}
\renewcommand{\Xi}[1][i]{\X^{(#1)}}
\newcommand{\PX}{\widehat{\pjnt}}
\newcommand{\PY}{\widehat{\P}}
\newcommand{\PZ}{\widehat{\Q}}
\newcommand{\pjnt}{\Pi} %
\newcommand{\Phat}{\widehat{\P}}
\newcommand{\twotest}{U_{n_1, n_2}}
\newcommand{\bin}{\mc I}
\newcommand{\itag}{\mathrm{in}}
\newcommand{\htag}{\mathrm{ho}}
\newcommand{\cheaptag}{\trm{cheap}}
\newcommand{\hcomp}{\mc S_{\htag}}
\newcommand{\hcheap}{\mc S_{\htag,\cheaptag}}
\newcommand{\icomp}[1][]{\mc S_{#1}}
\renewcommand{\set}{\mc{S}}
\newcommand{\gymean}[1][i]{\bar{g}_{Y,#1}}
\newcommand{\gzmean}[1][k]{\bar{g}_{Z,#1}}
\newcommand{\gyz}[1][ijk\ell]{G_{#1}}
\newcommand{\swap}{\texttt{swap}}
\newcommand{\swaps}{\swap_{s}}
\newcommand{\hbarho}{\overline{h}_{\htag}}
\newcommand{\hho}{h_{\htag}}
\newcommand{\Hho}{H_{\htag,m}}
\newcommand{\hbarin}{\overline{h}_{\itag}}
\newcommand{\hin}{h_{\itag}}
\newcommand{\Hin}{H_{\itag,m}}
\newcommand{\hinY}{h_{\itag,\mathrm{Y}}}
\newcommand{\hinZ}{h_{\itag,\mathrm{Z}}}
\newcommand{\eps}{\epsilon}
\newcommand{\teps}{\tilde{\epsilon}}
\newcommand{\tconst}{\mathcal{T}}
\newcommand{\subg}[1][\P]{\sig_{#1}} %
\newcommand{\mom}[1][2,\P]{\mathrm{M}_{#1}} %
\newcommand{\astar}{\alpha^{\star}}
\newcommand{\txiwild}{\tilde{\xi}_{\textup{wild}}}
\newcommand{\Lp}[1][p]{L^{#1}}
\newcommand{\wildxi}{\tilde{\xi}_{\textup{wild}}}
\newcommand{\nratio}{\rho_{n_1n_2}}
\newcommand{\papertitle}{Cheap Permutation Testing}
\begin{document}
\etocdepthtag.toc{mtchapter}
\etocsettagdepth{mtchapter}{section}

\makeatletter
\patchcmd{\@algocf@start}%
  {-1.5em}%
  {0pt}%
  {}{}%
\makeatother

\begin{frontmatter}
\title{\papertitle}
\runtitle{Cheap Permutation Testing}

\begin{aug}
\author[A]{\fnms{Carles}~\snm{Domingo-Enrich}\ead[label=e1]{carlesd@microsoft.com}}, 
\author[B]{\fnms{Raaz}~\snm{Dwivedi}\ead[label=e2]{dwivedi@cornell.edu}},
\and
\author[A]{\fnms{Lester}~\snm{Mackey}\ead[label=e3]{lmackey@microsoft.com}}
\address[A]{Microsoft Research New England\printead[presep={,\ }]{e1,e3}}
\address[B]{Cornell Tech\printead[presep={,\ }]{e2}}
\end{aug}

\begin{abstract}
Permutation tests are a popular choice for distinguishing distributions and testing independence, due to their exact, finite-sample control of false positives and their minimax optimality when paired with U-statistics. However, standard permutation tests are also expensive, requiring a test statistic to be computed hundreds or thousands of times to detect a separation between distributions. In this work, we offer a simple approach to accelerate testing: group your datapoints into bins and permute the bin labels. For U and V-statistics, we prove that these \emph{cheap permutation tests} have two remarkable properties. First, by storing appropriate sufficient statistics, a cheap test  can be run in time comparable to evaluating a \emph{single} test statistic. Second, cheap permutation power closely approximates standard permutation power. As a result, cheap tests inherit the exact false positive control and minimax optimality of standard permutation tests while running in a fraction of the time. We complement these findings with improved power guarantees for standard permutation testing and experiments demonstrating the benefits of cheap permutations over standard maximum mean discrepancy (MMD), Hilbert-Schmidt independence criterion (HSIC), random Fourier feature, Wilcoxon-Mann-Whitney, aggregated MMD, cross-MMD, and cross-HSIC tests. 
\end{abstract}

\begin{keyword}[class=MSC]
\kwd[Primary ]{62G10}%
\kwd[; secondary ]{62G09}
\kwd{62C20}%
\end{keyword}

\begin{keyword}
\kwd{Permutation testing}
\kwd{nonparametric}
\kwd{minimax optimality}
\kwd{homogeneity}
\kwd{independence}
\kwd{quadratic test statistics}
\kwd{U-statistics}
\kwd{V-statistics}
\kwd{maximum mean discrepancy}
\kwd{Hilbert-Schmidt independence criterion}
\kwd{random Fourier features}
\kwd{}Wilcoxon-Mann-Whitney
\end{keyword}

\end{frontmatter}
\section{Introduction}
\label{sec:introduction}

Permutation tests \citep{fisher1925,dwass1957modified} are commonly used in statistics, machine learning, and the sciences 
to test for independence,
detect distributional discrepancies, 
evaluate fairness, and
assess model quality \citep{good2013permutation,PhipsonSmyth2010,diciccio2020evaluating,lindgren1996model}.
By simply permuting the labels assigned to datapoints, these tests offer both exact, non-asymptotic control over false positives \citep{hoeffding1952large} and minimax optimal error rates when paired with popular U and V test statistics \citep{kim2022minimax,berrett2021optimal,schrab2023mmd,schrab2022efficient}.

However, standard permutation tests are also expensive, 
requiring the recomputation of a test statistic hundreds or thousands of times. 
This work offers a simple remedy: 
divide your data into $s$ bins and permute the bin labels. 
In \cref{sec:power} we show that this \emph{cheap permutation} strategy recovers the best known error rates for U and V-statistic permutation testing, even when the bin count $s$ is independent of the sample size $n$.
Moreover, we establish in \cref{sec:cheap} that, after computing the initial test statistic, the same cheap tests can be run in time independent of $n$ by keeping track of simple sufficient statistics.
Together, these findings give rise to new exact tests that inherit the minimax rate optimality of standard permutation testing 
while running in 
time comparable to the cost of a single test statistic.

\subsection{Related work}\label{sec:related}
A number of %
approaches to speeding up permutation testing have appeared in the literature.  
\citet{zmigrod2022exact} %
showed that, for a special class of $G$-bounded, additively-decomposable, and integer-valued test statistics, one can run an independence permutation test using all $n!$ distinct permutations of $n$ points in time $O(Gn\log(Gn) \log(n))$.
Unfortunately, this class does not cover the more general U and V-statistics studied in this work.  

For large sample sizes, one typically avoids the prohibitive cost of enumerating $n!$ permutations by instead iterating over a small subgroup of permutations \citep{chung1958randomization} or sampling $\numperm$ permutations uniformly at random \citep{dwass1957modified}. 
In \cref{sec:cheap}, we establish a  complementary cost savings for binned permutations and quadratic test statistics: each bin-permuted statistic can be computed in time independent of the sample size.

Recently, \citet{koning2024moreefficient,koning2024morepower} revealed another surprising benefit of restricted permutation: certain small subgroups yield greater power than testing with the full permutation group. 
Their analysis focuses on the case of generalized location models, and an interesting open question is whether those gains transfer to the more general settings and subgroups studied in this work.

In the context of homogeneity testing, \citet{chung2019rapid} 
replaced random permutations with transpositions of random pairs of elements showing that, for certain test statistics, 
each transposed statistic could be computed in time independent of the sample size $n$. 
However, for the higher-order U and V-statistics studied in this work, the cost of computing a transposed statistic grows linearly in $n$, while our 
cheap permutation statistics can still be computed in time independent of $n$. 
In addition, \citet{chung2019rapid} did not analyze the power of their proposed test. 
\citet{domingo2023compress} used binned permutation in combination with sample compression to reduce the runtime of homogeneity testing with a bounded kernel maximum mean discrepancy~\citep[MMD,][]{gretton2012akernel} test statistic.
Here we show that binned permutation provides benefits for a much broader range of tests, including independence tests and homogeneity tests based on general quadratic test statistics (see \cref{def:qts}).

\citet{anderson2023exact} 
computed the exact mean and variance of the permutation distribution for a class of multivariate generalizations of the Mann-Whitney test called generalized pairwise comparison (GPC) statistics.  However, the authors noted, ``In order to construct a hypothesis test for the GPC statistics, any use of the means and variances would require the assumption of asymptotic normality,'' an assumption which is not needed for our proposed methodology.
Relatedly, a wide variety of works accelerate testing by fitting an approximation to the permutation distribution \citep[see, e.g.,][]{zhou2009efficient,knijnenburg2009fewer,larson2015moment,segal2018fast,he2019permutation}.
However, these works do not analyze the power of their approximate tests, and, in each case, the approximation sacrifices finite-sample validity. %

Sequential permutation testing \citep[see, e.g.,][]{besag1991sequential,fay2007using,SILVA201333,fischer2024sequential} reduces the total number of permutations required for a powerful test by processing permuted statistics sequentially and applying an early stopping rule. 
This strategy normally incurs the full cost of recomputing a test statistic for each permutation but can be combined with cheap permutation statistics to yield testing overhead independent of the sample size. 
\citet{ramdas2023permutation} 
established the \emph{validity} of a wide variety of generalized permutation tests (involving arbitrary subsets of the permutation distribution and nonuniform distributions over them). However, their work does not discuss power or computational complexity, two matters of primary importance in the present work. %

Finally, the pioneering work of \citet{kim2022minimax} established strong finite-sample power guarantees and minimax rate-optimality results for standard U-statistic permutation tests with $\numperm$ permutations and order $\Theta(n^2 + \numperm n^2)$ runtime. Perhaps surprisingly, we find that binned permutation tests enjoy equivalent power guarantees and minimax rate optimality even when the bin count $s$ is independent of the sample size. Since bin-permuted U-statistics can be computed with only $O(s^2)$ overhead, this opens the door to powerful exact testing in $O(n^2 + \numperm s^2)$ time, that is, in time comparable to computing a single test statistic.

\subsection{Contributions}
The primary contribution of this work lies in the development of practical cheap testing procedures that, on the one hand, dramatically reduce the computational requirements of permutation testing and, on the other, retain the high-quality power properties of standard permutation testing both in theory and in practice. More precisely:
\begin{enumerate}
    \item We develop cheap tests of homogeneity and independence with only $\numperm s^2$ permutation overhead thanks to binned permutation and compact sufficient statistics. 
    \item We adapt the arguments of \citet{kim2022minimax} and \citet{domingo2023compress} to develop quantitative power guarantees for cheap permutation testing with U and V-statistics. Remarkably, the guarantees for $s$ bins closely match the standard guarantees for $n$ bins, even when the bin count is independent of $n$.
    \item \cref{sec:minimax} builds on these results to establish the minimax rate-optimality of cheap permutation tests for the four classic testing problems studied by \citet{kim2022minimax}: 
    discrete homogeneity testing, \Holder homogeneity testing, discrete independence testing, and \Holder independence testing. Notably, this optimality is achievable with a bin count and hence a permutation time {independent} of the sample size. 
    \item Moving beyond the classic testing settings, we use the lower bounds of \citet{kim2023differentially} to establish the minimax-rate optimality of cheap testing for (a) MMD homogeneity testing and (b) Hilbert-Schmidt independence criterion \citep[HSIC,][]{gretton2005measuring} independence testing. This optimality is again achievable with a bin count independent of $n$.
    \item In \cref{sec:experiments}, we complement these findings with experiments demonstrating the practical benefits of cheap testing over standard MMD, HSIC, random Fourier feature~\citep{rahimi2008random}, Wilcoxon-Mann-Whitney~\citep{wilcoxon1945individual,mann1947on}, aggregated MMD, cross-MMD~\citep{shekhar2022permutation}, and cross-HSIC tests~\citep{shekhar2023apermutation}. 
    \item Finally, we outline the extension of cheap testing to broader testing problems and test statistics in \cref{sec:discussion}.
\end{enumerate}
\subsection{Notation} \label{subsec: notation}
Given a permutation $\pi$ of the indices $[n] \defeq \{1,\dots,n\}$ and an index $i \in [n]$, we write either $\pi(i)$ or $\pi_i$ to denote the index to which $i$ is mapped. 
For integers $p\geq q\geq 1$, we define $(p)_q \defeq p(p-1)\cdots{(p-q+1)}$ and 
let $\mathbf{i}_q^p$ denote the set of all $q$-tuples that can be drawn without replacement from the set $[p]$. 
For a function $g$ integrable under a probability measure $\P$, we use 
the notation $\P g \defeq \E_{X\sim \P} [g(X)]$.
We use $\indicator$ for the indicator function.
For two nonnegative sequences $(a_n)_{n=1}^\infty$ and $(b_n)_{n=1}^\infty$, we write $a_n = O(b_n)$ if there exists a constant $C>0$ such that $a_n \le C b_n$ for all sufficiently large $n$ and $a_n = \Omega(b_n)$ if $b_n = O(a_n)$. We write $a_n = \Theta(b_n)$ if both $a_n = O(b_n)$ and $a_n = \Omega(b_n)$ and write $a_n = \omega(b_n)$ if $a_n/b_n \to \infty$ as $n\to\infty$.

\section{Background on hypothesis testing}\label{sec:background}

We define a test as any measurable procedure that takes as input a dataset $\X\defeq(X_1,\ldots,X_n)$ and outputs a probability in $[0,1]$.
Given two competing hypotheses ($\nullhypo$ and $\althypo$)  concerning the data generating distribution, we interpret $\Delta(\X)$ as the probability of rejecting the \emph{null hypothesis} $\nullhypo$ in favor of the \emph{alternative hypothesis} $\althypo$. 
The quality of a test is measured by 
its \emph{size}, its probability of (incorrectly) rejecting the null when the null is true, and its \emph{power}, its probability of (correctly) rejecting $\nullhypo$ when $\althypo$ is true.
A test often comes paired with a \emph{nominal level}, a desired upper bound $\alpha\in(0,1)$ on the Type I error. 
We say $\Delta$ is a \emph{valid level-$\alpha$} test if its size never exceeds $\alpha$ when $\nullhypo$ holds and an \emph{exact level-$\alpha$} test if its size exactly equals $\alpha$ whenever $\nullhypo$ holds.
All of the tests studied in this work are exact and take the form 
\begin{talign}
\label{eq:test}
    \Delta(\X) = 
    \begin{cases}
        1 &\text{if } T(\X) > c(\X), \\ 
        \xi(\X) &\text{if } T(\X) = c(\X), \stext{and}\\ 
        0 &\text{if } T(\X)  < c(\X), \\ 
    \end{cases}
\end{talign}
where $T(\X)$ is called the \emph{test statistic}, $c(\X)$ is called the \emph{critical value},  and $\xi(\X)$ is a rejection probability chosen to ensure exactness.
We will consider two broad classes of tests in this work: tests of homogeneity and tests of independence.

\subsection{Homogeneity testing}

\emph{Homogeneity tests} (also known as \emph{two-sample tests}) test for equality of distribution. 
More precisely, given a sample $\Y = (Y_i)_{i=1}^{n_1}$ drawn \iid from an unknown distribution $\P$ and an independent sample $\Z = (Z_j)_{j=1}^{n_2}$ drawn \iid from an unknown distribution $\Q$, a homogeneity test judges the hypotheses
\begin{talign}
    \nullhypo: \P = \Q \qtext{versus} \althypo: \P \neq \Q.
\end{talign}

A common approach to testing homogeneity is to threshold a \emph{quadratic test statistic (QTS)}.

\begin{definition}[Quadratic test statistic (QTS)]
\label{def:qts}
Given samples $\Y$ and $\Z$ of size $n_1$ and $n_2$ respectively, we call $T(\Y, \Z)$ a \emph{quadratic test statistic}, if $T$ is a quadratic form of the empirical distributions $\PY \defeq  \frac{1}{n_1}\sum_{y\in\Y}\dirac_{y}$ and $\PZ \defeq \frac{1}{n_2}\sum_{z\in\Z}\dirac_{z}$. That is, there exist base functions $\phi_{n_1},\phi_{n_2}$, and $\phi_{n_1, n_2}$ for which $T$ satisfies
\begin{talign}
\label{eq:qts}
    T(\Y, \Z) 
    &= (\PY\times\PY)\phi_{n_1} + (\PY\times\PZ) \phi_{n_1, n_2} + (\PZ\times\PZ)\phi_{n_2}.
\end{talign}
\end{definition}
An important example of a QTS the canonical \emph{homogeneity U-statistic}.
\begin{definition}[Homogeneity U-statistic] \label{def:homogeneity_u_stat}
Given samples $\Y$ and $\Z$ of size $n_1$ and $n_2$ respectively, we call $U(\Y, \Z)$, or $\twotest$ for short, a \emph{homogeneity U-statistic} if 
    \begin{talign} \label{Eq: Two-Sample U-statistic}
          U_{n_1,n_2} &= %
        \frac{1}{(n_1)_2 (n_2)_2}
        \sum_{(i_1,i_2) \in \mathbf{i}_2^{n_1}} \sum_{(j_1,j_2) \in \mathbf{i}_2^{n_2}} h_{\htag}(Y_{i_1},Y_{i_2}; Z_{j_1},Z_{j_2}), \qtext{and}\\
    \label{Eq: Two-Sample kernel}
    h_{\htag}(y_1,y_2;z_1,z_2)  &=  g(y_1,y_2) + g(z_1,z_2) - g(y_1,z_2) - g(y_2,z_1)
    \end{talign}
    for some symmetric 
    real-valued base function $g$. 
\end{definition}
    Notably, any homogeneity U-statistic %
    can be written as a QTS \cref{eq:qts} %
    via the mappings
    \begin{talign}
        \phi_{w}(y, z) &= \frac{1}{1-1/w} (g(y, z) - \frac{g(y, y)+g(z, z)}{w+1}
        )
        \stext{for} w \in \{n_1,n_2\}
        \stext{and}
        \phi_{n_1, n_2}(y, z) = - 2g(y, z).
    \end{talign}
Moreover, many commonly used test statistics can be recovered as homogeneity U-statistics with the right choice of $g$.

\begin{example}[Kernel two-sample test statistics~\citep{gretton2012akernel}]\label{ex:kernelts}
    When $g$ is chosen to be a positive-definite kernel $\kernel$ \citep[Def.~4.15]{steinwart2008support}, then $U_{n_1,n_2}$ \eqref{Eq: Two-Sample U-statistic} is a \emph{kernel two-sample test statistic} 
    and an unbiased estimator of the squared \emph{maximum mean discrepancy (MMD)} between the distributions $\P$ and $\Q$:
    \begin{talign}
    \mmd^2(\P, \Q)&\defeq (\P\times\P)\kernel + (\Q\times\Q)\kernel - 2(\P\times\Q)\kernel.
    \label{eq:kernel_mmd_distance}
    \end{talign}
\end{example}

\begin{example}[Energy statistics~\citep{baringhaus2004onanew,szekely2004testing,sejdinovic2013equivalence}]\label{ex:energy}
When $g$ is the negative Euclidean distance on $\reals^d$, $U_{n_1,n_2}$~\cref{Eq: Two-Sample U-statistic} is called an \emph{energy statistic}. 
More generally, if $g(y,z) = -\rho(y,z)$ for a semimetric $\rho$ of negative type \citep[Defs.~1-2]{sejdinovic2013equivalence}, %
then $U_{n_1,n_2}$ is a \emph{generalized energy statistic} and an unbiased estimate of the \emph{energy distance} between $\P$ and $\Q$:
\begin{talign}
\mathcal{E}_\rho(\P,\Q) = 
2(\P\times\Q)\rho - (\P\times\P)\rho - (\Q\times\Q)\rho.
\end{talign}
\end{example}

In both of these examples, 
the homogeneity U-statistic \cref{Eq: Two-Sample U-statistic} is {degenerate} under the null with  $(n_1+n_2)\, U_{n_1,n_2}$ converging to a complicated, $\P$-dependent, non-Gaussian limit as $n_1,n_2\to\infty$ \citep[Sec.~5.5.2]{serfling2009approximation}.   
As a result, one typically  
turns to 
permutation testing, as described in \cref{sec:perm-homogeneity}, to 
select the critical values for 
these statistics~\citep{sutherland2017generative,liu2020learning,szekely2004testing,baringhaus2004onanew}. 
\subsection{Independence testing}
\emph{Independence tests} test for dependence between paired variables. 
More precisely, given a paired sample $(\Y, \Z) \defeq (Y_i, Z_i)_{i=1}^{n}$ drawn 
\iid from an unknown distribution $\pjnt$ with marginals $\P$ and $\Q$, independence testing judges the hypotheses 
\begin{talign}
    \nullhypo: \pjnt = \P \times \Q \qtext{versus} \althypo: \pjnt \neq \P \times \Q.
\end{talign}

A common approach to testing independence is to threshold an \emph{independence V-statistic}.

\begin{definition}[Independence V-statistic] \label{def:independence_v_statistic}
Given a paired sample $(\Y, \Z)$ of size $n$, we call $V(\Y, \Z)$, or $V_n$ for short, an \emph{independence V-statistic} if 
    \begin{talign} \label{Eq: V-statistic for independence testing}
        V_n &= \frac{1}{n^4} \sum_{(i_1,i_2,i_3,i_4) \in [n]^4} h_{\itag}((Y_{i_1}, Z_{i_1}),
        (Y_{i_2}, Z_{i_2}),
        (Y_{i_3}, Z_{i_3}),
        (Y_{i_4}, Z_{i_4})),
        \qtext{for}\\
    \label{Eq: Independence kernel}
    h_{\itag}&((y_1,z_1),(y_2,z_2),(y_3,z_3),(y_4,z_4))  
    \\ &\defeq \big(g_Y(y_1,y_2) + g_Y(y_3,y_4)  - g_Y(y_1,y_3) - g_Y(y_2,y_4) \big) \\
    &
    \quad \times  \big(g_Z(z_1,z_2) + g_Z(z_3,z_4) - g_Z(z_1,z_3) - g_Z(z_2,z_4) \big).
    \end{talign}
 for some symmetric real-valued base functions $g_Y$ and $g_Z$.
\end{definition}

Many commonly used test statistics can be recovered as independence V-statistics  with the right choices of $g_Y$ and $g_Z$.

\begin{example}[Hilbert-Schmidt independence criterion (HSIC)~\cite{gretton2005measuring}]\label{ex:hsic}
     When $g_Y$ and $g_Z$ are positive-definite kernels, $\quarter V_n$ \cref{Eq: V-statistic for independence testing} is the empirical \emph{Hilbert-Schmidt independence criterion}, i.e., the squared maximum mean discrepancy $\mmd^2(\PX,\PY\times \PZ)$ \cref{eq:kernel_mmd_distance}, 
     where $\kernel = g_Y g_Z$, 
     $\PX \defeq \frac{1}{n} \sum_{i=1}^n \dirac_{(Y_i, Z_i)}$, $\PY \defeq \frac{1}{n} \sum_{i=1}^n \dirac_{Y_i}$, and $\PZ \defeq \frac{1}{n} \sum_{i=1}^n \dirac_{Z_i}$ \citep[App.~B]{schrab2022efficient}. 
\end{example}

\begin{example}[Distance covariance  ~\cite{szekely2007measuring,sejdinovic2013equivalence}]
    When $g_Y(y_1, y_2) = \twonorm{y_1 - y_2}$ and $g_Z(z_1, z_2) = \twonorm{z_1 - z_2}$ for $y_1\in\reals^{d_1}$ and $z_1\in\reals^{d_2}$, $\quarter V_n$ \cref{Eq: V-statistic for independence testing} is called the \emph{distance covariance} and forms the basis for the \emph{energy test of independence}. More generally, if $g_Y$ and $g_Z$ are semimetrics of negative type \citep[Defs.~1-2]{sejdinovic2013equivalence}, then $V_n$ is a \emph{generalized distance covariance}.
\end{example}

In both of these examples, the independence V-statistic \cref{Eq: Independence kernel} is degenerate under the null with $n V_n$ converging to a complicated, $\P$ and $\Q$-dependent, non-Gaussian limit as $n\to\infty$ \citep[Sec.~5.5.2]{serfling2009approximation}.   
As a result, one typically turns to permutation testing, for example, as described in \cref{sec:wild-independence}, to select   critical values for these statistics~\citep{szekely2007measuring,chwialkowski2014wild}. %

\section{Cheap permutation testing}\label{sec:cheap}
Given a test statistic, permutation testing provides an automated procedure for selecting the critical value and rejection probability of a test \cref{eq:test}.
In this section, we review standard (Monte Carlo) permutation tests for homogeneity %
and independence %
and introduce new cheap permutation tests that reduce the runtime and memory demands. The results of this section are summarized in \cref{table:tests_complexities}.

\subsection{Permutation tests of homogeneity}\label{sec:perm-homogeneity} 
Let 
$\X \defeq (X_{i})_{i=1}^{n} = (Y_1, \ldots, Y_{n_1}, Z_1, \ldots, Z_{n_2})$ 
represent a concatenation of the observed samples $\Y =(Y_i)_{i=1}^{n_1}$ and $ \Z=(Z_i)_{i=1}^{n_2}$ with 
$n \defeq n_1+n_2$.
In a standard permutation test of homogeneity, one 
samples $\numperm$ independent permutations $(\pi_b)_{b=1}^{\numperm}$ of the indices $[n]$,  
computes the permuted test statistics $(T(\X^{\pi_b}))_{b=0}^{\numperm}$
for $\pi_0$ the identity permutation and 
$\X^{\pi} \defeq (X_{\pi(i)})_{i=1}^{n}$,
and finally selects a $1-\alpha$ quantile of the empirical  distribution $\frac{1}{\numperm+1}\sum_{b=0}^{\numperm} \dirac_{T(\X^{\pi_b})}$ as the critical test value $c(\X)$.  

For a quadratic test statistic (\cref{def:qts}), this permutation test only depends on the data via the $3n^2$ sufficient statistics
\begin{talign}
    \hcomp \defeq \big\{\phi(x_i, x_j) \mid i, j\in[n], \phi\in\sbraces{\phi_{n_1}, \phi_{n_2},\phi_{n_1, n_2}}\big\}.
\end{talign}
Hence, if we let $c_{\phi}$ denote the maximum time required to evaluate a single element of $\hcomp$, %
then one can precompute and store the elements of $\hcomp$ given  $\Theta(c_{\phi}n^2)$ time and $\Theta(n^2)$ memory and, thereafter,  compute all permuted test statistics using $\Theta(\numperm n^2)$ elementary operations.
When the quadratic memory cost is unsupportable, the elements of $\hcomp$ can instead be recomputed for each permutation at a total cost of $\Theta(c_{\phi}\numperm n^2)$ time and $\Theta(n)$ memory.

\subsection{Cheap permutation tests of homogeneity}
To alleviate both the time and memory burden of standard permutation testing, we propose to permute datapoint bins instead individual points. 
More precisely, we partition the indices $[n]$ into $s$ consecutive sets $\idx_1, \dots, \idx_s$ of equal size, 
identify the corresponding datapoint bins $\X_j\defeq (X_i)_{i\in \idx_{j}}$, and define the bin-permuted samples $\Y^{\pi} \defeq (\X_{\pi(i)})_{i=1}^{s_1}$ and $\Z^{\pi} \defeq (\X_{\pi(i)})_{s_1+1}^{s}$ for each permutation $\pi$ on $[s]$ and $s_1\defeq s\frac{n_1}{n}$. 
Importantly, the bin-permuted QTS,
\begin{talign}
T(\Y^{\pi}, \Z^{\pi}) 
    &= 
\frac1{n_1^2}\sum_{i,j=1}^{s_1} 
   \Phi^{n_1}_{ij}   
   +
\frac1{n_1n_2}\sum_{i=1}^{s_1} \sum_{j=s_1+1}^{s} 
   \Phi^{n_1,n_2}_{ij}  
   +
\frac1{n_2^2}\sum_{i,j=s_1+1}^{s}
   \Phi^{n_2}_{ij},
\end{talign}
only depends on the data via the $3s^2$  sufficient statistics
\begin{talign}
    \hcheap \defeq\braces{ \Phi^{w}_{ij} \defeq\sum_{a\in\idx_i}\sum_{b\in\idx_j}\phi_{w}(X_a, X_b) \mid i,j \in [s], w\in\sbraces{n_1,n_2,(n_1, n_2)}}.
\end{talign}
Hence, one can precompute and store the elements of $\hcheap$ using only $\Theta(s^2)$ memory and $\Theta(c_{\phi}n^2)$ time and subsequently compute all bin-permuted statistics using only $\Theta(\numperm s^2)$ elementary operations. 
\cref{algo:cheap_homogeneity_test_qts} 
details the steps of this \emph{cheap homogeneity test} which recovers the standard permutation test when $s=n$.

Notably, the overhead of cheap permutation is negligible whenever $\numperm s^2 \ll c_\phi n^2$. 
In \cref{sec:power,sec:experiments}, we will see that it suffices to choose $s$ as a slow-growing or even constant function of $n$, 
yielding total runtime comparable to that of evaluating the original test statistic. 

\begin{table} %
    \caption{Time and memory costs for cheap and standard permutation tests with $n$ datapoints, $\numperm$ permutations, and $s$ bins. The multipliers $c_{\phi}$ and $c_g$  denote the cost of evaluating a single homogeneity QTS base function (see \cref{def:qts}) and a single independence V-statistic base function (see  \cref{def:independence_v_statistic}). 
    All costs are reported up to a universal constant.}
 \label{table:tests_complexities}
    \centering
    \begin{tabular}{ccccc}
        \toprule
        {\bf Permutation test}
        &  {\bf Test statistic $\pmb{T(\Y,\Z)}$}
        & {\bf $\pmb{T(\Y,\Z)}$ time} 
        & {\bf Permutation time} 
        & {\bf Memory} 
        \\[0mm]
        \midrule 

        \multirow{2}{*}{Homogeneity} 
        & \multirow{2}{*}{QTS (\cref{def:qts})}
        & \multirow{2}{*}{$c_{\phi} n^2$}
        & $\numperm n^2$
        & $n^2$
        \\[0mm]
        &
        & 
        & $c_{\phi}\numperm n^2$
        & $n\ \,$
        \\[2mm]
        Cheap homogeneity (\cref{algo:cheap_homogeneity_test_qts})
        & QTS (\cref{def:qts})
        & $c_{\phi} n^2$
        & $\blue{\numperm s^2}$
        & $\blue{s^2}$
        \\[2mm]
        
        \multirow{2}{*}{Independence}
        & \multirow{2}{*}{V-statistic (\cref{def:independence_v_statistic})}
        & \multirow{2}{*}{$c_{g} n^2$}
        & $\numperm n^2$
        & $n^2$
        \\[0mm]
        &
        & 
        & $c_{g}\numperm n^2$
        & $n\ \,$
        \\[2mm]

        Cheap independence (\cref{algo:cheap_independence_permutation_test})
        & V-statistic (\cref{def:independence_v_statistic})
        & $c_g n^2$
        & $\blue{\numperm s^2}$
        & $\blue{s^2}$
        \\[1ex] \bottomrule \hline
    \end{tabular}
\end{table}

\begin{algorithm2e} %
\small{    
  \KwIn{Samples $(Y_i)_{i=1}^{n_1}$, $(Z_i)_{i=1}^{n_2}$, bin count $s$, base functions $\phi_{n_1},\phi_{n_2},\phi_{n_1,n_2}$ (\cref{def:qts}), level $\alpha$, permutation count $\numperm$}
  \vspace{5pt}
  
  Define $(X_i)_{i=1}^{n} \defeq (Y_1, \ldots, Y_{n_1}, Z_1, \ldots, Z_{n_2})$ for $n \gets n_1+n_2$ and $s_1 \gets {s\, n_1/}{n}$ \\ 
  Split the indices $\sbraces{1, \ldots, n}$ into $s$ consecutive bins $\bin_1, \ldots, \bin_s$ of equal size \\ 
\textit{// Compute sufficient statistics using $\Theta(c_\phi n^2)$ time and $\Theta(s^2)$ memory} \\
    \lFor{$i,j\in[s]$ and $w\in\{n_1,n_2,(n_1,n_2)\}$}{
    $\Phi^{w}_{ij} \gets \sum_{a \in \bin_i, b\in \bin_j }\phi_{w}(X_a, X_b) $
    \label{line:K_ij_cheap_two_sample}
  }

    \textit{// Compute original and  permuted test statistics using $\Theta(\numperm s^2)$ elementary operations}\\    
  \For{$b=0, 1, 2, \ldots, \numperm$}
    {%
   
    $\pi \gets$ identity permutation if $b=0$ else uniform permutation of $[s]$ \\ 

   $T_{b} \gets  \frac1{n_1^2}\sum_{i,j=1}^{s_1} 
   \Phi^{n_1}_{\pi(i)\pi(j)} +  \frac1{n_1n_2}\sum_{i=1}^{s_1}\sum_{j=s_1+1}^{s} \Phi^{n_1,n_2}_{\pi(i)\pi(j)}$ $+  \frac1{n_2^2}
   \sum_{i,j=s_1+1}^{s}
   \Phi^{n_2}_{\pi(i)\pi(j)} 
   $
   }
   \textit{// Return rejection probability} \\
   $R \gets 1+$ number of permuted statistics $(T_{b})_{b=1}^{\numperm}$
   smaller than $T_0$ if ties are broken at random %
   \label{line:fourth_to_last_homogeneity}\\
    \KwRet{$\Delta(\X) \defeq \min(1,\max(0,R \!-\! (1\!-\!\alpha)(\numperm\!+\!1)))$}     %
    \label{line:last} 
}
  \caption{Cheap homogeneity testing}
  \label{algo:cheap_homogeneity_test_qts}
\end{algorithm2e}

\subsection{Wild bootstrap tests of independence}\label{sec:wild-independence}
To ease notation, we will focus on the case of even $n$.  
In a standard {wild bootstrap} permutation test of independence \citep{chwialkowski2014wild}, one begins by sampling   
$\numperm$ independent \emph{wild bootstrap permutations} $(\pi_b)_{b=1}^{\numperm}$ of the indices $[n]$. 
Each wild bootstrap permutation $\pi$ swaps the indices 
$i$ and 
$\swap(i) \defeq i + \frac{n}{2} - n\indic{i  > \frac{n}{2}}$ 
with probability $\half$ and does so independently for each $i \leq n/2$.
Then, one computes the permuted test statistics $(T(\Y,\Z^{\pi_b}))_{b=0}^{\numperm}$
for $\pi_0$ the identity permutation and  
$\Z^{\pi} \defeq (Z_{\pi(i)})_{i=1}^{n}$
and finally selects a $1-\alpha$ quantile of the empirical  distribution $\frac{1}{\numperm+1}\sum_{b=0}^{\numperm} \dirac_{T(\Y,\Z^{\pi_b})}$ as the critical test value $c(\Y,\Z)$.  

Now consider the independence V-statistic $V(\Y,\Z)$ of \cref{def:independence_v_statistic}, and introduce the shorthand 
\begin{talign}
(\gymean,\, \gzmean[i],\, \bar{g}_Y,\, \bar{g}_Z)
    &\defeq
(\frac{1}{n}\sum_{j=1}^ng_Y(Y_i, Y_j),\, \frac{1}{n}\sum_{j=1}^ng_Z(Z_i, Z_j),\, \frac{1}{n}\sum_{i=1}^n\gymean,\,
\frac{1}{n}\sum_{i=1}^n\gzmean[i])
\\
\qtext{and} \gyz[ij,k\ell] 
    &\defeq
(g_Y(Y_i,Y_j)-\gymean-\gymean[j]+\bar{g}_Y)
(g_Z(Z_k,Z_\ell)-\gzmean-\gzmean[\ell]+\bar{g}_Z)
\end{talign}
for all indices $i,j,k,\ell\in[n]$. With $V(\Y,\Z)$ as its test statistic, the standard wild bootstrap test only depends on the data via the $n^2/2$ sufficient statistics   
\begin{talign}
\set_1
    \defeq
\big\{
&\sum_{a,b\in \{i,\swap(i)\}} 
    (\gyz[ab,ba],\,
    \gyz[ab,\swap(b)\swap(a)])
    \mid 
 i\in[n/2]\big\} 
    \stext{and}\\ 
\set_2
    \defeq 
\big\{
&\sum_{a\in \{i,\swap(i)\},b\in \{j,\swap(j)\}}
(\gyz[ab,ba],\, \gyz[ab,\swap(b)\swap(a)]), \\
&\sum_{a\in \{i,\swap(i)\},b\in \{j,\swap(j)\}}
(\gyz[ab,b\swap(a)],\, \gyz[ab,\swap(b)a])
    \mid 
j < i \in [n/2]\big\}.
\end{talign}
since, for each wild bootstrap permutation $\pi$ and $j\in[n]$, we have $\pi(j)\in\{j,\swap(j)\}$ and 
\begin{talign}
4V(\Y, \Z^{\pi}) 
    &=  
\frac{1}{n^2}\sum_{i=1}^{n/2} \sum_{a,b\in \{i,\swap(i)\}}  \gyz[ab,\pi(a)\pi(b)] \\
    &+ 
\frac{2}{n^2}\sum_{i=1}^{n/2}\sum_{j =1}^{i-1} \sum_{a\in \{i,\swap(i)\}, b\in \{j,\swap(j)\}} \gyz[ab,\pi(a)\pi(b)] .
\end{talign}

Hence, if we let $c_{g}$ denote the maximum time required to evaluate $g_Y$ or $g_Z$, 
then one can precompute and store the elements of $\icomp[1]\cup\icomp[2]$ given  $\Theta(c_{g}n^2)$ time and $\Theta(n^2)$ memory and, thereafter,  compute all permuted test statistics using $\Theta(\numperm n^2)$ elementary operations.
When the quadratic memory cost is unsupportable, the elements of $\icomp[2]$ can instead be recomputed for each permutation at a total cost of $\Theta(c_{g}\numperm n^2)$ time and $\Theta(n)$ memory.

\subsection{Cheap wild bootstrap tests of independence}
To alleviate both the time and memory burden of standard wild bootstrap testing, we again propose to permute datapoint bins instead of individual points. 
To simplify our presentation, we will assume that $s$ is even and divides $n$ evenly.
As before, we partition the indices $[n]$ into $s$ consecutive sets $\idx_1, \dots, \idx_s$ of equal size, 
identify the corresponding datapoint bins $\Z_j\defeq (Z_i)_{i\in \idx_{j}}$, and define the bin-permuted sample $\Z^{\pi} \defeq (\Z_{\pi(i)})_{i=1}^{s}$ for each wild bootstrap permutation $\pi$ on $[s]$. 
If we further define the combination bins $\idx_i' \defeq \idx_i\cup\idx_{i+s/2}$ for each $i\in[\frac{s}{2}]$, then
the bin-wild-bootstrapped V-statistic 
\begin{talign} \label{eq:cheap_permuted_V_statistic}
    4V(\Y, \Z^{\pi}) 
    &=  
\frac{1}{n^2}\sum_{i=1}^{s/2}
D_{i\pi(i)}
    +
\frac{2}{n^2}\sum_{i=1}^{s/2}\sum_{j =1}^{i-1} 
S_{ij\pi(j)\pi(i)} %
\end{talign}
only depends on the data via the $s^2/2$  sufficient statistics %
\begin{talign}
\set_1'
    \defeq
\big\{
&(D_{ii},\, D_{i(i+s/2)})
    \defeq 
\sum_{a,b\in \idx_{i}'} 
    (\gyz[ab,ba],\,
    \gyz[ab,\swap(b)\swap(a)])
    \mid 
 i\in[\frac{s}{2}]\big\} 
    \stext{and}\\
\set_2'
    \defeq 
\big\{
&(S_{ijji},\,  S_{ij(j+s/2)(i+s/2)})
    \defeq
\sum_{a\in\idx_{i}',\,b\in\idx_j'}
(\gyz[ab,ba],\, \gyz[ab,\swap(b)\swap(a)]),
\\
&(S_{ijj(i+s/2)},\, S_{ij(j+s/2)i})
    \defeq
\sum_{a\in\idx_{i}',\,b\in\idx_j'}
(\gyz[ab,b\swap(a)],\, \gyz[ab,\swap(b)a])
    \mid 
j < i \in [\frac{s}{2}]\big\}.
\end{talign}
Hence, one can precompute and store the elements of $\set_1'\cup\set_2'$ using only $\Theta(s^2)$ memory and $\Theta(c_{g}n^2)$ time and subsequently compute all bin-permuted statistics using only $\Theta(\numperm s^2)$ elementary operations. 
\cref{algo:cheap_independence_permutation_test} 
details the steps of this \emph{cheap independence test} which recovers the standard wild bootstrap permutation test when $s=n$. %

As in the homogeneity setting, we  find that the overhead of cheap permutation is negligible whenever $\numperm s^2 \ll c_\phi n^2$, and we will see in \cref{sec:power,sec:experiments} that a slow-growing or constant setting of $s$ suffices to maintain power while substantially reducing runtime. 

A natural alternative to using wild bootstrap permutations is to instead employ uniformly random permutations $\pi$ on $[s]$ \citep[see, e.g.,][]{szekely2007measuring}. Unfortunately, while $s^2/2$ scalar statistics suffice to compute any bin-wild-bootstrapped V-statistic, at least $\Theta(s^4)$ statistics of the form 
\begin{talign}
\sum_{a\in\idx_{i}',\,b\in\idx_{j}',c\in\idx_{k}',d\in\idx_{l}'}\gyz[ab,cd]
    \qtext{for}
i,j,k,l\in[s]
\end{talign}
are required to compute all bin-permuted V-statistics, $4V(\Y, \Z^{\pi})$. 
As a result, the cheap wild bootstrap test of independence enjoys substantial speed and memory benefits over the analogous cheap (uniform) permutation test of independence. 
In \cref{sec:minimax,sec:power}, we will also establish power guarantees for cheap wild bootstrapping that match the best known rates for standard permutation testing and the minimax optimal rates for independence testing. We therefore recommend the cheap wild bootstrap independence test of \cref{algo:cheap_independence_permutation_test} as a fast and powerful replacement for both wild boostrap and uniform permutation tests.

\begin{algorithm2e} %
\small{
  \KwIn{Paired sample ${((Y_i, Z_i))}_{i=1}^n$,
  bin count $s$, base functions $g_Y$, $g_Z$ (\cref{def:independence_v_statistic}), level $\alpha$, wild bootstrap count $\numperm$}
  \vspace{5pt}
  Split the indices $\sbraces{1, \ldots, n}$ into $s$ consecutive bins $\bin_1, \ldots, \bin_s$ of equal size \\ %

\textit{// Compute sufficient statistics using $\Theta(c_g n^2)$ time and $\Theta(s^2)$ memory} \\
  \lFor{$a\in[n]$}{
    $(\gymean[a],\gzmean[a]) \gets \frac{1}{n}\sum_{b=1}^n(g_Y(Y_a, Y_b),
g_Z(Z_a, Z_b))$}
$(\bar{g}_Y, \bar{g}_Z) \gets\frac{1}{n}\!\sum_{a=1}^n(\gymean[a],\gzmean[a])$  \\
  Define the mappings 
 $\swap(a) \defeq a + \frac{n}{2} - n\indic{a  > \frac{n}{2}}$ 
  and $G_{ab,cd}\defeq (g_Y(Y_a,Y_b)-\gymean[a]-\gymean[b]+\bar{g}_Y)
(g_Z(Z_c,Z_d)-\gzmean[c]-\gzmean[d]+\bar{g}_Z)$\\
  \For{$i\in[s]$}{
    $(D_{ii},\,D_{i(i+s/2)})  \gets \sum_{a, b\in \idx_{i}} 
    (G_{ab,ba},\,  G_{ab,\swap(b)\swap(a)})$ \\
    \For{$j\in[i-1]$}{
        $(S_{ijji},\, S_{ij(j+s/2)(i+s/2)}) \gets \sum_{a\in\bin_i,b\in\bin_j} (G_{ab,ba},\, G_{ab,\swap(b)\swap(a)})$ \\
        $(S_{ij(j+s/2)i},\, S_{ijj(i+s/2)}) \gets \sum_{a\in\bin_i,b\in\bin_j} (G_{ab,\swap(b)a},\, G_{ab,b\swap(a)})$
        \label{line_S_17}
    }
  }

    \textit{// Compute original and  permuted test statistics using $\Theta(\numperm s^2)$ elementary operations}\\  
  \For{$b=0, 1, 2, \ldots, \numperm$}
    {%
    $\pi \gets$ identity permutation if $b=0$ else wild bootstrap permutation of $[s]$ \\ 
    $T_b \gets \frac{1}{n^2}\sum_{i=1}^s
D_{i\pi(i)}
    +
\frac{2}{n^2}\sum_{i=1}^s\sum_{j =1}^{i-1} 
S_{ij\pi(j)\pi(i)}$
   }
   
   \textit{// Return rejection probability} \\
   $R \gets 1 +$ number of permuted statistics $(T_{b})_{b=1}^{\numperm}$
   smaller than $T_0$ if ties are broken at random 
   \label{line:fourth_to_last_independence}\\
    \KwRet{$\Delta(\X) \defeq \min(1,\max(0,R \!-\! (1\!-\!\alpha)(\numperm\!+\!1)))
    $}
}
  \caption{Cheap independence testing\label{algo:cheap_independence_permutation_test}}
\end{algorithm2e}

\subsection{Finite-sample exactness}%
We conclude this section by noting that the cheap permutation tests of \cref{algo:cheap_homogeneity_test_qts,algo:cheap_independence_permutation_test} are \emph{exact}, that is, their sizes exactly match the nominal level $\alpha$ for all sample sizes, data distributions, and permutation counts $\numperm$.
This is a principal advantage of permutation testing over asymptotic tests that (a) require precise knowledge of a test statistic's asymptotic null distribution and (b) can violate the level constraint at any given sample size.

\begin{proposition}[Cheap permutation tests are exact] \label{prop:exactness}
    Under their respective null hypotheses, the cheap permutation tests of \cref{algo:cheap_homogeneity_test_qts,algo:cheap_independence_permutation_test} reject the null with probability $\alpha$.
\end{proposition}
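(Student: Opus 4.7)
The plan is to reduce both cheap tests to a standard application of Hoeffding's exchangeability argument for permutation tests, treating the bins rather than the individual data points as the exchangeable units. The rejection rule in both \cref{algo:cheap_homogeneity_test_qts,algo:cheap_independence_permutation_test} computes a randomized $p$-value based on the rank of $T_0$ among $(T_0, T_1, \ldots, T_{\numperm})$ with ties broken uniformly at random. By Hoeffding's classical result, it suffices to show that under the respective null hypothesis, the tuple $(T_0, T_1, \ldots, T_{\numperm})$ is exchangeable (jointly with the tie-breaking randomness).

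For the homogeneity test of \cref{algo:cheap_homogeneity_test_qts}, under $\nullhypo: \P=\Q$ the pooled observations $X_1, \ldots, X_n$ are i.i.d. from $\P$, so any permutation of $[n]$ leaves their joint distribution invariant. In particular, permuting the equal-sized bins $\bin_1, \ldots, \bin_s$ corresponds to a block permutation of the underlying $X_i$'s, which is a special case of this invariance. Since each $T_b$ is a fixed function of $(\X_{\pi_b(1)}, \ldots, \X_{\pi_b(s)})$ and the $\pi_b$'s for $b\geq 1$ are i.i.d. uniform on $\mathfrak{S}_s$ (independent of the data), the joint distribution of $(T_0, \ldots, T_{\numperm})$ is invariant under any permutation of its coordinates. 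The standard Hoeffding rank argument then yields that the $p$-value $p_\alpha$ satisfies $\P(p_\alpha=0)=\alpha$ under $\nullhypo$.

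For the independence test of \cref{algo:cheap_independence_permutation_test}, the argument is essentially the same once we identify the appropriate group of transformations. Under $\nullhypo: \pjnt = \P\times\Q$, the samples $\Y$ and $\Z$ are independent, so conditional on $\Y$ the vector $\Z$ is i.i.d.\ from $\Q$. The wild bootstrap permutations form a subgroup $\mathcal G\subset \mathfrak{S}_s$ (swaps of $i$ with $i+s/2$ for $i\leq s/2$) of order $2^{s/2}$. For every $\pi\in\mathcal G$, the block-permuted $\Z^{\pi}$ has the same conditional distribution given $\Y$ as $\Z$ itself, and the $\pi_b$'s are drawn i.i.d.\ uniformly on $\mathcal G$ independently of the data. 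Because $\mathcal G$ is a group, composition with an independent uniform element of $\mathcal G$ preserves uniformity, which is exactly the structural property needed to apply Hoeffding's argument and conclude the joint exchangeability of $(T_0, \ldots, T_{\numperm})$.

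The only delicate point will be the careful handling of ties and the randomized rejection rule: the quantity $p_\alpha = \min(1,\max(0, R-(1-\alpha)(\numperm+1)))$ with $R$ defined via uniform tie-breaking is precisely the standard randomized $p$-value, and Hoeffding's argument guarantees it is uniformly distributed on $\{1/(\numperm+1),\ldots,1\}$ (or, with randomization, has correct tail probability) under exchangeability, giving $\P(\Delta(\X)=1)=\alpha$. I expect no substantive obstacle beyond verifying that the block structure inherits exchangeability cleanly, which in both cases follows from the i.i.d.\ structure under the null combined with equal bin sizes.
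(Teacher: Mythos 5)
Your proposal is correct and takes essentially the same route as the paper, which justifies \cref{prop:exactness} in one sentence by observing that the cheap tests are standard permutation tests applied to bins of datapoints and invoking Hoeffding's exchangeability argument; your write-up simply fills in the details of that argument (bin-level exchangeability under the null, the group structure of the wild bootstrap swaps, and the exactness of the randomized rank-based rejection rule).
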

\begin{proof}
Since cheap permutation tests are standard permutation tests over bins of datapoints and \cref{algo:cheap_homogeneity_test_qts,algo:cheap_independence_permutation_test} both employ the randomized rejection rule of \citet{hoeffding1952large},  exactness follows immediately from Prop.~3 of \citet{hemerik2018exact}.
\end{proof}
\section{Power of cheap testing}
\label{sec:power}
We now turn our attention to the power of cheap testing with homogeneity U-statistics (\cref{def:homogeneity_u_stat}) and independence V-statistics (\cref{def:independence_v_statistic}). 
All of our results apply to any choice of permutation count $\numperm$ and are non-vacuous exactly when $\numperm \geq \frac{1}{\alpha}-1$.

\subsection{Power guarantees for homogeneity testing} \label{sec:ts_finite}
For the homogeneity U-statistic of \cref{def:homogeneity_u_stat}, define the symmetrized version of 
$\hho$,
\begin{talign} \label{Eq: symmetrized kernel}
\hbarho (y_1,y_2;z_1,z_2) \defeq \frac{1}{2!2!} \sum_{(i_1,i_2) \in \mathbf{i}_2^{2}} \sum_{(j_1,j_2) \in \mathbf{i}_2^{2}} \hho(y_{i_1},y_{i_2}; z_{j_1},z_{j_2}), 
\end{talign} 
and the \emph{variance components}
\begin{talign}
\psi_{Y,1}&\defeq \max\{ \Var(\E[\hbarho(Y_1,Y_2;Z_1,Z_2)|Y_1]), \Var(\E[\hbarho(Z_3,Y_2;Z_1,Z_2)|Z_3]) \}, \\  \label{Eq: definitions of psi functions}
\psi_{Z,1}&\defeq \max\{ \Var(\E[\hbarho(Y_1,Y_2;Z_1,Z_2)|Z_1]), \Var(\E[\hbarho(Y_1,Y_2;Y_3,Z_2)|Y_3]) \}, \stext{and} \\
\psi_{YZ,2}&\defeq \max \{ \E[g^2(Y_1,Y_2)], \  \E[g^2(Y_1,Z_1)], \ \E[g^2(Z_1,Z_2)] \}.
\end{talign}
Our first theorem, proved in \cref{subsec:proof_ts_finite}, bounds the minimum separation between $\P$ and $\Q$ required for standard and cheap homogeneity tests to reject with power at least $1-\beta$. 
Here, separation is measured by the mean of the test statistic $\E[U_{n_1,n_2}]$, which is zero under the null ($\P=\Q$) but may be non-zero under the alternative ($\P\neq\Q$). 
\begin{theorem}[Power of cheap homogeneity: finite variance] %
 \label{thm: Full Two Sample Tests}
Consider a homogeneity U-statistic $U_{n_1,n_2}$ (\cref{def:homogeneity_u_stat}) with finite variance components \cref{Eq: definitions of psi functions}, sample sizes $n_2 \geq n_1 \geq 4$, and $n\defeq n_1+n_2$.  
A cheap homogeneity test (\cref{algo:cheap_homogeneity_test_qts} with $s \geq 4$) %
based on $U_{n_1,n_2}$ has power at least $1-\beta$ whenever
    \begin{talign} 
    \label{eq:lb_cheap_2s}
\mathbb{E}[U_{n_1,n_2}] 
    &\geq 
\gamma_{n_1,n_2,s} 
    \defeq 
\gamma_{n_1,n_2} 
    +
\indic{s < n}\frac{\gamma_{n_1,n_2}
    +
\sqrt{\frac{201s^2}{4n^2} \psi_{YZ,2} 
    + 
{\frac{12 s}{n} \max( \psi_{Y,1}, \psi_{Z,1})}{}}}{ \sqrt{s^3\frac{\beta\astar}{24(1-\astar)}\nratio}-1} %
\\ \label{eq:lb_full_2s}
\qtext{for}  \gamma_{n_1,n_2} &\defeq
        \sqrt{\frac{3-\beta}{\beta}\big(\frac{4  \psi_{Y,1}}{n_1} \! + \! \frac{4  \psi_{Z,1}}{n_2} \big)} + \sqrt{\frac{1-\astar}{\astar}\frac{(36 n_1^2 + 36 n_2^2 + 198 n_1 n_2) \psi_{YZ,2}}{\beta n_1 (n_1 - 1) n_2 (n_2 - 1)}},
\end{talign}
$\astar\defeq \frac{\alpha}{2e} \big(\frac{\beta}{3}\big)^{1/\floor{\alpha(\numperm+1)}}$,
and
$\nratio \defeq \frac{ n_1(n_1 - 1)^2 n_2(n_2 - 1)^2}{n^6}$.
\end{theorem}
\begin{remark}[Properties of $\astar$]
The parameter $\astar$ lies in the range $[\frac{\alpha}{2e} \frac{\beta}{3}, \frac{\alpha}{2e})$ for all $\numperm \geq \frac{1}{\alpha}-1$ with $\astar \geq \frac{\alpha^2}{2e}$ for $\numperm \geq \frac{\log(3/\beta)}{\alpha\log(1/\alpha)}-1$
and
$\astar \geq \frac{\alpha}{2e^2}$ for $\numperm \geq \frac{\log(3/\beta)}{\alpha}-1$.
\end{remark}
\begin{remark}[Equivalent separation rates and thresholds]\label{rem:sep_rates_homogeneity}
We have $\gamma_{n_1,n_2,s} = \Theta(\gamma_{n_1,n_2})$ whenever $s\geq \sqrt[3]{\frac{24(1-\astar)(1+\Omega(1))}{\beta\astar \nratio}}$
and $\gamma_{n_1,n_2,s} = \gamma_{n_1,n_2}(1 + o(1))$ whenever 
$s=\omega(\frac{n_2}{n_1})$.
\end{remark}

The separation threshold $\gamma_{n_1,n_2}$ for standard permutation testing is a refined, quantitative version of the one derived in \cite[Thm.~4.1]{kim2022minimax}. Here, we keep track of all numerical factors and, thanks to the quantile comparison method introduced in \cref{sec:quantile_comparison}, 
establish a tighter dependency on the number of permutations $\numperm$.
The separation threshold $\gamma_{n_1,n_2,s}$ for cheap permutation testing is entirely new and implies two remarkable properties summarized in \cref{rem:sep_rates_homogeneity}.
First, the cheap and standard thresholds are \emph{asymptotically equivalent} (in the strong sense that $\gamma_{n_1,n_2,s}/\gamma_{n_1,n_2} = 1 + o(1)$) whenever $s=\omega(\frac{n_2}{n_1})$.
For $n_1 = \Theta(n_2)$, this occurs whenever $s$ grows unboundedly with $n$, even if the growth rate is arbitrarily slow.
Second, the cheap and standard \emph{separation rates} are identical (i.e., $\gamma_{n_1,n_2,s} = \Theta(\gamma_{n_1,n_2})$) whenever $s \geq \sqrt[3]{\frac{24(1-\astar)(1+\Omega(1))}{\beta\astar \nratio}}$.  Hence, when $n_1 = \Theta(n_2)$,  cheap testing can match the separation rate of standard permutation testing using a \emph{constant} number of bins independent of the sample size $n$.

\subsection{Power guarantees for independence testing} \label{sec:ind_finite}

For the independence V-statistic of \cref{def:independence_v_statistic}, 
we introduce the shorthand $X_i \defeq (Y_i,Z_i)$ and 
define a symmetrized version of 
$\hin$,
\begin{talign}
\hbarin(x_1,x_2,x_3,x_4) &\defeq \frac{1}{4!} \sum_{(i_1,i_2,i_3,i_4) \in \mathbf{i}_4^{4}} \hin(x_{i_1},x_{i_2},x_{i_3},x_{i_4}), 
\end{talign}
the \emph{variance components}
\begin{talign} 
\begin{split}
\label{Eq: definition of psi prime functions}
\psi_1' 
    &\defeq 
\Var(\E[\hbarin(X_1,X_2,X_3,X_4)\mid X_1]), 
    \\
\quad\tilde{\psi}_1' 
    & \defeq 
\Var( \E[\hbarin((Y_1,Z_{\pi_1}),(Y_2,Z_{\pi_2}),(Y_3,Z_{\pi_3}),(Y_4,Z_{\pi_4})) \mid (Y_1,Z_{\pi_1}, \pi_1)]  ), 
\quad\text{and} 
    \\
\psi_2' 
    & \defeq 
16 \max \big\{ \E[g_Y^2(Y_1,Y_2)g_Z^2(Z_i,Z_j)] : (i,j) \in \{(1,2),(1,3),(3,4)\}\big\},
\end{split}
\end{talign}
where $\pi$ is an independent wild bootstrap permutation of $[n]$, and the \emph{mean component}
\begin{talign}\label{Eq: mean component}
\tilde{\xi} 
    &\defeq 
\max\{ | \E[\hin((Y_{i_1},Z_{i'_1}),(Y_{i_2},Z_{i'_2}),(Y_{i_3},Z_{i'_3}),(Y_{i_4},Z_{i'_4}))] |
: 
i_k, i'_k \in [8], \forall k\in[4]\}.
\end{talign}

Our next theorem, proved in \cref{subsec:proof_independence_finite_variance}, bounds the minimum separation between $\pjnt$ and $\P\times\Q$ required for standard and cheap independence tests to reject with power at least $1-\beta$. 
Here, separation is measured by 
\begin{talign}\label{eq:mcU}
\mathcal{U} 
    &\defeq 
\E[\hin(X_1,X_2,X_3,X_4)],
\end{talign}
the expected U-statistic corresponding to the V-statistic $V_n$. 
Notably,  $\mathcal{U}$  is zero under the null ($\pjnt = \P\times\Q$) but may be non-zero under the alternative ($\pjnt \neq \P\times\Q$).
\begin{theorem}[%
Power of cheap independence: finite variance] \label{thm: Full Independence Tests}
Consider an independence V-statistic $V_{n}$ (\cref{def:independence_v_statistic}) with finite variance components \cref{Eq: definition of psi prime functions}, 
$\astar \defeq \frac{\alpha}{2e} \big(\frac{\beta}{3}\big)^{1/\floor{\alpha(\numperm+1)}}$, 
and $n\geq 8$. 
    A cheap independence test (\cref{algo:cheap_independence_permutation_test}) %
    based on $V_n$ has power at least $1-\beta$ whenever 
\begin{talign} \label{eq:ind_cheap_suff}
\mathcal{U} 
    &\geq 
\gamma_{n,s} 
    \defeq 
 \gamma_n
    +
 \indic{s<n}
 \frac{\gamma_n(s+1) + 4s^2\sqrt{\big( \frac{12}{\astar \beta} - 2\big)
    \big(\tilde{\psi}_1' \big( \frac{2304}{n s} + \frac{460800}{n s^2}
    \big) + \psi_2' \frac{4147200}{n^2 s} \big)}}{ 3s^2 - s - 1}\!\! \\
    \label{eq:ind_full_suff}
        \text{for}\ 
        \gamma_n &\defeq 
        \frac{4}{3}
        \big(\sqrt{(\frac{3}{\beta} \!-\! 1) \big(\frac{16 \psi_1'}{n}\! +\! \frac{144 \psi_2'}{n^2} \big)}
        \!+\! 
        \sqrt{\big( \frac{12}{\astar \beta} \!- \!2\big) \big( \frac{32 \tilde{\psi}_1'}{n} \!+ \!\frac{960 \psi_2'}{n^2} \big)}
        \!+ \!\frac{16}{n} \tilde{\xi}\! +\! 
        \frac{24 %
        }{n}\sqrt{\psi_2'}
        \big).
    \end{talign}
\end{theorem}
\begin{remark}[Equivalent separation rates and thresholds]\label{rem:sep_rates_independence}
We have $\gamma_{n,s} = \Theta(\gamma_{n})$ for \textbf{any} choice of $s$ 
and $\gamma_{n,s} = \gamma_{n}(1 + o(1))$ whenever $s=\omega(1)$.
\end{remark}

The separation threshold $\gamma_{n}$ is a quantitative analogue of the threshold derived in \citep[Thm.~5.1]{kim2022minimax}. The main structural difference is that \cref{thm: Full Independence Tests} analyzes the wild-bootstrapped V-statistic $V_n$ while Thm.~5.1 of \citep{kim2022minimax} analyzes a permutation test built on the corresponding independence U-statistic; this accounts for the two quantities in $\gamma_n$ that have no counterpart in \citep[Thm.~5.1]{kim2022minimax}. The term $\tilde{\xi}$ \cref{Eq: mean component} collects the diagonal contributions retained by the V-statistic but discarded by the U-statistic and enters only at the lower order $O(1/n)$. The term $\tilde{\psi}_1'$ \cref{Eq: definition of psi prime functions} is the first-order variance component measured under the wild-bootstrap resampling distribution rather than under $\pjnt$, entering at the same leading order $O(1/\sqrt{n})$ as $\psi_1'$. We will see in \cref{lem:tilde_psi_1_prime_bound} that $\tilde{\psi}_1'$ is bounded by a constant multiple of $\psi_1'$ for positive-definite kernels so that $\gamma_n$ matches the separation rate of \citep[Thm.~5.1]{kim2022minimax} up to constant factors for such kernels. \cref{thm: Full Independence Tests} additionally keeps track of all numerical constants and, thanks to the quantile comparison method of \cref{sec:quantile_comparison}, yields a tighter dependency on the number of permutations $\numperm$.

The separation threshold $\gamma_{n,s}$ for cheap permutation testing is entirely new and satisfies two remarkable properties (\cref{rem:sep_rates_independence}). First, the cheap and standard thresholds are again asymptotically equivalent whenever $s=\omega(1)$. Second, the cheap and standard separation \emph{rates} are identical (i.e., $\gamma_{n,s} = \Theta(\gamma_{n})$) for any choice of $s$, even $s=2$. Hence, cheap independence testing can match the separation rate of standard testing using a \emph{constant} number of bins independent of the sample size $n$.

\subsection{Quantile comparison method}\label{sec:quantile_comparison}
At the heart of our power guarantees is a quantile comparison method (\cref{Lemma: Generic Two Moments Method}) suitable for bounding the power of Monte Carlo permutation tests. %
Related results have appeared in the literature under the name \textit{two moments method},  as they control quantiles using the first two moments of the test statistic.
This approach to assessing non-asymptotic power was pioneered by \citet{fromont2012thetwosample} in the context of homogeneity testing and later developed by \citet{kim2022minimax,schrab2023mmd,domingo2023compress,kim2023differentially}, among others.  
Earlier guarantees due to \citep{kim2022minimax,schrab2023mmd,kim2023differentially} placed undue constraints on the number of permutations $\numperm$. For example, Prop.~D.1 of \citep{kim2022minimax} requires $\numperm \geq \frac{8}{\alpha^2}\log(\frac{4}{\beta})$,  Thm.~5 of \citep{schrab2023mmd} requires $\numperm \geq \frac{3}{\alpha^2}(\log(\frac{8}{\beta}) + \alpha(1-\alpha))$, and Lem.~21 of \citep{kim2023differentially} requires $\numperm \geq \frac{1}{\alpha}(\log(\frac{1}{\beta})+1-\alpha)$.
Here we adapt a tighter argument developed for homogeneity testing with a bounded positive-definite kernel \citep[Lem.~6]{domingo2023compress} and extend it to all forms of permutation test and, more generally, to any test that thresholds using the quantiles of \iid perturbations.
The result, proved in  \cref{Section: Proof of Lemma: Two Moments Method SubG}, 
provides non-vacuous power guarantees for all permutation counts $\numperm \geq \frac{1}{\alpha} -1$.

\begin{proposition}[Quantile comparison method] \label{Lemma: Generic Two Moments Method}
Consider a test statistic $T(\X)$ and an auxiliary sequence $(T_b)_{b=1}^\numperm$ generated \iid and independently of $T(\X)$ given $\X$.
Suppose that a test $\Delta(\X)$ rejects whenever $T(\X)$ exceeds the  $b_{\alpha} = \ceil{(1-\alpha)(\numperm+1)}$ smallest values in $(T_b)_{b=1}^\numperm$.  
Then $\E[\Delta(\X)] \geq 1-\beta$ if any population parameter $\mathcal{T}$ satisfies 
\begin{talign} \label{eq:separation}
&\tconst
    \geq
\Psi_{n}(\astar,\frac{\beta}{3}) + \Phi_{n}(\frac{\beta}{3}) 
\qtext{for}
\astar 
    \defeq 
\frac{\alpha}{2e} \big(\frac{\beta}{3}\big)^{1/\floor{\alpha(\numperm+1)}}, 
\end{talign}
where $(\Phi_{n}, \Psi_{\X}, \Psi_{n})$ are complementary quantile bounds satisfying
\begin{talign} \label{eq:Phi_P_n}
&\Pr(
\tconst
- T(\X) \geq \Phi_{n}(\frac{\beta}{3})) 
    \leq 
\frac{\beta}{3},
    \\
&\Pr( T_1 %
    \geq 
\Psi_{\X}(\astar) \mid  \X) \leq \astar  \qtext{almost surely,}
\stext{and} \label{eq:Psi_X_n} \\
&\Pr ( %
    \Psi_{\X}(\astar) 
    \geq 
\Psi_{n}(\astar,\frac{\beta}{3})) \leq \frac{\beta}{3}.\label{eq:Psi_P_n}
\end{talign}
\end{proposition}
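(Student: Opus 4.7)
My plan is to bound the total failure probability by $\beta$ via a three-way decomposition: two ``good'' events ensuring the sample $\X$ is typical in the senses of \cref{eq:Phi_P_n,eq:Psi_P_n}, together with a conditional factorial-moment bound on the number of permuted statistics that dominate $T(\X)$. Concretely, introduce
\begin{talign*}
E_1 \defeq \{T(\X) \geq \tconst - \Phi_{n}(\beta/3)\}
\stext{and}
E_2 \defeq \{\Psi_{\X}(\astar) \leq \Psi_{n}(\astar,\beta/3)\},
\end{talign*}
each with probability at least $1-\beta/3$ by \cref{eq:Phi_P_n,eq:Psi_P_n}. On $E_1\cap E_2$, the separation hypothesis \cref{eq:separation} chains to $T(\X) \geq \Psi_\X(\astar)$, so by \cref{eq:Psi_X_n} the pointwise tail estimate $\Pr(T_1 \geq T(\X)\mid \X) \leq \astar$ holds almost surely on this event.

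Next, since the $T_b$ are \iid given $\X$ and independent of $T(\X)$, the count $N_\X \defeq \#\{b\in[\numperm] : T_b \geq T(\X)\}$ is stochastically dominated (conditional on $\X$, on $E_1 \cap E_2$) by a $\text{Binomial}(\numperm,\astar)$ law. A short calculation identifies the non-rejection event with $\{N_\X \geq \numperm - b_\alpha + 1\}$; using the identity $\lceil(1-\alpha)(\numperm+1)\rceil = \numperm+1 - \floor{\alpha(\numperm+1)}$, this threshold equals $N_0 \defeq \floor{\alpha(\numperm+1)}$. Markov's inequality applied to the $N_0$-th falling factorial therefore gives
\begin{talign*}
\Pr(N_\X \geq N_0 \mid \X) \leq \binom{\numperm}{N_0}\astar^{N_0} \leq \big(\tfrac{e\numperm\astar}{N_0}\big)^{N_0}
\end{talign*}
on $E_1 \cap E_2$.

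Substituting $\astar = \tfrac{\alpha}{2e}(\beta/3)^{1/N_0}$ collapses the right-hand side to $(\alpha \numperm/(2N_0))^{N_0}(\beta/3)$. The estimate $\alpha \numperm/(2N_0) \leq 1$ then follows from $N_0 \geq \alpha(\numperm+1)/2$, valid whenever $\alpha(\numperm+1) \geq 1$, yielding $\Pr(N_\X \geq N_0 \mid \X) \leq \beta/3$ on $E_1 \cap E_2$. A final union bound over $E_1^c$, $E_2^c$, and the conditional failure event produces the $\beta$ budget.

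The delicate point is the factorial-moment calibration: the precise form of $\astar$ is engineered so that the crude envelope $(e\numperm \astar/N_0)^{N_0}$ lands at exactly $\beta/3$ regardless of whether $\alpha(\numperm+1)$ is an integer, and without imposing constraints on $\numperm$ beyond $\numperm \geq 1/\alpha - 1$. This is what delivers the tighter dependence on $\numperm$ advertised in \cref{sec:quantile_comparison} relative to prior two-moments arguments.
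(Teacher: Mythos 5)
Your proof is correct and follows essentially the same route as the paper's: a three-way $\beta/3$ union bound over (i) the test statistic being atypically small, (ii) the data-dependent threshold $\Psi_{\X}(\astar)$ being atypically large, and (iii) a binomial tail bound driven by the calibrated choice of $\astar$. The only difference is packaging — the paper delegates the binomial/order-statistic calculation to a cited lemma bounding $T_{(b_\alpha)}$ by the conditional $1-\alpha(\delta)$ quantile of $T_1$ given $\X$, whereas you reproduce the same calculation inline as a conditional count bound, arriving at the identical estimate $\binom{\numperm}{\floor{\alpha(\numperm+1)}}\,\astar^{\floor{\alpha(\numperm+1)}} \leq \beta/3$ under the implicit hypothesis $\numperm \geq 1/\alpha - 1$.
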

When the test statistic $T(\X)$ has finite variance and the auxiliary variable $T_1$ has almost-surely finite conditional variance, Cantelli's inequality yields the following corollary, a refinement of  the two moments method of \citep[Prop.~D.1]{kim2022minimax}. 
See \cref{Section: Proof of Lemma: Refined Two Moments Method} for the proof.
\begin{corollary}[Refined two moments method] \label{Lemma: Refined Two Moments Method}
If $\Var(T(\X)),\E[ \Var (T_1 \mid \X)] <\infty$, then the following functions satisfy the requirements \cref{eq:Phi_P_n,eq:Psi_X_n,eq:Psi_P_n} of \cref{Lemma: Generic Two Moments Method} for $\tconst = \E[T(\X)]$:
\begin{talign}
\Phi_{n}(\beta) 
    &= \sqrt{(\frac1\beta - 1) \mathrm{Var}(T(\X))}, 
    \quad
    \Psi_{\X}(\alpha) = \sqrt{(\frac1\alpha-1) \Var(T_1\mid \X) } + \E[T_1 \mid \X], \\
\label{eq:refined_Psi_n_zero}
\Psi_{n}(\alpha,\beta) 
    &= \sqrt{(\frac1{\alpha\beta}\!-\!\frac1\beta) \E[ \Var (T_1 \mid \X )] } \qtext{when} \E[T_1 \mid \X] = 0 \qtext{almost surely,} \text{and}\\
\label{eq:refined_Psi_n_nonzero}
\Psi_{n}(\alpha,\beta) 
    &= \sqrt{(\frac2{\alpha\beta}\!-\!\frac2\beta) \E[ \Var (T_1 \mid \X) ] }  %
    + \E[T_1] %
+ \!\sqrt{\!(\frac{2}{\beta} -1) \Var (\E[T_1 \mid \X])}
    \ \text{otherwise.}
\end{talign}
\end{corollary}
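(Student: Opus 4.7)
The plan is to verify each of the three requirements \cref{eq:Phi_P_n,eq:Psi_X_n,eq:Psi_P_n} of \cref{Lemma: Generic Two Moments Method} by direct application of Cantelli's one-sided inequality together with Markov's inequality and, in one case, a union bound. Throughout I take $\tconst = \E[T(\X)]$.

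For \cref{eq:Phi_P_n}, I would apply Cantelli to the mean-zero variable $\E[T(\X)] - T(\X)$, whose variance is $\Var(T(\X))$. The threshold $\sqrt{(3/\beta - 1)\Var(T(\X))}$ yields tail probability at most $\beta/3$, which matches $\Phi_{n}(\beta/3)$ by construction. For \cref{eq:Psi_X_n}, I would apply Cantelli conditionally on $\X$ to $T_1 - \E[T_1\mid \X]$, which has conditional variance $\Var(T_1\mid\X)$; shifting back by the conditional mean delivers the stated $\Psi_{\X}(\astar)$.

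The third requirement \cref{eq:Psi_P_n} is where the two cases arise. In the degenerate case $\E[T_1\mid\X] = 0$ almost surely, the event $\Psi_{\X}(\astar) \geq \Psi_{n}(\astar,\beta/3)$ is, after squaring and dividing, equivalent to $\Var(T_1\mid\X) \geq (3/\beta)\,\E[\Var(T_1\mid\X)]$, whose probability is at most $\beta/3$ by Markov's inequality applied to the nonnegative random variable $\Var(T_1\mid\X)$. In the general case, I would invoke the elementary pointwise implication ``$\sqrt{A}+B \geq \sqrt{C}+D$ implies $\sqrt{A} \geq \sqrt{C}$ or $B \geq D$'' with $A, C$ the conditional-variance terms and $B, D$ the conditional-mean terms, and then union-bound each piece at level $\beta/6$. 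The variance piece reduces via the algebraic identity $\frac{2/(\astar \cdot (\beta/3)) - 2/(\beta/3)}{1/\astar - 1} = \frac{6/\beta}{1} = 6/\beta$ to $\Pr(\Var(T_1\mid\X) \geq (6/\beta)\E[\Var(T_1\mid\X)]) \leq \beta/6$ via Markov, while the mean piece follows from Cantelli applied to the random variable $\E[T_1\mid\X]$ with mean $\E[T_1]$ and variance $\Var(\E[T_1\mid\X])$, with threshold $\sqrt{(6/\beta - 1)\Var(\E[T_1\mid\X])}$.

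No step here is genuinely difficult; the argument is essentially a bookkeeping exercise in which Cantelli handles mean-variance tails and Markov handles the averaged variance term. The only real care required is in the probability allocation (a single $\beta/3$ budget in the zero-mean case versus two $\beta/6$ budgets in the general case) and in verifying that the coefficients in the definition of $\Psi_{n}$ have been chosen precisely so that the Markov step above becomes tight. I anticipate no substantive obstacle beyond this arithmetic.
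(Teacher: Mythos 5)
Your proposal is correct and follows essentially the same route as the paper: Cantelli's inequality for $\Phi_{n}$ and $\Psi_{\X}$, Markov's inequality applied to the nonnegative variable $\Var(T_1\mid\X)$ for the conditional-variance term, Cantelli again for the conditional-mean term, and a union bound splitting the $\beta/3$ budget into two halves in the general case. The coefficient bookkeeping (the factor $6/\beta$ in the Markov step and $6/\beta-1$ in the Cantelli step) matches the paper's choice of $\Psi_n$ exactly, so no gap remains.
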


\section{Minimax optimality of cheap testing}\label{sec:minimax}

In this section, we apply the power results of \cref{sec:power} to establish the minimax rate optimality of cheap testing in a variety of settings. 
Below, for discrete and absolutely continuous signed measures $\mu$, we make use of the standard $\Lp[2]$ norm on densities, 
$\twonorm{\mu}^2 \defeq \int (\frac{d\mu}{d\nu}(x))^2d\nu(x)$, where $\nu$ is the counting measure if $\mu$ is supported on a discrete set $\xset$ and $\nu$ is the Lebesgue measure when $\mu$ has a Lebesgue density.
Note that 
\begin{talign}
\twonorm{\P-\Q}^2
    =
\begin{cases}
\sum_{x\in\xset} (p(x)-q(x))^2
    &
\text{if } \P,\Q \stext{have probability mass functions $p,q$ on a  set $\xset$} \\
\int (p(x)-q(x))^2 dx
    &
\text{if } \P,\Q  \stext{have Lebesgue densities $p,q$.}
\end{cases}
\end{talign}
\newcommand{\holderorder}{\tau}
We also define the \Holder-space ball, 
\begin{talign}
H^d_{\holderorder}(L)
    \defeq
    \big\{
f: [0,1]^d \mapsto \reals   
    \,\big| 
\,&|f^{(\floor{{\holderorder}})}(x) - f^{(\floor{{\holderorder}})}(x')|
    \leq 
L \twonorm{x - x' }^{{\holderorder} - \floor{{\holderorder}}}, \quad \forall x,x'\in [0,1]^d, \\
&|f^{({\holderorder}')}(x)| 
    \leq 
L,
\quad
\forall x\in[0,1]^d, 
\quad{\holderorder}' \in \{1,\dots,\floor{\holderorder}\}
    \big\}.
\label{eq:holder}
\end{talign}

\subsection{Discrete homogeneity testing} %

Our first result, proved in \cref{subsec:pf_minimax_homogeneity_multinomial}, shows that cheap homogeneity testing is minimax rate optimal for detecting discrete $\Lp[2]$ separations.

\begin{proposition}[Power of cheap testing: discrete {$\Lp[2]$} homogeneity] \label{Proposition: Multinomial Two-Sample Testing}
Suppose $\P$ and $\Q$ are discrete distributions on a finite set $\xset$ 
with $b_{(1)}\defeq \max (\twonorm{\P}^2,\twonorm{\Q}^2)$. %
A cheap homogeneity test (\cref{algo:cheap_homogeneity_test_qts}) based on a homogeneity U-statistic (\cref{def:homogeneity_u_stat}) with $n_1\le n_2$ and base function $g(y,z) = \indic{y=z}$ 
has power at least $1-\beta$ whenever 
\begin{talign} 
\twonorm{\P-\Q} 
    &\geq
\frac{Cb_{(1)}^{1/4}}{\sqrt[6]{\beta\astar}\sqrt{\min(\beta,\astar)\, n_1}}
\bigg(1+\frac{1}{
\sqrt{s^3\frac{\beta\astar}{24(1-\astar)} \nratio}-1}\bigg)
\end{talign}
for a universal constant $C$,  $\astar\defeq \frac{\alpha}{2e} \big(\frac{\beta}{3}\big)^{1/\floor{\alpha(\numperm+1)}}$,
and 
$\nratio\defeq \frac{n_1(n_1 - 1)^2 n_2(n_2 - 1)^2}{n^6}$.
\end{proposition}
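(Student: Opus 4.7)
\begin{proof-sketch}
The plan is to invoke the cheap homogeneity guarantee of \cref{thm: Full Two Sample Tests} after carefully bounding its three variance components for the indicator base function $g(y,z) = \indic{y = z}$. Write $p,q$ for the probability mass functions of $\P,\Q$. Since $\E[g(Y_1,Y_2)] = \twonorm{\P}^2$, $\E[g(Z_1,Z_2)] = \twonorm{\Q}^2$, and $\E[g(Y_1,Z_1)] = \langle\P,\Q\rangle$, the definition of $\hho$ immediately yields
\[
\E[U_{n_1,n_2}] = \twonorm{\P}^2 + \twonorm{\Q}^2 - 2\langle\P,\Q\rangle = \twonorm{\P-\Q}^2,
\]
so the power condition $\E[U_{n_1,n_2}] \ge \gamma_{n_1,n_2,s}$ will translate directly into a bound on $\twonorm{\P-\Q}$.

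Next, I bound $\psi_{YZ,2}$ and $\max(\psi_{Y,1},\psi_{Z,1})$. Because $g^2 = g$,
\[
\psi_{YZ,2} = \max(\twonorm{\P}^2,\langle\P,\Q\rangle,\twonorm{\Q}^2) \le b_{(1)}.
\]
For the first-order components, a direct computation from the symmetrized kernel
$\bar h_{\htag}(y_1,y_2;z_1,z_2) = g(y_1,y_2) + g(z_1,z_2) - \tfrac{1}{2}\!\sum_{a,b\in\{1,2\}} g(y_a,z_b)$
of \cref{Eq: symmetrized kernel} gives
\[
\E[\bar h_{\htag}(Y_1,Y_2;Z_1,Z_2)\mid Y_1 = y] = p(y) - q(y) - \langle\P-\Q,\Q\rangle,
\]
and the conditional expectations obtained by conditioning on $Z_3\sim\Q$, $Z_1\sim\Q$, or $Y_3\sim\P$ all take the same form $p(\cdot) - q(\cdot) - \mathrm{const}$. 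Each of the four corresponding variances is controlled by Cauchy-Schwarz together with the monotonicity $\twonorm{v}_4 \le \twonorm{v}_2$:
\[
\sum_{x\in\xset} p(x)(p(x) - q(x))^2 \le \twonorm{\P}\bigl(\textstyle\sum_x (p(x) - q(x))^4\bigr)^{1/2} \le \twonorm{\P}\twonorm{\P-\Q}^2 \le b_{(1)}^{1/2}\twonorm{\P-\Q}^2,
\]
and analogously with $\twonorm{\Q}$. Hence $\max(\psi_{Y,1},\psi_{Z,1}) \le b_{(1)}^{1/2}\twonorm{\P-\Q}^2$.

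Substituting these bounds into $\gamma_{n_1,n_2,s}$ of \cref{eq:lb_cheap_2s} and applying $n_1 \le n_2$ (so that $1/n_1 + 1/n_2 \le 2/n_1$ and $36n_1^2+36n_2^2+198n_1n_2 \le 270 n_2^2$) reduces $\twonorm{\P-\Q}^2 \ge \gamma_{n_1,n_2,s}$ to a \emph{quadratic} inequality of the schematic form
\[
\twonorm{\P-\Q}^2 \ge \omega_s\bigl(A\,\twonorm{\P-\Q} + B\bigr), \qquad \omega_s \defeq 1 + \bigl(\sqrt{s^3\beta\astar\nratio/(24(1-\astar))} - 1\bigr)^{-1},
\]
where $A = \Theta(b_{(1)}^{1/4}/\sqrt{\beta n_1})$ comes from the $\sqrt{\psi_{Y,1}/n_1}$ contribution of $\gamma_{n_1,n_2}$ and $B = \Theta(b_{(1)}^{1/2}/(n_1\sqrt{\astar\beta}))$ comes from the $\sqrt{\psi_{YZ,2}}$ contribution. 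The auxiliary term $\sqrt{(201 s^2/(4n^2))\psi_{YZ,2} + (12 s/n)\max(\psi_{Y,1},\psi_{Z,1})}$ in \cref{eq:lb_cheap_2s} splits into a constant piece $\lesssim s b_{(1)}^{1/2}/n$ and a piece linear in $\twonorm{\P-\Q}$ with coefficient $\lesssim \sqrt{s b_{(1)}^{1/2}/n}$; both are dominated by $B$ and $A$ respectively when $n_1 \le n_2$, so they are absorbed into the multiplicative factor $\omega_s$. Solving the quadratic via $x^2 \ge \omega_s(Ax + B) \iff x \ge (\omega_s A + \sqrt{(\omega_s A)^2 + 4\omega_s B})/2$, a sufficient condition is $\twonorm{\P-\Q} \ge \omega_s(A + \sqrt{B/\omega_s}) \le \omega_s(A + \sqrt{B})$.

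Finally, to recast the threshold in the advertised form, I bound both $\beta^{-1/2}$ (from $A$) and $(\astar\beta)^{-1/4}$ (from $\sqrt B$) by $(\astar\beta)^{-1/6}\min(\astar,\beta)^{-1/2}$: the first follows from $(\astar\beta)^{1/6}\le 1$ together with $\min(\astar,\beta)^2\le \astar\beta$, and the second from $\min(\astar,\beta)^6 \le (\astar\beta)^3 \le \astar\beta$ when $\astar\beta \le 1$. Absorbing all numerical constants into a universal $C$ then yields the claim. The main technical obstacle is this last bookkeeping step: the two dominant contributions $A$ and $\sqrt{B}$ carry \emph{different} powers of $\beta$ and $\astar$, and one must verify that both can be simultaneously absorbed into the compact form $\sqrt[6]{\beta\astar}^{-1}\sqrt{\min(\beta,\astar)}^{-1}$ while keeping the $s$-dependence cleanly factored as $1+1/(\sqrt{\cdots}-1)$.
\end{proof-sketch}
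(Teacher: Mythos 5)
Your overall route is the same as the paper's: compute $\E[U_{n_1,n_2}]=\twonorm{\P-\Q}^2$, bound $\psi_{YZ,2}\le b_{(1)}$ and $\max(\psi_{Y,1},\psi_{Z,1})\lesssim \sqrt{b_{(1)}}\twonorm{\P-\Q}^2$ (the paper cites \citet{kim2022minimax} for the latter; your direct Cauchy--Schwarz derivation is correct and even gives a better constant), plug into \cref{eq:lb_cheap_2s}, and solve the resulting quadratic in $x=\twonorm{\P-\Q}$. Those steps are all sound.

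The gap is in your treatment of the $s$-dependent auxiliary term $\frac{1}{v}\sqrt{\frac{201s^2}{4n^2}\psi_{YZ,2}+\frac{12s}{n}\max(\psi_{Y,1},\psi_{Z,1})}$, where $v\defeq\sqrt{s^3\tfrac{\beta\astar}{24(1-\astar)}\nratio}-1$. You assert that its constant piece ($\asymp s\,b_{(1)}^{1/2}/n$) is dominated by $B\asymp b_{(1)}^{1/2}/(n_1\sqrt{\astar\beta})$ and its linear coefficient ($\asymp\sqrt{s/n}\,b_{(1)}^{1/4}$) by $A\asymp b_{(1)}^{1/4}/\sqrt{\beta n_1}$, so that both get absorbed into $\omega_s=1+1/v$. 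Neither domination holds uniformly over the admissible range of $s$. For instance, with $n_1=n_2$ and $s=n$, the constant piece is $\asymp b_{(1)}^{1/2}$ while $B\asymp b_{(1)}^{1/2}/(n_1\sqrt{\astar\beta})$, so the ratio $n_1\sqrt{\astar\beta}$ is unbounded; even after dividing by $v$, taking $s$ near its minimal admissible value $(24(1-\astar)/(\beta\astar\nratio))^{1/3}$ makes the linear piece exceed $A$ by a factor $\asymp\beta^{1/3}\astar^{-1/6}$, which is unbounded as $\astar\to0$. So the step ``absorbed into the multiplicative factor $\omega_s$'' fails. The paper instead keeps these terms and substitutes $t\defeq s\sqrt[3]{\nratio}$, $u\defeq\tfrac{\beta\astar}{24(1-\astar)}$, so that $1+v=\sqrt{t^3u}$ and the auxiliary contributions become $\frac{\sqrt{t}}{v}\cdot O(b_{(1)}^{1/4}/\sqrt{n_1})$ and $\frac{t}{v}\cdot O(b_{(1)}^{1/2}/n_1)$; the bound $\frac{\sqrt{t}}{\min(v,\sqrt{v})}=\frac{(1+v)^{1/3}}{u^{1/6}\min(v,\sqrt{v})}\le C\,(\beta\astar)^{-1/6}(1+\tfrac1v)$ is precisely where the $\sqrt[6]{\beta\astar}$ in the advertised denominator comes from. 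Your final bookkeeping does introduce a $(\beta\astar)^{-1/6}$, but you attach it to the main terms $A$ and $\sqrt{B}$ (which only require $\min(\beta,\astar)^{-1/2}$); the factor is actually needed for the auxiliary terms you discarded. The conclusion is recoverable, but the argument as written would not go through without redoing this accounting.
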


\begin{remark}[Cheap optimality: discrete {$\Lp[2]$} homogeneity] %
The cheap test of \cref{Proposition: Multinomial Two-Sample Testing} achieves the minimax optimal separation rate of order $b_{(1)}^{1/4} /\sqrt{n_1}$ \citep[Prop.~4.4]{kim2022minimax}
whenever 
$s \geq \sqrt[3]{\frac{24(1-\astar)(1+\Omega(1))}{\beta\astar \nratio}}$.
\end{remark}

\subsection{\Holder homogeneity testing} \label{subsec:minimax_homogeneity_holder}
Our second result, proved in \cref{subsec:pf_minimax_homogeneity_holder}, shows that cheap homogeneity testing is minimax rate optimal for detecting \Holder $\Lp[2]$ separations.
The test statistic, based on discretization of the unit cube, is the same used in \citep[Prop.~4.6]{kim2022minimax}.

\begin{proposition}[Power of cheap testing: \Holder { $\Lp[2]$} homogeneity] \label{Proposition: Two-Sample Testing for Holder Densities}
Suppose $\P$ and $\Q$ are distributions on $[0,1]^d$ with Lebesgue densities %
belonging to the \Holder ball $H^d_{\holderorder}(L)$ \cref{eq:holder}.
Let $\{B_1, \dots, B_K \}$ be a partition of $[0,1]^d$ into $d$-dimensional hypercubes with side length $1/\floor{n_1^{2/(4\holderorder+d)}}$. 
A cheap homogeneity test (\cref{algo:cheap_homogeneity_test_qts}) based on a homogeneity U-statistic (\cref{def:homogeneity_u_stat}) with $n_1\le n_2$ and base function $g(y,z) = \sum_{k=1}^K\indic{y,z\in B_k}$ 
has power at least $1-\beta$ whenever 
\begin{talign} 
\twonorm{\P-\Q} 
    &\geq
(\frac{1}{n_1})^{2\holderorder/(4\holderorder+d)}\,
\frac{C\sqrt[4]{L}}{\sqrt[6]{\beta\astar}\sqrt{\min(\beta,\astar)}}\,
\bigg(1+\frac{1}{
\sqrt{s^3\frac{\beta\astar}{24(1-\astar)} \nratio}-1}\bigg)
\end{talign}
for a universal constant $C$,  $\astar\defeq \frac{\alpha}{2e} \big(\frac{\beta}{3}\big)^{1/\floor{\alpha(\numperm+1)}}$, and
$\nratio\defeq \frac{n_1(n_1 - 1)^2 n_2(n_2 - 1)^2}{n^6}$.
\end{proposition}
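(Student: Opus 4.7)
The plan is to reduce the Hölder setting to a discretized problem and invoke the cheap homogeneity guarantees of \cref{thm: Full Two Sample Tests}. The binning kernel $g(y,z) = \sum_{k=1}^K \indic{y\in B_k}\indic{z\in B_k}$ is positive-definite with feature map $\phi(x) = (\indic{x\in B_k})_{k=1}^K$, and the expected U-statistic equals the discretized squared $L^2$ distance, $\E[U_{n_1,n_2}] = \sum_{k=1}^K (p_k-q_k)^2 = V\twonorm{\bar p - \bar q}^2$, where $p_k \defeq \P(B_k)$, $q_k \defeq \Q(B_k)$, $V \defeq h^d$ with $h = 1/\lfloor n_1^{2/(4\holderorder+d)}\rfloor$, and $\bar f$ denotes the $L^2$-orthogonal projection of $f$ onto piecewise-constant functions on $\{B_k\}_{k=1}^K$. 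Standard approximation theory for $H^d_\holderorder(L)$ densities yields $\twonorm{p-\bar p}, \twonorm{q-\bar q}\leq C_0 L h^\holderorder$, so whenever $\twonorm{p-q} \geq 4 C_0 L h^\holderorder$ the triangle inequality gives $\twonorm{\bar p - \bar q} \geq \twonorm{p-q}/2$ and hence $\E[U_{n_1,n_2}] \geq V\twonorm{p-q}^2/4$.

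Next, I would bound the variance components in the cheap threshold $\gamma_{n_1,n_2,s}$. Since $g^2=g$ (the kernel only takes values in $\{0,1\}$) and Hölder densities on $[0,1]^d$ are uniformly bounded by some constant $L' = L'(L,d)$, direct calculation gives $\psi_{YZ,2} = \max(\sum_k p_k^2, \sum_k p_k q_k, \sum_k q_k^2) \leq L' V$ and $\psi_{Y,1},\psi_{Z,1} \leq \E[\hbarho^2] \leq C_1 L' V$ via Jensen's inequality and repeated use of $g^2=g$ when expanding the symmetrized kernel.

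Plugging these bounds into $\gamma_{n_1,n_2,s}$ from \cref{thm: Full Two Sample Tests}, the condition $\E[U_{n_1,n_2}] \geq \gamma_{n_1,n_2,s}$ reduces, after mirroring the multinomial derivation in \cref{Proposition: Multinomial Two-Sample Testing} with effective mass bound $b_{(1)} \leq L' V$, to $\twonorm{p-q} \geq C_2 (L')^{1/4} V^{-1/4} / (\sqrt[6]{\beta\astar}\sqrt{\min(\beta,\astar)\, n_1}) \cdot (1 + 1/(\sqrt{s^3 \beta\astar\nratio/(24(1-\astar))} - 1))$. The prescribed mesh $h = 1/\lfloor n_1^{2/(4\holderorder+d)}\rfloor$ makes $V^{-1/4}/\sqrt{n_1} \asymp n_1^{-2\holderorder/(4\holderorder+d)}$, which also matches (up to constants) the bias requirement $\twonorm{p-q} \geq 4 C_0 L h^\holderorder = O(L\, n_1^{-2\holderorder/(4\holderorder+d)})$, so taking the larger of the two conditions and absorbing $L, L', C_0, C_1, C_2$ into a universal constant $C$ yields the stated bound.

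The main obstacle is the variance bookkeeping. Bounding $\psi_{Y,1}, \psi_{Z,1}$ via $\hbarho$ requires expanding its four summed terms and reducing each resulting conditional expectation to sums such as $\sum_k p_k^2$ or $\sum_k p_k q_k$, each of which is at most $L' V$. The bias-variance balance is likewise delicate: the prescribed $h$ is precisely the mesh size at which the statistical threshold $V^{-1/4}/\sqrt{n_1}$ and the Hölder bias $L h^\holderorder$ coincide, and this coincidence is what produces the minimax rate $n_1^{-2\holderorder/(4\holderorder+d)}$.
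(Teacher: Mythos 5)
Your high-level strategy coincides with the paper's: discretize, reduce to the multinomial guarantee (\cref{Proposition: Multinomial Two-Sample Testing}), and balance bias against variance at the prescribed mesh $h=1/\floor{n_1^{2/(4\holderorder+d)}}$. The identification $\E[U_{n_1,n_2}]=\sum_k(p_k-q_k)^2$ and the mass bound $b_{(1)}\lesssim Lh^d$ are exactly what the paper uses (the latter via \citep[App.~I]{kim2022minimax}). However, two of your steps, as written, do not deliver the stated threshold. First, the discretization step: you control $\twonorm{\bar p-\bar q}$ by the triangle inequality together with $\twonorm{p-\bar p}\le C_0Lh^{\holderorder}$. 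Bin-average (piecewise-constant) projection only achieves order $h^{\min(\holderorder,1)}$, so this approximation bound fails for $\holderorder>1$, which the \Holder ball \cref{eq:holder} explicitly permits. Even for $\holderorder\le1$, your route imposes the side condition $\twonorm{\P-\Q}\ge4C_0Lh^{\holderorder}\asymp L\,n_1^{-2\holderorder/(4\holderorder+d)}$, whose linear dependence on $L$ cannot be absorbed into the stated $CL^{1/4}$ threshold with a universal constant $C$. The paper sidesteps both problems by invoking \citep[Lem.~3]{arias2018remember} for the direct comparison $\statictwonorm{\tilde{\P}-\tilde{\Q}}^2\ge C_1h^{d}\twonorm{\P-\Q}^2$ between the binned $\ell^2$ distance and the continuous $\Lp[2]$ distance.

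Second, the variance bookkeeping. The bound $\psi_{Y,1},\psi_{Z,1}\le\E[\hbarho^2]\le C_1L'h^d$ is valid but too lossy: it turns the first term of $\gamma_{n_1,n_2,s}$ into a separation-independent quantity of order $\sqrt{L'h^d/(\beta n_1)}$, so it must be absorbed into the constant term $c$ of the quadratic inequality $ax^2-bx-c\ge0$, yielding a requirement of order $\twonorm{\P-\Q}\gtrsim L^{1/4}h^{-d/4}n_1^{-1/4}$. This is polynomially larger than the target $L^{1/4}h^{-d/4}n_1^{-1/2}\asymp L^{1/4}n_1^{-2\holderorder/(4\holderorder+d)}$. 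To get the right rate one needs the separation-dependent bound $\psi_{Y,1},\psi_{Z,1}\le4\sqrt{b_{(1)}}\statictwonorm{\tilde{\P}-\tilde{\Q}}^2$ from \citep[App.~G]{kim2022minimax} (the bound used in the proof of \cref{Proposition: Multinomial Two-Sample Testing}), which places this contribution in the linear coefficient $b$ and makes it scale as $b_{(1)}^{1/4}/\sqrt{n_1}$. Indeed, your own conditional computation $\E[\hbarho\mid Y_1]=\sum_k\indic{Y_1\in B_k}(p_k-q_k)-\sum_kq_k(p_k-q_k)$ produces the needed factor $\sum_k(p_k-q_k)^2$ upon taking variances; the gap is that the bound you actually state discards it.
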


\begin{remark}[Cheap optimality: \Holder { $\Lp[2]$} homogeneity]
The cheap test of \cref{Proposition: Two-Sample Testing for Holder Densities} achieves the minimax optimal separation rate of order $n_1^{-2\holderorder/(4\holderorder+d)}$ \citep[Thm.~3]{arias2018remember}
whenever
$s
    \geq 
\sqrt[3]{\frac{24(1-\astar)(1+\Omega(1))}{\beta\astar \nratio}}$.
\end{remark}

\subsection{MMD homogeneity testing} \label{subsec:minimax_homogeneity_mmd}

Recently, \citet[Thm.~7]{kim2026differentially} established a lower-bound of order $n_1^{-1/2}$ on the minimax-optimal detectable separation in maximum mean discrepancy ($\mmd(\P,\Q)$, \cref{eq:kernel_mmd_distance}) when $\P$ and $\Q$ are distributions on $\reals^d$ and the positive-definite kernel $\kernel$ is bounded, translation-invariant (i.e., $\kernel(x, y) = \kappa(x-y)$ for some $\kappa$), and non-constant.
To show that cheap homogeneity testing can recover this optimal rate, we first establish two refined guarantees for testing with positive-definite kernels.

When the base function $g$ is a positive-definite kernel $\kernel$, 
the expected homogeneity U-statistic (\cref{def:homogeneity_u_stat}) is nonnegative and equal to the squared MMD. %
In this case, \cref{thm: Full Two Sample Tests} also yields a detectable separation threshold  of order $\sqrt{1/n_1+1/n_2}$ for the root mean test statistic,  $\mmd(\P,\Q)=\sqrt{\E[U_{n_1,n_2}]}$.

\begin{corollary}[Power of cheap homogeneity: finite variance, PD]%
\label{cor:ts_separation_rate}
Under the assumptions of \cref{thm: Full Two Sample Tests} and notation of \cref{ex:kernelts}, suppose that $g$ is a positive-definite kernel $\kernel$ with 
\begin{talign}
\xi_{\Q},\,
\xi_{\P} < \infty
    \qtext{for}
\xi_{\mu}
    \defeq
\max(\E[\mmd^2(\delta_{Y_1},\mu)],\,  \E[\mmd^2(\delta_{Z_1},\mu)]).
\end{talign}
A cheap homogeneity test based on $U_{n_1,n_2}$ has power at least $1-\beta$ whenever 
\begin{talign}
\mmd(\P,\Q) 
    &\geq 
 \eps_{n_1,n_2,s}
 	\defeq
\eps_{n_1,n_2}
    +
\indic{s<n}\frac{\eps_{n_1,n_2} + \sqrt{\frac{12s}{n}\max(\xi_\Q,\xi_\P)} + \sqrt[4]{ \frac{201\beta \astar s^5 \nratio}{96 (1-\astar)n^2 } \psi_{YZ,2}}}{\sqrt{s^3\frac{\beta\astar}{24(1-\astar)}\nratio}-1}
    \label{eq:psd_ts_cheap}
    \\
\label{eq:psd_ts_full}
    \qtext{for}
\eps_{n_1,n_2}
    &\defeq 
\sqrt{\frac{3-\beta}{\beta}\big(\frac{4  \xi_{\Q}}{n_1} \! + \! \frac{4  \xi_{\P}}{n_2} \big)} + \sqrt[4]{\frac{1-\astar}{\astar}\frac{(36 n_1^2 + 36 n_2^2 + 198 n_1 n_2) \psi_{YZ,2}}{\beta n_1 (n_1 - 1) n_2 (n_2 - 1)}}.
\end{talign}
\end{corollary}
\begin{remark}[Equivalent separation rates and thresholds]\label{rem:mmd_sep_rates_homogeneity}
We have $\eps_{n_1,n_2,s} = \Theta(\eps_{n_1,n_2})$ whenever $s\geq \sqrt[3]{\frac{24(1-\astar)(1+\Omega(1))}{\beta\astar \nratio}}$
and $\eps_{n_1,n_2,s} = \eps_{n_1,n_2}(1 + o(1))$ whenever $s=\omega(\frac{n_2}{n_1})$.
\end{remark}

The proof of  \cref{cor:ts_separation_rate} can be found in  \cref{sec:proof_cor_ts_separation_rate}. 
Our next result establishes improved power guarantees for cheap testing with only logarithmic dependence on $\beta$ and $\astar$ under a commonly-satisfied sub-Gaussianity assumption. 
\begin{theorem}[Power of cheap homogeneity: sub-Gaussian, PD] \label{thm:final_2s}
Under the assumptions of \cref{thm: Full Two Sample Tests} and notation of \cref{ex:kernelts}, suppose that $g$ is a positive-definite kernel $\kernel$ with finite sub-Gaussian parameters
\begin{talign}
\subg,\ \subg[\Q] < \infty 
    \qtext{for} 
\subg[\mu] \defeq \sup_{p\geq 1} \frac{(\E_{X\sim\mu} |\E_{\tilde{X}\sim \mu}\mmd(\delta_X,\delta_{\tilde{X}})|^p)^{1/p}}{\sqrt{p}}
\end{talign}
and MMD moments
\begin{talign}
\mom[4,\P],\ \mom[4,\Q] < \infty 
    \qtext{for} 
\mom[p,\mu] \defeq \E_{X\sim \mu}\mmd^p(\delta_X,\mu).
\end{talign}
Suppose moreover that $s_1 \defeq \frac{s n_1}{n_1+n_2} > 
2\log(4/\astar)+1$
for $\astar \defeq \frac{\alpha}{2e} \big(\frac{\beta}{3}\big)^{1/\floor{\alpha(\numperm+1)}}$. 
A cheap homogeneity test based on $U_{n_1,n_2}$ has power at least $1-\beta$ whenever 
\begin{talign}
&\mmd(\mathbb{P},\mathbb{Q}) 
    \geq
\frac{1}{1 - 
\frac{2 \log(4/\astar) + 1}{s_1}
}
\bigg( 
\frac{\sqrt{40\sqrt{2 \log(\frac{2}{\astar})/3} (\tilde{\mathcal{A}}_4 + \tilde{\mathcal{B}}(\frac{\beta}{9})^2) + 4 (\tilde{\mathcal{A}}_2^2 + \tilde{\mathcal{B}}(\frac{\beta}{9})^2)}}{\sqrt{n_1}} 
    +
\frac{3\tilde{\Lambda}_\P(\frac{\beta}{3})}{\sqrt{n_1-1}} 
 +
\frac{3\tilde{\Lambda}_\Q(\frac{\beta}{3})}{\sqrt{n_2-1}} 
\\ \label{eq:Power of cheap homogeneity sub-Gaussian PD}
&\qquad\qquad\qquad\qquad\qquad\qquad\quad
+
\frac{40\sqrt{\log(\frac{2}{\astar})} (\tilde{\mathcal{A}}_2 + \tilde{\mathcal{B}}(\frac{\beta}{9}))}{\sqrt{3 s_1 n_1}} 
    + 
\frac{2 (\sqrt{2} \tilde{\mathcal{A}}_2 + \tilde{\mathcal{B}}(\frac{\beta}{9}))}{s_1 \sqrt{n_1}} \bigg)
    \qtext{for}\\
&\tilde{\mathcal{A}}_p \!
    \defeq \!
\frac{p}{2} \sqrt{
    \mom[p,\P]
    \! + \! \mom[p,\Q] },\ \ 
\tilde{\mathcal{B}}(\delta) 
    \! \defeq \! 
8\sqrt{ e \log(\frac{1}{\delta}) ( \subg[\P]^2 \! + \! \subg[\Q]^2 )},\ \ 
\tilde{\Lambda}_\mu(\delta) 
    \! \defeq \!
\sqrt{2\mom[2,\mu]} \! + \! 
    8\subg[\mu]\sqrt{ 2 e\log(\frac{1}{\delta})}.
\end{talign}
\end{theorem}

\begin{remark}[Bounded kernels are sub-Gaussian]
If $\kernel$ is bounded, then the sub-Gaussian parameter $\subg[\mu]\leq 2\sup_{x\in\xset}\sqrt{\kernel(x,x)}$ for all distributions $\mu$.
\end{remark}

\begin{remark}[Cheap optimality: MMD homogeneity]
The cheap test of \cref{thm:final_2s} with a bounded, translation-invariant, and non-constant kernel $\kernel$ achieves the minimax optimal separation rate of order $n_1^{-1/2}$ \citep[Thm.~7]{kim2026differentially}  
whenever
$s=(2+\Omega(1))(2\log(4/\astar)+1){n}/{n_1}$.
\end{remark}

For balanced sample sizes ($n_2=\Theta(n_1)$),  \cref{thm:final_2s}, proved in \cref{sec:proof_cor:final_2s}, 
establishes that cheap testing powerfully detects sub-Gaussian MMD separations of order $\frac{\sqrt{\log(1/\beta)}\sqrt[4]{\log(1/\astar)}}{\sqrt{n_1}}$
using only $s=\Theta(\log(4/\astar))$ bins. 
This represents an exponential improvement over the order
$\frac{\sqrt{1/\beta}\sqrt[4]{1/(\astar\beta)}}{\sqrt{n_1}}$ threshold and $s=\Omega\left(\sqrt[3]{\frac{1}{\beta\astar}}\right)$ bin size requirement of \cref{cor:ts_separation_rate} and recovers the minimax optimal MMD separation rate of \citep[Thm.~7]{kim2026differentially} with a constant bin count independent of the sample size.
In cases of extreme imbalance ($n_2 = \omega(n_1)$), one can choose $s = \Theta(\log(4/\astar)n/{n_1})$ bins for minimax optimality and still obtain a speed-up of order $n_1^2$ over the $\numperm n^2$ standard permutation testing overhead.

For bounded kernels bounded below by $0$, \citet[Thm.~6]{kim2026differentially} recently established a minimum separation rate upper bound of order $\frac{\sqrt{\max(\log(1/\beta),\log(1/\alpha))}}{\sqrt{n_1}}$ using a standard permutation test with $\numperm \geq \frac{6}{\alpha}\log(2/\beta)$ permutations, $n_2 = \Theta(n_1)$, and  $\Theta(\numperm n^2)$ permutation overhead. 
By \cref{thm:final_2s}, cheap testing in this setting can detect order $\frac{\sqrt{\log(1/\beta)}\sqrt[4]{\log(1/\alpha)}}{\sqrt{n_1}}$ separations using $\numperm = \Omega(\frac{\log(3/\beta)}{\alpha\log(2e/\alpha)})$ permutations and $s = \Theta(\log(1/\alpha))$ bins. Hence, binned permutation can match both the rate and the power dependence of state-of-the-art permutation guarantees while running in time independent of $n$. By \cref{thm:final_2s}, the same guarantees also hold more generally for unbounded sub-Gaussian kernels (like distance-induced kernels \citep{sejdinovic2013equivalence}) and kernels that take on negative values (like the popular $\textup{sinc}(x-y)$ kernel).

To interpret the sub-Gaussian bound \cref{eq:Power of cheap homogeneity sub-Gaussian PD}, it helps to isolate the roles of its three building blocks. The moment terms $\tilde{\mathcal{A}}_p$ aggregate the $p$-th moments $\mom[p,\P]$ and $\mom[p,\Q]$ of the MMD deviation $\mmd(\delta_X,\mu)$ of a sample from its own distribution and thus quantify how spread out $\P$ and $\Q$ are in the kernel geometry. The sub-Gaussian term $\tilde{\mathcal{B}}(\delta)$ controls the tails of this deviation through the sub-Gaussian parameters $\subg[\P],\subg[\Q]$ and is the sole source of the logarithmic confidence factors $\sqrt{\log(1/\delta)}$. The term $\tilde{\Lambda}_\mu(\delta)$ combines the second moment $\mom[2,\mu]$ with the sub-Gaussian parameter $\subg[\mu]$ to bound the per-sample fluctuations of the statistic under $\mu$.

With this notation in hand, we can unpack the terms comprising \cref{eq:Power of cheap homogeneity sub-Gaussian PD}. The first three terms, of orders $n_1^{-1/2}$, $(n_1-1)^{-1/2}$, and $(n_2-1)^{-1/2}$, are independent of $s$ and capture the irreducible cost of distinguishing the MMD signal from the sampling fluctuations of $U_{n_1,n_2}$ under $\P$ and $\Q$; this is the same detection cost incurred by standard permutation testing. The last two terms, of orders $(s_1 n_1)^{-1/2}$ and $s_1^{-1} n_1^{-1/2}$, are the additional price of cheap testing with $s$ bins, and both vanish as $s_1$ grows, so that enough bins recover the standard threshold. Finally, the prefactor $(1-(2\log(4/\astar)+1)/s_1)^{-1}$ encodes the requirement $s_1 > 2\log(4/\astar)+1$ that there be enough bins to permute and decreases to one as $s_1$ increases.

\subsection{Discrete independence testing}
Our fourth result, proved in \cref{subsec:pf_minimax_independence_multinomial}, shows that cheap independence testing is minimax rate optimal for detecting discrete $\Lp[2]$ dependence.

\begin{proposition}[Power of cheap testing: discrete {$\Lp[2]$} independence] \label{Proposition: Multinomial Independence Testing}
Suppose that $\P$ and $\Q$ are discrete distributions on finite sets $\yset$ and $\zset$ respectively with  $b_{(2)}\defeq \max (\twonorm{\pjnt}^2,\twonorm{\P\times\Q}^2)$, and consider the augmented samples 
\begin{talign}
\tilde{\Y} \defeq ((Y_i,A_i))_{i=1}^n
    \qtext{and}
\tilde{\Z} \defeq ((Z_i,B_i))_{i=1}^n
    \qtext{for}
(A_i)_{i=1}^n, (B_i)_{i=1}^n\distiid \Unif((0,1)). 
\end{talign}
A cheap independence test (\cref{algo:cheap_independence_permutation_test}) based on an independence V-statistic (\cref{def:independence_v_statistic}) with paired sample $(\tilde{\Y},\tilde{\Z})$ and base functions  $g_Y((y_1,a_1),(y_2,a_2)) = \indic{y_1=y_2, a_1\neq a_2}$ and $g_Z((z_1,b_1),(z_2,b_2)) = \indic{z_1=z_2,b_1\neq b_2}$ 
has power at least $1-\beta$ whenever 
\begin{talign} 
\twonorm{\pjnt-\P\times\Q} 
    &\geq
\frac{C\,b_{(2)}^{1/4}}{\sqrt{\beta\astar n}},
\end{talign}
for a universal constant $C$
and $\astar\defeq \frac{\alpha}{2e} \big(\frac{\beta}{3}\big)^{1/\floor{\alpha(\numperm+1)}}$. 
\end{proposition}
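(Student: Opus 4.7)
The plan is to apply \cref{thm: Full Independence Tests} with the specified base functions $g_Y,g_Z$ and show that its sufficient condition $\mathcal{U} \geq \gamma_{n,s}$ reduces to the claimed separation rate. The uniform augmentations $(A_i,B_i)$ serve as a technical device: they force $g_Y((y,a),(y,a)) = 0$, annihilating the self-interactions that would bias V-statistic sums, while for $i \neq j$ the factors $\ind(A_i \neq A_j)$ equal $1$ almost surely, so $g_Y$ (resp.\ $g_Z$) behaves as a Kronecker delta on the $Y$ (resp.\ $Z$) coordinate in all i.i.d.\ expectations over distinct indices. Expanding $\mathcal{U} = \E[h_{\itag}(X_1,X_2,X_3,X_4)]$ as a signed sum of sixteen cross expectations $\E[g_Y(Y_i,Y_j)\,g_Z(Z_k,Z_\ell)]$, each term reduces to one of three values according to the index-overlap pattern: $\twonorm{\pjnt}^2$ when both pairs are drawn jointly, $\twonorm{\P}^2\twonorm{\Q}^2$ when they are fully disjoint, and $\inner{\pjnt}{\P\times\Q}$ when only one pair is joint. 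Summing with the $(+,+,-,-)\otimes(+,+,-,-)$ sign pattern telescopes to the closed form $\mathcal{U} = 4\twonorm{\pjnt - \P\times\Q}^2$, exactly four times the squared $L^2$ separation.

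Next I bound the variance components in \cref{Eq: definition of psi prime functions}. Because $g_Y,g_Z$ are $\{0,1\}$-valued, we have $g_Y^2 = g_Y$ and $g_Z^2 = g_Z$, so $\psi_2'$ is a maximum of the three expectation types above; each is at most $b_{(2)}$ (the cross type via Cauchy--Schwarz), giving $\psi_2' \leq 16 b_{(2)}$. The mean component $\tilde{\xi}$ is similarly bounded by $O(\sqrt{b_{(2)}})$ via Cauchy--Schwarz on signed sums of joint probabilities. For the sharper components $\psi_1'$ and $\tilde{\psi}_1'$, I compute $\E[\bar{h}_{\itag} \mid X_1]$ (resp.\ its wild-bootstrap analogue) explicitly; the result is a function of $(Y_1,Z_1)$ that measures local dependence between $\pjnt$ and $\P\times\Q$, and whose variance satisfies a chi-squared-style bound $\psi_1' \lesssim \infnorm{\pjnt} \cdot \mathcal{U} \leq \sqrt{b_{(2)}}\,\mathcal{U}$, mirroring the $\psi_{Y,1} \lesssim \infnorm{\P}\twonorm{\P-\Q}^2$ estimate used in the proof of \cref{Proposition: Multinomial Two-Sample Testing}.

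Substituting these bounds into \cref{eq:ind_cheap_suff} and using that the cheap correction factor $(1 - (s+1)/(3s^2))^{-1}$ is a bounded constant for $s \geq 2$, the dominant contribution $\sqrt{\tilde{\psi}_1'/(\astar\beta n)}$ becomes $O(b_{(2)}^{1/4}\sqrt{\mathcal{U}/(\astar\beta n)})$, with the $\psi_2'/n^2$ and $\tilde{\xi}/n$ contributions falling strictly lower order. The sufficient condition $\mathcal{U} \geq \gamma_{n,s}$ thus takes the self-referential form $\mathcal{U} \geq C\, b_{(2)}^{1/4}\sqrt{\mathcal{U}/(\astar\beta n)}$, which after squaring and cancelling $\sqrt{\mathcal{U}}$ rearranges to $\mathcal{U} \geq C^2 \sqrt{b_{(2)}}/(\astar\beta n)$; combined with $\mathcal{U} = 4\twonorm{\pjnt - \P\times\Q}^2$, this yields the claimed threshold $\twonorm{\pjnt - \P\times\Q} \geq C'\, b_{(2)}^{1/4}/\sqrt{\astar\beta n}$. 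The principal obstacle is the refined bound on $\tilde{\psi}_1'$: unlike $\psi_1'$, its conditioning tuple $(Y_1, Z_{\pi_1}, \pi_1)$ couples the data to a wild-bootstrap permutation, so the chi-squared argument must first marginalize over the permutation's action on the first index's pair via a short case analysis before the local-dependence bound can be applied.
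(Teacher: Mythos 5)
Your proposal is correct and follows essentially the same route as the paper: exploit the almost-sure distinctness of the uniform augmentations to kill the diagonal terms, identify $\mathcal{U}=4\twonorm{\pjnt-\P\times\Q}^2$, bound $\psi_2'\lesssim b_{(2)}$, $\tilde\xi\lesssim b_{(2)}$, and $\psi_1',\tilde\psi_1'\lesssim\sqrt{b_{(2)}}\twonorm{\pjnt-\P\times\Q}^2$, then feed these into \cref{thm: Full Independence Tests} and solve the resulting quadratic inequality in $\twonorm{\pjnt-\P\times\Q}$. The only presentational difference is that the paper imports the $\psi_1',\tilde\psi_1',\psi_2'$ bounds from the arguments of \citep[Prop.~5.3]{kim2022minimax} rather than rederiving them, and it carries the $\psi_2'/n^2$ and $\tilde\xi/n$ contributions through the quadratic formula explicitly (they are of the same order as the target threshold, not strictly lower order) rather than discarding them.
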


\begin{remark}[Cheap optimality: discrete {$\Lp[2]$} independence] %
The cheap test of \cref{Proposition: Multinomial Independence Testing} achieves the minimax optimal separation rate of order $b_{(2)}^{1/4} /\sqrt{n}$ \citep[Prop.~5.4]{kim2022minimax}
for \textbf{any} choice of  $s$.
\end{remark}

\subsection{\Holder independence testing}
Our fifth result, proved in \cref{subsec:pf_minimax_independence_holder}, shows that cheap independence testing is minimax rate optimal for detecting \Holder $\Lp[2]$ dependence. 
The test statistic base functions, based  discretization of the unit cube, are the same used in \citep[Prop.~5.5]{kim2022minimax}.

\begin{proposition}[Power of cheap testing: \Holder { $\Lp[2]$} independence] \label{Proposition: Independence Testing for Holder Densities}
Suppose that $\pjnt$ is a distribution on $[0,1]^{d_1+d_2}$ and that the Lebesgue densities of $\pjnt$ and $\P\times\Q$ belong to the \Holder ball  $H^{d_1+d_2}_{\holderorder}(L)$ \cref{eq:holder}
For $a\in\{1,2\}$, let $\{B_{a,1}, \dots, B_{a,K_a} \}$ be a partition of $[0,1]^{d_a}$ into $d_a$-dimensional hypercubes with side length $1/\floor{n^{2/(4\holderorder+d_1+d_2)}}$, and consider the augmented samples 
\begin{talign}
\tilde{\Y} \defeq ((Y_i,A_i))_{i=1}^n
    \qtext{and}
\tilde{\Z} \defeq ((Z_i,B_i))_{i=1}^n
    \qtext{for}
(A_i)_{i=1}^n, (B_i)_{i=1}^n\distiid \Unif((0,1)). 
\end{talign}
A cheap independence test (\cref{algo:cheap_independence_permutation_test}) based on an independence V-statistic (\cref{def:independence_v_statistic}) with paired sample $(\tilde{\Y},\tilde{\Z})$ and  base functions  
\begin{talign}
g_Y((y_1,a_1),(y_2,a_2)) 
    &= 
\sum_{k=1}^{K_1}\indic{y_1,y_2\in B_{1,k}, a_1\neq a_2} \qtext{and} \\
g_Z((z_1,b_1),(z_2,b_2)) 
    &=
\sum_{k=1}^{K_2}\indic{z_1,z_2\in B_{2,k}, b_1\neq b_2}
\end{talign} 
has power at least $1-\beta$ whenever 
\begin{talign}
\twonorm{\pjnt-\P\times\Q} 
    &\geq
\frac{C\,\sqrt[4]{L}}{\sqrt{\beta\astar}}\cdot
\left(\frac{1}{n}\right)^{{2\holderorder/}{(4\holderorder+d_1+d_2)}},
\end{talign}
for a universal constant $C$
and $\astar\defeq \frac{\alpha}{2e} \big(\frac{\beta}{3}\big)^{1/\floor{\alpha(\numperm+1)}}$. 
\end{proposition}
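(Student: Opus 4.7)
The plan is to reduce the claim to the cheap independence power guarantee of \cref{thm: Full Independence Tests} by computing the relevant population quantities for the bin-indicator base functions, mirroring the Hölder homogeneity argument of \cref{Proposition: Two-Sample Testing for Holder Densities} and the discrete independence argument of \cref{Proposition: Multinomial Independence Testing}.

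First, I would identify the expected U-statistic $\mathcal{U}$ of \cref{eq:mcU} with a squared binned HSIC. Because $A_i, B_i$ are continuous, $\indic{A_i\neq A_j}=1$ almost surely for $i\neq j$, so $V_n$ coincides with the U-statistic built from the plain bin-indicator kernels $\tilde g_Y(y_1,y_2) = \sum_k\indic{y_1,y_2\in B_{1,k}}$ and $\tilde g_Z(z_1,z_2)=\sum_\ell \indic{z_1,z_2\in B_{2,\ell}}$. A direct expansion of $\hin$ yields
\baligns
\mathcal{U} \;=\; 4\sum_{k=1}^{K_1}\sum_{\ell=1}^{K_2}\bigl(\pjnt(B_{1,k}\times B_{2,\ell}) - \P(B_{1,k})\Q(B_{2,\ell})\bigr)^2.
\ealigns

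Second, I would lower bound $\mathcal{U}$ by an $\Lp[2]$-norm-minus-bias using Hölder smoothness. Writing $\pi_h$ for the $\Lp[2]$ orthogonal projection onto piecewise constants on the product partition with $h=1/\floor{n^{2/(4\tau+d_1+d_2)}}$, one has $\mathcal{U}=4\,h^{d_1+d_2}\Vert\pi_h(f_\pjnt-f_\P f_\Q)\Vert_{\Lp[2]}^2$. Since $f_\pjnt - f_\P f_\Q$ inherits a Hölder smoothness of order $\tau$ with parameter $O(L)$, a standard piecewise-constant approximation bound gives $\Vert(I-\pi_h)(f_\pjnt-f_\P f_\Q)\Vert_{\Lp[2]}\leq C L h^\tau$, so
\baligns
\mathcal{U} \;\geq\; 4\,h^{d_1+d_2}\bigl(\twonorm{\pjnt-\P\times\Q}^2-C^2L^2 h^{2\tau}\bigr).
\ealigns

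Third, I would bound the variance and mean components $\psi_1',\tilde\psi_1',\psi_2',\tilde\xi$ governing the cheap threshold $\gamma_{n,s}$ in \cref{eq:ind_cheap_suff}. Because the base functions are $\{0,1\}$-valued indicators on disjoint bins and the Hölder condition forces $\Vert f_\P\Vert_\infty, \Vert f_\Q\Vert_\infty = O(1+L)$, direct computations give $\psi_2' = O(h^{d_1+d_2})$ and $\tilde\xi = O(h^{d_1+d_2})$, while analogous conditional-expectation calculations bound $\psi_1'$ and $\tilde\psi_1'$ by $O(h^{d_1+d_2})$ to leading order. Substituting these into \cref{eq:ind_cheap_suff} and taking $s$ to be a constant (permitted by \cref{rem:sep_rates_independence}) yields a cheap threshold $\gamma_{n,s}$ of order $h^{(d_1+d_2)/2}/(\sqrt{\beta\astar}\,n)$ in its dominant regime.

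Finally, I would combine the pieces. The condition $\mathcal{U}\geq\gamma_{n,s}$ reduces to
\baligns
\twonorm{\pjnt-\P\times\Q}^2 \;\gtrsim\; \frac{h^{-(d_1+d_2)/2}}{\sqrt{\beta\astar}\, n} \;+\; C^2 L^2 h^{2\tau}.
\ealigns
With the stated $h = 1/\floor{n^{2/(4\tau+d_1+d_2)}}$, both terms collapse to order $n^{-4\tau/(4\tau+d_1+d_2)}$ (up to $L$ and $\beta\astar$ factors), and solving for the separation yields the claimed $C\sqrt[4]{L}/\sqrt{\beta\astar}\cdot n^{-2\tau/(4\tau+d_1+d_2)}$ threshold after choosing $C$ large enough to absorb the bias. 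The main obstacle lies in Step~3: obtaining sharp control of $\psi_1',\tilde\psi_1',\psi_2'$, and $\tilde\xi$ so that the resulting threshold exhibits the correct fourth-root dependence on $L$. This requires carefully tracking the interaction of wild-bootstrap permutations with the bin structure and using the Hölder-induced sup-norm bounds on the densities, following the template laid out in the discrete independence analysis; once these variance components are controlled, the rate calculation and balancing in the final step follow by direct substitution.
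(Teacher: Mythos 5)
Your overall architecture (augmented bin-indicator kernels, identifying $\mathcal{U}$ with the binned separation, then balancing the bin width) is in the right spirit, and your Step~1 computation of $\mathcal{U}$ agrees with the paper (modulo the minor point that the augmentation only matters for diagonal terms of $V_n$, which is exactly why it is introduced). The decisive gap is Step~3: the bounds you assert, $\psi_1',\tilde{\psi}_1'=O(h^{d_1+d_2})$, drop the essential factor of the squared separation. The bounds the paper relies on — via \citep[Prop.~5.3]{kim2022minimax} inside the proof of \cref{Proposition: Multinomial Independence Testing} — are $\psi_1',\tilde{\psi}_1'\lesssim\sqrt{b_{(2)}}\,\twonorm{\tilde{\pjnt}-\tilde{\P}\times\tilde{\Q}}^2$ together with $\psi_2',\tilde{\xi}\lesssim b_{(2)}\leq L h^{d_1+d_2}$, and it is precisely this separation dependence that turns the power condition $\mathcal{U}\geq\gamma_{n,s}$ of \cref{eq:ind_cheap_suff} into a quadratic inequality whose solution gives the $b_{(2)}^{1/4}/\sqrt{\beta\astar n}$ rate with the advertised $1/\sqrt{\beta\astar}$ dependence. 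With your separation-free bounds, the dominant term $\sqrt{(12/(\astar\beta))\,\tilde{\psi}_1'/n}$ is of order $\sqrt{h^{d_1+d_2}/(\astar\beta n)}$, and the requirement $4h^{d_1+d_2}\twonorm{\pjnt-\P\times\Q}^2\gtrsim\sqrt{h^{d_1+d_2}/(\astar\beta n)}$ forces $\twonorm{\pjnt-\P\times\Q}\gtrsim h^{-(d_1+d_2)/4}(\astar\beta n)^{-1/4}\asymp n^{(d_1+d_2)/(2(4\holderorder+d_1+d_2))-1/4}$, which is strictly worse than the claimed $n^{-2\holderorder/(4\holderorder+d_1+d_2)}$ for every $\holderorder,d_1,d_2$; so the plan as written cannot deliver the stated rate.

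Your bias handling in Steps~2 and~4 is also incompatible with the stated $L^{1/4}$ dependence: the projection argument leaves an additive term $C^2L^2h^{2\holderorder}\asymp L^2 n^{-4\holderorder/(4\holderorder+d_1+d_2)}$, which for large $L$ dominates the claimed squared threshold $\asymp\sqrt{L}\,n^{-4\holderorder/(4\holderorder+d_1+d_2)}/(\beta\astar)$ and cannot be absorbed into a universal constant. The paper avoids both issues by not redoing any analysis from \cref{thm: Full Independence Tests}: it bins $\pjnt,\P,\Q$ into discrete $\tilde{\pjnt},\tilde{\P},\tilde{\Q}$, invokes Lem.~3 of \citet{arias2018remember} for the multiplicative (bias-free) comparison $\twonorm{\tilde{\pjnt}-\tilde{\P}\times\tilde{\Q}}^2\geq C_1\kappa_{(2)}^{-(d_1+d_2)}\twonorm{\pjnt-\P\times\Q}^2$, pairs it with $b_{(2)}\leq L\kappa_{(2)}^{-(d_1+d_2)}$ from \citep[App.~M]{kim2022minimax}, and then applies \cref{Proposition: Multinomial Independence Testing} verbatim. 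To repair your proof, either follow that reduction, or, if you argue directly from \cref{thm: Full Independence Tests}, replace your Step-3 bounds with separation-dependent ones (e.g., via \cref{lem:psi_1_prime_bound,lem:tilde_psi_1_prime_bound} or Kim et al.'s Prop.~5.3) and replace the projection-bias step with the multiplicative comparison.
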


\begin{remark}[Cheap optimality: \Holder { $\Lp[2]$} independence] %
The cheap test of \cref{Proposition: Independence Testing for Holder Densities} achieves the minimax optimal separation rate of order $n^{-2\holderorder/(4\holderorder+d_1+d_2)}$ \citep[Prop.~5.6]{kim2022minimax}
for \textbf{any} choice of  $s$.
\end{remark}

\subsection{HSIC independence testing}\label{subsec:minimax_independence_hsic}
Recently, \citet[Thm.~S.4]{kim2026differentially} established a lower-bound of order $n^{-1/2}$ on the minimax-optimal detectable separation in the (square-root) Hilbert-Schmidt independence criterion, $\mmd(\pjnt, \P \times \Q )$ (\cref{ex:hsic}), when $\pjnt$ is a distribution on $\reals^d$ and the positive-definite kernels $g_Y$ and $g_Z$ are bounded, translation-invariant, and non-constant.
To show that cheap independence testing can recover this optimal rate, we first establish two refined guarantees for testing with positive-definite kernels.

When the base functions $(g_Y,g_Z)$ are positive-definite kernels, 
the scaled independence V-statistic $\quarter V_n$ (\cref{def:independence_v_statistic}) is equal to the squared sample MMD, $\mmd^2(\PX,\PY\times\PZ)$. 
In this case, the proof of \cref{thm: Full Independence Tests} also yields a detectable separation threshold of order $\frac{1}{\sqrt{n}}$ for the population parameter, $\mmd(\pjnt,\P\times\Q)$.

\begin{corollary}[Power of cheap independence: finite variance, PD] \label{cor:psd_gs}
Under the assumptions of \cref{thm: Full Independence Tests} and notation of \cref{ex:hsic}, suppose that $n\geq 14$ and that $g_Y,g_Z$ are positive-definite kernels with 
$\kernel = g_Y g_Z$, 
$\xi \defeq \E[\hin(X_1,X_1,X_3,X_4)]$, and
\begin{talign}
\wildxi
    \defeq
\max \big\{ \E_{(Y,Z) \sim \mu} \mmd^2\big(\frac{\delta_{(Y,Z)} + \nu}{2},\frac{\delta_{Y} \times \Q + \P \times \delta_{Z}}{2} \big) : \mu, \nu \in \{\pjnt, \P\times\Q\}\big\}.
\end{talign}
A cheap independence test based on $V_n$ has power at least $1-\beta$ whenever
\begin{talign}
\label{eq:psd_ind_cheap} 
 &\mmd(\pjnt, \P\!\times\! \Q) 
    \!\geq \! \\
\eps_{n,s}
    \defeq
&\frac{
    (3  - \sqrt{( \frac{12}{\astar \beta} - 2) \frac{32}{n}})\,\eps_n
        +
    \frac{\indic{s<n}}{\sqrt{n}}
    \left(\!16\sqrt{ (\frac{12}{\alpha^* \beta} - 2) \txiwild (\frac{72}{s} + \frac{14400}{s^2}) } 
        +
    \sqrt[4]{
    (\frac{12}{\astar \beta} - 2)
    \psi_2' \frac{4147200}{s}}
    \sqrt{3  - \sqrt{( \frac{12}{\astar \beta} - 2) \frac{32}{n}}}\right)
}{
    3 -  \indic{s<n}\frac{s+1}{s^2} - \sqrt{
    (\frac{12}{\astar \beta} - 2)\frac{32}{n} (1 + \indic{s<n} (\frac{72}{s} +  \frac{14400}{s^2}) )}
} \\
\label{eq:psd_ind_full}
    &\qtext{for}
\eps_{n}
    \defeq
    \frac{
    \sqrt{ 
    ( 12(\astar \beta)^{-1} - 2) (32 \, \xi + 256 \, \txiwild)}}{ (3 -
    \sqrt{( 12(\astar \beta)^{-1} - 2) \frac{32}{n}}) \sqrt{n}} + \frac{\sqrt[4]{ ( 12(\astar \beta)^{-1} - 2) 960\, \psi_2' } + \sqrt{10\tilde{\xi}}}{(3 -
    \sqrt{( 12(\astar \beta)^{-1} - 2) \frac{32}{n}} )^{1/2} \sqrt{n}}.
\end{talign}
\end{corollary}
\begin{remark}[Equivalent MMD separation rates and thresholds]\label{rem:mmd_sep_rates_independence}
We have $\eps_{n,s} = \Theta(\eps_{n})$ for \textbf{any} choice of $s$ 
and $\eps_{n,s} = \eps_{n}(1 + o(1))$ whenever $s=\omega(1)$.
\end{remark}

The proof of  \cref{cor:psd_gs} can be found in  \cref{subsec:cor_psd_ind}. 
Our next result establishes improved power guarantees for cheap independence testing with only logarithmic dependence on $\beta$ and $\astar$ whenever $\kernel$ is bounded.

\begin{theorem}[Power of cheap independence: bounded, PSD] \label{thm:final_ind}
Under the assumptions of \cref{thm: Full Independence Tests} and notation of \cref{ex:hsic}, suppose that $g_Y,g_Z$ are positive-definite kernels with 
$\kernel = g_Y g_Z$ and $K \defeq \sup_{x} |\kernel(x,x)| <\infty$. 
If $\frac{1}{4} + \frac{\log (6/\astar)}{s} + \sqrt{\frac{\log (6/\astar)}{s}} < 1$ for $\astar = \frac{\alpha}{2e} \big(\frac{\beta}{3}\big)^{1/\floor{\alpha(\numperm+1)}}$,  then  
a cheap independence test based on $V_n$ has power at least $1-\beta$ whenever 
\begin{talign}
 \mmd(\pjnt, \P \times \Q) 
  	&\geq 
\frac{
\sqrt{K}\left( 30 + 22 \sqrt{\log({12}{/\beta})} \right) 
    \left( \frac{7}{2} + \frac{1+12\log({3}{/\astar})}{6\sqrt{s}}
    + 2\sqrt{\log({3}{/\astar})} \right)
    }{
\sqrt{n}\,\big(\frac{3}{4} - \frac{\log (6/\astar)}{s} - \sqrt{\frac{\log (6/\astar)}{s}} \big) 
	}.
\end{talign}
\end{theorem}

\begin{remark}[Cheap optimality: HSIC independence]
The cheap test of \cref{thm:final_ind} with a bounded, translation-invariant, and non-constant kernels $g_Y$ and $g_Z$ achieves the minimax optimal separation rate of order $n^{-1/2}$ \citep[Thm.~S.4]{kim2026differentially}  
whenever $s\geq 5\log(6/\astar)$.
\end{remark}

\cref{thm:final_ind}, proven in \cref{sec:proof_thm:final_ind}, 
establishes that cheap independence testing powerfully detects bounded MMD separations of order $\frac{\sqrt{\log(1/\beta)\log(1/\astar)}}{\sqrt{n}}$
using only  $s=\Theta(\log(1/\astar))$
bins.
This represents an exponential improvement over the order
$\frac{\sqrt{1/(\beta\astar)}}{\sqrt{n}}$ threshold of \cref{cor:psd_gs} and recovers the minimax optimal separation rate of \citep[Thm.~S.4]{kim2026differentially} with a constant bin count independent of the sample size.

For bounded kernels bounded below by $0$, \citet[Thm.~S.3]{kim2026differentially} recently established a minimum separation rate upper bound of order $\frac{\sqrt{\max(\log(1/\beta),\log(1/\alpha))}}{\sqrt{n}}$ using a standard permutation test with $\numperm \geq \frac{6}{\alpha}\log(2/\beta)$ permutations and $\Theta(\numperm n^2)$ permutation overhead. 
By \cref{thm:final_ind}, cheap testing in this setting can detect order $\frac{\sqrt{\log(1/\beta)}\sqrt[4]{\log(1/\alpha)}}{\sqrt{n}}$ separations using $\numperm = \Omega(\frac{\log(3/\beta)}{\alpha\log(2e/\alpha)})$ permutations and $s = \Theta(\log(1/\alpha))$ bins. Hence, binned wild bootstrapping can match both the rate and the power dependence of state-of-the-art permutation guarantees while running in time independent of $n$.

\section{Experiments}\label{sec:experiments}
We now complement our methodological development and theoretical analysis %
with a series of synthetic experiments exploring the practical benefits of cheap testing. 
We implemented all tests in Cython using nominal level $\alpha=0.05$ and a common interface and library of functions to enable a consistent comparison of computational complexity. 
For each experiment, we display average rejection rates and $95\%$ Wilson confidence intervals over $2500$ independent replications with $B=299$ permutations for Higgs boson testing and $10000$ independent replications with $B=1279$ permutations otherwise. 
Python and Cython code replicating all experiments can be found at \url{https://github.com/microsoft/cheap-permutations}, and supplementary experimental details can be found in \cref{sec:additional_plots}.

\subsection{Homogeneity testing} \label{subsec:homogeneity_experiments}
We benchmark the power and runtime of homogeneity U-statistic (\cref{def:homogeneity_u_stat}) tests with $n_1 =n_2$ in three settings. 
\begin{enumerate}[itemsep=.5\baselineskip]
\item \textsc{MMD testing}: 
$\P = \mathcal{N}(-0.03 e_1,\mathrm{I})$ and $\Q = \mathcal{N}(0.03 e_1,\mathrm{I})$ on $\reals^{d}$ for $d=10$, and 
the base function $g$ is the Gaussian kernel $\kernel_{\lambda}(x,y) \defeq e^{ -{\twonorm{x-y}^2}{/(2 \lambda^2)}}$ with bandwidth $\lambda$ equal to the median pairwise distance among sample points.
\label{item:mmd_testing}
\item \textsc{{RFF testing}}: $\P$, $\Q$, and $\lambda$ are as above, and the base function is the \emph{random Fourier feature} \citep[RFF,][]{rahimi2008random}  kernel $g(x,y) = 
\frac{4}{r^2}\sum_{k=1}^r
\cos( \omega_k^{\top} x + b_k ) \cos ( \omega_k^{\top} y + b_k)$
for $(\omega_k,b_k)$ sampled \iid from $\mathcal{N}(0,\frac{1}{\lambda^2})\times \Unif(0,2\pi)$. In this case, because the kernel $g$ has rank $r < n$, each test statistic can be computed in time $\Theta(dnr)$, and cheap permutation can be carried out in $\Theta(\numperm s r)$ additional time with $\Theta(sr)$ memory, as described in \cref{algo:cheap_permutation_wild_bootstrap_two_sample_features}.
\label{item:rff_testing}
\item \textsc{Aggregated MMD testing}: 
An alternative to selecting a single kernel MMD for testing is to construct an aggregated test across a weighted collection of MMD test statistics indexed by kernel choice.  We use the permutation-based MMDAgg aggregation procedure of \citet{schrab2023mmd} atop a uniformly-weighted collection of Gaussian kernel MMDs with bandwidths $\lambda\in \{ 2^i \lambda_0 \}_{i=0}^{-4}$ where $\lambda_0$ represents the median pairwise distance among sample points. 
Our data comes from the Higgs boson dataset of \citet{baldi2014searching} where $\Q$ is distribution over the $\phi$-momenta of the two leading jets for a signal process that produces Higgs bosons and $\P$ is the corresponding distribution for a background process that does not produce Higgs bosons.
\item \textsc{WMW testing}:    $\P=\Gsn(-0.015,1)$,  $\Q=\Gsn(0.015,1)$, and our test statistic is the popular {Wilcoxon-Mann-Whitney (WMW) test statistic} of \cref{ex:wilcoxon-mann-whitney}. 
\label{item:wilcoxon_testing}
\end{enumerate}

\begin{example}[Wilcoxon-Mann-Whitney test statistic \citep{wilcoxon1945individual,mann1947on}] \label{ex:wilcoxon-mann-whitney}
The \emph{Wilcoxon-Mann-Whitney test statistic} (also known as the Wilcoxon rank sum statistic and the Mann-Whitney U-statistic) is defined as $T(\Y,\Z) = \frac{1}{n_1 n_2} \sum_{i=1}^{n_1} \sum_{j=1}^{n_2} \indic{Y_{i} \leq Z_{j}} - \frac{1}{2}$, where %
$\Y$ and $\Z$ are assumed to have no elements in common. This statistic may be recast into the U-statistic form \eqref{Eq: Two-Sample U-statistic} by selecting
    \begin{talign} \label{eq:g_wilcoxon}
        g(y,z) = 
        -\half\indic{y<z}
        -\quarter\indic{y=z}.
    \end{talign}
\end{example}
Technically, the WMW statistic is an \emph{asymmetric} homogeneity U-statistic as $g(y,z)\neq g(z,y)$. Nevertheless, it remains a quadratic test statistic (\cref{def:qts}) and hence is amenable to cheap testing.

\begin{figure}
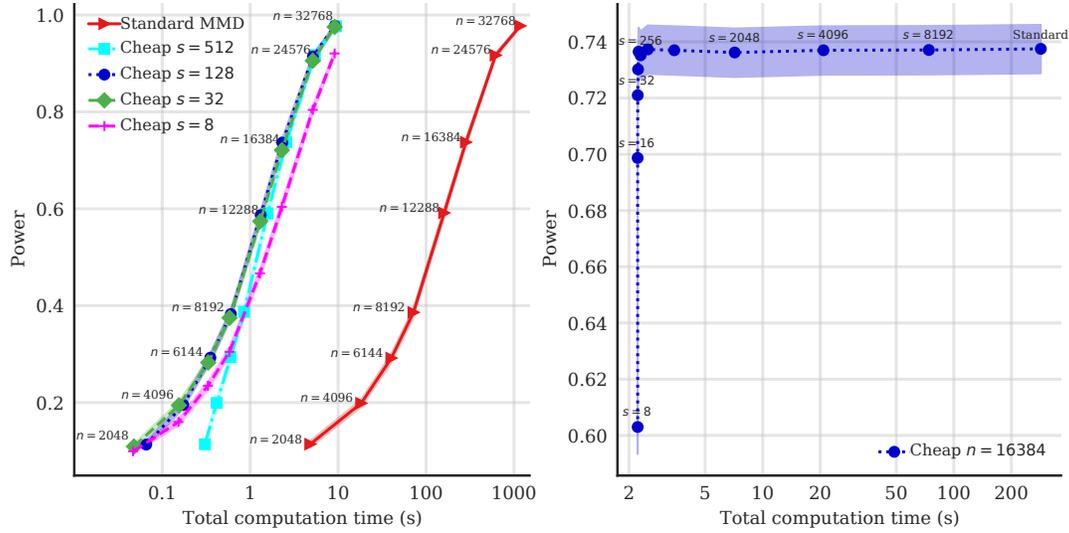

    \centering
    
    \includegraphics[width=0.49\textwidth]{rejection_probability_gaussians_10_1279_0.05_10000_0.06_complete_cheap_perm_log_time_scale_wilson.pdf}
    \includegraphics[width=0.49\textwidth]{rejection_probability_gaussians_10_8192_1279_0.05_10000_0.06_cheap_perm_log_time_scale_wilson.pdf}
    \caption{\textbf{Power \vs runtime for cheap and standard MMD tests of homogeneity} as total sample size $n=n_1+n_2$ and bin count $s$ vary.} 
\label{fig:cheap_homogeneity}
\end{figure}
\subsubsection{Cheap \vs standard MMD testing} 
\cref{fig:cheap_homogeneity} (left) compares cheap MMD testing to standard MMD permutation testing 
as a function of the total sample size $n = n_1+n_2$ and bin count $s$. 
We find that $s=128$ bins suffice to closely match the power of standard testing for higher power levels ($1-\beta \geq 0.3$) and that $s=32$ bins suffice for smaller power levels. 
In both cases, cheap testing is consistently $100$ times faster than the standard permutation test.

\cref{fig:cheap_homogeneity} (right) traces out the power-vs.-runtime trajectory of cheap testing for varying $s$ and fixed  $n=16384$.
We find that runtime remains essentially constant for $s\leq 128$ and that power remains essentially constant for $s\geq 128$.

\begin{figure}
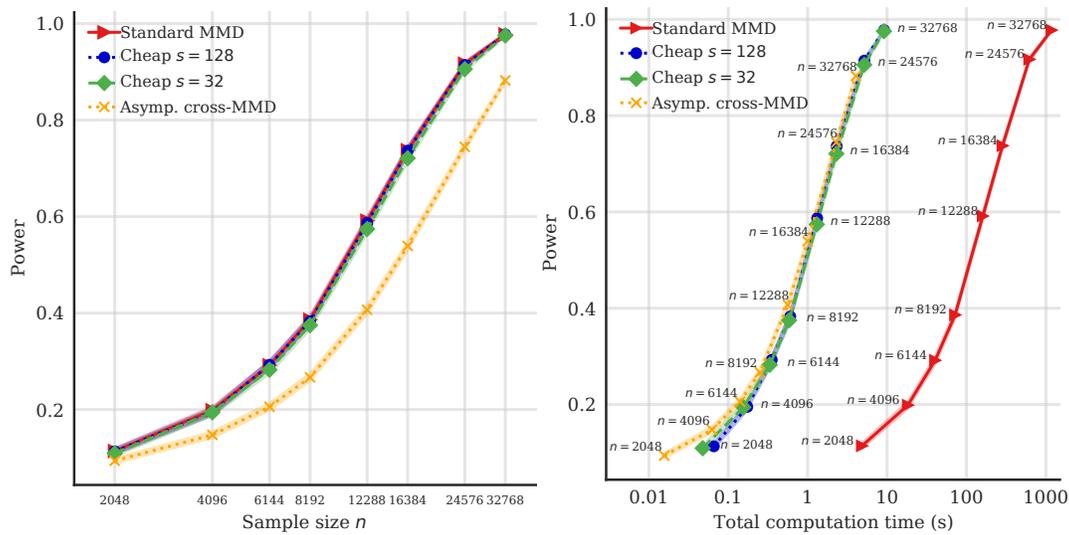

    \centering
    \includegraphics[width=0.49\textwidth]{n_samples_rejection_probability_gaussians_10_1279_0.05_10000_0.06_complete_cheap_perm_cross_mmd_log_time_scale_wilson.pdf}
    \includegraphics[width=0.49\textwidth]{rejection_probability_gaussians_10_1279_0.05_10000_0.06_complete_cheap_perm_cross_mmd_log_time_scale_wilson.pdf}
    \caption{\textbf{Cheap MMD \vs asymptotic cross-MMD tests of homogeneity.} (Left) Power as a function of the total sample size $n=n_1+n_2$.
    (Right) Time-power trade-off curves as $n$ and bin count $s$ vary.
    }
    \label{fig:cheap_homogeneity_n_plot}
\end{figure}

\subsubsection{Cheap MMD \vs asymptotic cross-MMD testing}
A recently-proposed alternative to the MMD permutation test is the \emph{asymptotic cross-MMD test} \citep{shekhar2022permutation} which avoids permutation by thresholding a related, asymptotically-normal test statistic using a Gaussian quantile. 
By avoiding permutation, the asymptotic cross-MMD test achieves a substantial speed-up over a standard MMD permutation test.  However, as \citet{shekhar2022permutation} note, this comes at the cost of reduced power and the loss of finite-sample validity.  
In \cref{fig:cheap_homogeneity_n_plot}, we see that cheap MMD achieves nearly the same speed-up benefits as asymptotic cross-MMD (right) while also retaining the full power and final-sample validity of standard MMD testing for any given sample size (left).

\begin{figure}
    \centering    \includegraphics[width=\textwidth]{rejection_probability_Higgs_2_299_200_5_0.05_2500_True_0.0_complete_cheap_perm_wilson}
    \caption{\textbf{Power \vs runtime for cheap and standard MMDAgg tests (left) and MMD tests (right) of homogeneity} as total sample size $n=n_1+n_2$ and bin count $s$ vary.} 
\label{fig:cheap_MMDAgg}
\end{figure}
\subsubsection{Cheap \vs standard aggregated MMD testing} 
\cref{fig:cheap_MMDAgg} (left) compares cheap MMDAgg to standard MMDAgg  
as a function of the total sample size $n = n_1+n_2$ and bin count $s$. 
We find that $s=32$ bins suffice to closely match the power of standard testing for all power levels.  With this setting, cheap testing is consistently $100$ times faster than the standard MMDAgg permutation test.

\cref{fig:cheap_MMDAgg} (right) highlights the primary benefit of aggregated MMD testing: testing the same data with a single kernel bandwidth, like the popular popular median bandwidth $\lambda_0$, results in a substantial degradation in power for standard testing and a matched degradation for cheap testing, albeit in $100\times$ less time.

\subsubsection{Cheap vs. standard RFF testing} 
\cref{fig:cheap_RFF} compares cheap RFF testing to standard RFF permutation testing 
as a function of the number of random Fourier features $r$ and bin count $s$. 
We find that $s=256$ bins suffice to closely match the power of standard testing for all power levels. 
With this setting, cheap testing is also $100$ to $1000$ times faster than the standard permutation test, depending on the choice of $r$.

\begin{figure}
    \centering
    \includegraphics[width=0.7\textwidth]{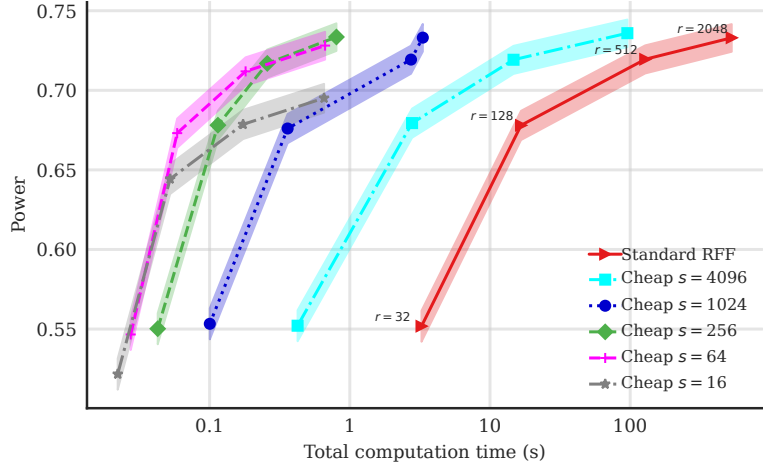}
    \caption{\textbf{Power \vs runtime for cheap and standard RFF tests of homogeneity} as the bin count $s$ and number of random Fourier features $r$ vary. Here, the total sample size is $n=16384$.
    }
    \label{fig:cheap_RFF}
\end{figure}

\subsubsection{Cheap \vs standard WMW testing}
\cref{fig:mann_whitney-wilcoxon} (left) compares cheap WMW testing to standard WMW permutation testing 
as a function of the total sample size $n = n_1+n_2$ and bin count $s$. 
We find that $s=128$ bins suffice to closely match the power of standard testing for higher power levels ($1-\beta \geq 0.3$) and that $s=64$ bins suffice for smaller power levels. 
In both cases, cheap testing is consistently $300$ to $900$ times faster than the standard permutation test.

\cref{fig:mann_whitney-wilcoxon} (right) traces out the power-vs.-runtime trajectory of cheap testing for varying $s$ and fixed  $n=16384$.
We find that runtime remains essentially constant for $s\leq 64$ and that power increases minimally beyond $s= 128$.

\begin{figure}
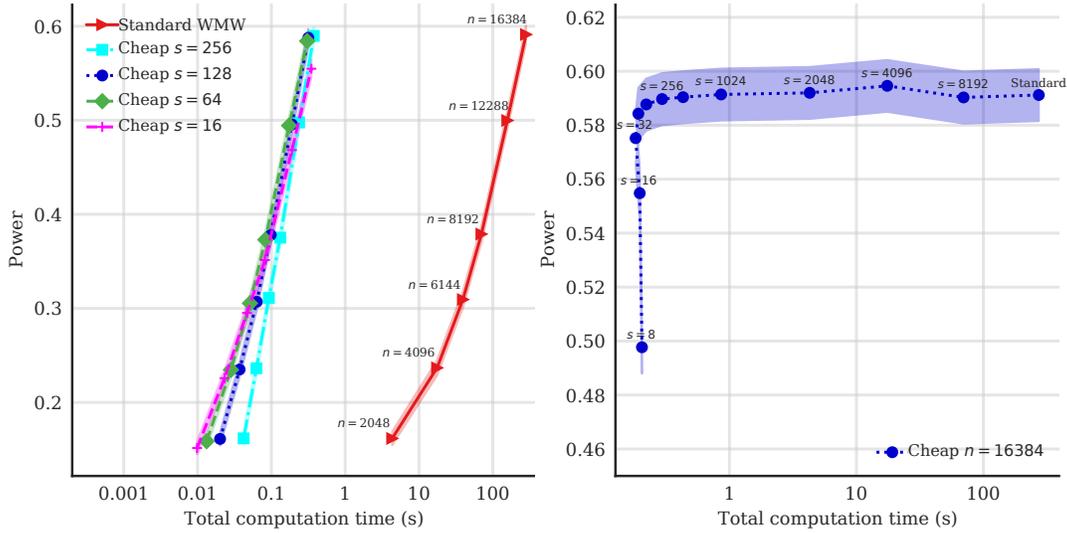

    \centering
    \includegraphics[width=0.49\textwidth]{n_samples_rejection_probability_gaussians_1_8192_1279_0.05_10000_0.03_wilcoxon_log_time_scale_wilson.pdf}
    \includegraphics[width=0.49\textwidth]{rejection_probability_gaussians_1_8192_1279_0.05_10000_0.03_wilcoxon_log_time_scale_wilson.pdf}
    \caption{\textbf{Power \vs runtime for cheap and standard Wilcoxon-Mann-Whitney tests of homogeneity} as total sample size $n=n_1+n_2$ and bin count $s$ vary.
    }
    \label{fig:mann_whitney-wilcoxon}
\end{figure}

\subsection{Independence testing} \label{subsec:independence_experiments}
We additionally benchmark the power and runtime of independence V-statistic (\cref{def:independence_v_statistic}) tests in the following setting. 
\begin{enumerate}[itemsep=.5\baselineskip]
\item[4.] \textsc{HSIC testing}: 
$\pjnt$ a multivariate Gaussian distribution with mean zero and covariance  
    \begin{talign}
        \begin{pmatrix}
            \mathrm{I} & 0.047\mathrm{I} \\
            0.047\mathrm{I} & \mathrm{I}
        \end{pmatrix} \in \mathbb{R}^{20 \times 20}.
    \end{talign}
The base functions $g_Y$ and $g_Z$ are  Gaussian kernels $\kernel_{\lambda_Y}$ and $\kernel_{\lambda_Z}$ with bandwidths $\lambda_Y$ and $\lambda_Z$ equal to the median pairwise distance among $\Y$ and $\Z$ points respectively.
\label{item:hsic_testing}
\end{enumerate}

\subsubsection{Cheap \vs standard HSIC testing}
\cref{fig:cheap_independence} (left) compares cheap HSIC testing to standard HSIC wild bootstrap testing 
as a function of the total sample size $n = n_1+n_2$ and bin count $s$. 
We find that $s=128$ bins suffice to closely match the power of standard testing for higher power levels ($1-\beta \geq 0.3$) and that $s=64$ bins suffice for smaller power levels. 
In both cases, cheap testing is consistently $15$ times faster than the standard wild bootstrap test.

\cref{fig:cheap_independence} (right) traces out the power-vs.-runtime trajectory of cheap testing for varying $s$ and fixed  $n=2048$.
We find that runtime remains essentially constant for $s\leq 128$ and that power increases minimally beyond $s=256$.

\begin{figure}
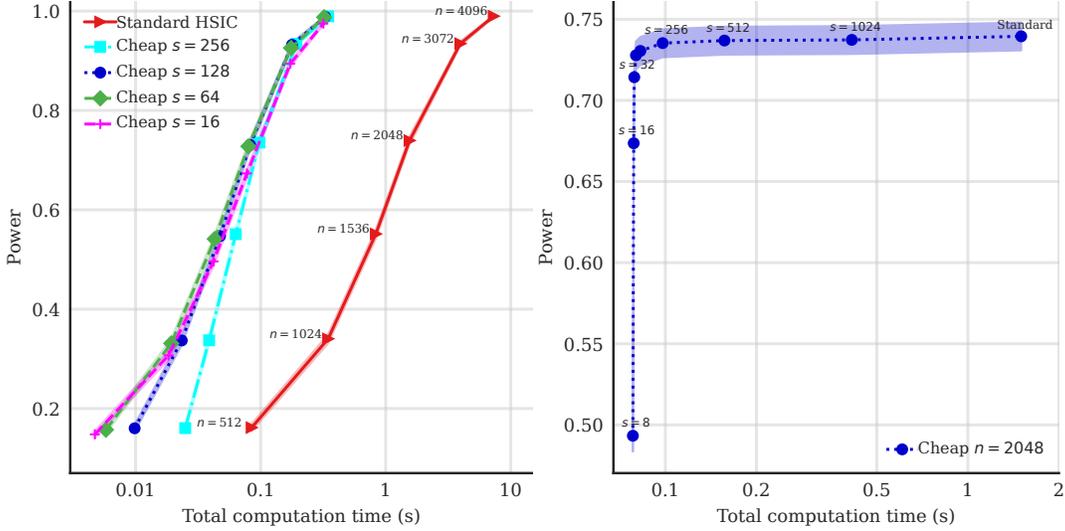

    \centering
    \includegraphics[width=0.49\textwidth]{rejection_probability_gaussians_cov_10_1279_0.05_10000_0.047_ind_complete_WB_ind_cheap_perm_WB_log_time_scale_wilson.pdf}
    \includegraphics[width=0.49\textwidth]{rejection_probability_gaussians_cov_10_2048_1279_0.05_10000_0.047_ind_cheap_perm_WB_log_time_scale_wilson.pdf}
    \caption{\textbf{Power \vs runtime for cheap and standard HSIC tests of independence} as total sample size $n=n_1+n_2$ and bin count $s$ vary.}
    \label{fig:cheap_independence}
\end{figure}

\begin{figure}
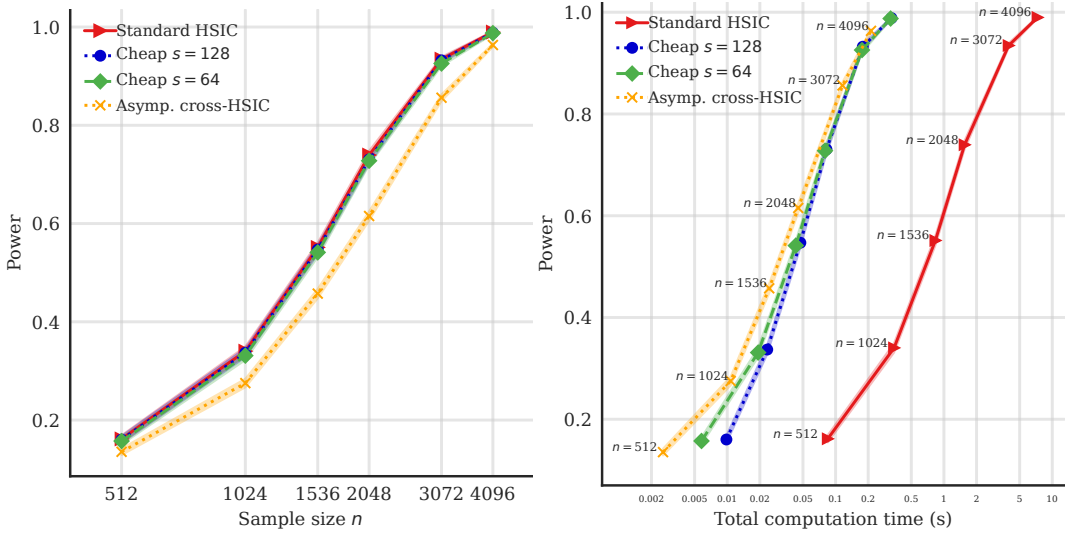

    \centering
    \includegraphics[width=0.49\textwidth]{n_samples_rejection_probability_gaussians_cov_10_1279_0.05_10000_0.047_ind_complete_WB_ind_cheap_perm_WB_ind_cross_log_time_scale_wilson.pdf}
    \includegraphics[width=0.49\textwidth]{rejection_probability_gaussians_cov_10_1279_0.05_10000_0.047_ind_complete_WB_ind_cheap_perm_WB_ind_cross_log_time_scale_wilson.pdf}
    \caption{\textbf{Cheap HSIC \vs asymptotic cross-HSIC tests of independence.} (Left) Power as a function of the total sample size $n=n_1+n_2$.
    (Right) Time-power trade-off curves as $n$ and bin count $s$ vary.
    }
    \label{fig:cheap_independence_n_plot}
\end{figure}
\subsubsection{Cheap HSIC \vs asymptotic cross-HSIC testing} \label{subsubsec:cross_hsic}
A recently-proposed alternative to the HSIC wild bootstrap test is the \emph{asymptotic cross-HSIC test} \citep{shekhar2023apermutation} which avoids wild bootstrapping by thresholding a related, asymptotically-normal test statistic using a Gaussian quantile. 
By avoiding wild bootstrapping, the asymptotic cross-HSIC test achieves a substantial speed-up over a standard HSIC wild bootstrap test.  However, as \citet{shekhar2023apermutation} note, this comes at the cost of reduced power and the loss of finite-sample validity.  
In \cref{fig:cheap_independence_n_plot}, we see that cheap HSIC achieves speed-up benefits comparable to those of asymptotic cross-HSIC (right) while also preserving the higher power and final-sample validity of standard HSIC testing for any given sample size (left).

\section{Discussion and extensions}\label{sec:discussion}

We have presented a general framework, \emph{cheap permutation testing}, for accelerating hypothesis testing while preserving the high power and exact false positive control of standard permutation tests. %
The core idea is to group datapoints into bins and to exploit bin-wise sufficient statistics to avoid the standard sample-size-dependent overhead of permutation testing. 
A notable limitation of this approach is that compact sufficient are not available for all unstructured or black-box test statistics. 
However, they are available for the quadratic test statistics popular in two-sample homogeneity and independence testing and for a variety of more general structured classes:

\paragraph{Generalized quadratic test statistics} 
Many popular test statistics can be recovered as simple transformations 
of quadratic statistics.  For example, \emph{studentized} U and V-statistics, which are known to imbue permutation tests with additional robustness properties \citep{neuhaus1993conditional,janssen1997studentized,chung2013exact,chung2016asymptotically}, divide a standard U or V-statistic by an estimate of its standard error and commonly take the form $U/\sqrt{V}$ where both $U$ and $V$ are quadratic test statistics in the sense of \cref{def:qts}.
All functions of one or more quadratic test statistics are amenable to the binned sufficient statistic constructions of \cref{sec:cheap} and hence can benefit from the speed-ups of cheap testing.

\paragraph{Multisample / $K$-sample testing}

In multisample homogeneity testing one observes $K \geq 2$ independent samples drawn \iid from unknown distribution $\P_1,\dots,\P_K$ and seeks to test whether all $K$ distributions match. 
Similarly, in multisample independence testing one observes tuples of $K \geq 2$ variables and seeks to test whether the components are jointly independent.  
In each case, one can generalize the cheap permutation tests of  \cref{algo:cheap_homogeneity_test_qts,algo:cheap_independence_permutation_test} to accommodate the multisample U and V-statistics commonly employed \citep[see, e.g.,][]{pfister2018kernel,panda2025universally,chung2013exact}.

\paragraph{Higher-order test statistics} 
Cheap testing is also compatible with  \emph{polynomial test statistics}, that is, degree $p$ polynomial functionals of the sample empirical distributions $\Phat_1,\dots,\Phat_K$: 
\begin{talign}
T
    =
\sum_{j_1,\dots,j_p \in[K]} (\Phat_{j_1}\times\dots\times\Phat_{j_p} )\phi_{j_1,\dots,j_p}.
\end{talign}
In this case, $O(s^p)$ sufficient statistics would suffice to run a cheap test with $O(\numperm s^p)$ permutation overhead, a substantial improvement over the standard  $\Theta(\numperm n^p)$ permutation overhead.

\begin{acks}[Acknowledgments]
We thank Nick Koning, Antonin Schrab, Ilmun Kim, Aaditya Ramdas, and Art Owen for their helpful feedback on an earlier version of this manuscript.
\end{acks}

\begin{funding}
RD acknowledges support by National Science Foundation under Grant No. DMS2023528 for the Foundations of Data Science Institute (FODSI). 
\end{funding}

\bibliographystyle{imsart-nameyear} %
\bibliography{refs}

\newpage

\numberwithin{theorem}{section}
\numberwithin{mylemma}{section}
\numberwithin{mydefinition}{section}
\numberwithin{myproposition}{section}
\numberwithin{mycorollary}{section}
\numberwithin{figure}{section}
\numberwithin{table}{section}
\numberwithin{algocf}{section}

\begin{appendix}
\begin{supplement}
\centering
\stitle{Supplement to ``\papertitle''}
\sdescription{This supplement contains proofs of all results as well as  supplementary experiment details.\\[\baselineskip]}
\end{supplement}

\etocsettocstyle{}{}
    \etocdepthtag.toc{mtappendix}
    \etocsettagdepth{mtchapter}{none}
    \etocsettagdepth{mtappendix}{section}
    \etocsettagdepth{mtappendix}{subsection}
    \etocsettagdepth{mtappendix}{subsubsection}
    {\tableofcontents}

\section{Additional notation}
For any dataset $\X = {(X_i)}_{i=1}^{n}$, we let $\delta_{\mathbb{X}}$ denote the empirical distribution $\frac{1}{n} \sum_{i=1}^{n} \delta_{X_i}$ and 
$\X_{-k} = (X_1, \dots, X_{k-1},X_{k+1},\dots, X_n)$ for each $k\in[n]$.
For a given positive-definite kernel $\kernel$, we let $\rkhs$ represent the associated reproducing kernel Hilbert space \citep[RKHS,][Def.~4.18]{steinwart2008support} and $\knorm{\cdot}$ and $\kinner{\cdot}{\cdot}$ represent the RKHS norm and inner product \citep[Thm.~4.21]{steinwart2008support}.

\subsection{Additional notation for cheap homogeneity testing (\cref{algo:cheap_homogeneity_test_qts})} \label{subsec:additional_homogeneity}
Let $\X$ be the concatenation of $\Y$ and $\Z$. We define the number of elements per bin $m \defeq \frac{n}{s} = \frac{n_1 + n_2}{s}$, the number of bins for each sample $s_1 \defeq \frac{n_1}{m}$ and $s_2 \defeq \frac{n_2}{m}$, and the datapoint bins $\Y^{(i)}=(Y_j)_{j\in\bin_i}$ and $\Z^{(i)}=(Z_j)_{j\in\bin_i}$.

For for the homogeneity U-statistic of \cref{def:homogeneity_u_stat}, we define the bin kernel 
\begin{talign} 
\begin{split} \label{eq:H_ts_def}
    \Hho(\Yi,\Yi[j];\Zi,\Zi[j]) = %
    \frac{1}{m^4} \sum_{y\in\Yi,y'\in\Yi[j],z\in\Zi,z'\in\Zi[j]} \hho(y,y';z,z').
\end{split}
\end{talign}
Moreover, by the definition of $\hho$, we can rewrite $\Hho$ as
\begin{talign} 
    \Hho(\Yi,\Yi[j];\Zi,\Zi[j]) &= G_m(\Yi,\Yi[j]) + G_m(\Zi,\Zi[j]) - G_m(\Yi,\Zi[j]) - G_m(\Yi[j],\Zi), \\
    \text{where} \qquad 
    &G_m(\Yi,\Yi[j]) \defeq \frac{1}{m^2} \sum_{y\in\Yi,y'\in\Yi[j]} g(y, y').
\label{eq:H_ts_G_m}
\end{talign}
Observe that $\Hho$ can also be written in terms of $\hbarho$:
\begin{talign} 
\begin{split} \label{eq:H_ts_def_barho}
\Hho(\Yi,\Yi[j];\Zi,\Zi[j])
    = 
\frac{1}{m^4} \sum_{y\in\Yi,y'\in\Yi[j],z\in\Zi,z'\in\Zi[j]} \hbarho(y,y';z,z').
\end{split}
\end{talign}

Let $(\mathbb{X}^{(i)})_{i=1}^{s}$ denote the concatenation of $(\mathbb{Y}^{(i)})_{i=1}^{s_1}$ and $ (\mathbb{Z}^{(i)})_{i=1}^{s_2}$. 
For an independent, uniformly random permutation $\pi$ of $[s]$, we also define the \emph{cheaply-permuted two-sample V and U-statistics} as
\begin{talign} 
    V_{n_1,n_2}^{\pi,s} &= \frac{1}{s_1^2 s_2^2} \sum_{i_1,i_2=1}^{s_1} \sum_{j_1,j_2=1}^{s_2} \Hho(\mathbb{X}^{(\pi_{i_1})},\mathbb{X}^{(\pi_{i_2})};\mathbb{X}^{(\pi_{s_1 + j_1})},\mathbb{X}^{(\pi_{s_1 + j_2})})
    \quad\text{and}\!\!\!\!\!\!\!
    \label{eq: Two Sample Cheap V_statistic} \\
\label{eq: Two Sample Cheap U_statistic}
    U_{n_1,n_2}^{\pi,s} &= \frac{1}{(s_1)_{(2)} (s_2)_{(2)}} \sum_{(i_1,i_2) \in \mathbf{i}_2^{s_1}} \sum_{(j_1,j_2) \in \mathbf{i}_2^{s_2}} \Hho(\mathbb{X}^{(\pi_{i_1})},\mathbb{X}^{(\pi_{i_2})};\mathbb{X}^{(\pi_{s_1 + j_1})},\mathbb{X}^{(\pi_{s_1 + j_2})}).\!\!\!\!\!\!\!
\end{talign}
These expressions along with the substitutions $n \gets s$, $n_1 \gets s_1$, $n_2 \gets s_2$, $\hho \gets \Hho$, and $g \gets G_m$ help us to frame a cheap homogeneity U-statistic permutation test as a standard homogeneity U-statistic permutation test operating on bins of datapoints.

\subsection{Additional notation for cheap independence testing (\cref{algo:cheap_independence_permutation_test})}

We define the number of elements per bin $m \defeq \frac{n}{s}$ and the datapoint bins $\Y^{(i)}=(Y_j)_{j\in\bin_i}$,  $\Z^{(i)}=(Z_j)_{j\in\bin_i}$, and 
$\Xi = ((Y_j,Z_j))_{j\in\bin_i}$.
When convenient, we will also refer to the $m$ elements of these bins as $(Y^{(i)}_j)_{j=1}^m$,  $(Z^{(i)}_j)_{j=1}^m$, 
and 
$(X^{(i)}_j)_{j=1}^m$.

For the independence V-statistic of \cref{def:independence_v_statistic}, we 
define the bin kernel $\Hin$ as
\begin{talign}
    \Hin(\mathbb{X}^{(1)},\mathbb{X}^{(2)},\mathbb{X}^{(3)},\mathbb{X}^{(4)}) = \frac{1}{m^4} \sum_{i_1,i_2,i_3,i_4 = 1}^{m} \hin(X^{(1)}_{i_1},X^{(2)}_{i_2},X^{(3)}_{i_3},X^{(4)}_{i_4})
\end{talign}
where, for
\begin{talign}
g((y,z),(y',z'))\defeq g_Y(y,y')g_Z(x,z'), 
\end{talign}
we can write
\begin{talign}
    \hin(X_1,X_2,X_3,X_4) = &\iint g((y,z),(y',z')) \, d(\delta_{(Y_1,Z_1)} + \delta_{(Y_4,Z_4)} - \delta_{(Y_1,Z_4)} - \delta_{(Y_4,Z_1)})(y,z) \\ &\qquad\qquad\qquad\qquad d(\delta_{(Y_2,Z_2)} + \delta_{(Y_3,Z_3)} - \delta_{(Y_2,Z_3)} - \delta_{(Y_3,Z_2)})(y',z').
\end{talign}
An alternative way to express $\hin(X_1,X_2,X_3,X_4)$ is the following:
\begin{talign} \label{eq:h_in_Y_Z_def_1}
    \hin(X_1,X_2,X_3,X_4) 
    &= \hinY(Y_1,Y_2,Y_3,Y_4) \hinZ(Z_1,Z_2,Z_3,Z_4),
\qtext{for}\\
\label{eq:h_in_Y_Z_def_2}
    \hinY(Y_1,Y_2,Y_3,Y_4) &\defeq g_Y(y_1,y_2) + g_Y(y_3,y_4)  - g_Y(y_1,y_3) - g_Y(y_2,y_4), \\
    \hinZ(Z_1,Z_2,Z_3,Z_4) &\defeq g_Z(z_1,z_2) + g_Z(z_3,z_4) - g_Z(z_1,z_3) - g_Z(z_2,z_4).
\end{talign}
Using this, we can rewrite $\Hin$ as
\begin{talign}
    &\Hin(\mathbb{X}^{(1)},\mathbb{X}^{(2)},\mathbb{X}^{(3)},\mathbb{X}^{(4)}) \\ &=  
     \frac{1}{m^4} \sum_{i_1,i_2,i_3,i_4 = 1}^{m} \iint g((y,z),(y',z')) \, d(\delta_{(Y^{(1)}_{i_1},Z^{(1)}_{i_1})} + \delta_{(Y^{(4)}_{i_4},Z^{(4)}_{i_4})} - \delta_{Y^{(1)}_{i_1},Z^{(4)}_{i_4}} - \delta_{Y^{(4)}_{i_4},Z^{(1)}_{i_1}})(y,z) \\ &\qquad\qquad\qquad d(\delta_{(Y^{(2)}_{i_2},Z^{(2)}_{i_2})} + \delta_{(Y^{(3)}_{i_3},Z^{(3)}_{i_3})} - \delta_{(Y^{(2)}_{i_2},Z^{(3)}_{i_3})} - \delta_{(Y^{(3)}_{i_3},Z^{(2)}_{i_2})})(y',z') \\ &= \iint g((y,z),(y',z')) \, d(\frac{1}{m^2} \sum_{i_1,i_4=1}^{m} \delta_{(Y^{(1)}_{i_1},Z^{(1)}_{i_1})} + \delta_{(Y^{(4)}_{i_4},Z^{(4)}_{i_4})} - \delta_{Y^{(1)}_{i_1},Z^{(4)}_{i_4}} - \delta_{Y^{(4)}_{i_4},Z^{(1)}_{i_1}})(y,z) \\ &\qquad\qquad\qquad d(\frac{1}{m^2} \sum_{i_2,i_3=1}^{m} \delta_{(Y^{(2)}_{i_2},Z^{(2)}_{i_2})} + \delta_{(Y^{(3)}_{i_3},Z^{(3)}_{i_3})} - \delta_{(Y^{(2)}_{i_2},Z^{(3)}_{i_3})} - \delta_{(Y^{(3)}_{i_3},Z^{(2)}_{i_2})})(y',z')
     \\ &= \iint g((y,z),(y',z')) \, d(\delta_{(\mathbb{Y}^{(1)},\mathbb{Z}^{(1)})} + \delta_{(\mathbb{Y}^{(4)},\mathbb{Z}^{(4)})} - \delta_{\mathbb{Y}^{(1)}} \times \delta_{\mathbb{Z}^{(4)}} - \delta_{\mathbb{Y}^{(4)}} \times \delta_{\mathbb{Z}^{(1)}})(y,z) \\ &\qquad\qquad\qquad d(\delta_{(\mathbb{Y}^{(2)},\mathbb{Z}^{(2)})} + \delta_{(\mathbb{Y}^{(3)},\mathbb{Z}^{(3)})} - \delta_{\mathbb{Y}^{(2)}} \times \delta_{\mathbb{Z}^{(3)}} - \delta_{\mathbb{Y}^{(3)}} \times \delta_{\mathbb{Z}^{(2)}})(y',z')
\end{talign}
For a permutation $\pi$ of $[s]$, we can also define the \emph{cheaply-permuted independence V-statistic}, 
\begin{talign}
    V_n^{\pi,s} \defeq \frac{1}{s^4} \sum_{(i_1,i_2,i_3,i_4) \in [s]^4} \Hin \big(&(\mathbb{Y}^{(i_1)},\mathbb{Z}^{(\pi_{i_1})}),(\mathbb{Y}^{(i_2)},\mathbb{Z}^{(\pi_{i_2})}), \label{eq:cheaply-permuted-independence-V}\\
    &(\mathbb{Y}^{(i_3)},\mathbb{Z}^{(\pi_{i_3})}),(\mathbb{Y}^{(i_4)},\mathbb{Z}^{(\pi_{i_4})}) \big).
\end{talign}
These expressions along with the substitutions $s$ for $n$ and $\Hin$ for $\hin$ help us to frame a cheap independence V-statistic permutation test as a standard independence V-statistic permutation test operating on bins of datapoints.

\section{\pcref{thm: Full Two Sample Tests}
} 
\label{subsec:proof_ts_finite}
We will establish both claims using the refined two moments method (\cref{Lemma: Refined Two Moments Method}) with the auxiliary sequence  $(U_{n_1,n_2}^{\pi_b,s})_{b=1}^\numperm$ of cheaply-permuted two-sample U-statistics \cref{eq: Two Sample Cheap U_statistic}. 
The expression \cref{eq:H_ts_G_m} for $\Hho$ ensures that 
$\E[U_{n_1,n_2}^{\pi,s}\mid \X] = 0$ surely for all $s\geq 4$, so, by \cref{Lemma: Refined Two Moments Method}, it suffices to bound  $\Var(U_{n_1,n_2})$ and $\E[\Var(U_{n_1,n_2}^{\pi,s}\mid \X)]$.
\cref{lem:variance_2s}, proved in \cref{subsec:variance_2s_proofs}, gives the unconditional variance bound. 
\begin{lemma}[Variance of homogeneity U-statistics] \label{lem:variance_2s}
Under the assumptions of \cref{thm: Full Two Sample Tests},
    \begin{talign} \label{Eq: Variance upper bound}
    \Var(U_{n_1,n_2})
    \leq %
    \frac{4  \psi_{Y,1}}{n_1} + \frac{4  \psi_{Z,1}}{n_2} + \frac{(17 n_1 n_2 + 4 n_2^2+ 4 n_1^2) \psi_{YZ,2}}{n_1 (n_1 - 1) n_2 (n_2 - 1)}.
    \end{talign}
\end{lemma}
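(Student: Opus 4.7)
The plan is to follow the standard Hoeffding/ANOVA decomposition for a two-sample U-statistic of degrees $(r_1,r_2) = (2,2)$. First I would replace $h_{\htag}$ with its symmetrization $\bar h_{\htag}$ defined in \cref{Eq: symmetrized kernel}; this leaves $U_{n_1,n_2}$ unchanged because we are summing over ordered tuples. Then I would invoke the classical variance identity
\begin{talign}
\Var(U_{n_1,n_2}) = \sum_{c=0}^{2}\sum_{d=0}^{2} \frac{\binom{2}{c}\binom{n_1-2}{2-c}\binom{2}{d}\binom{n_2-2}{2-d}}{\binom{n_1}{2}\binom{n_2}{2}}\,\zeta_{cd},
\end{talign}
where $\zeta_{cd} = \Var(\phi_{cd}(Y_1,\dots,Y_c;Z_1,\dots,Z_d))$ and $\phi_{cd}$ is the conditional expectation of $\bar h_{\htag}(Y_1,Y_2;Z_1,Z_2)$ given $c$ of the $Y$s and $d$ of the $Z$s. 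The $(0,0)$ term vanishes since disjoint tuples are independent and identically distributed, so $\zeta_{00}=0$.

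Next I would handle the two first-order components. By the first clause in the definition of $\psi_{Y,1}$, $\zeta_{10} = \Var(\E[\bar h_{\htag}(Y_1,Y_2;Z_1,Z_2)\mid Y_1]) \leq \psi_{Y,1}$, and likewise $\zeta_{01} \leq \psi_{Z,1}$. Multiplying by the combinatorial coefficients $4(n_1-2)/[n_1(n_1-1)] \leq 4/n_1$ and $4(n_2-2)/[n_2(n_2-1)] \leq 4/n_2$ produces the first two terms of \cref{Eq: Variance upper bound}.

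Third, I would bound the six higher-order components ($c+d\geq 2$) directly in terms of $\psi_{YZ,2}$. Expanding
\begin{talign}
\bar h_{\htag}(y_1,y_2;z_1,z_2) = g(y_1,y_2) + g(z_1,z_2) - \tfrac{1}{2}\!\!\!\sum_{i,j\in\{1,2\}}\!\!g(y_i,z_j),
\end{talign}
I would write out $\phi_{cd}$ explicitly for each $(c,d)\in\{(1,1),(2,0),(0,2),(2,1),(1,2),(2,2)\}$, then use Jensen ($\E[g(x,W)\mid x]^2 \leq \E[g^2(x,W)\mid x]$) and Cauchy--Schwarz to bound each $\Var(\phi_{cd})$ by an explicit multiple of the $g$-second moments $\E[g^2(Y_1,Y_2)], \E[g^2(Y_1,Z_1)], \E[g^2(Z_1,Z_2)] \leq \psi_{YZ,2}$. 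Substituting these into the decomposition and collecting coefficients over the common denominator $n_1(n_1-1)n_2(n_2-1)$ yields the third term $(17n_1n_2+4n_2^2+4n_1^2)\psi_{YZ,2}/[n_1(n_1-1)n_2(n_2-1)]$.

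The main obstacle is the constant tracking in the third step. A uniform bound of the form $\zeta_{cd}\leq \Var(\bar h_{\htag})\leq 16\psi_{YZ,2}$ is too lossy: summing it against the combinatorial prefactors gives a coefficient for $\psi_{YZ,2}$ roughly an order of magnitude larger than the stated $4n_1^2+17n_1n_2+4n_2^2$. To recover the sharp numerator, I must exploit that lower-order $\phi_{cd}$ (especially $\phi_{11}, \phi_{20}, \phi_{02}$) are linear combinations of far fewer $g$-dependent summands than the full $\bar h_{\htag}$, so their variances can be controlled with fewer than 16 uses of Cauchy--Schwarz per term. Carefully aggregating the per-$(c,d)$ contributions, then simplifying via $(n_i-2)(n_i-3)\leq n_i(n_i-1)$ and related inequalities, delivers the target constants $4$, $17$, and $4$.
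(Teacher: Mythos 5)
Your outline matches the paper's through the Lee two-sample decomposition and the treatment of the first-order components $\zeta_{10},\zeta_{01}$. Where you diverge is in the handling of the $c+d\geq 2$ terms, and here your plan both misses the paper's actual mechanism and, as written, would fail to deliver the stated constants. The paper never computes $\phi_{cd}$ explicitly: it uses the law-of-total-variance monotonicity $\zeta_{cd}\leq\zeta_{22}$ for every $(c,d)$, aggregates all six higher-order coefficients (using $n_1,n_2\geq 4$) into the single prefactor $(17n_1n_2+4n_1^2+4n_2^2)/[n_1(n_1-1)n_2(n_2-1)]$ multiplying $\zeta_{22}$, and then asserts $\zeta_{22}\leq\psi_{YZ,2}$. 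Your per-$(c,d)$ Cauchy--Schwarz route cannot reach $17$: even for the dominant $(1,1)$ term, a Minkowski expansion of $\phi_{11}$ gives at best $\zeta_{11}\leq(7/2)^2\psi_{YZ,2}$, which multiplied by the coefficient $\approx 16/(n_1n_2)$ already yields roughly $196/(n_1n_2)$, an order of magnitude above $17/(n_1n_2)$. To control the per-$(c,d)$ sums you would still need the monotonicity $\zeta_{cd}\leq\zeta_{22}$, at which point your argument collapses to the paper's.

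That said, your instinct that a uniform higher-order bound "is too lossy" is pointing at a genuine weak spot — just not the one you named. You worried about $\zeta_{cd}\leq\operatorname{Var}(\bar h)\leq 16\psi_{YZ,2}$; the paper instead relies on the \emph{stronger} (and crucial) claim $\zeta_{22}=\operatorname{Var}(\bar h)\leq\psi_{YZ,2}$, which it asserts ``by their definitions.'' That inequality is not a definitional consequence and in general is false: take $g(x,y)=xy$ with $\P=\Q=\mathcal{N}(0,1)$, so that
\begin{talign}
\bar h = Y_1 Y_2 + Z_1 Z_2 - \tfrac{1}{2}(Y_1+Y_2)(Z_1+Z_2),\qquad \operatorname{Var}(\bar h)=3,\qquad \psi_{YZ,2}=1.
\end{talign}
Your Minkowski bound $\zeta_{22}\leq 16\psi_{YZ,2}$ (compare the factor of $16$ that the paper does build into $\psi_2'$ for the independence case) is the safe one. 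So the correct critique is: either the third term of the lemma should carry an additional absolute constant, or the step $\zeta_{22}\leq\psi_{YZ,2}$ needs a real argument not supplied in the paper. Incorporate the monotonicity step into your plan, and then scrutinize that last inequality rather than the uniform bound.
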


\cref{lem:exp_variance_2s}, proved in \cref{subsec:exp_variance_2s_proofs}, provides the conditional variance bounds.
\begin{lemma}[Conditional variance of permuted homogeneity U-statistics] \label{lem:exp_variance_2s}
Under the assumptions of \cref{thm: Full Two Sample Tests}, 
\begin{talign} \label{eq:kim_thm_4.1}
    \mE[ \Var (U_{n_1,n_2}^{\pi,n} &| \mbb{X}) ]
    \leq \frac{2n_1^2 + 2n_2^2 + 16 n_1 n_2}{n_1 (n_1 - 1) n_2 (n_2 - 1)} \tilde{\psi}_{YZ,2} \qtext{and} \\
    \mE[ \Var (U_{n_1,n_2}^{\pi,s} | \mbb{X}) ] &\leq \big( \frac{ 2 n_2^2 + 2 n_1^2 + 16 n_1 n_2 + 400 (n^2_1 + n^2_2)/s}{n_1(n_1-1) n_2(n_2-1)} \! + \! \frac{352 n^4 / s}{n_1(n_1-1)^2 n_2(n_2-1)^2} \big) \tilde{\psi}_{YZ,2} \\ &\qquad + \frac{8 n^3 m^3 \mathbb{E}[U_{n_1,n_2}]^2 + 96 n^3 m^2 \max\{ \psi_{Y,1}, \psi_{Z,1} \}}{n_1(n_1 - 1)^2 n_2(n_2 - 1)^2}. \label{eq:kim_thm_4.1_cheap}
\end{talign}
\end{lemma}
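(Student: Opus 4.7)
The plan is to treat both bounds in parallel by viewing the cheap permuted U-statistic \eqref{eq: Two Sample Cheap U_statistic} as a standard permuted homogeneity U-statistic applied to the $s$ bins $\mbb X^{(1)},\dots,\mbb X^{(s)}$ with bin-level kernel $\Hho$. Under this reduction the first bound \eqref{eq:kim_thm_4.1} is the classical permutation-variance estimate of \citet[Thm.~4.1]{kim2022minimax} specialised to singleton bins ($m=1$), while the second bound \eqref{eq:kim_thm_4.1_cheap} requires an additional analysis of how within-bin averaging inflates the conditional variance.

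The first step is to show $E_\pi[U^{\pi,s}\mid\X]=0$ by combining the decomposition $\Hho(\mbb X^{(i)},\mbb X^{(j)};\mbb X^{(i')},\mbb X^{(j')}) = G_m(\mbb Y^{(i)},\mbb Y^{(j)}) + G_m(\mbb Z^{(i')},\mbb Z^{(j')}) - G_m(\mbb Y^{(i)},\mbb Z^{(j')}) - G_m(\mbb Y^{(j)},\mbb Z^{(i')})$ from \eqref{eq:H_ts_G_m} with the exchangeability of bins under a uniform permutation of $[s]$. This reduces $\Var(U^{\pi,s}\mid\X)$ to $E_\pi[(U^{\pi,s})^2]$. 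Expanding the square yields a double sum over pairs of bin-index $4$-tuples $((i_1,i_2,j_1,j_2),(i_1',i_2',j_1',j_2'))$, and averaging over the uniform $\pi$ produces a weighted sum indexed by the overlap pattern between the two $4$-tuples. Taking expectation over the data then expresses the conditional variance in terms of $E[\Hho\cdot\Hho]$ for each of these overlap patterns.

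For the bound \eqref{eq:kim_thm_4.1_cheap}, the next step is to further expand each $E[\Hho\cdot\Hho]$ via \eqref{eq:H_ts_def_barho} as a $1/m^8$-weighted sum over eight point-arguments drawn from the appropriate bins, and then classify the $m^8$ within-bin configurations according to whether the point-arguments all lie in distinct positions (reducing to an ``all-distinct'' kernel-product bounded by $\tilde\psi_{YZ,2}$ and producing the $\tilde\psi_{YZ,2}$-type terms of \eqref{eq:kim_thm_4.1_cheap}); whether they collide in pairs inside a single bin (producing first-order Hoeffding residuals bounded by $\max\{\psi_{Y,1},\psi_{Z,1}\}$); or whether they collide so heavily that the integrand factorises into two independent copies of $\hho$ (producing the $E[U_{n_1,n_2}]^2$ term via Cauchy--Schwarz). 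Counting the configurations of each type and multiplying by the corresponding permutation probabilities gives the three groups of terms in \eqref{eq:kim_thm_4.1_cheap}, with $n/s=m$ accounting for the $1/s$ savings relative to the trivial bound.

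The main obstacle will be the combinatorial bookkeeping: carefully enumerating the overlap patterns of $4$-tuples of bin indices, the within-bin configurations of the eight point-arguments inside $\Hho\cdot\Hho$, and the number of permutations of $[s]$ realising each joint configuration, all while keeping constants tight enough to reproduce the explicit coefficients in \eqref{eq:kim_thm_4.1_cheap}. The key quantitative device will be a bin-level Cauchy--Schwarz estimate that trades a within-bin sum of size $m$ for a factor of $m$ in the numerator, letting us absorb diagonal terms into the stated factors of $n^2/s$, $n^3 m^2$, and $n^3 m^3$. Specialising to $s=n$ forces $m=1$, kills every within-bin collision term, and recovers \eqref{eq:kim_thm_4.1}.
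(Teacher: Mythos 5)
Your high-level strategy — viewing the cheap permuted statistic as a bin-level permuted U-statistic with kernel $\Hho$, invoking degeneracy to kill overlap-$\le 1$ bin-tuples, and expanding $\Hho\cdot\Hho$ into $m^8$ within-bin configurations — is essentially the paper's route, and specialising to $m=1$ to recover \eqref{eq:kim_thm_4.1} is also correct. However, the mapping you propose from within-bin configuration type to bound type is exactly backwards, and this is fatal for recovering \eqref{eq:kim_thm_4.1_cheap}. When some point indices share a bin but all eight point indices are pairwise distinct, the key step is that \emph{conditionally on $\pi$} the two $\hbarho$ factors are built from disjoint data and the data expectation factorizes, $\mE\big[\mE[\hbarho\cdot\hbarho\mid\pi]\big] = \mE\big[\mE[\hbarho\mid\pi]\,\mE[\hbarho\mid\pi]\big]$, with each conditional mean equal to $0$ or $\pm\mathbb{E}[U_{n_1,n_2}]$; this is the origin of the $\mathbb{E}[U_{n_1,n_2}]^2$ term in \eqref{eq:kim_thm_4.1_cheap}, not heavy within-bin collision as you claim. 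Conversely, when two or more pairs of point arguments coincide, the two $\hbarho$ factors share data, nothing factorizes, and one falls back on Cauchy--Schwarz to get the second-moment quantity $\tilde{\psi}_{YZ,2}$, not ``two independent copies of $\hho$.'' If you run your plan literally — bounding the all-distinct configurations (a $1-O(1/m)$ fraction of the relevant terms) by $\tilde{\psi}_{YZ,2}$ — you produce a spurious $\Theta(\tilde{\psi}_{YZ,2}/s^3)$ contribution that cannot be absorbed into any of the $O(\tilde{\psi}_{YZ,2}/n^2)$ or $O(\tilde{\psi}_{YZ,2}/(sn^2))$ terms in \eqref{eq:kim_thm_4.1_cheap}, and the $\mathbb{E}[U_{n_1,n_2}]^2\,m^3$ and $\max\{\psi_{Y,1},\psi_{Z,1}\}\,m^2$ terms never appear with the correct powers of $m$.

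A related confusion is attributing $\mathbb{E}[U_{n_1,n_2}]^2$ to Cauchy--Schwarz: that inequality always yields second-moment ($\psi_{YZ,2}$-type) quantities, never a product of first moments. You also miss a distinction the paper makes explicit: the case in which all eight points lie in distinct bins (the paper's $\mathsf{I}_{\mathrm{diff}}$, handled by the bin-level version of Kim et al.'s argument, giving $\tilde{\psi}_{YZ,2}$) versus the case in which some points share a bin but all points are still distinct (the paper's $\mathsf{I}_{\mathrm{rest}}$, subcase (i), giving $\mathbb{E}[U_{n_1,n_2}]^2$). These two ``all points distinct'' situations behave differently because in the first the two $\hbarho$ factors remain coupled through the shared permutation $\pi$ even after conditioning on $\mathbb{X}$, while in the second conditioning on $\pi$ decouples them in the data. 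Making that distinction explicit and swapping your cases (i) and (iii) are the necessary corrections.
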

\cref{Lemma: Refined Two Moments Method,lem:variance_2s,lem:exp_variance_2s} together imply power at least $1-\beta$ for standard testing if
\begin{talign} 
    \begin{split} \label{eq:strong_lb_full_2s}
    \mathbb{E}[U_{n_1,n_2}] &\geq 
        \sqrt{\frac{3-\beta}{\beta}\big(\frac{4  \psi_{Y,1}}{n_1} + \frac{4  \psi_{Z,1}}{n_2} + \frac{(17 n_1 n_2 + 4 n_2^2+ 4 n_1^2) \psi_{YZ,2}}{n_1 (n_1 - 1) n_2 (n_2 - 1)} \big)}  + \sqrt{\frac{1-\astar}{\astar}\frac{(6 n_1^2 + 6 n_2^2 + 48 n_1 n_2) \psi_{YZ,2}}{\beta n_1 (n_1 - 1) n_2 (n_2 - 1)}},
    \end{split} 
\end{talign}
a condition implied by our assumption \cref{eq:lb_full_2s}, and power at least $1-\beta$ for cheap testing if
\begin{talign}
    \begin{split}
        \mathbb{E}[U_{n_1,n_2}] &\geq 
        \sqrt{\frac{3-\beta}{\beta}\big(\frac{4  \psi_{Y,1}}{n_1} + \frac{4  \psi_{Z,1}}{n_2} + \frac{(17 n_1 n_2 + 4 n_2^2+ 4 n_1^2) \psi_{YZ,2}}{n_1 (n_1 - 1) n_2 (n_2 - 1)} \big)} \\ &+
        \big(\frac{(\astar)^{-1} - 1}{\beta} \big( \frac{ 6 n_2^2 + 6 n_1^2 + 48 n_1 n_2}{n_1(n_1-1) n_2(n_2-1)} \! + \! \frac{1200 (n_1-1) (n_2-1) (n^2_1 + n^2_2)/s + 1056 n^3 m}{n_1(n_1-1)^2 n_2(n_2-1)^2} \big) \tilde{\psi}_{YZ,2} \\ &+ \frac{288 ((\astar)^{-1} - 1) n^3 m^2 \max\{ \psi_{Y,1}, \psi_{Z,1} \}}{\beta n_1(n_1 - 1)^2 n_2(n_2 - 1)^2} \big)^{1/2} + \sqrt{\frac{24 ((\astar)^{-1} - 1) n^3 m^3 \mathbb{E}[U_{n_1,n_2}]^2}{\beta n_1(n_1 - 1)^2 n_2(n_2 - 1)^2}}.
        \label{eq:strong_lb_cheap_2s}
    \end{split}
\end{talign}
We can rewrite \cref{eq:strong_lb_cheap_2s} equivalently as 
\begin{talign} 
\begin{split} \label{eq:strong_lb_cheap_2s_2}
    \mathbb{E}[U_{n_1,n_2}] &\geq \big( 1 - \sqrt{\frac{24 ((\astar)^{-1} - 1) n^3 m^3}{\beta n_1(n_1 - 1)^2 n_2(n_2 - 1)^2}} \big)^{-1}  \bigg( \sqrt{\frac{3-\beta}{\beta}\big(\frac{4  \psi_{Y,1}}{n_1} + \frac{4  \psi_{Z,1}}{n_2} + \frac{(17 n_1 n_2 + 4 n_2^2+ 4 n_1^2) \psi_{YZ,2}}{n_1 (n_1 - 1) n_2 (n_2 - 1)} \big)} \\ &\ 
    + \big(\frac{(\astar)^{-1} - 1}{\beta} \big( \frac{ 6 n_2^2 + 6 n_1^2 + 48 n_1 n_2 + 1200 (n^2_1 + n^2_2)/s}{n_1(n_1-1) n_2(n_2-1)} \! + \! \frac{1056 n^3 m}{n_1(n_1-1)^2 n_2(n_2-1)^2} \big) \tilde{\psi}_{YZ,2} \\ &\qquad + \frac{288 ((\astar)^{-1} - 1) n^3 m^2 \max\{ \psi_{Y,1}, \psi_{Z,1} \}}{\beta n_1(n_1 - 1)^2 n_2(n_2 - 1)^2} \big)^{1/2}
    \bigg).
\end{split}
\end{talign}
To conclude, we note that our assumption \cref{eq:lb_cheap_2s} is a sufficient condition for  \cref{eq:strong_lb_cheap_2s_2} as
\begin{talign}
n^4 %
&= (n_1^2 + n_2^2)^2 + 4 n_1 n_2 (n_1^2 + n_2^2) + 4 n_1^2 n_2^2 \geq 8 n_1 n_2 (n_1^2 + n_2^2) \geq 8 (n_1 - 1) (n_2 - 1) (n_1^2 + n_2^2).
\end{talign}

\subsection{\pcref{lem:variance_2s}} \label{subsec:variance_2s_proofs}
By \cite[p.~38]{lee1990ustatistics}, 
\begin{talign}  
\Var(U_{n_1,n_1}) 
    &\leq 
\sum_{i=0}^2 \sum_{j=0}^2 {2 \choose i} {2 \choose j} {n_1 - 2 \choose 2 - i} {n_2 - 2 \choose 2 - j} {n_1 \choose 2}^{-1} {n_2 \choose 2}^{-1}\hat{\sigma}_{i,j}^2 %
 \\ &= 16 \sum_{i=0}^2 \sum_{j=0}^2 \frac{(n_1 - 2)!^2}{n_1! i! (2-i)!^2 (n_1 - 4 + i)!} \frac{(n_2 - 2)!^2}{n_2! j! (2-j)!^2 (n_2- 4 + j)!} \hat{\sigma}_{i,j}^2 \qtext{where} 
 \label{eq:var_lee}
\\
\hat{\sigma}_{i,j}^2 
    &\defeq 
\Var( \E [ \hbarho (Y_1,Y_2; Z_1, Z_2) | Y_{i+1}, \dots, Y_2, Z_{j+1}, \dots, Z_2 ])
\qtext{for}
i, j \in [2].
\end{talign}
Note that for $n_1 \geq 4$ (and analogously for $n_2 \geq 4$),
\begin{talign}
	\frac{(n_1 - 2)!^2}{n_1! i! (2-i)!^2 (n_1 - 4 + i)!} = 
	\begin{cases}
		\frac{(n_1 - 2)!^2}{4 n_1! (n_1 - 4)!} = \frac{(n_1 - 2)(n_1 - 3)}{4 n_1 (n_1 - 1)} \leq \frac{1}{4} &\text{if } i = 0 \\ 
		\frac{(n_1 - 2)!^2}{n_1! (n_1 - 3)!} = \frac{n_1 - 2}{n_1(n_1 - 1)} \leq \frac{1}{n_1} &\text{if } i = 1 \\
		\frac{(n_1 - 2)!^2}{2 n_1! (n_1 - 2)!} = \frac{1}{2 n_1 (n_1 -1)} &\text{if } i = 2
	\end{cases}.
\end{talign}
Plugging this into the right-hand side of \cref{eq:var_lee}, we obtain that for $n_1, n_2 \geq 4$,
\begin{talign} 
\begin{split} \label{eq:var_lee_conclusion}
	\Var(U_{n_1,n_2}) &\leq 16 \big( \frac{\hat{\sigma}_{0,0}^2}{4^2} + \frac{ \hat{\sigma}_{0,1}^2}{4 n_2}  +   \frac{ \hat{\sigma}_{1,0}^2 }{4 n_1} + \frac{\hat{\sigma}_{1,1}^2}{n_1 n_2}  + \frac{ \hat{\sigma}_{0,2}^2}{8 n_2 (n_2 -1)}  + \frac{\hat{\sigma}_{2,0}^2}{8 n_1 (n_1 -1)} \\ &\qquad + \frac{\hat{\sigma}_{1,2}^2 }{2 n_1 n_2 (n_2 - 1)} + \frac{\hat{\sigma}_{2,1}^2}{2 n_2 n_1 (n_1 - 1)}  +  \frac{\hat{\sigma}_{2,2}^2}{4 n_1 (n_1 - 1) n_2 (n_2 - 1)} \big) \\ &\leq \frac{4  \hat{\sigma}_{1,0}^2}{n_1} + \frac{4  \hat{\sigma}_{0,1}^2}{n_2} + \big(16 + \frac{4}{9} \big) \frac{\hat{\sigma}_{2,2}^2}{n_1 n_2} + \big(2 + 2 \big) \frac{\hat{\sigma}_{2,2}^2}{n_1 (n_1 - 1)} + \big(2 + 2 \big) \frac{\hat{\sigma}_{2,2}^2}{n_2 (n_2 - 1)} %
	\\ &\leq \frac{4  \hat{\sigma}_{1,0}^2}{n_1} + \frac{4  \hat{\sigma}_{0,1}^2}{n_2} + \frac{(17 n_1 n_2 + 4 n_2^2+ 4 n_1^2) \hat{\sigma}_{2,2}^2}{n_1 (n_1 - 1) n_2 (n_2 - 1)}.
\end{split}
\end{talign}
In the second inequality, we used that $\hat{\sigma}_{0,0}^2 = 0$ and that, by the law of total variance, $\hat{\sigma}_{i,j}^2 \leq \hat{\sigma}_{2,2}^2$ for all $i,j \in [2]$. Equation \cref{Eq: Variance upper bound} then follows from the fact that $\hat{\sigma}_{1,0}^2\leq \psi_{Y,1} = $, $\hat{\sigma}_{0,1}^2\leq \psi_{Z,1}$, and $\hat{\sigma}_{2,2}^2 \leq \psi_{YZ,2}$ by their definitions.
\subsection{\pcref{lem:exp_variance_2s}} \label{subsec:exp_variance_2s_proofs}
We begin by defining the convenient shorthand 
\begin{talign}
\mathfrak{i} \defeq (i_1,i_2,j_1,j_2,i_1',i_2',j_1',j_2')
    \qtext{and}
\mathfrak{s}(\mathfrak{i}) \defeq |\{i_1,i_2\} \cap \{i_1',i_2'\}| + |\{j_1,j_2\} \cap \{j_1',j_2'\}|.
\end{talign} 
\subsubsection{Proof of \cref{eq:kim_thm_4.1}}
Following the proof of \cite[Thm.~4.1]{kim2022minimax}, we define the index sets
\begin{talign}
\begin{split}
\mathsf{I}_{\text{total}} &\defeq \{ \mathfrak{i} \in \mathbb{N}^8_{+}: (i_1,i_2) \in \mathbf{i}_2^{n_1}, (j_1,j_2) \in \mathbf{i}_2^{n_2}, (i_1',i_2') \in \mathbf{i}_2^{n_1}, (j_1',j_2') \in \mathbf{i}_2^{n_2}\}, \\
\mathsf{I}_{1} &\defeq \{ \mathfrak{i} \in \mathsf{I}_{\text{total}} : \mathfrak{s}(\mathfrak{i})  \leq 1\}, \qtext{and}
\mathsf{I}_{1}^c = \{ \mathfrak{i} \in \mathsf{I}_{\text{total}} : \mathfrak{s}(\mathfrak{i}) >  1\}.
\end{split}
\end{talign}
By the argument of \cite[Thm.~4.1]{kim2022minimax}, we have 
\begin{talign} \label{eq:var_permutations_2s_precise}
    \mE [ \Var (U_{n_1,n_2}^{\pi,n} | \mbb X ) ] \leq \tilde{\psi}_{YZ,2} \frac{|\mathsf{I}_{1}^c|}{(n_1)^2_{(2)} (n_2)^2_{(2)}},
\end{talign}
where $\tilde{\psi}_{YZ,2}$ is the maximum over $\mathfrak{i} \in \mathsf{I}_{1}^c$ of 
\begin{talign} \label{eq:psi_prime_upper}
     &\big|\mE \big[\E[\hbarho(X_{\pi_{i_1}},X_{\pi_{i_2}};X_{\pi_{n_1+j_1}},X_{\pi_{n_1+j_2}}) \hbarho(X_{\pi_{i'_1}},X_{\pi_{i'_2}};X_{\pi_{n_1+j'_1}},X_{\pi_{n_1+j'_2}}) | \mbb{X} ] \big] \big|,
\end{talign}
and $\hbarho$ is the symmetrized version of $\hho$ defined in \cref{Eq: symmetrized kernel}. 
Moreover,
\begin{talign}
|\mathsf{I}_{\text{total}}| &= n_1^2(n_1-1)^2 n_2^2(n_2-1)^2, \\
    |\{\mathfrak{i} \in \mathsf{I}_{\text{total}} \, | \, \mathfrak{s}(\mathfrak{i}) = 0 \}| &=
    n_1(n_1-1)(n_1-2)(n_1-3)n_2(n_2-1)(n_2-2)(n_2-3), \qtext{and}\\
    |\{\mathfrak{i} \in \mathsf{I}_{\text{total}} \, | \, \mathfrak{s}(\mathfrak{i}) = 1 \}| &= 4 n_1(n_1-1)(n_1-2)n_2(n_2-1)((n_2-2)(n_2-3)+ (n_1-2)(n_1-3)),
\end{talign}
and therefore
\begin{talign}
|\mathsf{I}_{1}^c| &= |\mathsf{I}_{\text{total}}| - |\{\mathfrak{i} \in \mathsf{I}_{\text{total}} \, | \, \mathfrak{s}(\mathfrak{i}) = 0 \}| - |\{\mathfrak{i} \in \mathsf{I}_{\text{total}} \, | \, \mathfrak{s}(\mathfrak{i}) = 1 \}| \\
    &= 
n_1(n_1-1) n_2(n_2-1) (2n_1^2 + 2n_2^2 + 16 n_1 n_2 - 34 n_1 - 34 n_2 + 60).
\label{eq:I1c}
\end{talign}
Since $\psi_{YZ,2}$ upper-bounds $\tilde{\psi}_{YZ,2}$ and $n\geq 4$, the following estimate, provided by \cref{eq:var_permutations_2s_precise,eq:I1c}, completes our proof:
\begin{talign}
    \mE [ \Var (U_{n_1,n_2}^{\pi,n} | \mbb{X} ) ] &\leq \frac{2\tilde{\psi}_{YZ,2} (n_1^2 + n_2^2 + 8 n_1 n_2 - 17 (n_1 + n_2) + 30)}{n_1 (n_1 - 1) n_2 (n_2 - 1)} \leq \frac{\tilde{\psi}_{YZ,2}(2n_1^2 + 2n_2^2 + 16 n_1 n_2)}{n_1 (n_1 - 1) n_2 (n_2 - 1)}.
\end{talign}
\subsubsection{Proof of \cref{eq:kim_thm_4.1_cheap}}
In this case, we can write
\begin{talign}
\begin{split} \label{eq:exp_variance_kim_cheap}
&\mE [ \Var (U_{n_1,n_2}^{\pi,s} | \mbb X) ] 
    = 
\mE [ \E [(U_{n_1,n_2}^{\pi,s})^2 | \mbb X] ] - \E[ (\E [U_{n_1,n_2}^{\pi,s} | \mbb X])^2 ] %
    \\ 
    &= 
\frac{1}{n_1^2(n_1 - 1)^2 n_2^2(n_2 - 1)^2} \sum_{\mathfrak{i} \in \mathsf{I}_{\text{total}}} \\
&\quad\ \mathbb{E}\big[\E[\hbarho(X_{\tilde{\pi}_{i_1}},X_{\tilde{\pi}_{i_2}};X_{\tilde{\pi}_{n_1+j_1}},X_{\tilde{\pi}_{n_1+j_2}})  \hbarho(X_{\tilde{\pi}_{i'_1}},X_{\tilde{\pi}_{i'_2}};X_{\tilde{\pi}_{n_1+j'_1}},X_{\tilde{\pi}_{n_1+j'_2}}) | \mbb{X} ] \\ 
    &\quad\ - 
\E [ \hbarho(X_{\tilde{\pi}_{i_1}},X_{\tilde{\pi}_{i_2}};X_{\tilde{\pi}_{n_1+j_1}},X_{\tilde{\pi}_{n_1+j_2}}) | \mbb{X} ]  \E [ \hbarho(X_{\tilde{\pi}_{i'_1}},X_{\tilde{\pi}_{i'_2}};X_{\tilde{\pi}_{n_1+j'_1}},X_{\tilde{\pi}_{n_1+j'_2}}) | \mbb{X} ] \big],
\end{split}
\end{talign}
where $\tilde{\pi}_{i} \defeq 
m \pi_{\lfloor i / m \rfloor} + i - m \lfloor i / m \rfloor$. 
We now express $\mathsf{I}_{\text{total}}$ as the disjoint union of four sets:
\begin{itemize}
    \item $\mathsf{I}_{\mathrm{diff}}$, containing $s_1^2 (s_1 - 1)^2 s_2^2 (s_2 - 1)^2 m^8 = s_1^2 (s_1^2 - 2s_1 + 1) s_2^2 (s_2^2 - 2s_2 + 1) m^8$ indices for which the bins of each component are pairwise different.
    \item $\mathsf{I}_{a}$, containing the %
    indices for which one or more of the pairs among $(i_1,i_2)$, $(i'_1,i'_2)$, $(n_1 + j_1,n_1 + j'_1)$, $(n_1 + j_2, n_1 + j'_2)$ share the same bin, but $\{\lfloor i_1 / m \rfloor, \lfloor i_2 / m \rfloor \} \cap \{\lfloor i'_1 / m \rfloor, \lfloor i'_2 / m \rfloor \} = \emptyset$, $\{ \lfloor (n_1 + j_1) / m \rfloor, \lfloor (n_1 + j_2) / m \rfloor \} \cap \{ \lfloor (n_1 + j'_1) / m \rfloor,$ and $\lfloor (n_1 + j'_2) / m \rfloor \} = \emptyset$. 
    \item $\mathsf{I}_{b}$, containing the indices for which exactly one pair among $(i_1,i_2)$, $(i'_1,i'_2)$, $(n_1 + j_1,n_1 + j'_1)$, $(n_1 + j_2, n_1 + j'_2)$ shares the same bin, and $|\{\lfloor i_1 / m \rfloor, \lfloor i_2 / m \rfloor \} \cap \{\lfloor i'_1 / m \rfloor, \lfloor i'_2 / m \rfloor \}| + |\{ \lfloor (n_1 + j_1) / m \rfloor, \lfloor (n_1 + j_2) / m \rfloor \} \cap \{ \lfloor (n_1 + j'_1) / m \rfloor, \lfloor (n_1 + j'_2) / m \rfloor \}| = 1$. 
    \item $\mathsf{I}_{\mathrm{rest}}$, containing the rest of indices. For indices in $\mathsf{I}_{\mathrm{rest}}$, there has to be at least one pair among $(i_1,i_2)$, $(i'_1,i'_2)$, $(n_1 + j_1,n_1 + j'_1)$, $(n_1 + j_2, n_1 + j'_2)$ that shares the same bin, and there have to be at least two pairs formed by an index in $\{i_1,i_2,n_1 + j_1,n_1 + j_2\}$ and an index in $\{ i'_1,i'_2,n_1 + j'_1,n_1 + j'_2\}$ such that both indices belong to the same bin. From this argument, we can deduce that
    \begin{talign}
        |\mathsf{I}_{\mathrm{rest}}| &\leq (4 s_1 s_2^4 + 4 s_1^2 s_2^3 + 16 s_1^2 s_2^3 + 16 s_1^3 s_2^2 + 4 s_1^4 s_2 + 4 s_1^3 s_2^2) m^8 \\ &= (4 s_1 s_2^4 + 20 s_1^2 s_2^3 + 20 s_1^3 s_2^2 + 4 s_1^4 s_2) m^8.
    \end{talign}
\end{itemize}

\paragraph{Contribution of $\mathsf{I}_{\mathrm{diff}}$ terms} To deal with the contribution from $\mathsf{I}_{\mathrm{diff}}$ terms, we mirror the argument of \citep[Thm. 4.1]{kim2022minimax}. %
In %
\cref{subsec:additional_homogeneity} we have seen that cheap permutation can be treated like standard permutation of dataset bins with the substitutions $n \gets s$, $n_1 \gets s_1$, $n_2 \gets s_2$, $\hho \gets \Hho$, $g \gets G_m$. To this end, we define the index sets
\begin{talign}
\mathsf{K}_{\text{total}} &\defeq \{ \mathfrak{i} \in \mathbb{N}^8_{+}: (i_1,i_2) \in \mathbf{i}_2^{s_1}, (j_1,j_2) \in \mathbf{i}_2^{s_2}, (i_1',i_2') \in \mathbf{i}_2^{s_1}, (j_1',j_2') \in \mathbf{i}_2^{s_2}\}, \\
\mathsf{K}_{1} &\defeq \{ \mathfrak{i} \in \mathsf{K}_{\text{total}} : \mathfrak{s}(\mathfrak{i})  \leq 1\}, \ \ 
\mathsf{K}_{1}^c = \{ \mathfrak{i} \in \mathsf{K}_{\text{total}} : \mathfrak{s}(\mathfrak{i}) >  1\}, 
    \sstext{and}
\mathsf{K}_{2} \defeq \{ \mathfrak{i} \in \mathsf{K}_{\text{total}} : \mathfrak{s}(\mathfrak{i}) = 2 \}.
\end{talign}
We compute the size of $\mathsf{K}_{2}$:
\begin{talign}
|\mathsf{K}_{2}| &= 2 s_1(s_1-1) s_2(s_2-1)(s_2-2)(s_2-3) + 2 s_1(s_1-1)(s_1-2)(s_1-3) s_2(s_2-1) \\ &\qquad + 4 s_1(s_1-1)(s_1-2) \cdot 4 s_2(s_2-1)(s_2-2) %
\\ &= s_1(s_1-1) s_2(s_2-1) ( 2 s_2^2 + 2 s_1^2 + 16 s_1 s_2 - 42 s_1 - 42 s_2 + 88 ).
\end{talign}
Hence, we obtain that
\begin{talign}
|\mathsf{K}_{1}^c \setminus \mathsf{K}_{2}| &= |\mathsf{K}_{1}^c| - |\mathsf{K}_{2}| = s_1(s_1-1) s_2(s_2-1) ( ( - 34 s_1 - 34 s_2 + 60 ) - ( - 42 s_1 - 42 s_2 + 88 ) ) \\ &= s_1(s_1-1) s_2(s_2-1) (8 s_1 + 8 s_2 - 28).
\end{talign}
Our goal is now to upper-bound the contribution of $\mathsf{I}_{\text{diff}}$ to \cref{eq:exp_variance_kim_cheap}, 
\begin{talign}
\begin{split} \label{eq:ts_kim_bound}
&%
\sum_{\mathfrak{i} \in \mathsf{I}_{\text{diff}}} \mE[\E[\hbarho(X_{\tilde{\pi}_{i_1}},X_{\tilde{\pi}_{i_2}};X_{\tilde{\pi}_{n_1+j_1}},X_{\tilde{\pi}_{n_1+j_2}}) \\ &\qquad\qquad\qquad\qquad\qquad\qquad\times \hbarho(X_{\tilde{\pi}_{i'_1}},X_{\tilde{\pi}_{i'_2}};X_{\tilde{\pi}_{n_1+j'_1}},X_{\tilde{\pi}_{n_1+j'_2}}) | \mbb{X} ]] \\
&=%
m^8 \sum_{\mathfrak{i} \in \mathsf{K}_{\mathrm{total}} } \mE\big[\E\big[\Hho(\mathbb{X}_n^{(\pi_{i_1})}, \mathbb{X}_n^{(\pi_{i_2})}; \mathbb{X}_n^{(\pi_{s_1+j_1})}, \mathbb{X}_n^{(\pi_{s_1+j_2})}) \\ &\qquad\qquad\qquad\qquad\qquad\qquad \times \Hho(\mathbb{X}_n^{(\pi_{i'_1})}, \mathbb{X}_n^{(\pi_{i'_2})}; \mathbb{X}_n^{(\pi_{s_1+j'_1})}, X_n^{(\pi_{s_1+j'_2})}) \big| \mbb{X}\big] \big]
\\ &\leq %
\sum_{\mathfrak{i} \in \mathsf{K}_{\mathrm{total}} } %
\mE \big[\E \big[ \big(\sum_{k_1,k_2,\ell_1,\ell_2 = 1}^{m} \hbarho(X_{k_1}^{(\pi_{i_1})}, X_{k_2}^{(\pi_{i_2})}; X_{\ell_1}^{(\pi_{s_1+j_1})}, X_{\ell_2}^{(\pi_{s_1+j_2})}) \big) \\ &\qquad\qquad\qquad\qquad \times \big(\sum_{k'_1,k'_2,\ell'_1,\ell'_2 = 1}^{m} \hbarho(X_{k'_1}^{(\pi_{i'_1})}, X_{k'_2}^{(\pi_{i'_2})}; X_{\ell'_1}^{(\pi_{s_1+j'_1})}, X_{\ell'_2}^{(\pi_{s_1+j'_2})}) \big) \big| \mbb{X}\big] \big] \\ &= %
\sum_{\mathfrak{i} \in \mathsf{K}_{\mathrm{total}} } %
\sum_{\mathfrak{K} \in \mathsf{J}_{\mathrm{total}}} \mE \big[\E \big[ \hbarho(X_{k_1}^{(\pi_{i_1})}, X_{k_2}^{(\pi_{i_2})}; X_{\ell_1}^{(\pi_{s_1+j_1})}, X_{\ell_2}^{(\pi_{s_1+j_2})}) \\ &\qquad\qquad\qquad\qquad\qquad\qquad\qquad \times \hbarho(X_{k'_1}^{(\pi_{i'_1})}, X_{k'_2}^{(\pi_{i'_2})}; X_{\ell'_1}^{(\pi_{s_1+j'_1})}, X_{\ell'_2}^{(\pi_{s_1+j'_2})}) \big| \mbb{X}\big] \big],
\end{split}
\end{talign}
where we define $\mathfrak{K} \defeq (k_1,k_2,\ell_1,\ell_2,k'_1,k'_2,\ell'_1,\ell'_2)$, 
\begin{talign} \label{eq:J_A_defs}
\mathsf{J}_{\text{total}} &\defeq \{1, \dots, m\}^8,\quad
\mathsf{J}_{1} \defeq \{ \mathfrak{K} \in \mathsf{J}_{\text{total}} : \mathfrak{s}(\mathfrak{K}) \leq 1\}, \qtext{and}
 \mathsf{J}_{1}^c = \{ \mathfrak{K} \in \mathsf{J}_{\text{total}} : \mathfrak{s}(\mathfrak{K}) >  1\}.
\end{talign}
Each non-zero summand in the final expression of  \cref{eq:ts_kim_bound} has indices satisfying
\begin{enumerate}
\item $\mathfrak{i} \in \mathsf{K}_2$, with, for the indices in $\mathfrak{i}$ that are equal, the corresponding indices in $\mathfrak{K}$ are also equal (so that, for each $\mathfrak{i} \in \mathsf{K}_2$, there are at most $m^6$ terms with non-zero contribution), or 
\item $\mathfrak{i} \in \mathsf{K}_{1}^c \setminus \mathsf{K}_{2}$, and $\mathfrak{K}\in\mathsf{J}_{1}^c$ (as otherwise the terms would have zero contribution).
\end{enumerate}
We now bound the size of $\mathsf{J}_{1}^c$:
\begin{talign}
|\mathsf{J}_{1}| &= 
|\{\mathfrak{K} \in \mathsf{J}_{\text{total}} \, | \, \mathfrak{s}(\mathfrak{K}) = 0 \}| + |\{\mathfrak{K} \in \mathsf{J}_{\text{total}} \, | \, \mathfrak{s}(\mathfrak{K}) = 1 \}| 
\end{talign}
\begin{talign}
|\{\mathfrak{K} \in \mathsf{J}_{\text{total}} \, | \, \mathfrak{s}(\mathfrak{K}) = 0 \}| &\geq m (m - 1) (m - 2) (m - 3) m (m - 1) (m - 2) (m - 3) \\ &\quad + 4 m (m - 1) (m - 2) m (m - 1) (m - 2) (m - 3) 
\\ &= m (m - 1) (m - 2) m (m - 1) (m - 2) (m - 3) \big( (m - 3) + 4 \big)
\\
|\{\mathfrak{K} \in \mathsf{J}_{\text{total}} \, | \, \mathfrak{s}(\mathfrak{K}) = 1 \}| &\geq 8 m (m - 1) (m - 2) m (m - 1) (m - 2) (m - 3) %
\end{talign}
\begin{talign}
\implies |\mathsf{J}_{1}^c| &= |\mathsf{J}_{\text{total}}| - |\{\mathfrak{K} \in \mathsf{J}_{\text{total}} \, | \, \mathfrak{s}(\mathfrak{K}) = 0 \}| - |\{\mathfrak{K} \in \mathsf{J}_{\text{total}} \, | \, \mathfrak{s}(\mathfrak{K}) = 1 \}| \\ &\leq m^8 - m (m - 1) (m - 2) m (m - 1) (m - 2) (m - 3) \big( (m - 3) + 4 \big) \\ &\quad - 8 m (m - 1) (m - 2) m (m - 1) (m - 2) (m - 3) \\ &= 50 m^6 - 228 m^5 + 419 m^4 - 348 m + 108 \leq 50 m^6.
\end{talign}
Since every non-zero summand is bounded by $\tilde{\psi}_{YZ,2}$, we can bound \cref{eq:ts_kim_bound} by
\begin{talign} \label{eq:I_diff_final_bound}
&|\mathsf{K}_{2}| \cdot m^6 \tilde{\psi}_{YZ,2}
+ 
|\mathsf{K}_{1}^c \setminus \mathsf{K}_{2}| \cdot |\mathsf{J}_{1}^c| \tilde{\psi}_{YZ,2}
\\ &\leq 
s_1(s_1-1) s_2(s_2-1) (( 2 s_2^2 + 2 s_1^2 + 16 s_1 s_2) m^6
    + 
(8 s_1 + 8 s_2) \cdot 50 m^6) \tilde{\psi}_{YZ,2} 
\\ &= %
n_1(n_1-m) n_2(n_2-m) ( 2 n_2^2 + 2 n_1^2 + 16 n_1 n_2 + 400 (n^2_1 + n^2_2)/s) \tilde{\psi}_{YZ,2},
\end{talign}
which, in turn, bounds the contribution of $\mathsf{I}_{\mathrm{diff}}$ to  \cref{eq:exp_variance_kim_cheap}.

\paragraph{Contribution of $\mathsf{I}_{a}$ terms} Without loss of generality, assume that $i_1$ and $i_2$ belong to the same bin but that all other indices belong to pairwise different bins. Then,
\begin{talign}
    &\mE\big[\E[\hbarho(X_{\tilde{\pi}_{i_1}},X_{\tilde{\pi}_{i_2}};X_{\tilde{\pi}_{n_1+j_1}},X_{\tilde{\pi}_{n_1+j_2}}) \hbarho(X_{\tilde{\pi}_{i'_1}},X_{\tilde{\pi}_{i'_2}};X_{\tilde{\pi}_{n_1+j'_1}},X_{\tilde{\pi}_{n_1+j'_2}}) | \mbb{X} ] \\ &\qquad - \E [ \hbarho(X_{\tilde{\pi}_{i_1}},X_{\tilde{\pi}_{i_2}};X_{\tilde{\pi}_{n_1+j_1}},X_{\tilde{\pi}_{n_1+j_2}}) | \mbb{X} ] \E [ \hbarho(X_{\tilde{\pi}_{i'_1}},X_{\tilde{\pi}_{i'_2}};X_{\tilde{\pi}_{n_1+j'_1}},X_{\tilde{\pi}_{n_1+j'_2}}) | \mbb{X} ] \big] \\ &=
    \mE\big[\E [ \hbarho(X_{\tilde{\pi}_{i_1}},X_{\tilde{\pi}_{i_2}};X_{\tilde{\pi}_{n_1+j_1}},X_{\tilde{\pi}_{n_1+j_2}}) | \mbb{X} ] \E [ \hbarho(X_{\tilde{\pi}_{i'_1}},X_{\tilde{\pi}_{i'_2}};X_{\tilde{\pi}_{n_1+j'_1}},X_{\tilde{\pi}_{n_1+j'_2}}) | \mbb{X} ] \\ &\quad - \E [ \hbarho(X_{\tilde{\pi}_{i_1}},X_{\tilde{\pi}_{i_2}};X_{\tilde{\pi}_{n_1+j_1}},X_{\tilde{\pi}_{n_1+j_2}}) | \mbb{X} ] \E [ \hbarho(X_{\tilde{\pi}_{i'_1}},X_{\tilde{\pi}_{i'_2}};X_{\tilde{\pi}_{n_1+j'_1}},X_{\tilde{\pi}_{n_1+j'_2}}) | \mbb{X} ] \big] = 0.
\end{talign}
\paragraph{Contribution of $\mathsf{I}_{b}$ terms} Without loss of generality, assume that $i_1$ and $i_2$ belong to the same bin and that $n_1 + j_1$ and $n_1 + j'_2$ also belong to the same bin. 
Since exchanging the indices $i'_1$ and $n_1 + j'_1$ does not change the distribution, and $\hbarho(x,y; x',y') = -\hbarho(x', y;  x, y')$,  %
\begin{talign}
    &\mE\big[\E[\hbarho(X_{\tilde{\pi}_{i_1}},X_{\tilde{\pi}_{i_2}};X_{\tilde{\pi}_{n_1+j_1}},X_{\tilde{\pi}_{n_1+j_2}}) \hbarho(X_{\tilde{\pi}_{i'_1}},X_{\tilde{\pi}_{i'_2}};X_{\tilde{\pi}_{n_1+j'_1}},X_{\tilde{\pi}_{n_1+j'_2}}) | \mbb{X} ] \big] \\ &= \mE \big[\E[\hbarho(X_{\tilde{\pi}_{i_1}},X_{\tilde{\pi}_{i_2}};X_{\tilde{\pi}_{n_1+j_1}},X_{\tilde{\pi}_{n_1+j_2}}) \hbarho(X_{\tilde{\pi}_{n_1+j'_1}},X_{\tilde{\pi}_{i'_2}};X_{\tilde{\pi}_{i'_1}},X_{\tilde{\pi}_{n_1+j'_2}}) | \mbb{X} ] \big] \\ &= - \mE\big[\E [ \hbarho(X_{\tilde{\pi}_{i_1}},X_{\tilde{\pi}_{i_2}};X_{\tilde{\pi}_{n_1+j_1}},X_{\tilde{\pi}_{n_1+j_2}}) \hbarho(X_{\tilde{\pi}_{n_1+j'_1}},X_{\tilde{\pi}_{i'_2}};X_{\tilde{\pi}_{i'_1}},X_{\tilde{\pi}_{n_1+j'_2}}) | \mbb{X} ] \big] = 0.
\end{talign}
The same argument works to show that 
\begin{talign}
\E [ \hbarho(X_{\tilde{\pi}_{i_1}},X_{\tilde{\pi}_{i_2}};X_{\tilde{\pi}_{n_1+j_1}},X_{\tilde{\pi}_{n_1+j_2}}) | \mbb{X} ] \E [ \hbarho(X_{\tilde{\pi}_{i'_1}},X_{\tilde{\pi}_{i'_2}};X_{\tilde{\pi}_{n_1+j'_1}},X_{\tilde{\pi}_{n_1+j'_2}}) | \mbb{X} ] = 0.
\end{talign}
\paragraph{Contribution of $\mathsf{I}_{\mathrm{rest}}$ terms} We consider further subcases.

\paragraph{$\mathsf{I}_{\mathrm{rest}}$ terms, subcase (i)}
For indices in $\mathsf{I}_{\mathrm{rest}}$ such that $i_1$, $i_2$, $n_1+j_1$, $n_1+j_2$, $i'_1$, $i'_2$, $n_1+j'_1$, $n_1+j'_2$ are all pairwise different, 
\begin{talign}
&\mE\big[\E[\hbarho(X_{\tilde{\pi}_{i_1}},X_{\tilde{\pi}_{i_2}};X_{\tilde{\pi}_{n_1+j_1}},X_{\tilde{\pi}_{n_1+j_2}}) \hbarho(X_{\tilde{\pi}_{i'_1}},X_{\tilde{\pi}_{i'_2}};X_{\tilde{\pi}_{n_1+j'_1}},X_{\tilde{\pi}_{n_1+j'_2}}) | \mbb{X} ] \big] \\ &= \E\big[\mE[\hbarho(X_{\tilde{\pi}_{i_1}},X_{\tilde{\pi}_{i_2}};X_{\tilde{\pi}_{n_1+j_1}},X_{\tilde{\pi}_{n_1+j_2}}) \hbarho(X_{\tilde{\pi}_{i'_1}},X_{\tilde{\pi}_{i'_2}};X_{\tilde{\pi}_{n_1+j'_1}},X_{\tilde{\pi}_{n_1+j'_2}}) | \pi ] \big] \\ &= \E\big[\mE[\hbarho(X_{\tilde{\pi}_{i_1}},X_{\tilde{\pi}_{i_2}};X_{\tilde{\pi}_{n_1+j_1}},X_{\tilde{\pi}_{n_1+j_2}}) | \pi ] \mE[ \hbarho(X_{\tilde{\pi}_{i'_1}},X_{\tilde{\pi}_{i'_2}};X_{\tilde{\pi}_{n_1+j'_1}},X_{\tilde{\pi}_{n_1+j'_2}}) | \pi ] \big].
\end{talign}
Here, $\mE[\hbarho(X_{\tilde{\pi}_{i_1}},X_{\tilde{\pi}_{i_2}};X_{\tilde{\pi}_{n_1+j_1}},X_{\tilde{\pi}_{n_1+j_2}}) | \pi ]$ and $\mE[ \hbarho(X_{\tilde{\pi}_{i'_1}},X_{\tilde{\pi}_{i'_2}};X_{\tilde{\pi}_{n_1+j'_1}},X_{\tilde{\pi}_{n_1+j'_2}}) | \pi ]$ play the same role, so we will focus on the latter. We consider the following cases:
\begin{itemize}
    \item $\tilde{\pi}_{i_1} \leq n_1$, $\tilde{\pi}_{i_2} \leq n_1$, $\tilde{\pi}_{n_1+j_1} > n_1$, $\tilde{\pi}_{n_1+j_2} > n_1$ or analogous configurations: Then,
    \begin{talign}
        \mE[\hbarho(X_{\tilde{\pi}_{i_1}},X_{\tilde{\pi}_{i_2}};X_{\tilde{\pi}_{n_1+j_1}},X_{\tilde{\pi}_{n_1+j_2}}) | \pi ] = \mE[\hbarho(Y_1,Y_2;Z_1,Z_2) ] = \mE[U_{n_1,n_2}],
    \end{talign}
    \item $\tilde{\pi}_{i_1} \leq n_1$, $\tilde{\pi}_{i_2} \leq n_1$, $\tilde{\pi}_{n_1+j_1} \leq n_1$, $\tilde{\pi}_{n_1+j_2} > n_1$ or analogous configurations: Then, 
    \begin{talign}
        &\mE[\hbarho(X_{\tilde{\pi}_{i_1}},X_{\tilde{\pi}_{i_2}};X_{\tilde{\pi}_{n_1+j_1}},X_{\tilde{\pi}_{n_1+j_2}}) | \pi ] = \mE[\hbarho(Y_1,Y_2;Y_3,Z_1) ] \\ &= \mathbb{E}[\hbarho(Y_3,Y_2;Y_1,Z_1) ] = -\mathbb{E}[\hbarho(Y_1,Y_2;Y_3,Z_1) ] = 0.
    \end{talign}
    \item $\tilde{\pi}_{i_1} \leq n_1$, $\tilde{\pi}_{i_2} \leq n_1$, $\tilde{\pi}_{n_1+j_1} \leq n_1$, $\tilde{\pi}_{n_1+j_2} \leq n_1$ or analogous configurations: Then, 
    \begin{talign}
        &\mE[\hbarho(X_{\tilde{\pi}_{i_1}},X_{\tilde{\pi}_{i_2}};X_{\tilde{\pi}_{n_1+j_1}},X_{\tilde{\pi}_{n_1+j_2}}) | \pi ] = \mE[\hbarho(Y_1,Y_2;Y_3,Y_4) ] \\ &= \mathbb{E} [\hbarho(Y_3,Y_2;Y_1,Y_4) ] = -\mathbb{E}[\hbarho(Y_1,Y_2;Y_3,Y_4) ] = 0.
    \end{talign}
\end{itemize}
Thus, for all terms in subcase (i), we have that
\begin{talign}
    &\mE\big[\E[\hbarho(X_{\tilde{\pi}_{i_1}},X_{\tilde{\pi}_{i_2}};X_{\tilde{\pi}_{n_1+j_1}},X_{\tilde{\pi}_{n_1+j_2}}) \hbarho(X_{\tilde{\pi}_{i'_1}},X_{\tilde{\pi}_{i'_2}};X_{\tilde{\pi}_{n_1+j'_1}},X_{\tilde{\pi}_{n_1+j'_2}}) | \mbb{X} ] \big] \leq \mE[U_{n_1,n_2}]^2.
\end{talign}
Moreover, the fraction of $\mathsf{I}_{\mathrm{rest}}$ terms in subcase (i) is $m^2(m-1)^2(m-2)^2(m-3)^2 / m^8$.
\paragraph{$\mathsf{I}_{\mathrm{rest}}$ terms, subcase (ii)}
Now we consider indices in $\mathsf{I}_{\mathrm{rest}}$ such that there is exactly one pair among $i_1$, $i_2$, $n_1+j_1$, $n_1+j_2$, $i'_1$, $i'_2$, $n_1+j'_1$, $n_1+j'_2$ that are the same, and the other indices are pairwise different. Without loss of generality, we can assume that $i_1 = i'_1$ (recall that $i_1 \neq i_2$ always). Then,
\begin{talign}
&\mE\big[\E[\hbarho(X_{\tilde{\pi}_{i_1}},X_{\tilde{\pi}_{i_2}};X_{\tilde{\pi}_{n_1+j_1}},X_{\tilde{\pi}_{n_1+j_2}}) \hbarho(X_{\tilde{\pi}_{i_1}},X_{\tilde{\pi}_{i'_2}};X_{\tilde{\pi}_{n_1+j'_1}},X_{\tilde{\pi}_{n_1+j'_2}}) | \mbb{X} ] \big] \\ &= \E\big[\mE[\hbarho(X_{\tilde{\pi}_{i_1}},X_{\tilde{\pi}_{i_2}};X_{\tilde{\pi}_{n_1+j_1}},X_{\tilde{\pi}_{n_1+j_2}}) \hbarho(X_{\tilde{\pi}_{i_1}},X_{\tilde{\pi}_{i'_2}};X_{\tilde{\pi}_{n_1+j'_1}},X_{\tilde{\pi}_{n_1+j'_2}}) | \pi ] \big] \\ &= \E\big[\mE[ \mE[ \hbarho(X_{\tilde{\pi}_{i_1}},X_{\tilde{\pi}_{i_2}};X_{\tilde{\pi}_{n_1+j_1}},X_{\tilde{\pi}_{n_1+j_2}}) | X_{\tilde{\pi}_{i_1}}, \pi ] \\ &\qquad\qquad\qquad \times \mE[ \hbarho(X_{\tilde{\pi}_{i_1}},X_{\tilde{\pi}_{i'_2}};X_{\tilde{\pi}_{n_1+j'_1}},X_{\tilde{\pi}_{n_1+j'_2}}) | X_{\tilde{\pi}_{i_1}}, \pi ] | \pi ] \big].
\end{talign}
Here, $\mE[ \hbarho(X_{\tilde{\pi}_{i_1}},X_{\tilde{\pi}_{i_2}};X_{\tilde{\pi}_{n_1+j_1}},X_{\tilde{\pi}_{n_1+j_2}}) | X_{\tilde{\pi}_{i_1}}, \pi ]$ and $\mE[ \hbarho(X_{\tilde{\pi}_{i_1}},X_{\tilde{\pi}_{i'_2}};X_{\tilde{\pi}_{n_1+j'_1}},X_{\tilde{\pi}_{n_1+j'_2}}) | X_{\tilde{\pi}_{i_1}}, \pi ]$ also play the same role, so we will focus on the latter. We consider the following cases with $(Y,Y',Y'',Z,Z',Z'')\sim\P\times\P\times\P\times\Q\times\Q\times\Q$ given $(X_{\tilde{\pi}_{i_1}},\pi)$:
\begin{itemize}
    \item $\tilde{\pi}_{i_2} \leq n_1$, $\tilde{\pi}_{n_1+j_1} > n_1$, $\tilde{\pi}_{n_1+j_2} > n_1$ or analogous configurations: Then,
    \begin{talign}
        &\mE[ \hbarho(X_{\tilde{\pi}_{i_1}},X_{\tilde{\pi}_{i_2}};X_{\tilde{\pi}_{n_1+j_1}},X_{\tilde{\pi}_{n_1+j_2}}) | X_{\tilde{\pi}_{i_1}}, \pi ] = \mE[\hbarho(X_{\tilde{\pi}_{i_1}},Y;Z,Z') | X_{\tilde{\pi}_{i_1}}, \pi ].
    \end{talign}
    \item $\tilde{\pi}_{i_2} \leq n_1$, $\tilde{\pi}_{n_1+j_1} \leq n_1$, $\tilde{\pi}_{n_1+j_2} > n_1$ or analogous configurations: Then,
    \begin{talign}
        &\mE[ \hbarho(X_{\tilde{\pi}_{i_1}},X_{\tilde{\pi}_{i_2}};X_{\tilde{\pi}_{n_1+j_1}},X_{\tilde{\pi}_{n_1+j_2}}) | X_{\tilde{\pi}_{i_1}}, \pi ] = \mE[\hbarho(X_{\tilde{\pi}_{i_1}},Y;Y',Z') | X_{\tilde{\pi}_{i_1}}, \pi ].
    \end{talign}
    \item $\tilde{\pi}_{i_2} \leq n_1$, $\tilde{\pi}_{n_1+j_1} > n_1$, $\tilde{\pi}_{n_1+j_2} \leq n_1$ or analogous configurations: Then,
    \begin{talign}
        &\mE[ \hbarho(X_{\tilde{\pi}_{i_1}},X_{\tilde{\pi}_{i_2}};X_{\tilde{\pi}_{n_1+j_1}},X_{\tilde{\pi}_{n_1+j_2}}) | X_{\tilde{\pi}_{i_1}}, \pi ] = \mE[\hbarho(X_{\tilde{\pi}_{i_1}},Y;Z',Y') | X_{\tilde{\pi}_{i_1}}, \pi ] \\ &= \mE[\hbarho(X_{\tilde{\pi}_{i_1}},Y';Z',Y) | X_{\tilde{\pi}_{i_1}}, \pi ] = - \mathbb{E} [\hbarho(X_{\tilde{\pi}_{i_1}},Y;Z',Y') | X_{\tilde{\pi}_{i_1}}, \pi ] = 0.
    \end{talign}
    \item $\tilde{\pi}_{i_2} \leq n_1$, $\tilde{\pi}_{n_1+j_1} \leq n_1$, $\tilde{\pi}_{n_1+j_2} \leq n_1$ or analogous configurations: Then,
    \begin{talign}
        &\mE [\hbarho(X_{\tilde{\pi}_{i_1}},X_{\tilde{\pi}_{i_2}};X_{\tilde{\pi}_{n_1+j_1}},X_{\tilde{\pi}_{n_1+j_2}}) | X_{\tilde{\pi}_{i_1}}, \pi ] = \mE[\hbarho(X_{\tilde{\pi}_{i_1}},Y;Y',Y'') | X_{\tilde{\pi}_{i_1}}, \pi ] \\ &= \mE[\hbarho(X_{\tilde{\pi}_{i_1}},Y'';Y',Y) | X_{\tilde{\pi}_{i_1}}, \pi ] = -\mE[\hbarho(X_{\tilde{\pi}_{i_1}},Y;Y',Y'') | X_{\tilde{\pi}_{i_1}}, \pi ] = 0.
    \end{talign}
\end{itemize}
Thus, we obtain that 
\begin{talign}
&\mE\big[\E[\hbarho(X_{\tilde{\pi}_{i_1}},X_{\tilde{\pi}_{i_2}};X_{\tilde{\pi}_{n_1+j_1}},X_{\tilde{\pi}_{n_1+j_2}}) \hbarho(X_{\tilde{\pi}_{i_1}},X_{\tilde{\pi}_{i'_2}};X_{\tilde{\pi}_{n_1+j'_1}},X_{\tilde{\pi}_{n_1+j'_2}}) | \mbb{X} ] \big] \\ 
    &\leq 
\max\big\{ \E\big[\mE[ \E[\hbarho(X_{\tilde{\pi}_{i_1}},Y;Z,Z') | X_{\tilde{\pi}_{i_1}}, \pi ]^2  ] \big],  \E\big[\mE [ \E[\hbarho(X_{\tilde{\pi}_{i_1}},Y;Y',Z') | X_{\tilde{\pi}_{i_1}}, \pi ]^2  ] \big] \big\} \\ 
    &=  %
\max\big\{ \frac{n_1}{n} \E\big[ \E[\hbarho(Y',Y;Z,Z') | Y' ]^2 \big] + \frac{n_2}{n} \E\big[ \E[\hbarho(Z'',Y;Z,Z') | Z'' ]^2  \big], \\ &\qquad\quad \frac{n_1}{n} \E\big[ \E[\hbarho(Y'',Y;Y',Z') | Y'' ]^2 \big]  + \frac{n_2}{n} \E\big[ \E[\hbarho(Z'',Y;Y',Z) | Z' ]^2  \big]\big\}.
\end{talign}
Now, notice that 
\begin{talign}
\mE\big[ \E[\hbarho(Y',Y;Z,Z') | Y' ]^2 \big]
    &= 
\Var\big( \E[\hbarho(Y',Y;Z,Z') | Y' ] \big) + \E[\hbarho(Y',Y;Z,Z') ]^2
    \\ 
    &= 
\Var\big( \E[\hbarho(Y',Y;Z,Z') | Y' ] \big) + \mathbb{E}[U_{n_1,n_2}]^2, \\
\E\big[\E[\hbarho(Z'',Y;Z,Z') | Z'' ]^2  \big] 
    &= \Var\big( \E[\hbarho(Z'',Y;Z,Z') | Z'' ]  \big) + \E[\hbarho(Z'',Y;Z,Z') ]^2
    \\ &= \Var\big( \E[\hbarho(Z'',Y;Z,Z') | Z'' ]  \big),\\
\E\big[ \E[\hbarho(Y'',Y;Y',Z') | Y'' ]^2 \big] 
    &= 
\Var\big( \E[\hbarho(Y'',Y;Y',Z') | Y'' ] \big), \qtext{and} \\
\E\big[ \E[\hbarho(Z'',Y;Y',Z') | Z'' ]^2  \big] 
    &= 
\Var\big( \E[\hbarho(Y',Y;Z'',Z') | Z'' ]  \big) + \mathbb{E}[U_{n_1,n_2}]^2.
\end{talign}
Thus, for $\mathsf{I}_{\mathrm{rest}}$ terms in subcase (ii), 
\begin{talign}
    &\mE\big[\E[\hbarho(X_{\tilde{\pi}_{i_1}},X_{\tilde{\pi}_{i_2}};X_{\tilde{\pi}_{n_1+j_1}},X_{\tilde{\pi}_{n_1+j_2}}) \hbarho(X_{\tilde{\pi}_{i_1}},X_{\tilde{\pi}_{i'_2}};X_{\tilde{\pi}_{n_1+j'_1}},X_{\tilde{\pi}_{n_1+j'_2}}) | \mbb{X} ] \big] \\ &\leq \max\{ \psi_{Y,1}, \psi_{Z,1} \} + \mathbb{E}[U_{n_1,n_2}]^2.
\end{talign}
Moreover, the fraction of $\mathsf{I}_{\mathrm{rest}}$ terms in this subcase is upper-bounded by $12 m^2(m-1)^2(m-2)^2(m-3) / m^8$.
\paragraph{$\mathsf{I}_{\mathrm{rest}}$ terms, subcase (iii)} For the remaining terms in $\mathsf{I}_{\mathrm{rest}}$ we can write
\begin{talign}
    \mE\big[\E[\hbarho(X_{\tilde{\pi}_{i_1}},X_{\tilde{\pi}_{i_2}};X_{\tilde{\pi}_{n_1+j_1}},X_{\tilde{\pi}_{n_1+j_2}}) \hbarho(X_{\tilde{\pi}_{i_1}},X_{\tilde{\pi}_{i'_2}};X_{\tilde{\pi}_{n_1+j'_1}},X_{\tilde{\pi}_{n_1+j'_2}}) | \mbb{X} ] \big] \leq \tilde{\psi}_{YZ,2}.
\end{talign}
Since there need to be at least two pairs of indices that share the same value, we have that the fraction of elements of $\mathsf{I}_{\mathrm{rest}}$ that fall under subcase (iii) is upper-bounded by
\begin{talign}
    \frac{1}{m^8} \big( {4 \choose 3} m^6 + {4 \choose 3} m^6 + {4 \choose 2} \cdot {4 \choose 2} m^6 \big) = \frac{44}{m^2}.
\end{talign}

\paragraph{Total contribution of $\mathsf{I}_{\mathrm{rest}}$ terms} %
We end up with a bound of the form
\begin{talign} 
\begin{split} \label{eq:I_rest_final_bound}
    &\sum_{\mathfrak{i} \in \mathsf{I}_{\text{diff}}} \mE[\E[\hbarho(X_{\tilde{\pi}_{i_1}},X_{\tilde{\pi}_{i_2}};X_{\tilde{\pi}_{n_1+j_1}},X_{\tilde{\pi}_{n_1+j_2}})  \hbarho(X_{\tilde{\pi}_{i'_1}},X_{\tilde{\pi}_{i'_2}};X_{\tilde{\pi}_{n_1+j'_1}},X_{\tilde{\pi}_{n_1+j'_2}}) | \mbb{X} ]] \\ 
    &\leq |\mathsf{I}_{\mathrm{rest}}| \big( \frac{(m-1)^2(m-2)^2(m-3)^2}{m^6} \mathbb{E}[U_{n_1,n_2}]^2 \\ &\qquad\quad + \frac{12(m-1)^2(m-2)^2(m-3)}{m^6}  \big( \max\{ \psi_{Y,1}, \psi_{Z,1} \} + \mathbb{E}[U_{n_1,n_2}]^2 \big) 
    + \frac{44}{m^2} \tilde{\psi}_{YZ,2} \big) \\ &\leq  (4 s_1 s_2^4 + 20 s_1^2 s_2^3 + 20 s_1^3 s_2^2 + 4 s_1^4 s_2) m^8   \big( \mathbb{E}[U_{n_1,n_2}]^2 + \frac{12}{m} \max\{ \psi_{Y,1}, \psi_{Z,1} \} + \frac{44}{m^2} \tilde{\psi}_{YZ,2} \big).
\end{split}
\end{talign}
Combining our $\mathsf{I}_{\mathrm{diff}}$, $\mathsf{I}_{a},$ $\mathsf{I}_{b},$ and $\mathsf{I}_{\mathrm{rest}}$ estimates, we obtain the advertised variance bound, 
\begin{talign}
&\mE[ \Var (U_{n_1,n_2}^{\pi,s} | \mbb X) ]\, n_1^2(n_1 - 1)^2 n_2^2(n_2 - 1)^2\\ 
    &\leq 
\big( n_1(n_1-m) n_2(n_2-m) ( 2 n_2^2 + 2 n_1^2 + 16 n_1 n_2 + 400 (n^2_1 + n^2_2)/s) \tilde{\psi}_{YZ,2} \\ 
    &+ 
(4 n_1 n_2^4 + 20 n_1^2 n_2^3 + 20 n_1^3 n_2^2 + 4 n_1^4 n_2) m^3 \big( \mathbb{E}[U_{n_1,n_2}]^2 + \frac{12}{m} \max\{ \psi_{Y,1}, \psi_{Z,1} \} + \frac{44}{m^2} \tilde{\psi}_{YZ,2} \big) \big)
    \\ 
    &\leq 
\big( (n_1-m) (n_2-m) ( 2 n_2^2 + 2 n_1^2 + 16 n_1 n_2 + 400 (n^2_1 + n^2_2)/s) \tilde{\psi}_{YZ,2} \\ 
    &+ 
(4 n_2^3 + 20 n_1 n_2^2 + 20 n_1^2 n_2 + 4 n_1^3) m^3 \big( \mathbb{E}[U_{n_1,n_2}]^2 + \frac{12}{m} \max\{ \psi_{Y,1}, \psi_{Z,1} \} + \frac{44}{m^2} \tilde{\psi}_{YZ,2} \big) \big)
    \\ 
    &\leq
\big( (n_1-m) (n_2-m) ( 2 n_2^2 + 2 n_1^2 + 16 n_1 n_2 + 400 (n^2_1 + n^2_2)/s) \tilde{\psi}_{YZ,2} \\ &+ 8 n^3 m^3 \big( \mathbb{E}[U_{n_1,n_2}]^2 + \frac{12}{m} \max\{ \psi_{Y,1}, \psi_{Z,1} \} + \frac{44}{m^2} \tilde{\psi}_{YZ,2} \big) \big).
\end{talign}
\section{\pcref{cor:ts_separation_rate}} \label{sec:proof_cor_ts_separation_rate}
We begin with lemma that relates the parameters of \cref{cor:ts_separation_rate} to those of \cref{thm: Full Two Sample Tests}.
\begin{lemma}[Positive-definite  bounds on $\psi_{Y,1}$, $\psi_{Z,1}$] \label{lem:bounds_sigma}
Under the premises of \cref{cor:ts_separation_rate}, 
\begin{talign}
    \psi_{Y,1} \leq \mmd^2(\P,\Q) \xi_{\Q}
    \qtext{and}
    \psi_{Z,1} \leq \mmd^2(\P,\Q) \xi_{\P}.
\end{talign}
\end{lemma}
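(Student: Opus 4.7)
\textbf{Proof plan for \cref{lem:bounds_sigma}.} The plan is to exploit the reproducing-kernel inner product structure of $\hho$ to express each conditional expectation appearing in $\psi_{Y,1},\psi_{Z,1}$ as a single inner product in $\rkhs$ involving the mean embedding difference $\mu_\P - \mu_\Q$, where $\mu_\P \defeq \E[\kernel(Y_1,\cdot)]$ and $\mu_\Q \defeq \E[\kernel(Z_1,\cdot)]$. Cauchy-Schwarz then converts the squared conditional expectation into a product of $\mmd^2(\P,\Q)$ and a term controlled by $\xi_\P$ or $\xi_\Q$.

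First I would write, for $g = \kernel$,
\begin{talign}
\hho(y_1,y_2;z_1,z_2)
    =
\kinner{\kernel(y_1,\cdot) - \kernel(z_1,\cdot)}{\kernel(y_2,\cdot) - \kernel(z_2,\cdot)},
\end{talign}
and verify by direct averaging over the four pair-swaps that the symmetrized kernel \cref{Eq: symmetrized kernel} admits the representation
\begin{talign}
\hbarho(y_1,y_2;z_1,z_2)
    =
\tfrac{1}{2}\bigl[\kinner{f_1 - g_1}{f_2-g_2} + \kinner{f_1-g_2}{f_2-g_1}\bigr],
\end{talign}
where $f_i \defeq \kernel(y_i,\cdot)$ and $g_j \defeq \kernel(z_j,\cdot)$.

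Next, I would take conditional expectations. For $\Var(\E[\hbarho(Y_1,Y_2;Z_1,Z_2)\mid Y_1])$, integrating out $Y_2,Z_1,Z_2$ in each summand separately yields
\begin{talign}
\E[\hbarho(Y_1,Y_2;Z_1,Z_2)\mid Y_1]
    =
\kinner{\kernel(Y_1,\cdot) - \mu_\Q}{\mu_\P - \mu_\Q},
\end{talign}
and similarly for $\E[\hbarho(Z_3,Y_2;Z_1,Z_2)\mid Z_3]$ one obtains $\kinner{\kernel(Z_3,\cdot) - \mu_\Q}{\mu_\P - \mu_\Q}$. Applying Cauchy-Schwarz, recognizing $\|\mu_\P - \mu_\Q\|_\kernel = \mmd(\P,\Q)$ and $\|\kernel(x,\cdot)-\mu\|_\kernel = \mmd(\dirac_x,\mu)$, and using $\Var(W)\leq \E[W^2]$ bounds each summand in the max defining $\psi_{Y,1}$ by $\mmd^2(\P,\Q)\,\E[\mmd^2(\dirac_{Y_1},\Q)\vee\mmd^2(\dirac_{Z_3},\Q)]\leq \mmd^2(\P,\Q)\xi_\Q$. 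The bound on $\psi_{Z,1}$ follows from the symmetric computation, conditioning on $Z_1$ or $Y_3$ respectively, which produces the inner product $\kinner{\mu_\P - \kernel(\cdot,\cdot)}{\mu_\P-\mu_\Q}$ and thus the factor $\xi_\P$.

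There is no real obstacle here; the only bookkeeping step that requires care is verifying the symmetrized representation of $\hbarho$ (writing out the four permuted terms and collecting cross-kernel contributions), after which Cauchy-Schwarz delivers the result immediately.
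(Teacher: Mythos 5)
Your proposal is correct and takes essentially the same route as the paper's proof: express the conditional expectation of the (symmetrized) kernel as an RKHS inner product against the mean-embedding difference $\mu_\P-\mu_\Q$, apply Cauchy--Schwarz to extract $\mmd(\P,\Q)$ times $\mmd(\delta_{Y_1},\Q)$, $\mmd(\delta_{Z_3},\Q)$, $\mmd(\delta_{Z_1},\P)$, or $\mmd(\delta_{Y_3},\P)$ as appropriate, and bound each variance by the corresponding second moment to pick up $\xi_\Q$ or $\xi_\P$. The only slip is cosmetic: bound each term of the max defining $\psi_{Y,1}$ by its own factor $\E[\mmd^2(\delta_{Y_1},\Q)]$ or $\E[\mmd^2(\delta_{Z_3},\Q)]$ (each at most $\xi_\Q$ by definition), not by $\E[\mmd^2(\delta_{Y_1},\Q)\vee\mmd^2(\delta_{Z_3},\Q)]$, which is in general at least $\xi_\Q$ rather than at most.
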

\begin{proof}
We prove one case as the others are identical.
By Cauchy-Schwarz, 
\begin{talign}
\begin{split}
&|\mE[\hbarho(Y_{1},Y_{2}; Z_{1},Z_{2})|Z_1]| = |\mE[\hho(Y_{1},Y_{2}; Z_{1},Z_{2})|Z_1]| \\ 
    &= 
|\mE[\kernel(Y_{1},Y_{2}) + \kernel(Z_{1},Z_{2}) - \kernel(Y_{1},Z_{2}) - \kernel(Y_{2},Z_{1})|Z_1]| \\ 
    &= 
\big|\int \kernel(x,x') \, d(\P-\Q)(x) \, d(\P-\delta_{Z_1})(x') \big|
    \\ &= \big|\kinner{ \int \kernel(x,\cdot) \, d(\P-\Q)(x)} {\int \kernel(x,\cdot) \, d(\P-\delta_{Z_1})(x)} \big|
    \\ &\leq \knorm{ \int \kernel(x,\cdot) \, d(\P-\Q)(x)} \knorm{ \int \kernel(x,\cdot) \, d(\P-\delta_{Z_1})(x)} \\ &= \mmd(\P,\Q) \mmd(\delta_{Z_1},\P).
\end{split}
\end{talign}
Since $\E[\mE[\hbarho(Y_{1},Y_{2}; Z_{1},Z_{2})|Z_1]] = 0$, squaring and taking expectations yields the claim.
\end{proof}

We first show that  \cref{eq:psd_ts_full} implies  the sufficient power condition \cref{eq:lb_full_2s} of \cref{thm: Full Two Sample Tests}. 
Since
 \begin{talign}
 \label{eq:psi_mmd_inequality1}
    &\sqrt{\frac{3-\beta}{\beta}\big(\frac{4  \psi_{Y,1}}{n_1} + \frac{4  \psi_{Z,1}}{n_2} \big)} 
    \leq \sqrt{\frac{3-\beta}{\beta}\big(\frac{4  \mmd^2(\P,\Q) \xi_\Q}{n_1} + \frac{4  \mmd^2(\P,\Q) \xi_\P}{n_2} \big)} 
 \end{talign}
 by \cref{lem:bounds_sigma} and $\mE[U_{n_1,n_2}] = \mmd^2(\P,\Q)$ 
 by \citep[Lemma~6]{gretton2012akernel}, the following inequality is a sufficient condition for \cref{eq:lb_full_2s} to hold:
\begin{talign} \label{eq:quadratic_ineq_2s}
    &\qquad a x^2 - bx - c \geq 0 
    \qtext{for} x = \mmd(\P, \Q), \\
    &a = 1, \ \  b = 
    \sqrt{\frac{3-\beta}{\beta}\big(\frac{4 \xi_\Q}{n_1} + \frac{4  \xi_\P}{n_2} \big)}, \sstext{and}
    c =
    \sqrt{\frac{1-\astar}{\astar}\frac{(36 n_1^2 + 36 n_2^2 + 198 n_1 n_2) \psi_{YZ,2}}{\beta n_1 (n_1 - 1) n_2 (n_2 - 1)}}.
\end{talign}
By the quadratic formula, this holds whenever 
$x \geq b + \sqrt{c}$,
yielding the first claim.

We next show that \cref{eq:psd_ts_cheap} implies the sufficient power condition  \cref{eq:lb_cheap_2s} of \cref{thm: Full Two Sample Tests}.  %
Let $f=\sqrt{s^3\frac{\beta\astar}{24(1-\astar)}\nratio}$.
Using the equality $\mE[U_{n_1,n_2}] = \mmd^2(\P,\Q)$ and the inequalities 
\begin{talign}
\sqrt{\frac{201s^2}{4n^2} \psi_{YZ,2} 
    + 
{\frac{12 s}{n} \max( \psi_{Y,1}, \psi_{Z,1})}{}}
    \leq
\sqrt{\frac{201s^2}{4n^2} \psi_{YZ,2}} 
    + 
\sqrt{{\frac{12 s}{n} \max( \xi_{\Q}, \xi_{\P})}{}}\mmd(\P,\Q)
\end{talign}
and \cref{eq:psi_mmd_inequality1}
justified by \cref{lem:bounds_sigma} and the triangle inequality, we find that \cref{eq:lb_cheap_2s} holds whenever
\begin{talign}
    a&x^2-bx-c\geq 0
    \qtext{for}
    x = \mmd(\P, \Q), \quad a = 1-1/f, \\
    b &= 
    \sqrt{\frac{3-\beta}{\beta}\big(\frac{4  \xi_\Q}{n_1} + \frac{4 \xi_\P}{n_2} \big)} + \frac{1}{f}\sqrt{\frac{12 s}{n} \max( \xi_\Q, \xi_\P)},
    \qtext{and}\\
    c 
    &=
\sqrt{\frac{1-\astar}{\astar}\frac{(36 n_1^2 + 36 n_2^2 + 198 n_1 n_2) \psi_{YZ,2}}{\beta n_1 (n_1 - 1) n_2 (n_2 - 1)}} 
    + 
\frac{1}{f}\sqrt{\frac{201s^2}{4n^2} \psi_{YZ,2}}.
\end{talign}
Since $a\leq 1$, this holds when $x \geq \frac{b}{a} + \frac{\sqrt{c}}{a}\geq \frac{b}{a} + \sqrt{\frac{c}{a}}$, yielding the second claim.

\section{\pcref{thm:final_2s}}
\label{sec:proof_cor:final_2s}
We will establish the claim using the quantile comparison method (\cref{Lemma: Generic Two Moments Method}) by identifying complementary quantile bounds (CQBs) $\Phi_{n_1,n_2}, \Psi_{\X,s},$ and $\Psi_{n_1,n_2,s}$ satisfying the tail bounds \cref{eq:Phi_P_n,,eq:Psi_X_n,,eq:Psi_P_n} for the test statistic $U_{n_1,n_2}$, auxiliary sequence  $(U_{n_1,n_2}^{\pi_b,s})_{b=1}^\numperm$, and population parameter $\tconst = \mathbb{E}[U_{n_1,n_2}] = \mmd^2(\P,\Q)$.
Throughout, we will call a random variable \emph{sub-Gaussian} if it has finite sub-Gaussian norm
\begin{talign} \label{eq:subG_norm}
\|X\|_{\psi_{2}} \defeq \sup_{p \geq 1} \frac{(\mathbb{E}[|X|^{p}])^{1/p}}{\sqrt{p}}.
\end{talign}
\subsection{Bounding the fluctuations of the test statistic}
We begin by identifying a CQB $\Phi_{n_1,n_2}$ satisfying the test statistic tail bound \cref{eq:Phi_P_n}. 
We first introduce another way to express the U-statistic $U_{n_1,n_2}$ and its corresponding V-statistic $V_{n_1,n_2}$. If we let $\delta_i = 1$ for $i\in [n_1]$ and $\delta_i = -1$ for $i \in n_1 + [n_2]$ and $K_{ij} = g(X_i,X_j)$, we can write
\begin{talign}
\label{eq:V_n_1_n_2_K}
V_{n_1,n_2} 
    &\defeq
\mmd^2(\PY,\PZ)
    =
\sum_{i,j=1}^{n} \big( \frac{\mathrm{I}[\delta_i = 1, \delta_j = 1]}{n_1^2} + \frac{\mathrm{I}[\delta_i = -1, \delta_j = -1]}{n_2^2} - \frac{\mathrm{I}[\delta_i \delta_j = -1]}{n_1 n_2} \big) K_{ij}, \\
U_{n_1,n_2} 
    &= 
\sum_{i,j=1}^{n} \big( \frac{\mathrm{I}[\delta_i = 1, \delta_j = 1, i \neq j]}{n_1(n_1-1)} + \frac{\mathrm{I}[\delta_i = -1, \delta_j = -1, i \neq j]}{n_2(n_2-1)} - \frac{\mathrm{I}[\delta_i \delta_j = -1]}{n_1 n_2} \big) K_{ij}.
\label{eq:U_n_1_n_2_K}
\end{talign}
Hence,
\begin{talign}
    V_{n_1,n_2} \! - \! U_{n_1,n_2} 
    &= \sum_{i,j=1}^{n} \big( \frac{\mathrm{I}[\delta_i = 1, \delta_j = 1]}{n_1^2} \! + \!  \frac{\mathrm{I}[\delta_i = -1, \delta_j = -1]}{n_2^2} \! - \! \frac{\mathrm{I}[\delta_i = 1, \delta_j = 1, i \neq j]}{n_1(n_1-1)} \! - \! \frac{\mathrm{I}[\delta_i = -1, \delta_j = -1, i \neq j]}{n_2(n_2-1)} \big) K_{ij} \\ &= \sum_{i=1}^{n} (\frac{\mathrm{I}[\delta_i = 1]}{n_1^2} \! + \! \frac{\mathrm{I}[\delta_i = -1]}{n_2^2}) K_{ii} \! - \! \sum_{i,j=1,i\neq j}^{n} \big( \frac{\mathrm{I}[\delta_i = 1, \delta_j = 1]}{n_1^2 (n_1 - 1)} \! + \! \frac{\mathrm{I}[\delta_i = -1, \delta_j = -1]}{n_2^2 (n_2 -1)} \big) K_{ij}
    \label{eq:V_U_diff}
\end{talign}
where we used that $\frac{1}{n_1^2} - \frac{1}{n_1 (n_1 - 1)} %
= -\frac{1}{n_1^2 (n_1 - 1)}$.

Our proof makes use of two lemmas.  The first, essentially due to \citet{maurer2021concentration}, shows that functions of independent variables concentrate around their mean when coordinatewise function differences are sub-Gaussian.

\begin{lemma}[Sub-Gaussian differences inequality {\citep[Sec.~5]{maurer2021concentration}}] \label{thm:mcdiarmid_subG}
Suppose an integrable function $f$ of a sequence of independent datapoints $\X=(X_i)_{i=1}^n$ satisfies %
\begin{talign}
|f(\X)-\E[f(\X)\mid \X_{-k}]|
    \leq
f_{k}(X_k)
\qtext{almost surely}
\end{talign}
for each $k\in[n]$ and measurable functions $(f_k)_{k=1}^n$. 
If each $f_k(X_k)$ is sub-Gaussian, then
\begin{talign}
f(\X) - \mathbb{E}[f(\X)] 
    <
\sqrt{64 e\log(1/\delta) \sum _{k\in [n]}\|f_{k}(X_k)\|_{\psi _{2}}^{2}}
\qtext{with probability at least}
1-\delta.
\end{talign}
\end{lemma}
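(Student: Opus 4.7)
My approach is a Doob martingale decomposition combined with a sub-Gaussian analogue of Azuma's inequality. Let $\mathcal{F}_k = \sigma(X_1,\ldots,X_k)$ and set $D_k \defeq \E[f(\X)\mid\mathcal{F}_k] - \E[f(\X)\mid\mathcal{F}_{k-1}]$, so that $f(\X)-\E[f(\X)] = \sum_{k=1}^n D_k$ telescopes into a sum of martingale differences with respect to $(\mathcal{F}_k)$.

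The next step is to bound each $D_k$ pointwise by $f_k(X_k)$. I would use independence of the coordinates of $\X$: since $\E[f(\X)\mid\X_{-k}]$ does not depend on $X_k$, integrating out $X_{k+1},\ldots,X_n$ gives the identity $\E[f(\X)\mid\mathcal{F}_{k-1}] = \E[\E[f(\X)\mid\X_{-k}]\mid\mathcal{F}_k]$. Hence $D_k = \E[f(\X)-\E[f(\X)\mid\X_{-k}]\mid\mathcal{F}_k]$, and the hypothesis yields $|D_k|\leq \E[f_k(X_k)\mid\mathcal{F}_k]= f_k(X_k)$ almost surely. In particular, conditional on $\mathcal{F}_{k-1}$, the centered random variable $D_k$ is dominated in absolute value by $f_k(X_k)$, which is independent of $\mathcal{F}_{k-1}$; therefore its conditional sub-Gaussian norm satisfies $\|D_k\mid\mathcal{F}_{k-1}\|_{\psi_2}\leq \|f_k(X_k)\|_{\psi_2}$.

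From here I would use the standard MGF estimate for centered sub-Gaussian variables, which gives $\E[e^{\lambda D_k}\mid\mathcal{F}_{k-1}]\leq \exp(c\lambda^2\|f_k(X_k)\|_{\psi_2}^2)$ for an explicit constant $c$ tied to the $\psi_2$-norm. Iterated conditioning via the tower property then produces
\begin{talign}
\E[e^{\lambda(f(\X)-\E[f(\X)])}]\leq \exp\Big(c\lambda^2\sum_{k=1}^n\|f_k(X_k)\|_{\psi_2}^2\Big),
\end{talign}
and a Chernoff bound with the optimal choice of $\lambda$ yields an upper tail of the advertised form $\sqrt{C\log(1/\delta)\sum_k\|f_k(X_k)\|_{\psi_2}^2}$.

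The only genuine obstacle is calibrating the numerical constant to match the stated $64e$. This requires the standard conversion $\E[e^{\lambda X}]\leq \exp(e\lambda^2\|X\|_{\psi_2}^2)$ for centered sub-Gaussian $X$ (which gives the factor $e$), together with a careful accounting of the factor $64$ that arises when optimizing $\lambda$ in the Chernoff bound; this is precisely the bookkeeping carried out in \citep[Sec.~5]{maurer2021concentration}, so I would invoke their estimate rather than re-deriving the constant from scratch.
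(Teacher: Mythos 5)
Your proposal is correct in outline, but it takes a more self-contained route than the paper. The paper's proof is essentially a two-line affair: it quotes the exponential inequality from the proof of \citep[Thm.~3]{maurer2021concentration}, namely $\Pr\left(f(\X)-\E[f(\X)]\ge\varepsilon\right)\le\exp\left(16e\beta^2\sum_{k\in[n]}\|f_k(X_k)\|_{\psi_2}^2-\beta\varepsilon\right)$ for all $\beta,\varepsilon>0$, and then minimizes over $\beta$ to obtain the $64e$. You instead rebuild the moment-generating-function bound via the Doob martingale $D_k=\E[f(\X)\mid\mathcal{F}_k]-\E[f(\X)\mid\mathcal{F}_{k-1}]$; your identity $\E[f(\X)\mid\mathcal{F}_{k-1}]=\E[\E[f(\X)\mid\X_{-k}]\mid\mathcal{F}_k]$ (valid by independence), the resulting domination $|D_k|\le f_k(X_k)$, the conditional centering, and the tower-property assembly are all sound, so the Azuma--Chernoff argument does deliver a tail of the advertised form with a universal constant. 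What your route buys is transparency (an elementary martingale argument rather than a black-box citation); what it costs is the constant, and you concede that matching $64e$ requires importing the calibration from \citet{maurer2021concentration} anyway, so in the end you lean on the same reference that the paper cites wholesale. Two cautions on that last step. First, the Section~5 argument of \citet{maurer2021concentration} is not the plain Azuma bookkeeping you describe, so claiming it is ``precisely'' your computation overstates the match, though this does not affect validity. Second, if you import their final stated constant verbatim you would inherit $32e$ rather than $64e$: the paper flags this as an apparent typo caused by an incorrect minimization, and the safe move (which the paper makes) is to import the exponential inequality above and carry out the minimization over $\beta$ yourself.
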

\begin{proof}
The proof of Thm.~3 in Sec.~5 of \citep{maurer2021concentration} shows that
\begin{talign}
    \label{eq:maurer-inequality}
    \mathrm{Pr}\left(f(\X) - \mathbb{E}[f(\X)] \geq \varepsilon \right) \leq \exp ( 16 e \beta^2 \sum _{k\in [n]}\|f_{k}(X_k)\|_{\psi _{2}}^{2}  - \beta \varepsilon )
    \sstext{for all}
    \varepsilon,\beta > 0.
\end{talign}
Minimizing over $\beta > 0$ yields the result.\footnote{The statement of \citep[Thm.~3]{maurer2021concentration} contains the apparent typo $32$ in place of $64$, presumably due to incorrect minimization of \cref{eq:maurer-inequality}.}
\end{proof}
The second lemma, proved in \cref{proof:MMD_concentration}, uses \cref{thm:mcdiarmid_subG} to 
derive suitable CQBs for $U_{n_1,n_2}$ and $V_{n_1,n_2}$. 

\begin{lemma}[Sub-Gaussian homogeneity quantile bounds]\label{lem:MMD_concentration}
Under the assumptions of \cref{thm:final_2s}, 
for $V_{n_1,n_2}\defeq\mmd^2(\PY, \PZ)$ and each $\beta\in(0,1)$, 
\begin{talign}
\Pr \big(|\mmd^2&(\P, \Q) 
        - V_{n_1,n_2}| 
    \geq
\Lambda_{V_{n_1,n_2}}^2(\beta) + 2 \mmd(\P, \Q) \Lambda_{V_{n_1,n_2}}(\beta) \big) \leq \beta
\quad\text{and} \\
\Pr \big(|\mmd^2&(\P, \Q) 
        - U_{n_1,n_2}| \geq
\Phi_{n_1,n_2}(\beta) %
\big) 
\leq \beta
\quad\text{where} \\
\label{eq:lambda_Vn_def}
\Lambda_{V_{n_1,n_2}}(\beta) 
    &\defeq \sqrt{\frac{\mom[2,\P]}{n_1}} + \sqrt{\frac{\mom[2,\Q]}{n_2}}    + 
\sqrt{64 e\log(\frac{1}{\beta}) \big( \frac{\subg^2 }{n_1} + \frac{\subg[\Q]^2 }{n_2} \big)} \\
\label{eq:lambda_n_def}
\Lambda_{U_{n_1,n_2}}(\beta) 
    &\defeq
    \sqrt{ \frac{2\,\mom[2,\P]}{n_1 - 1}} + \sqrt{\frac{2\,\mom[2,\Q]}{n_2 - 1} } + \sqrt{128 e\log(\frac{2}{\beta}) (\frac{(n_1 + 1)\subg^2}{n_1^2}+\frac{(n_2 + 1)\subg[\Q]^2}{n_2^2})},
\quad\text{and}\\
\label{eq:Phi_n1_n2}
\Phi_{n_1,n_2}(\beta) 
&\defeq \Lambda_{U_{n_1,n_2}}^2(\beta) + 2 \mmd(\P, \Q) \Lambda_{U_{n_1,n_2}}(\beta).
\end{talign}
\end{lemma}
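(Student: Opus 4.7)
My plan is to reduce the first claim to a linear MMD estimate via the factorization $a^{2}-b^{2} = (a-b)(a+b)$ with $a = \mmd(\P,\Q)$ and $b = \mmd(\PY,\PZ)$. Two applications of the RKHS triangle inequality yield $|a-b| \leq D$ and $a+b \leq 2\mmd(\P,\Q) + D$, where $D \defeq \mmd(\PY,\P)+\mmd(\PZ,\Q)$, so $|V_{n_1,n_2}-\mmd^{2}(\P,\Q)| \leq D^{2} + 2\mmd(\P,\Q)\,D$. This matches the advertised product form, so it suffices to show $D \leq \Lambda_{V_{n_1,n_2}}(\beta)$ with probability at least $1-\beta$, which I would establish by applying the sub-Gaussian differences inequality (\cref{thm:mcdiarmid_subG}) to $D$ viewed as a function of the independent sample.

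\textbf{Concentration of $D$.} Replacing a single coordinate $Y_k$ by an independent copy $Y_k'$ perturbs $\mmd(\PY,\P)$ by at most $\tfrac{1}{n_1}\mmd(\dirac_{Y_k},\dirac_{Y_k'})$ via a third application of the RKHS triangle inequality, so Jensen yields the coordinate envelope $f_k(Y_k) = \tfrac{1}{n_1}\E[\mmd(\dirac_{Y_k},\dirac_{Y_k'})\mid Y_k]$ whose sub-Gaussian norm is at most $\subg/n_1$ by the very definition of $\subg$ in \cref{thm:final_2s}; the analogous bound $\subg[\Q]/n_2$ holds for each $Z_k$. Summing the squared norms gives $\subg^{2}/n_1+\subg[\Q]^{2}/n_2$, and Jensen combined with the standard variance identity $\E[\mmd^{2}(\PY,\P)] = \mom[2,\P]/n_1$ gives $\E[D] \leq \sqrt{\mom[2,\P]/n_1}+\sqrt{\mom[2,\Q]/n_2}$. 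Substituting into \cref{thm:mcdiarmid_subG} reproduces $\Lambda_{V_{n_1,n_2}}(\beta)$ exactly.

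\textbf{The $U$-statistic bound and the main obstacle.} For the second claim I would begin from the identity $U_{n_1,n_2} = \tfrac{n_1}{n_1-1}\|\mu_{\PY}\|^{2} + \tfrac{n_2}{n_2-1}\|\mu_{\PZ}\|^{2} - 2\inner{\mu_{\PY}}{\mu_{\PZ}} - \tfrac{1}{n_1-1}\overline{K}_{YY} - \tfrac{1}{n_2-1}\overline{K}_{ZZ}$, where $\mu_\nu$ denotes the kernel mean embedding of a distribution $\nu$ and $\overline{K}_{YY} \defeq \tfrac{1}{n_1}\sum_i\kernel(Y_i,Y_i)$. Recentering the diagonal using $\E[\kernel(Y,Y)] = \|\mu_\P\|^{2} + \mom[2,\P]$, I would split $U_{n_1,n_2}-\mmd^{2}(\P,\Q)$ into (i) a rescaled squared-MMD piece that factors as $D'(D' + 2\mmd(\P,\Q))$ via the same $a^{2}-b^{2}$ trick but with effective sample sizes $n_1-1$ and $n_2-1$ (producing the $\sqrt{2\mom[2,\P]/(n_1-1)}$ and $\sqrt{2\mom[2,\Q]/(n_2-1)}$ mean contributions) and (ii) a mean-zero diagonal concentration term amenable to a second application of \cref{thm:mcdiarmid_subG}; union-bounding the two pieces at level $\beta/2$ each yields the $\log(2/\beta)$ factor. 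The main obstacle will be the bookkeeping here: recovering the tight prefactor $\tfrac{(n_1+1)\subg^{2}}{n_1^{2}}$ rather than a crude $\tfrac{1}{(n_1-1)^{2}}$ requires combining the on- and off-diagonal perturbations produced by a single coordinate change $Y_k \to Y_k'$ under one sub-Gaussian envelope, rather than treating them separately and incurring an additional union-bound factor.
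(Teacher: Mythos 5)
Your treatment of the V-statistic is correct and is the same as the paper's: reduce to $D=\mmd(\PY,\P)+\mmd(\PZ,\Q)$ by the triangle inequality, apply \cref{thm:mcdiarmid_subG} with coordinate envelopes $\frac{1}{n_1}\E[\mmd(\delta_{Y_k},\delta_{\tilde Y})\mid Y_k]$ (sub-Gaussian norm $\subg/n_1$ by definition of $\subg$), bound $\E[D]$ by Jensen, and convert to the squared statement via $|a^2-b^2|\le|a-b|(|a-b|+2|a|)$.

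The U-statistic part has a genuine gap, and it is precisely at the step you defer as ``bookkeeping.'' Your algebra correctly identifies the residual: the diagonal correction you isolate is exactly $V_{n_1,n_2}-U_{n_1,n_2}$. But the plan to treat it as ``a mean-zero diagonal concentration term amenable to a second application of \cref{thm:mcdiarmid_subG}'' cannot work under the lemma's hypotheses, whether or not you combine the on- and off-diagonal perturbations: replacing $Y_k$ perturbs this (un-square-rooted) quantity by increments of \emph{squared} MMD type (e.g.\ terms proportional to $\kernel(Y_k,Y_k)$, or to $\mmd^2(\delta_{Y_k},\PY)$), and the assumption $\subg<\infty$ only makes the MMD itself sub-Gaussian in $Y_k$ --- its square is merely sub-exponential --- so no admissible envelope $f_k(Y_k)$ exists for that piece. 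The missing idea is to take the square root \emph{before} concentrating: $V_{n_1,n_2}-U_{n_1,n_2}$ is a nonnegative sum of squared MMDs, namely a constant multiple of $\sum_{i}\mmd^2(\delta_{Y_i},\delta_{\Y_{-i}})$ plus the $Z$ analogue, so $\sqrt{V_{n_1,n_2}-U_{n_1,n_2}}$ is an $\ell^2$ norm of a vector of MMDs. A single coordinate change then perturbs it by at most $\frac{1}{n_1}\sqrt{1+1/n_1}\;\E[\mmd(\delta_{Y_k},\delta_{\tilde Y})\mid Y_k]$, whose sub-Gaussian norm is $\frac{\sqrt{n_1+1}}{n_1^{3/2}}\subg$ --- this is exactly where the $(n_1+1)\subg^2/n_1^2$ prefactor comes from, with no union bound over diagonal versus off-diagonal pieces --- and its mean is controlled by $\E[V_{n_1,n_2}-U_{n_1,n_2}]$, giving the $\sqrt{2\mom[2,\P]/(n_1-1)}$ terms. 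Combining this event at level $\beta/2$ with the V-statistic event at level $\beta/2$ produces the $\log(2/\beta)$ factor and, since $U_{n_1,n_2}\le V_{n_1,n_2}$, the stated bound with $\Phi_{n_1,n_2}(\beta)$.
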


\subsection{Bounding the fluctuations of the threshold}
We next identify CQBs $\Psi_{\X,s}$ and $\Psi_{n_1,n_2,s}$ that satisfy the properties
\begin{talign}
    &\Pr \left( U^{\pi,s}_{n_1,n_2} %
        \geq \Psi_{\X,s}(\alpha) \mid \X \right) \leq \alpha, \qquad \forall \alpha \in (0,1),
        \qtext{and} \label{eq:Psi_X_n_ts} \\
        &\Pr ( %
        \Psi_{\X,s}(\alpha) \geq \Psi_{n_1,n_2,s}(\alpha,\beta)) \leq \beta, \qquad \forall \alpha, \beta \in (0,1) \label{eq:Psi_P_n_ts}
\end{talign}
for the cheaply permuted U-statistic $U^{\pi,s}_{n_1,n_2}$ \cref{eq: Two Sample Cheap U_statistic}.
Our derivation makes use of three lemmas.  
The first is tail bound for random quadratic forms known as the Hanson-Wright inequality.

\begin{lemma}[Hanson-Wright inequality {\citep[Thm.~3.1]{dadush2018fast}}] \label{lem:hanson_wright}
Let $X \in \R^n$ be a random vector with %
independent components satisfying $\mathbb{E}[e^{\lambda X_i}] \leq e^{\lambda^2 \nu^2/2}$
for some $\nu\geq0$, all $\lambda \in \R$, and all $i\in[n]$.
Let $A \in \mathbb{R}^{n \times n}$ be a symmetric matrix such that $a_{ii} = 0$ for $i \in [n]$. Then, there exists a universal constant $\mathfrak{c} > 3/20$ such that for any $t > 0$, 
\begin{talign} \label{eq:hanson_2}
    \mathrm{Pr}(X^{\top} A X > t) \leq \exp(- \mathfrak{c} \min\{ \frac{t^2}{\nu^4 \fronorm{A}^2}, \frac{t}{\nu^2 \opnorm{A}} \}).
\end{talign}
Equivalently, for any $\delta\in(0,1)$, with probability at least $1-\delta$, we have 
\begin{talign} \label{eq:hanson_inverted}
    X^{\top} A X \leq \max \big\{ \nu^2 \fronorm{A} \sqrt{\frac{ \log(1/\delta)}{\mathfrak{c}}}, \frac{ \log(1/\delta) \nu^2 \opnorm{A}}{\mathfrak{c}} \big\}.
\end{talign}
Moreover, when $\delta \in(0,0.86)$, we have 
\begin{talign} \label{eq:hanson_inverted_2}
X^{\top} A X 
    \leq 
\frac{\log(1/\delta) \nu^2 \fronorm{A}}{\mathfrak{c}}
    \leq
\frac{20 \log(1/\delta) \nu^2 \fronorm{A}}{3}.
\end{talign}
\end{lemma}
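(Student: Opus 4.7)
The tail bound \eqref{eq:hanson_2} is a version of the Hanson--Wright inequality for sub-Gaussian quadratic forms; the two inverted forms \eqref{eq:hanson_inverted} and \eqref{eq:hanson_inverted_2} will follow by elementary inversion of \eqref{eq:hanson_2}. I would first record that the sub-Gaussian MGF hypothesis forces $\E[X_i] = 0$, and combined with $a_{ii}=0$ this gives $\E[X^{\top} A X] = \sum_{i \neq j} a_{ij} \E[X_i]\E[X_j] = 0$, so we only need to control the upper tail of $X^{\top} A X$ itself.

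For \eqref{eq:hanson_2}, my approach would be the classical decoupling plus conditional concentration route. By a standard decoupling inequality (e.g.\ de la Pe\~na--Montgomery-Smith), there is a universal constant $c_0$ such that the MGF of $X^{\top} A X = \sum_{i \neq j} a_{ij} X_i X_j$ is dominated by that of the bilinear form $c_0 \lambda X^{\top} A X'$ for an independent copy $X'$ of $X$. Conditioning on $X'$, the quantity $X^{\top}(AX') = \sum_i (AX')_i X_i$ is a sum of independent centered sub-Gaussian variables, so its conditional MGF is at most $\exp(c_0^2 \lambda^2 \nu^2 \|AX'\|_2^2 / 2)$. Taking outer expectation then reduces everything to bounding $\E \exp(\mu X'^{\top} A^{\top} A X')$ with $\mu = c_0^2 \lambda^2 \nu^2/2$; diagonalizing $A^{\top}A$ and exploiting coordinatewise sub-Gaussianity controls this by $\exp(C \mu \nu^2 \|A\|_F^2)$ provided $\mu \lesssim 1/(\nu^2 \|A\|_{\mathrm{op}})$. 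Optimizing $\lambda$ in Chernoff's inequality yields \eqref{eq:hanson_2}, with the two-regime minimum reflecting whether the quadratic or linear constraint on $\lambda$ is binding.

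For \eqref{eq:hanson_inverted}, set $\delta$ equal to the right-hand side of \eqref{eq:hanson_2}, so that $\log(1/\delta) = \mathfrak{c} \min\{t^2/(\nu^4 \|A\|_F^2),\ t/(\nu^2 \|A\|_{\mathrm{op}})\}$; solving each branch gives either $t = \nu^2 \|A\|_F \sqrt{\log(1/\delta)/\mathfrak{c}}$ or $t = \log(1/\delta)\nu^2 \|A\|_{\mathrm{op}}/\mathfrak{c}$, and taking the max of these two produces an upper tail quantile. For \eqref{eq:hanson_inverted_2}, observe that when $\log(1/\delta)/\mathfrak{c} \geq 1$ we have $\sqrt{\log(1/\delta)/\mathfrak{c}} \leq \log(1/\delta)/\mathfrak{c}$, and $\|A\|_{\mathrm{op}} \leq \|A\|_F$, so both branches are dominated by $\log(1/\delta) \nu^2 \|A\|_F/\mathfrak{c}$; the restriction $\delta < 0.86 \leq e^{-\mathfrak{c}}$ ensures $\log(1/\delta)/\mathfrak{c} \geq 1$, and the final numerical factor $20/3$ comes from the universal lower bound $\mathfrak{c} > 3/20$.

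The main obstacle is step controlling $\E\exp(\mu X'^{\top} A^{\top} A X')$, because this is itself a Hanson--Wright-type problem with the correct interplay between $\|A\|_F$ (governing the Gaussian tail) and $\|A\|_{\mathrm{op}}$ (governing the exponential tail). A cleaner alternative I would pursue in parallel is the Rudelson--Vershynin moment-method proof, which bounds $\E|X^{\top} A X|^p$ by a combinatorial sum over closed walks weighted by entries of $A$, avoiding the self-referential quadratic structure at the cost of careful combinatorial bookkeeping; either route delivers the two-regime tail, but matching the explicit constant $\mathfrak{c} > 3/20$ stated in \citep[Thm.~3.1]{dadush2018fast} is the real technical heart of the argument and justifies citing their version directly rather than reproving it from scratch.
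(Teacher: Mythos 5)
Your proposal is correct and follows essentially the same route as the paper: the tail bound \eqref{eq:hanson_2} is taken directly from \citep[Thm.~3.1]{dadush2018fast}, \eqref{eq:hanson_inverted} is obtained by inverting the two branches of the minimum, and \eqref{eq:hanson_inverted_2} follows from $\delta < 0.86 \leq e^{-\mathfrak{c}}$ (so that $\sqrt{\log(1/\delta)/\mathfrak{c}} \leq \log(1/\delta)/\mathfrak{c}$) together with $\opnorm{A} \leq \fronorm{A}$ and $\mathfrak{c} > 3/20$. The decoupling/moment-method sketch for reproving Hanson--Wright is unnecessary, as you yourself conclude; the paper likewise simply cites the result rather than reproving it.
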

\begin{proof}
Equation \cref{eq:hanson_2} is the content of  \citep[Thm.~3.1]{dadush2018fast}, and equivalence with \cref{eq:hanson_inverted} follows from selecting  $t = \max \big\{ \sqrt{a \log(1/\delta)}, b \log(1/\delta) \big\}$ in \cref{eq:hanson_2} or  $\delta = \exp(- \min\{ \frac{\mathfrak{c} t^2}{\nu^4 \fronorm{A}^2}, \frac{\mathfrak{c} t}{\nu^2 \opnorm{A}} \})$ in \cref{eq:hanson_inverted}. 
Now, suppose that $\delta \in (0,0.86)$.  Since $0.86 \leq \exp(-3/20)\leq \exp(-\mathfrak{c})$, we have $\sqrt{\frac{ \log(1/\delta)}{\mathfrak{c}}} \leq \frac{\log(1/\delta)}{\mathfrak{c}}$.
Since $\opnorm{A} \leq \fronorm{A}$, the claim \cref{eq:hanson_inverted_2} follows.
\end{proof}

Our second lemma, proved in \cref{sec:proof_of_lem:concentration_simultaneous}, provides high-probability bounds on the sums of MMD powers.
\begin{lemma}[High-probability bounds on sums of MMD powers] 
\label{lem:concentration_simultaneous}
Under the assumptions of \cref{thm:final_2s}, if $p\in\{1,2,4\}$, then
    \begin{talign}
        Z_p(\X) &\defeq \big(\sum_{i=1}^{s_1} \mmd^p(\delta_{\Y_i},\P) + \sum_{i=1}^{s_2} \mmd^p(\delta_{\Z_i},\Q)\big)^{1/p} \\
        &< \mathcal{A}_p(s_1,s_2,m) + \mathcal{B}(s_1,s_2,m,\delta), \label{eq:bound_Z_p} %
    \end{talign}
    with probability at least $1-\delta$ for %
    \begin{talign}
        \mathcal{B}(s_1,s_2,m,\delta) &\defeq 
        \sqrt{64 e \log(1/\delta) \big( \frac{n_1 \subg^2}{m^2} + \frac{n_2 \subg[\Q]^2}{m^2} \big)}, \\
        \mathcal{A}_1(s_1,s_2,m) &\defeq s_1 \sqrt{\frac{\mathbb{E}_{Y \sim \P}[\mmd^2(\delta_{Y},\P)]}{m}}
        + s_2 \sqrt{\frac{\mathbb{E}_{Z \sim \Q}[\mmd^2(\delta_{Z},\Q)]}{m}}, \\
        \mathcal{A}_2(s_1,s_2,m) &\defeq \sqrt{ 
        \frac{s_1 \mathbb{E}_{Y \sim \P}[\mmd^2(\delta_{Y},\P)]}{m}
        + \frac{s_2 \mathbb{E}_{Z \sim \Q}[\mmd^2(\delta_{Z},\Q)]}{m} }, 
        \qtext{and} \\
        \mathcal{A}_4(s_1,s_2,m) &\defeq \big( \big( \frac{3}{m^2} + \frac{1}{m^3} \big) \big( s_1 \mathbb{E}_{Y \sim \P} [ \mmd^4(\delta_{Y},\P) ] + s_2 
        \mathbb{E}_{Z \sim \Q} [ \mmd^4(\delta_{Z},\Q) ] \big) \big)^{1/4}.
    \end{talign}
\end{lemma}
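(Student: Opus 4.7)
The plan is to apply the sub-Gaussian differences inequality (\cref{thm:mcdiarmid_subG}) to $Z_p(\X)$ viewed as a function of the $n=n_1+n_2$ independent datapoints in $\X$, and then bound its mean separately using Jensen's inequality and elementary moment identities for empirical MMDs.

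First I would establish a per-coordinate difference bound. Writing $Z_p(\X)$ as the $\ell^p$-norm of the vector $\bigl( (\mmd(\delta_{\Y_i},\P))_{i=1}^{s_1},\, (\mmd(\delta_{\Z_j},\Q))_{j=1}^{s_2} \bigr)$, replacing a single datapoint $X_k$ inside a bin, say $\Y_i$, by an independent copy $\tilde X_k$ (yielding the modified sample $\X'$ and modified bin $\Y_i'$) alters only the $i$-th coordinate of this vector. Combining the reverse $\ell^p$ triangle inequality, the reverse triangle inequality for $\mmd$, and the identity $\delta_{\Y_i} - \delta_{\Y_i'} = \tfrac{1}{m}(\delta_{X_k}-\delta_{\tilde X_k})$ yields $|Z_p(\X) - Z_p(\X')| \leq \tfrac{1}{m}\mmd(\delta_{X_k},\delta_{\tilde X_k})$. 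Averaging over $\tilde X_k \sim \P$ supplies the coordinate-difference function $f_k(X_k) = \tfrac{1}{m}\E_{\tilde X_k}[\mmd(\delta_{X_k},\delta_{\tilde X_k})]$, and the definition of $\subg$ in \cref{thm:final_2s} immediately gives $\|f_k(X_k)\|_{\psi_2}\leq \subg/m$ when $X_k\sim\P$, with the analogous $\subg[\Q]/m$ bound for $\Z$-bin indices. Invoking \cref{thm:mcdiarmid_subG} with $\sum_k\|f_k(X_k)\|_{\psi_2}^2 \leq (n_1\subg^2 + n_2\subg[\Q]^2)/m^2$ then delivers $Z_p(\X) < \E[Z_p(\X)] + \mathcal{B}(s_1,s_2,m,\delta)$ with probability at least $1-\delta$.

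It then remains to show $\E[Z_p(\X)]\leq \mathcal{A}_p(s_1,s_2,m)$. For each $p$, Jensen gives $\E[Z_p(\X)]\leq (\E[Z_p(\X)^p])^{1/p}$. The basic ingredient is the empirical-MMD identity $\E[\mmd^2(\delta_{\Y_i},\P)] = \mom[2,\P]/m$, which follows by writing $\delta_{\Y_i} - \P = \tfrac{1}{m}\sum_{j=1}^m U_j$ in the RKHS, with $U_j \defeq \kernel(Y_{i,j},\cdot) - \E_{Y\sim\P}[\kernel(Y,\cdot)]$ independent and mean-zero, and taking expectations. For $p=1$, a second Jensen step yields $\E[\mmd(\delta_{\Y_i},\P)]\leq \sqrt{\mom[2,\P]/m}$, which summed over bins gives $\mathcal{A}_1$; for $p=2$ the identity directly produces $\mathcal{A}_2$.

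The main obstacle is the $p=4$ moment computation. Expanding $\E[\|\tfrac{1}{m}\sum_j U_j\|^4] = \tfrac{1}{m^4}\sum_{j_1,l_1,j_2,l_2}\E[\langle U_{j_1},U_{l_1}\rangle\langle U_{j_2},U_{l_2}\rangle]$ and using $\E[U_j]=0$ to kill any summand containing an index appearing only once leaves three surviving patterns: all four indices equal ($m$ terms of value $\mom[4,\P]$); two equal pairs in matched positions ($2\binom{m}{2}$ terms of value $\mom[2,\P]^2$); and two equal pairs in crossed positions ($4\binom{m}{2}$ terms of value $\E[\langle U,U'\rangle^2]$ for independent copies $U,U'$). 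Bounding the crossed term by $\E[\langle U,U'\rangle^2]\leq \mom[2,\P]^2 \leq \mom[4,\P]$ (Cauchy--Schwarz followed by Jensen) gives $\E[\mmd^4(\delta_{\Y_i},\P)]\leq \frac{3m^2-2m}{m^4}\,\mom[4,\P] \leq (3/m^2+1/m^3)\,\mom[4,\P]$. Summing over bins and invoking $\E[Z_4]\leq (\E[Z_4^4])^{1/4}$ assembles $\mathcal{A}_4$. Once this fourth-moment bookkeeping is carried out, the rest of the argument is routine.
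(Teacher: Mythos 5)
Your proposal is correct and follows essentially the same route as the paper's proof: (i) a single-coordinate difference bound of $\tfrac1m\E_{\tilde X}[\mmd(\delta_{X_k},\delta_{\tilde X})]$ (the paper's $\phi,\phi'$) fed into the Maurer sub-Gaussian differences inequality with $\sum_k\|f_k\|_{\psi_2}^2\leq(n_1\subg^2+n_2\subg[\Q]^2)/m^2$, plus (ii) a Jensen step $\E[Z_p]\leq(\E[Z_p^p])^{1/p}$ and a $p$-th moment computation. Your fourth-moment bookkeeping enumerates the surviving index patterns directly and bounds the crossed term by Cauchy--Schwarz then Jensen, arriving at $\frac{3m^2-2m}{m^4}\mom[4,\P]\leq(\frac{3}{m^2}+\frac{1}{m^3})\mom[4,\P]$; the paper reaches the identical final bound $(\frac{3}{m^2}+\frac{1}{m^3})\mom[4,\P]$ via a slightly looser overcounting decomposition, so the two are interchangeable.
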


Our third lemma, \cref{Theorem: homogeneity Concentration Cheap}, proved in \cref{sec:proof_of_Theorem: homogeneity Concentration Cheap}, uses \cref{lem:hanson_wright,lem:concentration_simultaneous} to 
verify the CQB properties \cref{eq:Psi_X_n_ts,eq:Psi_P_n_ts} for a particular pairing of $(\Psi_{\X,s},\Psi_{n_1,n_2,s})$.
\begin{lemma}[Sub-Gaussian quantile bound for permutation threshold] %
\label{Theorem: homogeneity Concentration Cheap}
Instantiate the assumptions and notation of \cref{lem:concentration_simultaneous}.
For all $\alpha, \beta \in (0,1)$, %
    \begin{talign}
    &\Psi_{n_1,n_2,s}(\alpha,\beta) \defeq 
    \frac{20 \log(2/\alpha)}{3 s_1^{3/2}} \big( \sqrt{12} \mmd(\P,\Q) (\mathcal{A}_2(s_1,s_2,m) + \mathcal{B}(s_1,s_2,m,\beta/3)) \\ &\qquad\qquad + \sqrt{24} (\mathcal{A}_4(s_1,s_2,m) + \mathcal{B}(s_1,s_2,m,\beta/3))^2 \big)  \\ &\qquad + \frac{2}{s_1^2} \big( 
    (\mathcal{A}_2(s_1,s_2,m) + \mathcal{B}(s_1,s_2,m,\beta/3))^2 \\ &\qquad\qquad 
    + \mmd(\P,\Q) 
    (\mathcal{A}_1(s_1,s_2,m) + \mathcal{B}(s_1,s_2,m,\beta/3)) \big)
    \\ &\qquad 
    + \frac{\mmd^2(\P,\Q) (2 \log(4/\alpha) + 1)}{s_1}.
    \end{talign}
 satisfies $ \Pr (\Psi_{\X,s}(\alpha) \geq \Psi_{n_1,n_2,s}(\alpha,\beta)) \leq \beta$ for $\Psi_{\X,s}(\alpha)$ defined in equation \cref{eq:psi_x_s_def} and satisfying $\Pr \left( U^{\pi,s}_{n_1,n_2} \geq \Psi_{\X,s}(\alpha) \mid \X \right) \leq \alpha$.
 Here, $m \defeq \frac{n_1+n_2}{s}$, $s_1 \defeq \frac{sn_1}{n_1 + n_2}$, and $s_2 \defeq s-s_1$.
\end{lemma}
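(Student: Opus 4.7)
The plan is to apply the Hanson-Wright inequality (\cref{lem:hanson_wright}) to the cheaply permuted U-statistic conditional on $\X$ in order to define $\Psi_{\X,s}(\alpha)$ satisfying \cref{eq:Psi_X_n_ts}, and then to convert this data-dependent quantile bound into the deterministic envelope $\Psi_{n_1,n_2,s}(\alpha,\beta)$ by controlling the resulting Frobenius and operator norms through the high-probability MMD moment bounds of \cref{lem:concentration_simultaneous}.

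First I would write $U^{\pi,s}_{n_1,n_2}$ as a bin-level quadratic form. Mirroring the representation \cref{eq:U_n_1_n_2_K}, I would introduce bin sign variables $\delta^{\pi}_i\in\{+1,-1\}$ assigning each bin to the $\P$- or $\Q$-side according to the permutation $\pi$, and let $A(\X)$ be the off-diagonal bin-level matrix whose entries are built from $G_m(\X^{(i)},\X^{(j)})$ with the U-statistic weights of \cref{eq:U_n_1_n_2_K}. Then $U^{\pi,s}_{n_1,n_2}$ decomposes as a conditional mean piece (which includes a bias of order $\mmd^2(\P,\Q)/s_1$ together with a diagonal remainder controlled by bin-level MMDs) plus a centered quadratic form $(\delta^{\pi})^{\top}A(\X)\delta^{\pi}$. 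A uniform bin permutation of a balanced sign vector is exchangeable but not independent, so I would use the standard coupling between sampling without replacement and independent Rademacher signs to reduce to the independent case at the cost of at most a constant; the resulting signs satisfy the sub-Gaussian hypothesis of \cref{lem:hanson_wright} with $\nu^2\leq 1$.

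Second, applying \cref{lem:hanson_wright} with the refined estimate \cref{eq:hanson_inverted_2}, I obtain that conditionally on $\X$, with probability at least $1-\alpha$,
\begin{talign}
U^{\pi,s}_{n_1,n_2}-\E[U^{\pi,s}_{n_1,n_2}\mid\X] \leq \tfrac{20\log(2/\alpha)}{3}\fronorm{A(\X)}+\tfrac{20\log(4/\alpha)}{3}\opnorm{A(\X)},
\end{talign}
which I would take as the definition of $\Psi_{\X,s}(\alpha)$ (combined with $\E[U^{\pi,s}_{n_1,n_2}\mid\X]$). Using the RKHS inner-product representation of $\Hho$ and $G_m$, the Frobenius norm decomposes as $\fronorm{A(\X)}^2\lesssim \frac{\mmd^2(\P,\Q)}{s_1^3}\sum_i\mmd^2(\delta_{\X^{(i)}},\cdot)+\frac{1}{s_1^4}\sum_i\mmd^4(\delta_{\X^{(i)}},\cdot)$ up to cross terms, while the operator norm is dominated by $\frac{1}{s_1^2}\max_i\mmd^2(\delta_{\X^{(i)}},\cdot)\leq\frac{1}{s_1^2}(\sum_i\mmd^2(\delta_{\X^{(i)}},\cdot))$, and the conditional mean contributes the isolated term of order $\mmd^2(\P,\Q)(2\log(4/\alpha)+1)/s_1$.

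Third, I would invoke \cref{lem:concentration_simultaneous} at failure level $\beta/3$ for each $p\in\{1,2,4\}$ and union-bound, replacing the random sums $Z_p(\X)$ by their envelopes $\mathcal{A}_p+\mathcal{B}$ with total failure probability at most $\beta$. Substituting these envelopes into the Hanson-Wright bound and collecting terms yields $\Psi_{n_1,n_2,s}(\alpha,\beta)$ with exactly the three-scale structure displayed: the $s_1^{-3/2}$ term collects the Frobenius contributions (pairing $\mmd(\P,\Q)$ with $\mathcal{A}_2+\mathcal{B}$ and supplying the square-rooted $\mathcal{A}_4$ piece), the $s_1^{-2}$ term collects the pure bin-level noise $(\mathcal{A}_2+\mathcal{B})^2$ and the $\mmd(\P,\Q)(\mathcal{A}_1+\mathcal{B})$ piece from the mean and operator norm, and the $s_1^{-1}$ term records the bias of the permutation mean. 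The main obstacle will be the bookkeeping in the second step: identifying precisely which bin-level MMD moments control each piece of $\fronorm{A(\X)}^2$ and $\opnorm{A(\X)}$, and arranging the algebra so that the refined Hanson-Wright constant $\tfrac{20}{3}$ from \cref{eq:hanson_inverted_2} is preserved and the $\mmd^2(\P,\Q)/s_1$ signal term is isolated from the noise terms, since these constants drive the exponential improvement in \cref{thm:final_2s}.
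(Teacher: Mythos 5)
Your skeleton is right — express the permuted statistic as a bin-level quadratic form, use Hanson-Wright to get the conditional quantile $\Psi_{\X,s}$, then substitute the envelopes from \cref{lem:concentration_simultaneous} — but two of the technical steps as written would not go through, and they are exactly the places where the paper has to do real work.

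First, the handling of the constrained sign vector. You write that $\delta^\pi$ is "exchangeable but not independent," and propose to "use the standard coupling between sampling without replacement and independent Rademacher signs to reduce to the independent case at the cost of at most a constant." There is no such off-the-shelf coupling for quadratic forms: Hoeffding's with-/without-replacement comparison applies to sums $\sum_i f(X_i)$ under convexity, and it does not transfer to $\delta^{\pi\top}A\delta^\pi$ in a way that preserves the tail constants you need here. What the paper actually does (following \citep[Sec.~6.1]{kim2022minimax}) is construct an auxiliary variable $\widetilde{V}_{n_1,n_2}^{\pi,\ell,\zeta,s}$ using a fresh without-replacement index tuple $\ell$ and \emph{genuinely independent} Rademacher signs $\zeta$, and then proves the distributional identity that, conditional on $\X$, $\E[\widetilde{V}^{\pi,\ell,\zeta,s}\mid\X,\pi,\zeta]$ has the \emph{same law} as $V^{\pi,s}_{n_1,n_2}$. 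This is an exact decoupling, not an inequality "up to a constant," and Hanson-Wright is then applied to the off-diagonal part $\zeta^\top\E[\mathbf{A}^{\mathrm{od}}_{\pi,\ell}\mid\X,\pi]\zeta$ with the independent $\zeta$. Your plan never produces the independent signs that the Hanson-Wright hypothesis requires.

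Second, the source of the signal term $\mmd^2(\P,\Q)(2\log(4/\alpha)+1)/s_1$. You attribute it to "the conditional mean" of the permuted statistic, but $\E[U_{n_1,n_2}^{\pi,s}\mid\X]=0$ surely (this fact is used at the start of the proof of \cref{thm: Full Two Sample Tests}), so there is no mean to produce such a term. In the paper this piece comes from $\E[c_{\zeta,\pi,\ell}\mid\zeta,\pi]$, which is a random quadratic form in the \emph{independent} Rademacher $\zeta$ built out of the population measures $\mu_k,\mu_k'$, and it is controlled by a separate Hoeffding-plus-union-bound argument at level $\alpha/2$ — that is where the $\log(4/\alpha)$ comes from. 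Relatedly, the paper must also bound $V^{\pi,s}$ rather than $U^{\pi,s}$ directly (using $V^{\pi,s}\geq U^{\pi,s}$), which is what produces the trace term $\mathrm{Tr}(\mathbf{A}_{\pi,\ell})$ that you fold, incorrectly, into a "diagonal remainder" of the mean; that trace is random and gets its own envelope through \cref{lem:concentration_simultaneous}. So the final $\Psi_{n_1,n_2,s}$ is assembled from \emph{three} probabilistic pieces (Hanson-Wright on the off-diagonal, direct trace bound, Hoeffding on $c_{\zeta,\pi,\ell}$), not from one Hanson-Wright estimate plus a deterministic mean.

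The envelope substitution in your third step matches the paper: apply \cref{lem:concentration_simultaneous} at level $\beta/3$ for $p\in\{1,2,4\}$ and union-bound to replace $Z_p(\X)$ by $\mathcal{A}_p+\mathcal{B}$.
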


In the sequel, we will make use of a simplified version of $\Psi_{n_1,n_2,s}$ that, as an immediate corollary to \cref{Theorem: homogeneity Concentration Cheap}, also satisfies the CQB property \cref{eq:Psi_P_n_ts}.
\begin{corollary}[Simplified Sub-Gaussian quantile bound for permutation threshold] %
\label{Corollary: homogeneity Concentration Cheap}
Instantiate the assumptions and notation of \cref{thm:final_2s}. 
For all $\alpha, \beta \in (0,1)$,   
    \begin{talign}
        \Psi_{n_1,n_2,s}(\alpha,\beta) &\defeq \frac{20 \log(2/\alpha)}{3s_1} \big( \frac{ %
        \sqrt{12} \mmd(\P,\Q) (\tilde{\mathcal{A}}_2 + \tilde{\mathcal{B}}(\frac{\beta}{3}))}{\sqrt{m}} + \frac{ %
        \sqrt{24} (\tilde{\mathcal{A}}_4 + \tilde{\mathcal{B}}(\frac{\beta}{3})^2)}{m} \big)
        \\ &\qquad + \frac{2}{s_1} \big( \frac{2(\tilde{\mathcal{A}}_2^2 + \tilde{\mathcal{B}}(\frac{\beta}{3})^2)}{m} + \frac{\mmd(\P,\Q) (\sqrt{2} \tilde{\mathcal{A}}_2 + \tilde{\mathcal{B}}(\frac{\beta}{3}))}{\sqrt{m s_1}} \big)
        \\ &\qquad 
        + \frac{\mmd^2(\P,\Q) (2 \log(4/\alpha) + 1)}{s_1}.
    \end{talign}
 satisfies $ \Pr (\Psi_{\X,s}(\alpha) \geq \Psi_{n_1,n_2,s}(\alpha,\beta)) \leq \beta$ 
 for $m$, $s_1$, and $\Psi_{\X,s}$ as in \cref{Theorem: homogeneity Concentration Cheap}.
\end{corollary}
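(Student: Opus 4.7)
The plan is to derive the Corollary's simplified $\Psi_{n_1,n_2,s}$ as a pointwise upper bound on the Lemma's version of $\Psi_{n_1,n_2,s}$; since the CQB property \cref{eq:Psi_P_n_ts} is preserved under inflation of the threshold, once we verify the inflation, the probability bound is inherited from \cref{Theorem: homogeneity Concentration Cheap} and no new probabilistic argument is needed. The data-dependent $\Psi_{\X,s}(\alpha)$ is left untouched, so the only substantive task is algebraic: replacing the sample-dependent quantities $\mathcal{A}_p(s_1,s_2,m)$ and $\mathcal{B}(s_1,s_2,m,\delta)$ by their distribution-only surrogates $\tilde{\mathcal{A}}_p$ and $\tilde{\mathcal{B}}(\delta)$ and then consolidating the $s_1$, $s_2$, $s$, $m$ factors.

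The first step would be to produce uniform bounds of the form $\mathcal{A}_p(s_1,s_2,m) \lesssim \sqrt{s/m}\,\tilde{\mathcal{A}}_p$ for $p\in\{2,4\}$. This follows from the weighted-sum inequality $s_1 a + s_2 b \leq s(a+b)$ together with the definition $\tilde{\mathcal{A}}_p = (p/2)\sqrt{\mom[p,\P]+\mom[p,\Q]}$, applied to $\mathcal{A}_p^p$. The analogous bound on $\mathcal{A}_1$ uses $\sqrt{a}+\sqrt{b}\leq\sqrt{2(a+b)}$ to yield $\mathcal{A}_1 \lesssim s\,\tilde{\mathcal{A}}_2/\sqrt{m}$, and the bound $\mathcal{B}(s_1,s_2,m,\delta) \leq \sqrt{s/m}\,\tilde{\mathcal{B}}(\delta)$ follows directly from $n_1\subg^2 + n_2\subg[\Q]^2 \leq n(\subg^2 + \subg[\Q]^2)$ after substituting $n_j = m s_j$.

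With these surrogate bounds in hand, I would substitute termwise into the Lemma's $\Psi_{n_1,n_2,s}$. Each quadratic expression $(\mathcal{A}_p+\mathcal{B})^2$ is first split via $(a+b)^2 \leq 2(a^2+b^2)$ and then bounded on each piece using the surrogate estimates; the linear expressions $(\mathcal{A}_p+\mathcal{B})$ are handled directly. Residual $s_1^{-3/2}$, $s_1^{-2}$ and $s_1^{-1}$ denominators are absorbed using the identities $m s_1 = n_1$, $m s_2 = n_2$, $s = s_1+s_2$ to rewrite them as the $s_1^{-1}$, $(s_1\sqrt{m})^{-1}$, $(s_1 m)^{-1}$, and $(s_1\sqrt{m s_1})^{-1}$ factors appearing in the Corollary. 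The last summand $\mmd^2(\P,\Q)(2\log(4/\alpha)+1)/s_1$ is identical in both statements and requires no manipulation.

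The principal obstacle is bookkeeping rather than any deep inequality: each of the five summands in the Lemma's $\Psi_{n_1,n_2,s}$ must be substituted and simplified individually while matching the exact numerical constants ($\sqrt{12}$, $\sqrt{24}$, $2$, $\sqrt{2}$, $20/3$) appearing in the Corollary. Care is also needed to verify that the $(p/2)$ scaling built into $\tilde{\mathcal{A}}_p$ for $p=4$ absorbs the $(3/m^2+1/m^3)$ prefactor of $\mathcal{A}_4^4$ in the expected way (using $m\geq 1$ so that $3/m^2 + 1/m^3 \leq 4/m^2$). No new probabilistic machinery is required beyond what \cref{Theorem: homogeneity Concentration Cheap} already provides.
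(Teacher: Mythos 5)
Your overall strategy is the right one and is exactly what the paper does: the paper supplies no separate argument for \cref{Corollary: homogeneity Concentration Cheap}, treating it as an immediate domination of the quantile bound of \cref{Theorem: homogeneity Concentration Cheap} obtained by replacing $\mathcal{A}_p(s_1,s_2,m)$ and $\mathcal{B}(s_1,s_2,m,\beta/3)$ with the population surrogates $\tilde{\mathcal{A}}_p$ and $\tilde{\mathcal{B}}(\beta/3)$ and simplifying the $s_1$ and $m$ factors. Since the CQB property \cref{eq:Psi_P_n_ts} is monotone in the threshold, no new probabilistic work is needed, as you say.

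However, the specific surrogate bounds you propose do not reproduce the stated threshold, and the discrepancy is not mere constant-chasing. Using $s_1 a + s_2 b \le s(a+b)$ gives $\mathcal{A}_2 \le \sqrt{s/m}\,\tilde{\mathcal{A}}_2$ and $\mathcal{B} \le \sqrt{s/m}\,\tilde{\mathcal{B}}$, but the Lemma's first summand carries the prefactor $s_1^{-3/2}$, so your substitution yields a term of order $\sqrt{s}\,/(s_1^{3/2}\sqrt{m}) = \sqrt{s/s_1}\cdot(s_1\sqrt{m})^{-1}$, whereas the Corollary asserts $(s_1\sqrt{m})^{-1}$; since $s/s_1 = n/n_1 \ge 2$, your route produces a strictly larger threshold and therefore does not establish the Corollary as written. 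The same loss of a factor $s/s_1$ (or $(s/s_1)^{1/2}$) occurs in the $s_1^{-2}$ terms. To land on the stated denominators one needs comparisons of the form $s_1\mom[2,\P] + s_2\mom[2,\Q] \le s_1(\mom[2,\P]+\mom[2,\Q])$ and $s_1\subg[\P]^2 + s_2\subg[\Q]^2 \le s_1(\subg[\P]^2+\subg[\Q]^2)$, i.e.\ $\mathcal{A}_2 \le \sqrt{s_1/m}\,\tilde{\mathcal{A}}_2$ and $\mathcal{B} \le \sqrt{s_1/m}\,\tilde{\mathcal{B}}(\delta)$; these hold with equality only when $s_1 = s_2$ and fail in the direction you need when $s_2 > s_1$ (the regime $n_1 \le n_2$ assumed throughout). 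A similar issue arises for the $(\mathcal{A}_4+\mathcal{B})^2$ term, where after the split $(a+b)^2 \le 2(a^2+b^2)$ the $\mathcal{B}^2/s_1^{3/2}$ piece scales as $\tilde{\mathcal{B}}^2/(\sqrt{s_1}\,m)$ rather than the claimed $\tilde{\mathcal{B}}^2/(s_1 m)$. So the step you dismiss as bookkeeping is precisely where the argument either needs a per-population accounting (tracking $s_1$ and $s_2$ separately rather than collapsing to $s$) or delivers only a weaker bound; you should either prove the $s_1$-weighted comparisons under an explicit balancedness condition or state the Corollary with the inflated constants your inequalities actually give.
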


\subsection{Putting everything together}

Fix any $\beta\in(0,1)$, and suppose that 
\begin{talign} \label{eq:separation_mmd_subg}
    \mE[U_{n_1,n_2}] &\geq \Psi_{n_1,n_2,s}(\astar,\beta/3) + \Phi_{n_1,n_2}(\beta/3) 
    \end{talign}
for $\Psi_{n_1,n_2,s}$ and $\Phi_{n_1,n_2}$ defined in \cref{Corollary: homogeneity Concentration Cheap,lem:MMD_concentration}. 
By \cref{Corollary: homogeneity Concentration Cheap}, \cref{lem:MMD_concentration}, and \cref{Lemma: Generic Two Moments Method}, a cheap permutation test based on $U_{n_1,n_2}$ has power at least $1-\beta$. 
We now show that the assumed condition \cref{eq:Power of cheap homogeneity sub-Gaussian PD} is sufficient to imply \cref{eq:separation_mmd_subg}.
Since $\E[U_{n_1,n_2}]=\mmd^2(\P,\Q)$, the condition \cref{eq:separation_mmd_subg} is equivalent to the inequality $a x^2 - bx - c \geq 0$ with $x = \mmd(\P,\Q)$,
    \begin{talign}
        a &= 1 - \frac{2 \log(4/\astar) + 1}{s_1},
        \\ b &= 
        \frac{40\sqrt{\log(2/\astar)} (\tilde{\mathcal{A}}_2 + \tilde{\mathcal{B}}(\frac{\beta}{3}))}{\sqrt{3 s_1 n_1}} \! + \! \frac{2 (\sqrt{2} \tilde{\mathcal{A}}_2 + \tilde{\mathcal{B}}(\frac{\beta}{3}))}{s_1 \sqrt{n_1}} \! + \! \frac{2\tilde{\Lambda}_{\P}(\beta)}{\sqrt{n_1-1}} 
        \! + \! \frac{2\tilde{\Lambda}_{\Q}(\beta)}{\sqrt{n_2-1}}, 
        \qtext{and}
        \\
        c &=
        \frac{40\sqrt{2 \log(2/\astar)/3} (\tilde{\mathcal{A}}_4 + \tilde{\mathcal{B}}(\frac{\beta}{3})^2) + 4 (\tilde{\mathcal{A}}_2^2 + \tilde{\mathcal{B}}(\frac{\beta}{3})^2)}{n_1} + \bigg( \frac{\tilde{\Lambda}_{\P}(\beta)}{\sqrt{n_1-1}} 
        + \frac{\tilde{\Lambda}_{\Q}(\beta)}{\sqrt{n_2-1}} 
        \bigg)^2.
    \end{talign}
By the triangle inequality and the fact  $\sqrt{a}\geq a$, 
it therefore suffices to have 
\begin{talign}
x 
    \geq 
\frac{1}{a} (b + \sqrt{c})
    \geq
\frac{b}{a} + \sqrt{\frac{c}{a}}
    \geq
\frac{b + \sqrt{b^2 + 4 a c}}{2a}.
\end{talign}
The advertised result now follows from the observation that
    \begin{talign}
        \sqrt{c} \leq \sqrt{\frac{40\sqrt{2 \log(2/\astar)/3} (\tilde{\mathcal{A}}_4 + \tilde{\mathcal{B}}(\frac{\beta}{3})^2) + 4 (\tilde{\mathcal{A}}_2^2 + \tilde{\mathcal{B}}(\frac{\beta}{3})^2)}{n_1}} + \frac{\tilde{\Lambda}_{\P}(\beta)}{\sqrt{n_1-1}} 
        + \frac{\tilde{\Lambda}_{\Q}(\beta)}{\sqrt{n_2-1}}. 
    \end{talign}
\subsection{\pcref{lem:MMD_concentration}}\label{proof:MMD_concentration}
    Fix any $\beta\in(0,1)$.
    Let $\X$ be the concatenation of $\Y$ and $\Z$, and let $\tilde{Y}$ and $\tilde{Z}$ be independent draws from $\P$ and $\Q$ respectively. 
    By the triangle inequality, we have the following almost sure inequalities for each $k\in[n]$:
\begin{talign}
|\mmd(\P, \Q) - \mmd(\PY, \PZ)| 
    &\leq
\mmd(\P, \PY) + \mmd(\Q, \PZ)
    \defeq 
\Delta(\X)  \\ %
|\Delta(\X)
    -
\E[\Delta(\X)\mid \X_{-k}]|
    &\leq
\begin{cases}
\frac{1}{n_1}\E[\mmd(\delta_{Y_k},\delta_{\tilde{Y}})\mid Y_k] &
    \text{if\ } k \leq n_1 \\
\frac{1}{n_2}\E[\mmd(\delta_{Z_{k-n_1}},\delta_{\tilde{Z}})\mid Z_{k-n_1}] &
    \text{otherwise}.
\end{cases}
\end{talign}
Furthermore, by Jensen's inequality, we have
\begin{talign}
\E[\Delta(\X)]
    &\leq
\sqrt{\E[\mmd(\P, \PY)^2]} + \sqrt{\E[\mmd(\Q, \PZ)^2]} \\
    &= 
\sqrt{\E[((\PY-\P)\times(\PY-\P))\kernel]} + \sqrt{\E[((\PZ-\Q)\times(\PZ-\Q))\kernel]} \\ 
    &= 
\sqrt{\E[ \frac{1}{n_1^2} \sum_{i=1}^{n_1} ((\delta_{Y_i}-\P)\times (\delta_{Y_i}-\P))\kernel]}
    +
\sqrt{\E[\frac{1}{n_2^2} \sum_{i=1}^{n_2} ((\delta_{Z_i}-\Q)\times (\delta_{Z_i}-\Q))\kernel
]} \\
    &= 
\sqrt{\frac{\mom[2,\P]}{n_1}}+\sqrt{\frac{\mom[2,\Q]}{n_2}}.
\end{talign}
Hence, by \cref{thm:mcdiarmid_subG}, with probability at least $1-\beta$,
\begin{talign}
|\mmd(\P, \Q) - \mmd(\PY, \PZ)| 
    <
\sqrt{\frac{\mom[2,\P]}{n_1}}+\sqrt{\frac{\mom[2,\Q]}{n_2}}
    +
\sqrt{64 e\log(\frac{1}{\beta}) (\frac{\subg^2}{n_1}+\frac{\subg[\Q]^2}{n_2})}
    =
\Lambda_{V_{n_1,n_2}}(\beta).
\end{talign}
Since $|a^2-b^2|=|a-b| |b+a| \leq |a-b| (|b-a|+2|a|)$ for all real $a,b$, we further have 
\begin{talign}
    |
    \mmd^2(\P, \Q) - \mmd^2(\PY, \PZ)| 
    <
    \Lambda_{V_{n_1,n_2}}^2(\beta) + 2 \mmd(\P, \Q) \Lambda_{V_{n_1,n_2}}(\beta)
    \label{eq:mmd_sqd_minus_V}
\end{talign}
with probability at least $1-\beta$, as advertised.

    To establish the $U_{n_1,n_2}$ bound, we begin by writing
    \begin{talign} \label{eq:decomposition_U_bound}
        &\mmd^2(\P, \Q) - U_{n_1,n_2} 
        = \mmd^2(\P, \Q) - V_{n_1,n_2} + V_{n_1,n_2}  - U_{n_1,n_2}. \!\!
    \end{talign}
    To bound the term  $V_{n_1,n_2} - U_{n_1,n_2}$, 
    observe that, by \cref{eq:V_U_diff},
    \begin{talign}
        V_{n_1,n_2} - U_{n_1,n_2} &= \frac{1}{n_1^2}\sum_{i=1}^{n_1} \mmd^2(\delta_{Y_i},\delta_{\Y_{-i}}) + \frac{1}{n_2^2}\sum_{i=1}^{n_2} \mmd^2(\delta_{Z_i},\delta_{\Z_{-i}})  \geq 0.
    \end{talign}
    Hence, we can write
    \begin{talign}
    \tilde{\Delta}(\X)
    		=
     \sqrt{V_{n_1,n_2} - U_{n_1,n_2}} = \big\| \big( \frac{1}{n_1}(\mmd(\delta_{Y_i},\delta_{\Y_{-i}}))_{i=1}^{n_1}, \frac{1}{n_2}(\mmd(\delta_{Z_i},\delta_{\Z_{-i}}))_{i=1}^{n_2} \big) \big\|_{2}.
    \end{talign}
By repeated use of the triangle inequality, we have, for each $k \in [n_1]$,   
    \begin{talign} \label{eq:Delta_k_bound}
        &\big|\tilde{\Delta}(\X) - \mathbb{E}[\tilde{\Delta}(\X)|\Y_{-k},\Z] \big|
        \leq \frac{1}{n_1} \E \big[ \| \big( \big( \frac{1}{n_1} \mmd(\delta_{Y_k},\delta_{\tilde{Y}}) \big)_{i=1, i \neq k}^{n_1}, \mmd(\delta_{Y_k},\delta_{\tilde{Y}}) \big) \|_2 | Y_k \big] \\ 
        &= \frac{1}{n_1} \sqrt{1+\frac{n_1-1}{n_1^2}}\E \big[ \mmd(\delta_{Y_k},\delta_{\tilde{Y}}) | Y_k \big] %
        \leq %
       \frac{1}{n_1} \sqrt{1+\frac{1}{n_1}} \E \big[ \mmd(\delta_{Y_k},\delta_{\tilde{Y}}) | Y_k \big]
    \end{talign}
    almost surely.
     Identical reasoning yields the following almost sure bound for each $k \in [n_2]$: 
    \begin{talign}
    \big|\tilde{\Delta}(\X) - \mathbb{E}[\tilde{\Delta}(\X)|\Y,\Z_{-k}] \big| \leq \frac{1}{n_2} \sqrt{1+\frac{1}{n_2}} \E \big[ \mmd(\delta_{Z_k},\delta_{\tilde{Z}}) | Z_k \big].
    \end{talign}
    
    Next, by the Cauchy-Schwarz inequality, we find that
    \begin{talign}
        &\mathbb{E}[\tilde{\Delta}(\X)]^2 \leq \mathbb{E}[\tilde{\Delta}(\X)^2] = \E\big[\frac{1}{n_1^2}\sum_{i=1}^{n_1} \mmd^2(\delta_{Y_i},\delta_{\Y_{-i}}) + \frac{1}{n_2^2}\sum_{i=1}^{n_2} \mmd^2(\delta_{Z_i},\delta_{\Z_{-i}}) \big],
    \end{talign}
	where
    \begin{talign}
        \E &\big[ \mmd^2(\delta_{Y_i},\delta_{\Y_{-i}}) \big]
        = \E\big[\iint k(x,y) \, \mathrm{d}(\delta_{Y_i} - \P + \P - \delta_{\Y_{-i}})(x) \, \mathrm{d}(\delta_{Y_i} - \P + \P - \delta_{\Y_{-i}})(y)\big]
        \\ &= \E \big[(1+\frac{1}{n_1-1}) \iint k(x,y) \, \mathrm{d}(\delta_{Y_i} - \P)(x) \, \mathrm{d}(\delta_{Y_i} - \P)(y)\big]
       = \big(1+\frac{1}{n_1-1}\big) \mom[2,\P].
    \end{talign}
    Analogously, $\E \big[ \mmd^2(\delta_{Z_i},\delta_{\Z_{-i}}) \big] = \big(1+\frac{1}{n_2-1}\big) \mom[2,\Q] $.
    Thus, we conclude that
    \begin{talign}
        \mathbb{E}[\tilde{\Delta}(\X)] \leq \sqrt{ \frac{\mom[2,\P]}{n_1 - 1} + \frac{\mom[2,\Q]}{n_2 - 1} }.
    \end{talign}
    Hence, applying \cref{thm:mcdiarmid_subG}, we obtain that with probability at least $1-\beta/2$, 
    \begin{talign}
    \sqrt{V_{n_1,n_2} - U_{n_1,n_2}} 
        	<  
	\sqrt{ \frac{\mom[2,\P]}{n_1 - 1} + \frac{\mom[2,\Q]}{n_2 - 1} }
		+
    \sqrt{64 e\log(\frac{2}{\beta}) (\frac{(n_1 + 1)\subg^2}{n_1^2}+\frac{(n_2 + 1)\subg[\Q]^2}{n_2^2})}.
    \end{talign}
    This estimate together with the union bound and the relations \cref{eq:mmd_sqd_minus_V,eq:decomposition_U_bound} yield the claim.

\subsection{\pcref{lem:concentration_simultaneous}}
\label{sec:proof_of_lem:concentration_simultaneous}
Define 
    \begin{talign}
        \phi(y) \defeq \mathbb{E}_{Y' \sim \P}[\mmd(\delta_{y}, \delta_{Y'})] \qtext{and} \phi'(z) \defeq \mathbb{E}_{Z' \sim \Q}[\mmd(\delta_{z}, \delta_{Z'})],
    \end{talign}
fix any $p\in\{1,2,4\}$, 
and note that we may write %
    \begin{talign} 
    \label{eq:Z_2_char}
        &Z_p(\X) =
        \big\| \big( (\mmd(\delta_{\Y_i},\P))_{i=1}^{s_1}, (\mmd(\delta_{\Z_i},\Q))_{i=1}^{s_2} \big) \big\|_{p}, 
        \qtext{and} \\
&|Z_p(\X)-\E[Z_p(\X)\mid \X_{-k}]|
    \leq
\begin{cases}
\frac{1}{m}\phi(Y_k) &
    \text{if\ } k \leq n_1 \\
\frac{1}{m}\phi'(Z_{k-n_1})&
    \text{otherwise}
\end{cases}
\end{talign}
almost surely by the triangle inequality. 
    Applying \cref{thm:mcdiarmid_subG} 
    we therefore have
    \begin{talign}
    Z_p(\X) - \mathbb{E}[Z_p(\X)]
     <
\mathcal{B}(s_1,s_2,m,\delta)
\qtext{with probability at least}
1-\delta.
    \end{talign}
    Hence, it remains to upper-bound $\mathbb{E}[Z_p(\X)]$. 
    Applying Hölder's inequality, we find that
    \begin{talign} 
    \label{eq:exp_Z_1}
        \mathbb{E}[Z_1(\X)] &= \sum_{i=1}^{s_1} \mathbb{E}[\mmd(\delta_{\Y_i},\P)] + \sum_{i=1}^{s_2} \mathbb{E}[\mmd(\delta_{\Z_i},\Q)]
        \\ &= \sum_{i=1}^{s_1} \sqrt{\mathbb{E}[\mmd^2(\delta_{\Y_i},\P)]} + \sum_{i=1}^{s_2} \sqrt{\mathbb{E}[\mmd^2(\delta_{\Z_i},\Q)]}
        =
         \mathcal{A}_1(s_1,s_2,m), \\
    \label{eq:exp_Z_2}
        \mathbb{E}[Z_2(\X)]^2 &\leq \mathbb{E}[Z_2(\X)^2]  = \sum_{i=1}^{s_1} \mathbb{E}[\mmd^2(\delta_{\Y_i},\P)] + \sum_{i=1}^{s_2} \mathbb{E}[\mmd^2(\delta_{\Z_i},\Q)] \\ 
        &=  \mathcal{A}_2(s_1,s_2,m), \qtext{and}\\
        \mathbb{E}[Z_4(\X)]^4 &\leq \mathbb{E}[Z_4(\X)^4] = \sum_{i=1}^{s_1} \mathbb{E}[\mmd^4(\delta_{\Y_i},\P)] + \sum_{i=1}^{s_2} \mathbb{E}[\mmd^4(\delta_{\Z_i},\Q)] %
    \end{talign}
    Now consider the shorthand $(\delta_{\Y_i} - \P)(\delta_{\Y_i} - \P)\kernel \defeq \iint k(x,y) \, d(\delta_{\Y_i} - \P)(x) \, d(\delta_{\Y_i} - \P)(y)$. 
    To conclude our fourth moment bound, we notice that %
    \begin{talign}
        &\mathbb{E}[\mmd^4(\delta_{\Y_i},\P)] \\ &= \E\big[ \big((\delta_{\Y_i} - \P)(\delta_{\Y_i} - \P)\kernel \big)^2 \big] \\ &= \E \big[ \frac{1}{m^4}\sum_{k,l,k',l'=1}^{m} \big( (\delta_{Y_{mi + k}} - \P)(\delta_{Y_{mi + l}} - \P)\kernel \big) \big( (\delta_{Y_{mi + k'}} - \P)(\delta_{Y_{mi + l'}} - \P)\kernel \big)
        \big] \\ 
        &= \E \big[ \frac{1}{m^4} \big(\sum_{k=1}^{m} (\delta_{Y_{mi + k}} - \P)(\delta_{Y_{mi + k}} - \P)\kernel \big) \big( \sum_{l=1}^{m} (\delta_{Y_{mi + l}} - \P)(\delta_{Y_{mi + l}} - \P)\kernel \big) \\ &\quad + \frac{2}{m^4} \sum_{k,l=1}^{m} \big( (\delta_{Y_{mi + k}} - \P)(\delta_{Y_{mi + l}} - \P)\kernel \big)^2 + \frac{1}{m^4} \sum_{k=1}^{m} \big( (\delta_{Y_{mi + k}} - \P)(\delta_{Y_{mi + k}} - \P)\kernel \big)^2 \big] 
        \\ &\leq  \mathbb{E}_{Y \sim \P} \big[ \big( \frac{1}{m^2} + \frac{2}{m^2} + \frac{1}{m^3} \big) \mmd^4(\delta_{Y},\P) \big] = \mathbb{E}_{Y \sim \P} \big[ \big( \frac{3}{m^2} + \frac{1}{m^3} \big) \mmd^4(\delta_{Y},\P) \big]
    \end{talign}
    and the analogous result for %
    $\mathbb{E}[\mmd^4(\delta_{\Z_i},\P)]$.
    This completes the proof.

\subsection{\pcref{Theorem: homogeneity Concentration Cheap}}
\label{sec:proof_of_Theorem: homogeneity Concentration Cheap}
Fix any $\alpha,\beta\in(0,1)$. 
 We begin by identifying  $\Psi_{\X,s}(\alpha)$ satisfying
 \begin{talign}
 \Pr \left( V^{\pi,s}_{n_1,n_2} 
        \geq \Psi_{\X,s}(\alpha) \mid \X \right) \leq \alpha %
 \end{talign}
 where 
 $V_{n_1,n_2}^{\pi,s}$ is the cheaply permuted V-statistic  \cref{eq: Two Sample Cheap V_statistic}. 
 This in turn will imply \cref{eq:Psi_X_n_ts} as
 $V_{n_1,n_2}^{\pi,s} \geq U_{n_1,n_2}^{\pi,s}$ 
 by the same argument \cref{eq:V_U_diff} used to show that $V_{n_1,n_2} \geq U_{n_1,n_2}$. 

    We follow a structure similar to that in the proof of \citep[Thm.~6.1]{kim2022minimax}.
    Our first step is to identify a second random variable that, conditioned on $\X$, has the same distribution as $V_{n_1,n_2}^{\pi,s}$.
  Let $\ell = (\ell_1,\dots,\ell_{s_1})$ be an $s_1$ tuple drawn without replacement from $[s_2]$ %
  generated independently of $(\X,\pi)$ and $\zeta=(\zeta_k)_{k=1}^{s_1}$ be independent random variables that take values $-1$ and $1$ with equal probability, independently of $(\X,\pi,\ell)$. 
  For $k_1,k_2\in[s_1]$, define the random measures 
    \begin{talign}
        \mu_{k} = 
        \begin{cases}
            \P &\text{if } \pi_{k} \leq s_1 \\
            \Q &\text{otherwise}
        \end{cases} \qtext{and}
        \mu'_{k'} = 
        \begin{cases}
            \P &\text{if } \pi_{s_1 + \ell_{k'}} \leq s_1 \\
            \Q &\text{otherwise}.
        \end{cases} 
    \end{talign}
    By construction, $(\mu_{k_1}$, $\mu_{k_2}$, $\mu'_{k_1}$, $\mu'_{k_2})$ are population distributions of the  respective sample points
    $(\mathbb{X}_{\pi_{k_1}}$, $\mathbb{X}_{\pi_{k_2}}$, $\mathbb{X}_{\pi_{s_1+\ell_{k_1}}}$, $\mathbb{X}_{\pi_{s_1+\ell_{k_2}}})$. 
    Next, let us define 
    \begin{talign}
    \widetilde{V}_{n_1,n_2}^{\pi,\ell,\zeta,s} \defeq \frac{1}{s_1^2} \sum_{k_1,k_2 =1}^{s_1} \zeta_{k_1} \zeta_{k_2} \Hho (\mathbb{X}_{\pi_{k_1}},\mathbb{X}_{\pi_{k_2}}; \mathbb{X}_{\pi_{s_1+\ell_{k_1}}},\mathbb{X}_{\pi_{s_1+\ell_{k_2}}}),
    \end{talign}
    where $\Hho$ is defined in \cref{eq:H_ts_def}.
    By \citep[Sec.~6.1]{kim2022minimax}, conditional on $\X$,  the distribution of $\E[\widetilde{V}_{n_1,n_2}^{\pi,\ell,\zeta,s}\mid \X,\pi,\zeta]$ matches that of $V_{n_1,n_2}^{\pi,s}$.

    Our goal is now to bound the tail of $\E[\widetilde{V}_{n_1,n_2}^{\pi,\ell,\zeta,s}\mid \X,\pi,\zeta]$.
    To this end, we define
    \begin{talign}
    &a_{k_1,k_2}(\pi,\ell) =  %
    \frac{1}{s_1^2}
    (\Hho (\mathbb{X}_{\pi_{k_1}},\mathbb{X}_{\pi_{k_2}}; \mathbb{X}_{\pi_{s_1+\ell_{k_1}}},\mathbb{X}_{\pi_{s_1+\ell_{k_2}}}) - \langle \mu_{k_1} \kernel - %
    \mu'_{k_1} \kernel, %
    \mu_{k_2} \kernel - \mu'_{k_2} \kernel \rangle_{\kernel}), 
    \end{talign}
    for $1 \leq k_1, k_2 \leq s_1$ and $\mathbf{A}_{\pi,\ell}=(a_{k_1,k_2}(\pi,\ell))_{k_1,k_2=1}^{s_1}$.  We remark that 
    \begin{talign} \label{eq:tildeU_def}
        &\widetilde{V}_{n_1,n_2}^{\pi,\ell,\zeta,s} = \zeta^{\top} \mathbf{A}_{\pi,\ell} \zeta + c_{\zeta,\pi,\ell}, \\
        &\text{where } c_{\zeta,\pi,\ell} \defeq \frac{1}{s_1^2} \sum_{k_1,k_2 = 1}^{s_1} \zeta_{k_1} \zeta_{k_2} \langle \mu_{k_1} \kernel - \mu'_{k_1} \kernel, \mu_{k_2} \kernel - \mu'_{k_2} \kernel \rangle_{\kernel},
    \end{talign}
    which means that $\E[\widetilde{V}_{n_1,n_2}^{\pi,\ell,\zeta,s}\mid \X,\pi,\zeta] = \zeta^{\top} \E[\mathbf{A}_{\pi,\ell}\mid \X,\pi] \zeta + \E[c_{\zeta,\pi,\ell}\mid \zeta, \pi]$. 
    
    Observe that %
    $\langle \mu_{k_1} \kernel - \mu'_{k_1} \kernel, \mu_{k_2} \kernel - \mu'_{k_2} \kernel \rangle_{\kernel}$ can only take on the values $0$ and $\pm \mmd^2(\P,\Q)$. To make this more precise, we can partition $[s_1]$ into the following subsets: $\mathcal{A} = \{ k \in [s_1] \mid \mu_{k} = \P, \mu'_{k} = \P \}$, $\mathcal{B} = \{ k \in [s_1] \mid \mu_{k} = \P, \mu'_{k} = \Q \}$, $\mathcal{C} = \{ k \in [s_1] \mid \mu_{k} = \Q, \mu'_{k} = \P \}$, $\mathcal{D} = \{ k \in [s_1] \mid \mu_{k} = \Q, \mu'_{k} = \Q \}$. 
    We will also introduce the partition $\mathcal{G} \sqcup \mathcal{F}$,\footnote{$\sqcup$ denotes a disjoint union.} where $\mathcal{G} = \{ k \in [s_1] \mid \mu_{k} = \P\}$ and $\mathcal{F} = \{ k \in [s_1] \mid \mu_{k} = \Q\}$. Observe that $\mathcal{G} = \mathcal{A} \sqcup \mathcal{B}$ and $\mathcal{F} = \mathcal{C} \sqcup \mathcal{D}$ and that the membership of $k$ in $\mathcal{G}$ or $\mathcal{F}$ is determined by $\pi$, while its membership in  $\mathcal{A}$, $\mathcal{B}$, $\mathcal{C}$, or $\mathcal{D}$ depends on $\ell$ as well. 

    In terms of indicator function notation,
    we have
    \begin{talign}
    \kinner{\mu_{k_1} \kernel - \mu'_{k_1} \kernel}{ \mu_{k_2} \kernel - \mu'_{k_2} \kernel }
        &= 
    \mmd^2(\P,\Q)(\mbf{1}_{(k_1, k_2) \in \mathcal{B} \times \mathcal{B} \cup \mathcal{C} \times \mathcal{C}}
        -
    \mbf{1}_{(k_1, k_2) \in \mathcal{B} \times \mathcal{C} \cup \mathcal{C} \times \mathcal{B}}),
    \end{talign}
    which means that
    \begin{talign}
        \E[c_{\zeta,\pi,\ell}\mid \zeta, \pi] \! = \! \frac{1}{s_1^2} \sum_{k_1,k_2 = 1}^{s_1} \zeta_{k_1} \zeta_{k_2} \E \big[ \mathbf{1}_{(k_1,k_2) \in \mathcal{B} \times \mathcal{B} \cup \mathcal{C} \times \mathcal{C}} \! - \! \mathbf{1}_{(k_1,k_2) \in \mathcal{B} \times \mathcal{C} \cup \mathcal{C} \times \mathcal{B}} \mid \pi\big] \mmd^2(\P,\Q).
    \end{talign}
    If we let
    $b_{k_1,k_2}(\pi) \defeq \E \big[ \mathbf{1}_{(k_1,k_2) \in \mathcal{B} \times \mathcal{B} \cup \mathcal{C} \times \mathcal{C}} - \mathbf{1}_{(k_1,k_2) \in \mathcal{B} \times \mathcal{C} \cup \mathcal{C} \times \mathcal{B}} \mid \pi\big]$, 
    we have that $b_{k_1,k_2}(\pi)$ takes the following values depending on the pair $(k_1,k_2)$:
    \begin{talign}
        b_{k_1,k_2}(\pi) = 
        \begin{cases}
            \E \big[ \mathbf{1}_{(k_1,k_2) \in \mathcal{B} \times \mathcal{B}} \mid \pi, k_1, k_2 \in \mathcal{G}, k_1 \neq k_2 \big] \defeq b_1 &\text{if } k_1, k_2 \in \mathcal{G}, k_1 \neq k_2 \\
            \E \big[ \mathbf{1}_{k_1 \in \mathcal{B}} \mid \pi, k_1 \in \mathcal{G} \big] \defeq b_2 &\text{if } k_1, k_2 \in \mathcal{G}, k_1 = k_2 \\
            \E \big[ \mathbf{1}_{(k_1,k_2) \in \mathcal{C} \times \mathcal{C}} \mid \pi, k_1, k_2 \in \mathcal{F}, k_1 \neq k_2 \big] \defeq b_3 &\text{if } k_1, k_2 \in \mathcal{F}, k_1 \neq k_2 \\
            \E \big[ \mathbf{1}_{k_1 \in \mathcal{C}} \mid \pi, k_1 \in \mathcal{F} \big] \defeq b_4 &\text{if } k_1, k_2 \in \mathcal{F}, k_1 = k_2 \\
            -\E \big[ \mathbf{1}_{(k_1,k_2) \in \mathcal{B} \times \mathcal{C}} \mid \pi, k_1 \in \mathcal{G}, k_2 \in \mathcal{F} \big] \defeq - b_5 &\text{if } k_1 \in \mathcal{G}, k_2 \in \mathcal{F} \\
            -\E \big[ \mathbf{1}_{(k_1,k_2) \in \mathcal{B} \times \mathcal{B}} \mid \pi, k_1 \in \mathcal{F}, k_2 \in \mathcal{G} \big] \defeq - b_6 &\text{if } k_1 \in \mathcal{F}, k_2 \in \mathcal{G}.
        \end{cases}
    \end{talign}
    Given $\pi$, let $q_{\mathcal{G}}$ be the number of indices $k$ such that $1\leq k \leq s_2$ 
    and $\pi_{s_1 + k} \leq s_1$, and analogously, let $q_{\mathcal{F}}$ be the number of indices $k$ such that $1\leq k \leq s_2$ and $\pi_{s_1 + k_2} > s_1$, which means that $q_{\mathcal{G}} + q_{\mathcal{F}} = s_2$.
    Observe that $b_1 = \E \big[ \mathbf{1}_{(k_1,k_2) \in \mathcal{B} \times \mathcal{B}} \mid \pi, k_1, k_2 \in \mathcal{G}, k_1 \neq k_2 \big] = \frac{q_{\mathcal{G}} (q_{\mathcal{G}} - 1)}{s_2 (s_2 - 1)} = \frac{q_{\mathcal{G}}^2}{s_2^2} - \frac{q_{\mathcal{G}} (s_2 - q_{\mathcal{G}})}{s^2_2 (s_2 - 1)} = \frac{q_{\mathcal{G}}^2}{s_2^2} - \frac{q_{\mathcal{G}} q_{\mathcal{F}}}{s^2_2 (s_2 - 1)}$. This is because the index $k_1$ belongs to $\mathcal{B}$ with probability $\frac{q_{\mathcal{G}}}{s_2}$, and then conditioned on this fact, the index $k_2$ belongs to $\mathcal{B}$ with probability $\frac{q_{\mathcal{G}}-1}{s_2-1}$.
    Similarly, $b_5 = \E \big[ \mathbf{1}_{(k_1,k_2) \in \mathcal{B} \times \mathcal{C}} \mid \pi, k_1 \in \mathcal{G}, k_2 \in \mathcal{F} \big] = \frac{q_{\mathcal{G}} q_{\mathcal{F}}}{s_2(s_2-1)} = \frac{q_{\mathcal{G}} q_{\mathcal{F}}}{s_2^2} + \frac{q_{\mathcal{G}} q_{\mathcal{F}}}{s_2^2 (s_2 - 1)}  
    $. Through analogous arguments we find that
    \begin{talign}
        b_3 = \frac{q_{\mathcal{F}}^2}{s_2^2} - \frac{q_{\mathcal{F}} q_{\mathcal{G}}}{s^2_2 (s_2 - 1)} 
        \qtext{and} 
        b_6 = \frac{q_{\mathcal{G}} q_{\mathcal{F}}}{s_2^2} + \frac{q_{\mathcal{G}} q_{\mathcal{F}}}{s_2^2 (s_2 - 1)}.
    \end{talign}
Therefore,
\begin{talign}
        &\sum_{k_1,k_2=1}^{s_1} \zeta_{k_1} \zeta_{k_2} b_{k_1,k_2}(\pi) \\ &= \big( \sum_{k \in \mathcal{G}} \zeta_{k} \big)^2 b_1 + \big( \sum_{k \in \mathcal{G}} 1 \big) \big( b_2 - b_1 \big)
        + \big( \sum_{k \in \mathcal{F}} \zeta_{k} \big)^2 b_3 + \big( \sum_{k_1 \in \mathcal{F}} 1 \big) \big( b_4 - b_3 \big) \\ &\qquad - \big( \sum_{k \in \mathcal{G}} \zeta_{k} \big) \big( \sum_{k \in \mathcal{F}} \zeta_{k} \big) b_5 - \big( \sum_{k \in \mathcal{G}} \zeta_{k} \big) \big( \sum_{k \in \mathcal{F}} \zeta_{k} \big) b_6 
        \\ &= \big( \sum_{k \in \mathcal{G}} \zeta_{k} \big)^2 \big( \frac{q_{\mathcal{G}}^2}{s_2^2} - \frac{q_{\mathcal{G}}q_{\mathcal{F}}}{s^2_2 (s_2 - 1)} \big) + \big( \sum_{k \in \mathcal{G}} 1 \big) \big( b_2 - b_1 \big)
        \\ &\qquad + \big( \sum_{k \in \mathcal{F}} \zeta_{k} \big)^2 \big( \frac{q_{\mathcal{F}}^2}{s_2^2} - \frac{q_{\mathcal{F}} q_{\mathcal{G}}}{s^2_2 (s_2 - 1)} \big) + \big( \sum_{k_1 \in \mathcal{F}} 1 \big) \big( b_4 - b_3 \big) \\ &\qquad - \big( \sum_{k \in \mathcal{G}} \zeta_{k} \big) \big( \sum_{k \in \mathcal{F}} \zeta_{k} \big) \big( \frac{q_{\mathcal{G}} q_{\mathcal{F}}}{s_2^2} + \frac{q_{\mathcal{G}} q_{\mathcal{F}}}{s_2^2 (s_2 - 1)} \big) - \big( \sum_{k \in \mathcal{G}} \zeta_{k} \big) \big( \sum_{k \in \mathcal{F}} \zeta_{k} \big) \big( \frac{q_{\mathcal{G}} q_{\mathcal{F}}}{s_2^2} + \frac{q_{\mathcal{G}} q_{\mathcal{F}}}{s_2^2 (s_2 - 1)} \big) 
        \\ &= \big( \big( \sum_{k \in \mathcal{G}} \zeta_{k} \big) \frac{q_{\mathcal{G}}}{s_2}
        - \big( \sum_{k \in \mathcal{F}} \zeta_{k} \big) \frac{q_{\mathcal{F}}}{s_2}
        \big)^2
        - \frac{q_{\mathcal{G}} q_{\mathcal{F}}}{s_2^2 (s_2 - 1)} \big( \sum_{k \in \mathcal{G}} \zeta_{k} + \sum_{k \in \mathcal{F}} \zeta_{k}\big)^2 
        \\ &\qquad + |\mathcal{G}| \big( b_2 - b_1 \big) + |\mathcal{F}| \big( b_4 - b_3 \big)
        \\ &\leq \big( \big( \sum_{k \in \mathcal{G}} \zeta_{k} \big) \E \big[ \mathbf{1}_{k \in \mathcal{B}} \mid \pi, k \in \mathcal{G}\big] - \big( \sum_{k \in \mathcal{F}} \zeta_{k} \big) \E \big[ \mathbf{1}_{k \in \mathcal{C}} \mid \pi, k \in \mathcal{F}\big] \big)^2 + s_1.
    \end{talign}
    Let $c_1 = \E \big[ \mathbf{1}_{k \in \mathcal{B}} \mid \pi, k \in \mathcal{G}\big]$, $c_2 = \E \big[ \mathbf{1}_{k \in \mathcal{C}} \mid \pi, k \in \mathcal{F}\big]$. 
    By Hoeffding's inequality \citep[Thm.~2]{hoeffding1963probability} and the union bound 
    \begin{talign}
        \Pr \big( |c_1 \sum_{k \in \mathcal{G}} \zeta_{k} + c_2 \sum_{k \in \mathcal{F}} \zeta_{k}| \geq t \mid \pi\big) \leq 2\exp \big( - \frac{t^2}{2 (c_1^2|\mathcal{G}|+c_2^2|\mathcal{F}|)} \big)
        \leq
        2\exp \big( - \frac{t^2}{2 s_1} \big).
    \end{talign}
    Therefore, with probability at least $1-\delta''$, 
    \begin{talign}
        \E[c_{\zeta,\pi,\ell}\mid \zeta, \pi] \! &= \! \frac{\mmd^2(\P,\Q)}{s_1^2} \sum_{k_1,k_2 = 1}^{s_1} \zeta_{k_1} \zeta_{k_2} b_{k_1,k_2}(\pi)
        \\ &\leq \frac{\mmd^2(\P,\Q)}{s_1^2} \big( \big(c_1 \sum_{k \in \mathcal{G}} \zeta_{k} + c_2 \sum_{k \in \mathcal{F}} \zeta_{k} \big)^2 + s_1 \big)
        \\ &< \frac{\mmd^2(\P,\Q)}{s_1^2} \big(
        2 \log(2/\delta'') s_1 + s_1 \big) = \frac{\mmd^2(\P,\Q) (2 \log(2/\delta'') + 1)}{s_1}.
    \end{talign}

    It remains to bound the tail of 
    \begin{talign}
    \zeta^{\top} \E[\mathbf{A}_{\pi,\ell}\mid \X,\pi] \zeta
        =
    \zeta^{\top} \E[\mathbf{A}^{\mathrm{od}}_{\pi,\ell}\mid \X,\pi] \zeta + \E[\mathrm{Tr}(\mathbf{A}_{\pi,\ell}) \mid \X,\pi], 
    \end{talign} 
    where $\mathbf{A}^{\mathrm{od}}_{\pi,\ell}$ has the off-diagonal components of $\mathbf{A}_{\pi,\ell}$ and zeros in the diagonal. 
    We bound $\zeta^{\top} \E[\mathbf{A}^{\mathrm{od}}_{\pi,\ell}\mid \X,\pi] \zeta$ first.
    Since $\sup_{\lambda\in\R} \E[e^{\lambda {\zeta}_1-\lambda^2 /2}] \leq 1$ \citep[Lem.~2.2]{boucheron2013concentration}, %
     \cref{lem:hanson_wright} implies that 
\begin{talign} \label{eq:hanson_wright_applied_4}
    \zeta^{\top} \E[\mathbf{A}^{\mathrm{od}}_{\pi,\ell} \mid \X, \pi]\zeta 
        \leq 
    \frac{20 \log(1/\delta')  \|\E[\mathbf{A}^{\mathrm{od}}_{\pi,\ell}\mid \X,\pi]\|_F}{3}
        \leq
    \frac{20 \log(1/\delta') \sup_{\pi, \ell} \|\mathbf{A}^{\mathrm{od}}_{\pi,\ell}\|_F}{3}
\end{talign}
with probability at least $1-\delta'$ conditioned on $(\X,\pi)$, provided that $\delta'\in(0,0.86)$.
Setting $\delta'' = \frac{\alpha}{2}$ and $\delta' = \frac{\alpha}{2}$, we find that with probability at least $1-\alpha$ conditioned on $\X$, 
\begin{talign} 
   \E[\widetilde{V}_{n_1,n_2}^{\pi,\ell,\zeta,s} &\mid \X,\pi,\zeta] 
   = \zeta^{\top} \E[\mathbf{A}^{\mathrm{od}}_{\pi,\ell} \mid \X, \pi] \zeta + \E[\mathrm{Tr}(\mathbf{A}_{\pi,\ell})\mid \X, \pi] + \E[c_{\zeta,\pi,\ell} \mid \pi,\zeta] \\ 
   &< \Psi_{\X,s}(\alpha) \! \defeq \! \frac{20 \log(\frac{2}{\alpha}) \sup_{\pi, \ell} \|\mathbf{A}^{\mathrm{od}}_{\pi,\ell}\|_F}{3} \! + \! \sup_{\pi,\ell} \mathrm{Tr}(\mathbf{A}_{\pi,\ell}) + \frac{\mmd^2(\P,\Q) (2 \log(4/\alpha) + 1)}{s_1}.
 \label{eq:psi_x_s_def}
\end{talign}

Now it only remains to find $\Psi_{n_1,n_2,s}$ satisfying $\Pr(\Psi_{\X,s}(\alpha) \geq \Psi_{n_1,n_2,s}(\alpha,\beta)) \leq \beta$. %
To achieve this we will develop quantile bounds for $\sup_{\pi, \ell} \|\mathbf{A}^{\mathrm{od}}_{\pi,\ell}\|_F$ and $\sup_{\pi,\ell} \mathrm{Tr}(\mathbf{A}_{\pi,\ell})$.
We begin with $\sup_{\pi, \ell} \|\mathbf{A}^{\mathrm{od}}_{\pi,\ell}\|_F$. 
Since 
\begin{talign}
\Hho(\mathbb{X}_{\pi_{k_1}}, \mathbb{X}_{\pi_{k_2}}; \mathbb{X}_{\pi_{s_1+\ell_{k_1}}}, \mathbb{X}_{\pi_{s_1+\ell_{k_2}}})
    =
\langle \delta_{\mathbb{X}_{\pi_{k_1}}} \kernel - \delta_{\mathbb{X}_{\pi_{s_1+\ell_{k_1}}}} \kernel,  \delta_{\mathbb{X}_{\pi_{k_2}}} \kernel - \delta_{\mathbb{X}_{\pi_{s_1+\ell_{k_2}}}} \kernel \rangle_{\kernel} 
\end{talign}
we have, by the triangle inequality and Cauchy-Schwarz, 
\begin{talign}
&s_1^2 |a_{k_1,k_2}(\pi,\ell)| 
    = 
|\Hho(\mathbb{X}_{\pi_{k_1}}, \mathbb{X}_{\pi_{k_2}}; \mathbb{X}_{\pi_{s_1+\ell_{k_1}}}, \mathbb{X}_{\pi_{s_1+\ell_{k_2}}}) - \langle \mu_{k_1} \kernel - \mu'_{k_1} \kernel, \mu_{k_2} \kernel - \mu'_{k_2} \kernel \rangle_{\kernel}| \\ 
    &= 
|\langle \delta_{\mathbb{X}_{\pi_{k_1}}} \kernel - \delta_{\mathbb{X}_{\pi_{s_1+\ell_{k_1}}}} \kernel,  \delta_{\mathbb{X}_{\pi_{k_2}}} \kernel - \delta_{\mathbb{X}_{\pi_{s_1+\ell_{k_2}}}} \kernel \rangle_{\kernel} - \langle \mu_{k_1} \kernel - \mu'_{k_1} \kernel, \mu_{k_2} \kernel - \mu'_{k_2} \kernel \rangle_{\kernel} | \\ &\leq |\langle \delta_{\mathbb{X}_{\pi_{k_1}}} \kernel - \delta_{\mathbb{X}_{\pi_{s_1+\ell_{k_1}}}} \kernel - (\mu_{k_1} \kernel - \mu'_{k_1} \kernel), \delta_{\mathbb{X}_{\pi_{k_2}}} \kernel - \delta_{\mathbb{X}_{\pi_{s_1+\ell_{k_2}}}} \kernel \rangle_{\kernel}| \\ &\quad+ |\langle \mu_{k_1} \kernel - \mu'_{k_1} \kernel,  \delta_{\mathbb{X}_{\pi_{k_2}}} \kernel - \delta_{\mathbb{X}_{\pi_{s_1+\ell_{k_2}}}} \kernel - (\mu_{k_2} \kernel - \mu'_{k_2} \kernel) \rangle_{\kernel}| \\ &\leq \| \delta_{\mathbb{X}_{\pi_{k_1}}} \kernel - \delta_{\mathbb{X}_{\pi_{s_1+\ell_{k_1}}}} \kernel - (\mu_{k_1} \kernel - \mu'_{k_1} \kernel) \|_{\kernel} \| \delta_{\mathbb{X}_{\pi_{k_2}}} \kernel - \delta_{\mathbb{X}_{\pi_{s_1+\ell_{k_2}}}} \kernel \|_{\kernel} \\ &\quad + \| \mu_{k_1} \kernel - \mu'_{k_1} \kernel \|_{\kernel}  \| \delta_{\mathbb{X}_{\pi_{k_2}}} \kernel - \delta_{\mathbb{X}_{\pi_{s_1+\ell_{k_2}}}} \kernel - (\mu_{k_2} \kernel - \mu'_{k_2} \kernel) \|_{\kernel} \\ &\leq (\mmd(\delta_{\mathbb{X}_{\pi_{k_1}}},\mu_{k_1}) \! + \! \mmd(\delta_{\mathbb{X}_{\pi_{s_1 \! + \! \ell_{k_1}}}},\mu'_{k_1})) \\ &\qquad \times (\mmd(\delta_{\mathbb{X}_{\pi_{k_2}}},\mu_{k_2}) \! + \! \mmd(\delta_{\mathbb{X}_{\pi_{s_1 \! + \! \ell_{k_2}}}},\mu'_{k_2}) \! + \! \mmd(\mu_{k_2},\mu'_{k_2})) \\ &\quad+ \mmd(\mu_{k_1},\mu'_{k_1}) (\mmd(\delta_{\mathbb{X}_{\pi_{k_2}}},\mu'_{k_2}) \! + \! \mmd(\delta_{\mathbb{X}_{\pi_{s_1 \! + \! \ell_{k_2}}}},\mu'_{k_2})).
\end{talign}
Hence, by Jensen's inequality and the arithmetic-geometric mean inequality, 
\begin{talign}
\begin{split}
    &\sum_{k_1,k_2=1}^{s_1} s_1^4 |a_{k_1,k_2}(\pi,\ell)|^2 \\ &\leq \sum_{k_1,k_2=1}^{s_1} \bigg( (\mmd(\delta_{\mathbb{X}_{\pi_{k_1}}},\mu_{k_1}) \! + \! \mmd(\delta_{\mathbb{X}_{\pi_{s_1 \! + \! \ell_{k_1}}}},\mu'_{k_1})) \\ &\qquad\qquad\qquad \times (\mmd(\delta_{\mathbb{X}_{\pi_{k_2}}},\mu_{k_2}) \! + \! \mmd(\delta_{\mathbb{X}_{\pi_{s_1 \! + \! \ell_{k_2}}}},\mu'_{k_2}) \! + \! \mmd(\mu_{k_2},\mu'_{k_2})) \\ &\qquad\qquad\qquad \! + \! \mmd(\mu_{k_1},\mu'_{k_1}) (\mmd(\delta_{\mathbb{X}_{\pi_{k_2}}},\mu'_{k_2}) \! + \! \mmd(\delta_{\mathbb{X}_{\pi_{s_1 \! + \! \ell_{k_2}}}},\mu'_{k_2})) \bigg)^2 \\ &\leq \sum_{k_1,k_2=1}^{s_1} 3 (\mmd(\delta_{\mathbb{X}_{\pi_{k_1}}},\mu_{k_1}) \! + \! \mmd(\delta_{\mathbb{X}_{\pi_{s_1 \! + \! \ell_{k_1}}}},\mu'_{k_1}))^2
    \\ &\qquad\qquad\quad \times 
    (\mmd(\delta_{\mathbb{X}_{\pi_{k_2}}},\mu_{k_2}) \! + \! \mmd(\delta_{\mathbb{X}_{\pi_{s_1 \! + \! \ell_{k_2}}}},\mu'_{k_2}))^2 \\ &\qquad\qquad \! + \! 3 \mmd^2(\mu_{k_2},\mu'_{k_2}) (\mmd(\delta_{\mathbb{X}_{\pi_{k_1}}},\mu_{k_1}) \! + \! \mmd(\delta_{\mathbb{X}_{\pi_{s_1 \! + \! \ell_{k_1}}}},\mu'_{k_1}))^2 \\ &\qquad\qquad \! + \! 3 \mmd^2(\mu_{k_1},\mu'_{k_1}) (\mmd(\delta_{\mathbb{X}_{\pi_{k_2}}},\mu'_{k_2}) \! + \! \mmd(\delta_{\mathbb{X}_{\pi_{s_1 \! + \! \ell_{k_2}}}},\mu'_{k_2}))^2 \\ &\leq \sum_{k_1,k_2=1}^{s_1} \frac{3}{2} \big((\mmd(\delta_{\mathbb{X}_{\pi_{k_1}}},\mu_{k_1}) \! + \! \mmd(\delta_{\mathbb{X}_{\pi_{s_1 \! + \! \ell_{k_1}}}},\mu'_{k_1}))^4 \\ &\qquad\qquad\qquad + \! (\mmd(\delta_{\mathbb{X}_{\pi_{k_2}}},\mu_{k_2}) \! + \! \mmd(\delta_{\mathbb{X}_{\pi_{s_1 \! + \! \ell_{k_2}}}},\mu'_{k_2}))^4 \big) \\ &\qquad\qquad \! + \! 6 \mmd^2(\mu_{k_2},\mu'_{k_2}) (\mmd^2(\delta_{\mathbb{X}_{\pi_{k_1}}},\mu_{k_1}) \! + \! \mmd^2(\delta_{\mathbb{X}_{\pi_{s_1 \! + \! \ell_{k_1}}}},\mu'_{k_1})) \\ &\qquad\qquad \! + \! 6 \mmd^2(\mu_{k_1},\mu'_{k_1}) (\mmd^2(\delta_{\mathbb{X}_{\pi_{k_2}}},\mu'_{k_2}) \! + \! \mmd^2(\delta_{\mathbb{X}_{\pi_{s_1 \! + \! \ell_{k_2}}}},\mu'_{k_2}))
    \\ &\leq \sum_{k_1,k_2=1}^{s_1} 12 \big(\mmd^4(\delta_{\mathbb{X}_{\pi_{k_1}}},\mu_{k_1}) \! + \! \mmd^4(\delta_{\mathbb{X}_{\pi_{s_1 \! + \! \ell_{k_1}}}},\mu'_{k_1}) \\ &\qquad\qquad\qquad + \! \mmd^4(\delta_{\mathbb{X}_{\pi_{k_2}}},\mu_{k_2}) \! + \! \mmd^4(\delta_{\mathbb{X}_{\pi_{s_1 \! + \! \ell_{k_2}}}},\mu'_{k_2}) \big) \\ &\qquad \! + \! 6 \mmd^2(\P,\Q) (\mmd^2(\delta_{\mathbb{X}_{\pi_{k_1}}},\mu_{k_1}) \! + \! \mmd^2(\delta_{\mathbb{X}_{\pi_{s_1 \! + \! \ell_{k_1}}}},\mu'_{k_1}) \\ &\qquad\qquad\qquad\qquad\qquad \! + \! \mmd^2(\delta_{\mathbb{X}_{\pi_{k_2}}},\mu'_{k_2}) \! + \! \mmd^2(\delta_{\mathbb{X}_{\pi_{s_1 \! + \! \ell_{k_2}}}},\mu'_{k_2})) \\ &\leq \sum_{i=1}^{s_1} \big(
    24 s_1 \mmd^4(\delta_{\mathbb{Y}_{i}},\P) \! + \!
    12 s_1 \mmd^2(\P,\Q) \mmd^2(\delta_{\mathbb{Y}_{i}},\P) \big) \\ &\qquad \! + \! \sum_{i=1}^{s_2} \big(
    24 s_1 \mmd^4(\delta_{\mathbb{Z}_{i}},\Q) \! + \! 
    12 s_1 \mmd^2(\P,\Q) \mmd^2(\delta_{\mathbb{Z}_{i}},\Q) \big)
\end{split}
\end{talign}

Let $\event$ be the event that the inequality of \cref{lem:concentration_simultaneous} with $\delta = \frac{\beta}{3}$ holds simultaneously for all $p\in\{1,2,4\}$. 
By the union bound, this holds with probability at least $1-\beta$. 
Moreover, on $\event$,
\begin{talign}
    \sum_{k_1,k_2=1}^{s_1} s_1^4 |a_{k_1,k_2}(\pi,\ell)|^2 &< 24 s_1 
    (\mathcal{A}_4(s_1,s_2,m) + \mathcal{B}(s_1,s_2,m,\beta/3))^4 \\ &+ 12 s_1 \mmd^2(\P,\Q) 
    (\mathcal{A}_2(s_1,s_2,m) + \mathcal{B}(s_1,s_2,m,\beta/3))^2
\end{talign}
Therefore, on $\event$,
\begin{talign} \label{eq:sup_frobenius}
    &\sup_{\pi,\ell} \|\mathbf{A}_{\pi,\ell}^{\mathrm{od}} \|_F \\ &<
    \sqrt{\frac{24 s_1 (\mathcal{A}_4(s_1,s_2,m) + \mathcal{B}(s_1,s_2,m,\frac{\beta}{3}))^4 + 12 s_1 \mmd^2(\P,\Q) (\mathcal{A}_2(s_1,s_2,m) + \mathcal{B}(s_1,s_2,m,\frac{\beta}{3}))^2}{s_1^4}} \\ &\leq \frac{\sqrt{24} (\mathcal{A}_4(s_1,s_2,m) + \mathcal{B}(s_1,s_2,m,\frac{\beta}{3}))^2}{s_1^{3/2}} + \frac{\sqrt{12} \mmd(\P,\Q) (\mathcal{A}_2(s_1,s_2,m) + \mathcal{B}(s_1,s_2,m,\frac{\beta}{3}))}{s_1^{3/2}}.
\end{talign}
Next, we bound $\sup_{\pi,\ell} \mathrm{Tr}(\mathbf{A}_{\pi,\ell})$ using Jensen's inequality:
\begin{talign}
    &\mathrm{Tr}(\mathbf{A}_{\pi,\ell}) = \sum_{k_1=1}^{s_1} a_{k_1,k_1}(\pi,\ell) \\ 
    &\leq\frac{1}{s_1^2} \sum_{k_1=1}^{s_1} (\mmd(\delta_{\mathbb{X}_{\pi_{k_1}}},\mu_{k_1}) \! + \! \mmd(\delta_{\mathbb{X}_{\pi_{s_1 \! + \! \ell_{k_1}}}},\mu'_{k_1}))^2 \\ &\qquad\quad+ 2 \mmd(\mu_{k_1},\mu'_{k_1}) (\mmd(\delta_{\mathbb{X}_{\pi_{k_1}}},\mu_{k_1}) \! + \! \mmd(\delta_{\mathbb{X}_{\pi_{s_1 \! + \! \ell_{k_1}}}},\mu'_{k_1}))
    \\ &\leq \frac{1}{s_1^2} \sum_{k_1=1}^{s_1} 2 (\mmd^2(\delta_{\mathbb{X}_{\pi_{k_1}}},\mu_{k_1}) \! + \! \mmd^2(\delta_{\mathbb{X}_{\pi_{s_1 \! + \! \ell_{k_1}}}},\mu'_{k_1})) \\ &\qquad\quad+ 2 \mmd(\P,\Q) (\mmd(\delta_{\mathbb{X}_{\pi_{k_1}}},\mu_{k_1}) \! + \! \mmd(\delta_{\mathbb{X}_{\pi_{s_1 \! + \! \ell_{k_1}}}},\mu'_{k_1})) \\ &\leq \frac{2}{s_1^2} \big( \sum_{i=1}^{s_1} \mmd^2(\delta_{\mathbb{Y}_{i}},\P) + \sum_{i=1}^{s_2} \mmd^2(\delta_{\mathbb{Z}_{i}},\Q) \\ &\qquad\quad + \mmd(\P,\Q) \big( \sum_{i=1}^{s_1} \mmd(\delta_{\mathbb{Y}_{i}},\P) + \sum_{i=1}^{s_2} \mmd(\delta_{\mathbb{Z}_{i}},\Q) \big) \big)
\end{talign}
On $\event$ we therefore have  
\begin{talign} 
\begin{split} \label{eq:trace_bound}
    \sup_{\pi,\ell} \mathrm{Tr}(\mathbf{A}_{\pi,\ell}) &< \frac{2}{s_1^2} \big( 
    (\mathcal{A}_2(s_1,s_2,m) + \mathcal{B}(s_1,s_2,m,\beta/3))^2 \\ &\qquad + \mmd(\P,\Q) 
    (\mathcal{A}_1(s_1,s_2,m) + \mathcal{B}(s_1,s_2,m,\beta/3)) \big).
\end{split}
\end{talign}
From 
\cref{eq:sup_frobenius,,eq:hanson_wright_applied_4,,eq:trace_bound}, we therefore obtain that with probability at least 
$1-\beta$,
\begin{talign}
    \Psi_{\X,s}(\alpha)
    &<
    \Psi_{n_1,n_2,s}(\alpha,\beta).
\end{talign}

\section{\pcref{thm: Full Independence Tests}} \label{subsec:proof_independence_finite_variance}

We will establish both claims using the refined two moments method (\cref{Lemma: Refined Two Moments Method}) with the auxiliary sequence  $(V_{n}^{\pi_b,s})_{b=1}^\numperm$ of cheaply-permuted independence V-statistics \cref{eq:cheaply-permuted-independence-V}. 
By \cref{Lemma: Refined Two Moments Method} with option  \cref{eq:refined_Psi_n_nonzero}, it suffices to bound $\Var(V_n)$, $\E[\Var(V_n^{\pi,s}\mid \X)]$, $\Var(\E[V_n^{\pi,s}\mid \X])$, and $\E[V_n^{\pi,s}]$.
These quantities are bounded in \cref{lem:variance_indep_V,,lem:exp_variance_ind,,lem:exp_exp_variance_exp_ind} with proofs in \cref{subsec:variance_indep_V_proof,,subsec:exp_variance_ind_proof,,subsec:exp_exp_variance_exp_ind_proof}.

\begin{lemma}[Variance of independence V-statistics] \label{lem:variance_indep_V}
Under the assumptions of \cref{thm: Full Independence Tests},
\begin{talign} \label{Eq: Variance upper bound II V}
\Var(V_n) \leq \frac{16 \psi_{1}'}{n} + \frac{144 \psi_{2}'}{n^2}.
\end{talign}
\end{lemma}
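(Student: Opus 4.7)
The plan is to bound $\Var(V_n)$ by expanding it as a double sum of covariances indexed by ordered pairs of 4-tuples $(\mathbf{i},\mathbf{i}') \in [n]^4 \times [n]^4$, classifying pairs according to the number of datapoints they share, and applying a standard Hoeffding-style variance decomposition.

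First I would write
\begin{talign}
\Var(V_n) = \frac{1}{n^8}\sum_{\mathbf{i},\mathbf{i}' \in [n]^4} \Cov\bigl(\hin(X_{\mathbf{i}}),\hin(X_{\mathbf{i}'})\bigr),
\end{talign}
where $X_i \defeq (Y_i,Z_i)$, and then replace $\hin$ by the symmetrization $\hbarin$ inside each covariance (which leaves the covariance unchanged by exchangeability). Next, partition the index-pair space by $c \defeq |\{i_1,i_2,i_3,i_4\} \cap \{i_1',i_2',i_3',i_4'\}|$, the number of datapoints the two tuples share as sets. For $c=0$, the two tuples involve disjoint, independent samples, so the covariance vanishes.

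For $c\ge 1$, the law of total covariance combined with a conditional-expectation argument gives $|\Cov(\hbarin(X_\mathbf{i}),\hbarin(X_{\mathbf{i}'}))| \le \zeta_c \defeq \Var(\E[\hbarin(X_1,X_2,X_3,X_4)\mid X_1,\dots,X_c])$, and by the definition of $\psi_1'$ I get $\zeta_1 = \psi_1'$ directly. For $c\ge 2$ I would bound $\zeta_c \le \zeta_4 \le \E[\hbarin^2]$, then exploit the product decomposition $\hin = \hinY \cdot \hinZ$ (each factor being a sum of four $g$-values): expanding $\hbarin^2$ into 16 cross-terms of the form $g_Y(\cdot,\cdot)g_Y(\cdot,\cdot)g_Z(\cdot,\cdot)g_Z(\cdot,\cdot)$ and applying Cauchy–Schwarz on each yields $\E[\hbarin^2] \le \psi_2'$ (the factor $16$ already lives inside the definition of $\psi_2'$).

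The final step is combinatorial. The number of ordered $(\mathbf{i},\mathbf{i}')$ with $c=1$ is at most $\binom{4}{1}^2\cdot n\cdot n^3\cdot n^3 = 16 n^7$ (pick the shared-index position on each side, the shared value, and the remaining three indices on each side freely from $[n]$), and the number with $c\ge 2$ is at most $C n^6$ for an absolute constant $C$ which I would track carefully, including the within-tuple diagonal pairs that the V-statistic (unlike the corresponding U-statistic) also creates. Combining yields
\begin{talign}
\Var(V_n) \le \frac{16 n^7 \psi_1'}{n^8} + \frac{C n^6 \psi_2'}{n^8} = \frac{16 \psi_1'}{n} + \frac{144 \psi_2'}{n^2},
\end{talign}
where the main obstacle is the precise combinatorial accounting producing the constant $144$ and correctly absorbing the V-statistic's diagonal-tie contributions into the $\psi_2'/n^2$ remainder. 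I expect this bookkeeping to mirror the case analysis in \cref{subsec:exp_variance_2s_proofs} for the homogeneity setting, but with a $4$-wise rather than $(2,2)$-wise product kernel, which makes the enumeration more intricate.
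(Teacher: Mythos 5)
Your proposal matches the paper's proof essentially step for step: both expand $\Var(V_n)$ over $[n]^4\times[n]^4$, symmetrize to $\hbarin$, note that disjoint index pairs contribute zero, bound the (at most $16n^7$) pairs sharing exactly one index by $\psi_1'$, and bound the (at most $4\cdot3\cdot4\cdot3\cdot n^6 = 144n^6$) pairs sharing two or more indices by $\psi_2'$ via Cauchy--Schwarz and Jensen. The only difference is cosmetic (covariance/Hoeffding-$\zeta_c$ language versus the paper's direct $\E[hh']-\E[h]\E[h']$ accounting), so I consider the proposal correct and equivalent to the paper's argument.
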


\begin{lemma}[Conditional variance of permuted independence V-statistics] \label{lem:exp_variance_ind}
Under the assumptions of \cref{thm: Full Independence Tests},
\begin{talign} 
\label{eq:exp_var_full_ind_bound}
&\max(\Var (\mathbb{E}[V_n^{\pi,n} \mid  \mathbb{X}] ), \mE [ \Var (V_n^{\pi,n} \mid  \mbb{X}) ]) 
    \leq
\Var(V_n^{\pi,n})
    \leq 
\frac{32 \tilde{\psi}_1'}{n} + \frac{960 \psi_2'}{n^2} 
    \qtext{and} 
    \\
\begin{split}
&\max(\Var (\mathbb{E}[V_n^{\pi,s} \mid  \mathbb{X}] ), 
    \mE[\Var (V_n^{\pi,s} \mid  \mbb{X}) ])
    \leq
\Var(V_n^{\pi,s}) \\
    &\qquad\leq %
    \tilde{\psi}_1' \big( \frac{32}{n} + \frac{2304}{n s} + \frac{460800}{n s^2} \big) + \psi_2' \big( \frac{960}{n^2} + \frac{4147200}{n^2 s} \big).
\label{eq:exp_var_cheap_ind_bound}
\end{split}
\end{talign}
\end{lemma}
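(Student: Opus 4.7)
\begin{proof-sketch}
The first inequality in each display is immediate from the law of total variance:
\begin{talign}
\Var(V_n^{\pi,s}) = \E[\Var(V_n^{\pi,s}\mid \X)] + \Var(\E[V_n^{\pi,s}\mid \X]),
\end{talign}
so each non-negative summand is bounded by the total. The substantive content is therefore the upper bound on $\Var(V_n^{\pi,s})$ itself, which I would prove by mirroring the structure of the analogous homogeneity argument (\cref{lem:exp_variance_2s}) but adapted to wild bootstrap permutations of an independence V-statistic over bins.

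My plan is as follows. For the \emph{standard} case $s=n$, I would begin by expanding $\Var(V_n^{\pi,n}) = \E[(V_n^{\pi,n})^2] - (\E[V_n^{\pi,n}])^2$ as a sum over $8$-tuples $\mathfrak{i}=(i_1,\dots,i_4,i_1',\dots,i_4')$ of products $\E[h_{\itag}\hbar_{\itag}]$ of symmetrized independence kernels (the symmetrized version being justified as in \cite[Thm.~5.1]{kim2022minimax}). I would partition the index set by the overlap cardinality $\mathfrak{s}(\mathfrak{i})$ of $\{i_1,\dots,i_4\}$ with $\{i_1',\dots,i_4'\}$, isolating the terms with $\mathfrak{s}(\mathfrak{i})\le 1$ (which vanish by the wild bootstrap orthogonality of $h_{\itag}$) from those with $\mathfrak{s}(\mathfrak{i})\ge 2$. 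The remaining contributions split into (a) tuples controlled by $\tilde{\psi}_1'$ (exactly one overlap after conditioning on the wild bootstrap sign, so the second moment factors through a single conditional expectation), which produce the $32/n$ and $2304/(ns)$ type terms, and (b) tuples controlled by $\psi_2'$ (higher overlap or all three MMD-like pairs collapsed), producing the $960/n^2$ type terms. The factor of $16$ inside $\psi_2'$ absorbs the product structure of $h_{\itag} = \hinY \cdot \hinZ$.

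For the \emph{cheap} case with bin count $s<n$, I would re-express $V_n^{\pi,s}$ as in \cref{subsec:additional_homogeneity}, treating it as a standard wild bootstrap statistic with bin kernel $\Hin$ of $s$ bin-points, then unfold $\Hin$ into $m^4$ sums of $h_{\itag}$ evaluated at actual data points. The variance sum is then stratified by two nested overlap structures: (i) how many of the eight bins in $\mathfrak{i}$ coincide (analogous to $\mathsf{K}_1^c$ vs.\ $\mathsf{K}_2$ vs.\ $\mathsf{K}_{\mathrm{diff}}$ in the homogeneity proof), and (ii) for each fixed bin pattern, how many of the inner $8$-tuples $\mathfrak{K}\in[m]^8$ lead to overlapping data indices. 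Terms whose bins and inner indices are all distinct vanish by wild bootstrap orthogonality. The $\tilde{\psi}_1'/(ns)$ and $\tilde{\psi}_1'/(ns^2)$ terms arise, respectively, from configurations where one pair of data indices collapses within a bin boundary or where two pairs collapse across bins; the $\psi_2'/(n^2 s)$ term arises from configurations with even heavier collapse forced by bin coincidences. The exact numerical constants come from counting the sizes of each class.

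The main obstacle will be the bookkeeping in step (ii): classifying the $(s, m)$-indexed configurations, ensuring that the wild bootstrap's paired sign-flip structure (not an arbitrary permutation) is respected in each vanishing argument, and tracking contributions of $\tilde{\psi}_1'$ vs.\ $\psi_2'$ so that the combinatorial prefactors match the advertised constants $2304, 460800, 4147200$. I would proceed as in \cref{subsec:exp_variance_2s_proofs} by tabulating the ``diff / $a$ / $b$ / rest'' cases, with the extra wrinkle that the product structure $h_{\itag}=\hinY\hinZ$ requires us to track overlaps separately in the $Y$ and $Z$ coordinates when the wild bootstrap swap $\swap(\cdot)$ intervenes. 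Once the tabulation is complete, summing the contributions and collecting constants yields \cref{eq:exp_var_cheap_ind_bound}, and setting $s=n$ (so the second and third bin-dependent terms become of the same order as the first) recovers \cref{eq:exp_var_full_ind_bound}.
\end{proof-sketch}
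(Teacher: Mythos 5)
Your reduction of the $\max(\cdot,\cdot)$ bounds to the total variance via the law of total variance is correct (the paper reaches the same conclusion by two applications of Jensen's inequality), and your overall strategy — expand $\Var(V_n^{\pi,s})$ as a sum over $8$-tuples, stratify by index overlap, bound the surviving summands by $\tilde{\psi}_1'$ or $\psi_2'$, and count — is exactly the paper's.

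There is, however, a concrete error in your classification for the standard case, which then propagates to the cheap case. You assert that the terms with overlap $\mathfrak{s}(\mathfrak{i})\le 1$ vanish ``by wild bootstrap orthogonality'' and that the $\tilde{\psi}_1'$-controlled terms producing the $32/n$ contribution live inside $\{\mathfrak{s}\ge 2\}$. These two claims are incompatible: any class with $\mathfrak{s}\ge 2$ contains $O(n^6)$ tuples and hence contributes only $O(1/n^2)$ after dividing by $n^8$, so a $32\tilde{\psi}_1'/n$ term can only come from the $O(n^7)$ single-overlap tuples. The antisymmetry that kills single-overlap terms in the homogeneity proof ($\hbarho(x,y;x',y')=-\hbarho(x',y;x,y')$) has no analogue for the independence V-statistic: here only the \emph{zero}-overlap terms factorize and cancel, and even that requires measuring overlap against the swap-augmented set $\{i_1,\dots,i_4,\swap(i_1),\dots,\swap(i_4)\}$ rather than the raw indices, since $\pi_{i}$ and $\pi_{\swap(i)}$ share a Rademacher sign and a tuple with $i_1'=\swap(i_1)$ does not decouple. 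One must then further split the single-overlap class into the tuples with seven distinct indices (the set $\bar{\mathsf{K}}_2$, at most $32n^7$ of them, each bounded by $\tilde{\psi}_1'$ after conditioning on $(Y_1,Z_{\pi_1},\pi_1)$) and the degenerate single-overlap tuples in which three swap-pairs coincide ($384n^6$ of them, only bounded by $\psi_2'$); this split is what produces $960=4(576+384)/\,4$, i.e.\ the $576n^6+384n^6$ count behind the $960/n^2$ constant. As written, your stratification would either lose the leading $32\tilde{\psi}_1'/n$ term entirely or fail to justify the cancellation it relies on, and the cheap-case bookkeeping (where the inner $[m]^8$ stratification must be crossed with the corrected bin-level classes) cannot recover the constants $2304$, $460800$, $4147200$ without first fixing this.
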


\begin{lemma}[Mean of permuted independence V-statistics] \label{lem:exp_exp_variance_exp_ind}
Under the assumptions of \cref{thm: Full Independence Tests},
\begin{talign}
\label{eq:Vnpin-mean}
\mE [V_n^{\pi,n}] &\leq 
\frac{1}{4} \mathcal{U}
+ \frac
{10n^2 - 44 n + 48}
{n^3} \tilde{\xi} 
    \qtext{and} 
    \\
\label{eq:Vnpis-mean}
    \mE [V_n^{\pi,s}]  &\leq 
    \frac{1}{4} \big( 1 + 
    \frac{s^2 + s - 4}{s^3}
    \big) \mathcal{U} + \frac
    {10n^2 - 44 n + 48}
    {n^3} \tilde{\xi}.
\end{talign}
\end{lemma}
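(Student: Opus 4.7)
The plan is to evaluate $\mE[V_n^{\pi,s}]$ directly from the definition \cref{eq:cheaply-permuted-independence-V} by summing over sample-level quadruples. Each ordered bin-quadruple $(i_1,\dots,i_4)\in[s]^4$ together with within-bin element-quadruple $(k_1,\dots,k_4)\in[m]^4$ indexes a sample quadruple $(\alpha_a)_{a=1}^{4}$ with $\alpha_a=(i_a-1)m+k_a$, and the cheap permutation acts at the sample level by $\tilde\pi_{\alpha_a}=(\pi_{i_a}-1)m+k_a$. I would partition the $n^4$ sample quadruples into a \textbf{main class}, in which the four samples are pairwise distinct and their bins $i_a$ lie in four disjoint swap pairs $\{i_a,\swap(i_a)\}$, a \textbf{null class} in which $\alpha_1=\alpha_4$ or $\alpha_2=\alpha_3$ (so that $\hin$ vanishes identically by antisymmetry of the defining differences), and a \textbf{boundary class} containing all remaining quadruples.

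For the main class, I would use the factored representation
\begin{talign}
\hin=\langle\mu_1,\mu_2\rangle_{g_Y g_Z},\quad
\mu_1=(\delta_{Y_{\alpha_1}}-\delta_{Y_{\alpha_4}})(\delta_{Z_{\tilde\pi_{\alpha_1}}}-\delta_{Z_{\tilde\pi_{\alpha_4}}}),\
\mu_2=(\delta_{Y_{\alpha_2}}-\delta_{Y_{\alpha_3}})(\delta_{Z_{\tilde\pi_{\alpha_2}}}-\delta_{Z_{\tilde\pi_{\alpha_3}}}),
\end{talign}
and exploit the fact that, when the four bins lie in disjoint swap pairs, $\mu_1$ and $\mu_2$ involve disjoint subsets of samples and disjoint wild-bootstrap coin flips, so they are independent under the joint law of $(\pi,\X)$ and $\mE[\hin]=\langle\mE\mu_1,\mE\mu_2\rangle_{g_Y g_Z}$. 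A direct calculation using $\mE[\delta_{(Y_{\alpha_1},Z_{\tilde\pi_{\alpha_1}})}]=\tfrac12\pjnt+\tfrac12\P\times\Q$ and $\mE[\delta_{(Y_{\alpha_1},Z_{\tilde\pi_{\alpha_4}})}]=\P\times\Q$ yields $\mE[\mu_k]=\pjnt-\P\times\Q$ for $k\in\{1,2\}$. Combined with the identity $\mathcal{U}=4\langle\pjnt-\P\times\Q,\pjnt-\P\times\Q\rangle_{g_Y g_Z}$ (obtained by applying the same factored computation without permutation to iid $\pjnt$ samples), this gives $\mE[\hin]=\mathcal{U}/4$ per main-class quadruple.

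For the null class I would invoke $\hin\equiv 0$; for the boundary class, I would use the crude bound $|\mE[\hin]|\leq\tilde\xi$ together with $|\mathcal{U}|\leq\tilde\xi$ to control both the direct boundary contribution and any gap between the normalized main-class count and the claimed coefficient on $\mathcal{U}$. For $V_n^{\pi,n}$ ($m=1$), the main-class size is $n(n-2)(n-4)(n-6)$; tallying the null-class and boundary-class counts and combining with the above per-quadruple contributions should produce the coefficient $(10n^2-44n+48)/n^3$ on $\tilde\xi$ in \cref{eq:Vnpin-mean}. For the cheap case, the main-class shortfall at the bin level is partially compensated by sample quadruples with bin coincidences ($i_a=i_b$ but $\alpha_a\neq\alpha_b$): these are boundary at the bin level but still contribute $\mathcal{U}/4$ at the sample level because the underlying samples remain distinct and iid; careful enumeration of these ``intra-bin main'' configurations should produce the extra $\frac{s^2+s-4}{4s^3}\mathcal{U}$ term in \cref{eq:Vnpis-mean}.

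The main obstacle will be the precise combinatorial bookkeeping, especially handling swap-pair collision cases in which two bin indices satisfy $i_a=\swap(i_b)$ so that $\sigma_{i_a}$ and $\sigma_{i_b}$ are perfectly correlated, breaking the clean factorization of $\mE\mu_1$ and $\mE\mu_2$. Verifying that these collision cases fall cleanly into the boundary class without producing spurious $\mathcal{U}/4$ terms, and matching the exact numerators $(10n^2-44n+48)$ and $(s^2+s-4)$ claimed in the lemma, will require careful case enumeration across bin-index coincidences, swap-pair collisions, and within-bin element coincidences.
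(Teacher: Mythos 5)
Your main-class calculation ($\E[\mu_k]=\pjnt-\P\times\Q$, hence $\E[\hin]=\mathcal{U}/4$ when the four bins occupy four disjoint swap pairs) is correct, and for $V_n^{\pi,n}$ ($m=1$) your three-way classification coincides with the paper's, so \cref{eq:Vnpin-mean} is within reach.

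For \cref{eq:Vnpis-mean} there is a genuine gap, not merely bookkeeping. You cannot uniformly credit $\mathcal{U}/4$ to your ``intra-bin main'' quadruples, and you cannot afford to bound the remaining bin-collision quadruples by $\tilde\xi$. Writing $\epsilon_{i_k}\defeq\indic{\pi_{i_k}=i_k}$, the argument behind your main-class estimate in fact yields, for every quadruple whose four samples and their swap partners are eight pairwise-distinct indices,
\begin{talign}
\E\big[\hin\big((Y_{\alpha_1},Z_{\tilde{\pi}_{\alpha_1}}),\dots,(Y_{\alpha_4},Z_{\tilde{\pi}_{\alpha_4}})\big)\,\big|\,\pi\big]
	=\tfrac14\,(\epsilon_{i_1}+\epsilon_{i_4})(\epsilon_{i_2}+\epsilon_{i_3})\,\mathcal{U},
\end{talign}
since each cross term $\E[\delta_{(Y_{\alpha_a},Z_{\tilde{\pi}_{\alpha_b}})}]$ equals $\pjnt$ when $a=b$ and $\epsilon_{i_a}=1$ and equals $\P\times\Q$ otherwise. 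When $i_1$ and $i_4$ (or $i_2$ and $i_3$) coincide or are swap partners, the two coins coincide but the product still averages to $1$, so you do get $\mathcal{U}/4$. But when, say, $i_1$ and $i_2$ collide, $\epsilon_{i_1}=\epsilon_{i_2}$ and $\E[(\epsilon_{i_1}+\epsilon_{i_4})(\epsilon_{i_1}+\epsilon_{i_3})]>1$: the per-quadruple contribution strictly exceeds $\mathcal{U}/4$, your factorization $\E[\hin]=\langle\E\mu_1,\E\mu_2\rangle$ fails because $\mu_1$ and $\mu_2$ share a coin, and assigning $\mathcal{U}/4$ undercounts and invalidates the upper bound. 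Pushing those quadruples into the $\tilde\xi$ class is no better: there are $\Theta(s^3)$ bin quadruples with a collision, each carrying $\Theta(m^4)$ element quadruples, so the normalized $\tilde\xi$-coefficient would be $\Theta(1/s)$, far larger than the claimed $(10n^2-44n+48)/n^3=\Theta(1/n)$ when $s\ll n$. The ingredient you are missing is the universal Cauchy--Schwarz bound $\E[(\epsilon_{i_1}+\epsilon_{i_4})(\epsilon_{i_2}+\epsilon_{i_3})]\leq 2$, valid under \emph{any} coupling of the $\epsilon_{i_k}$, which gives $\E[\hin]\leq\tfrac12\mathcal{U}$ for every such quadruple regardless of bin collisions. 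This keeps all distinct-sample quadruples in a $\mathcal{U}$-controlled bucket, reserves $\tilde\xi$ only for the $O(n^3)$ quadruples with a genuine sample-level coincidence, and, combined with the exact multipliers for the specific collision patterns, produces the stated $\mathcal{U}$-coefficient.
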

Now fix any $\alpha,\beta\in(0,1)$.
By \cref{lem:exp_variance_ind},
\begin{talign}
&\sqrt{(\frac2{\alpha\beta}\!-\frac2\beta) \mE [ \Var (V_n^{\pi,s} \mid  \mbb{X}) ] }  
    + \!\sqrt{\!(\frac{2}{\beta} -1) \Var (\mathbb{E}[V_n^{\pi,s} \mid  \mathbb{X}] )} \\ &\leq \big( \sqrt{\frac2{\alpha\beta}\!-\!\frac2\beta} + \sqrt{\frac{2}{\beta} -1} \big) \sqrt{  \Var (V_n^{\pi,s}) } \leq \sqrt{ \big( \frac4{\alpha\beta}\!-\!2 \big) \Var (V_n^{\pi,s})},
\end{talign}
with the same result holding when $V_n^{\pi,n}$ is substituted for $V_n^{\pi,s}$. 
Hence, by 
\cref{Lemma: Refined Two Moments Method,,lem:variance_indep_V,,lem:exp_variance_ind,,lem:exp_exp_variance_exp_ind} the standard and cheap tests enjoy power at least $1-\beta$ whenever 
\begin{talign} \label{eq:ind_full_suff_app}
    &\mathbb{E}[V_{n}] \! - \! \frac{1}{4} \mathcal{U} \geq 
    \sqrt{(\frac{3}{\beta} - 1) \big(\frac{16 \psi_1'}{n} + \frac{144 \psi_2'}{n^2} \big)}
    + \sqrt{\big( \frac{12}{\astar \beta} - 2\big)
    \big(
    \frac{32 \tilde{\psi}_1'}{n} + \frac{960 \psi_2'}{n^2} \big)}
    + \frac{10}{n} \tilde{\xi}
    \stext{and}\\
    &\mathbb{E}[V_{n}] \! - \! \frac{1}{4} \big( 1 + 
    \frac{s^2 + s - 4}{s^3}
    \big)\mathcal{U} \geq 
    \sqrt{(\frac{3}{\beta} - 1) \big(\frac{16 \psi_1'}{n} + \frac{144 \psi_2'}{n^2} \big)} 
    \\ &\qquad\ + \! \sqrt{\big( \frac{12}{\astar \beta} - 2\big)
    \big(\tilde{\psi}_1' \big( 
    \frac{32}{n} + \frac{2304}{n s} + \frac{460800}{n s^2}
    \big) + \psi_2' \big( 
    \frac{960}{n^2} + \frac{4147200}{n^2 s}
    \big) \big)} \! + \! \frac{10}{n} \tilde{\xi}
    \label{eq:ind_cheap_suff_app}
\end{talign}
respectively.
To recover the looser sufficient condition \cref{eq:ind_full_suff} for standard testing, we note that, by \cref{lem:V_n_U_n_generic}, $\E[V_n] \geq \mathcal{U} - 
\frac{24 \sqrt{\psi_2'} + 6 \tilde{\xi}}{n}$.
To recover the looser sufficient condition \cref{eq:ind_cheap_suff} for cheap testing, we again invoke $\E[V_n] \geq \mathcal{U} - 
\frac{24 \sqrt{\psi_2'} + 6 \tilde{\xi}}{n}$ and additionally note that 
\begin{talign}
\begin{split}
    & \sqrt{(\frac{3}{\beta} - 1) \big(\frac{16 \psi_1'}{n} + \frac{144 \psi_2'}{n^2} \big)} 
    \! + \! \sqrt{\big( \frac{12}{\astar \beta} - 2\big)
    \big(\tilde{\psi}_1' \big( 
    \frac{32}{n} + \frac{2304}{n s} + \frac{460800}{n s^2}
    \big)+ \psi_2' \big(
    \frac{960}{n^2} + \frac{4147200}{n^2 s}
    \big) \big)} \\ &\qquad\quad + \! \frac{10}{n} \tilde{\xi} \! + \! 
    \frac{24 \sqrt{\psi_2'} + 6 \tilde{\xi}}{n}
    \\ & \leq \frac{3}{4}\gamma_n + \sqrt{\big( \frac{12}{\astar \beta} - 2\big)
    \big(\tilde{\psi}_1' \big( \frac{2304}{n s} + \frac{460800}{n s^2}
    \big) + \psi_2' \frac{4147200}{n^2 s} \big)}.
\end{split}
\end{talign}
\begin{lemma}[Independence V vs.~U-statistics] \label{lem:V_n_U_n_generic}
Let $U_n$ be the U-statistic associated to $h_{\itag}$, which takes the form $U_n = \frac{1}{(n)_4} \sum_{(i_1,i_2,i_3,i_4) \in \mathbf{i}_4^n} h_{\itag}(X_{i_1}, X_{i_2}, X_{i_3}, X_{i_4})$, and satisfies $\mE[U_n] = \mE[h_{\itag}(X_{1}, X_{2}, X_{3}, X_{4})] = \mathcal{U}$.
Under the assumptions of \cref{thm: Full Independence Tests},  
    \begin{talign}
        |\mathbb{E}[ V_n - U_n ]| \leq \frac{24 \sqrt{\psi_2'} + 6 \tilde{\xi}}{n}.
    \end{talign}
\end{lemma}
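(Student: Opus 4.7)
\textbf{Plan for Lemma \ref{lem:V_n_U_n_generic}.} The proof proceeds by decomposing $\mathbb{E}[V_n - U_n]$ according to whether the index tuple $\vec{i} \in [n]^4$ has all distinct components or contains repetitions. Since $U_n$ averages over $\mathbf{i}_4^n$ only while $V_n$ averages over the full $[n]^4$, a direct manipulation gives
\[
\mathbb{E}[V_n - U_n]
  = \Bigl(\frac{(n)_4}{n^4} - 1\Bigr)\mathcal{U}
  + \frac{1}{n^4}\sum_{\vec{i} \in [n]^4 \setminus \mathbf{i}_4^n} \mathbb{E}\bigl[h_{\itag}(X_{i_1}, \ldots, X_{i_4})\bigr],
\]
isolating the all-distinct contribution from the repeat-tuple contribution. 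Applying the triangle inequality reduces the problem to bounding the two pieces.

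\textbf{First piece.} The elementary identity $n^4 - (n)_4 = 6n^3 - 11n^2 + 6n \leq 6n^3$ yields $|\frac{(n)_4}{n^4} - 1| \leq \frac{6}{n}$, and $|\mathcal{U}| \leq \tilde{\xi}$ follows from the definition of $\tilde{\xi}$ by selecting $i_k = i'_k = k$ for $k \in [4]$. Hence this piece contributes at most $\frac{6\tilde{\xi}}{n}$, matching the $\tilde{\xi}$-term in the stated inequality.

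\textbf{Second piece.} There are at most $n^4 - (n)_4 \leq 6n^3$ repeat tuples. The plan is to show $|\mathbb{E}[h_{\itag}(X_{i_1}, \ldots, X_{i_4})]| \leq 4\sqrt{\psi_2'}$ uniformly in $\vec{i}$. Using $h_{\itag} = h_Y \cdot h_Z$ with $h_Y$ and $h_Z$ each a signed sum of four $g_Y$ (resp., $g_Z$) evaluations, I would apply Jensen's inequality followed by a term-by-term Cauchy-Schwarz,
\[
|\mathbb{E}[h_{\itag}]|
  \leq \sum_{a,b=1}^{4} \sqrt{\mathbb{E}\bigl[g_Y^2(Y_{p_a}, Y_{q_a})\,g_Z^2(Z_{r_b}, Z_{s_b})\bigr]}
  \leq 16 \cdot \sqrt{\psi_2'/16} = 4\sqrt{\psi_2'},
\]
where each of the 16 cross-moment expectations fits one of the three overlap patterns $(1,2), (1,3), (3,4)$ covered by $\psi_2'$. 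Multiplying by $6n^3/n^4$ gives the $\frac{24\sqrt{\psi_2'}}{n}$ contribution, and summing the two pieces yields the claim.

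\textbf{Main obstacle.} The delicate step is justifying the per-term estimate $\mathbb{E}[g_Y^2\,g_Z^2] \leq \psi_2'/16$ when $\vec{i}$ contains repetitions: diagonal factors such as $g_Y(Y_a, Y_a)$ can arise (e.g., when $i_1 = i_2$) and are not directly controlled by $\psi_2'$ as stated. The saving grace is the symmetry identities $h_Y(y_1, y_2, y_3, y_1) = 0$ and $h_Y(y_1, y_2, y_2, y_4) = 0$, which follow from $g_Y(y,y') = g_Y(y',y)$ and force $h_{\itag} = 0$ whenever $i_1 = i_4$ or $i_2 = i_3$. This eliminates the most problematic configurations and leaves only six nonvanishing repeat patterns (four with three distinct indices and two with two distinct indices). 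For these surviving patterns, a careful enumeration and regrouping of the Cauchy-Schwarz expansion—pairing each diagonal $g_Y$ factor with an off-diagonal $g_Z$ factor whose overlap matches one of the three $\psi_2'$-patterns, and vice versa—keeps the uniform $4\sqrt{\psi_2'}$ bound intact with the stated constant of $24$.
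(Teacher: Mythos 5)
Your decomposition of $\mathbb{E}[V_n - U_n]$ into the coefficient-mismatch term on distinct tuples plus the repeat-tuple term is exactly the paper's starting point, and your counting ($n^4 - (n)_4 \leq 6n^3$) is correct. However, you have assigned the two bounding quantities to the wrong pieces, and the resulting gap in your second piece is not repairable under the stated assumptions. The quantity $\psi_2'$ controls only moments of the form $\E[g_Y^2(Y_1,Y_2)\,g_Z^2(Z_i,Z_j)]$ with \emph{distinct} arguments in each factor. The repeat tuples that survive your (correct) vanishing identities, e.g.\ $i_1 = i_2$, produce genuinely diagonal evaluations: $\hinY(y_1,y_1,y_3,y_4)$ contains the term $g_Y(y_1,y_1)$, so the expansion of $\E[h_{\itag}]$ contains terms such as $\E[g_Y(Y_1,Y_1)g_Z(Z_1,Z_1)]$ and $\E[g_Y(Y_1,Y_1)g_Z(Z_3,Z_4)]$. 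Cauchy--Schwarz applied to any such term produces a factor $\E[g_Y^2(Y_1,Y_1)\cdots]$ regardless of which $g_Z$ factor you pair it with, so your proposed "regrouping" cannot remove the diagonal; and for the doubly diagonal term there is nothing to pair at all. Since Theorem~\ref{thm: Full Independence Tests} assumes only that $\psi_1', \tilde\psi_1', \psi_2'$ are finite, these diagonal moments may be arbitrarily large or infinite, so your claimed uniform bound $|\E[h_{\itag}(X_{i_1},\dots,X_{i_4})]| \leq 4\sqrt{\psi_2'}$ on repeat tuples is false.

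The fix is to swap the roles of the two controls, which is what the paper does. The repeat tuples should be bounded by $\tilde{\xi}$: its definition in \eqref{Eq: mean component} takes a maximum over \emph{all} index patterns $i_k, i_k' \in [8]$, including coincident ones, so it covers the diagonal configurations by construction; with at most $6n^3$ such tuples this piece contributes $6\tilde{\xi}/n$. The coefficient-mismatch term on distinct tuples should be bounded via $|\mathcal{U}| \leq (\E[h_{\itag}(X_1,\dots,X_4)^2])^{1/2} \leq 4\sqrt{\psi_2'}$ (your $16$-term Cauchy--Schwarz expansion is valid there, because with four distinct indices every $g_Y$ and $g_Z$ evaluation is off-diagonal and its overlap pattern reduces by exchangeability to one of $(1,2),(1,3),(3,4)$); this contributes $24\sqrt{\psi_2'}/n$. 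With that reassignment your argument becomes the paper's proof.
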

\begin{proof}
    By definition, 
    \begin{talign}
    \begin{split}
        V_n - U_n &= \frac{1}{n^4} \sum_{(i_1,i_2,i_3,i_4) \in [n]^4} h_{\itag}(X_{i_1}, X_{i_2}, X_{i_3}, X_{i_4}) \\ &\quad - \frac{1}{(n)_4} \sum_{(i_1,i_2,i_3,i_4) \in \mathbf{i}_4^n} h_{\itag}(X_{i_1}, X_{i_2}, X_{i_3}, X_{i_4}) \\ &= \big( \frac{1}{n^4} - \frac{1}{(n)_4} \big) \sum_{(i_1,i_2,i_3,i_4) \in \mathbf{i}_4^n} h_{\itag}(X_{i_1}, X_{i_2}, X_{i_3}, X_{i_4}) \\ &\quad + \frac{1}{n^4} \sum_{(i_1,i_2,i_3,i_4) \in [n]^4 \setminus \mathbf{i}_4^n} h_{\itag}(X_{i_1}, X_{i_2}, X_{i_3}, X_{i_4}). 
    \end{split}
    \end{talign}
    Thus, we have that 
     \begin{talign} 
     \begin{split} \label{eq:abs_E_V_U}
        |\mathbb{E}[ V_n - U_n ]| &\leq \big| \frac{1}{n^4} - \frac{1}{(n)_4} \big| \sum_{(i_1,i_2,i_3,i_4) \in \mathbf{i}_4^n} \big|\mathbb{E}\big[ h_{\itag}(X_{i_1}, X_{i_2}, X_{i_3}, X_{i_4}) \big] \big| \\ &\quad + \frac{1}{n^4} \sum_{(i_1,i_2,i_3,i_4) \in [n]^4 \setminus \mathbf{i}_4^n} \big| \mathbb{E}\big[ h_{\itag}(X_{i_1}, X_{i_2}, X_{i_3}, X_{i_4}) \big] \big| .
    \end{split}
    \end{talign}
    We bound the first term on the right-hand side 
    using Jensen's inequality and the $\psi_2'$ definition:
    \begin{talign}
        &\big| \frac{1}{n^4} - \frac{1}{(n)_4} \big| \sum_{(i_1,i_2,i_3,i_4) \in \mathbf{i}_4^n} \big|\mathbb{E}\big[ h_{\itag}(X_{i_1}, X_{i_2}, X_{i_3}, X_{i_4}) \big] \big| \\ &\leq \big| \frac{1}{n^4} - \frac{1}{(n)_4} \big| \sum_{(i_1,i_2,i_3,i_4) \in \mathbf{i}_4^n} \big(\mathbb{E}\big[ h_{\itag}(X_{i_1}, X_{i_2}, X_{i_3}, X_{i_4})^2 \big] \big)^{1/2} \\ &\leq \big| \frac{1}{n^4} - \frac{1}{(n)_4} \big|  (n)_4 \big( 16 \psi_2' \big)^{1/2}.
    \end{talign}
    We bound the second term using the definition of $\tilde{\xi}$:
    \begin{talign}
        \frac{1}{n^4} \sum_{(i_1,i_2,i_3,i_4) \in [n]^4 \setminus \mathbf{i}_4^n} \big| \mathbb{E}\big[ h_{\itag}(X_{i_1}, X_{i_2}, X_{i_3}, X_{i_4}) \big] \big| \leq \big| \frac{1}{n^4} - \frac{1}{(n)_4} \big|  (n)_4 \tilde{\xi}.
    \end{talign}
    The advertised conclusion follows as %
    $1 - \frac{(n)_4}{n^4} = 1 - \frac{(n-1)(n-2)(n-3)}{n^3} = \frac{6n^2 - 11n + 6}{n^3} \leq \frac{6}{n}$. 
\end{proof}
\subsection{\pcref{lem:variance_indep_V}} \label{subsec:variance_indep_V_proof}
Define the index sets
\begin{talign}
\mathsf{K}_{\text{total}} &\defeq [n]^8, \\
\mathsf{K}_{1} &\defeq \{ (i_1,i_2,i_3,i_4,i_1',i_2',i_3',i_4') \in \mathsf{K}_{\text{total}} : |\{i_1,i_2,i_3,i_4\}\cap \{i_1',i_2',i_3',i_4'\} | = 0 \}, \\
\mathsf{K}_{2} &\defeq \{ (i_1,i_2,i_3,i_4,i_1',i_2',i_3',i_4') \in \mathsf{K}_{\text{total}} : |\{i_1,i_2,i_3,i_4\}\cap \{i_1',i_2',i_3',i_4'\} | = 1 \}, \stext{and} \\
(\mathsf{K}_{1} \cup \mathsf{K}_{2})^c &\defeq \{ (i_1,i_2,i_3,i_4,i_1',i_2',i_3',i_4') \in \mathsf{K}_{\text{total}} : |\{i_1,i_2,i_3,i_4\}\cap \{i_1',i_2',i_3',i_4'\} | > 1 \}.
\end{talign}
Then,
\begin{talign}
\begin{split} \label{eq:variance_decomp_ind}
    \Var(V_n) &= \mE[V_n^2] - \mE[V_n]^2 \\ &= \frac{1}{n^8}  \sum_{(i_1,\dots,i_4,i_1',\dots,i_4') \in \mathsf{K}_{\text{total}}} \big( \mE[\hbarin(X_{i_1},X_{i_2},X_{i_3},X_{i_4}) \hbarin(X_{i_1'},X_{i_2'},X_{i_3'},X_{i_4'})] \\ &\qquad\qquad - \mE[\hbarin(X_{i_1},X_{i_2},X_{i_3},X_{i_4})] \mE[\hbarin(X_{i_1'},X_{i_2'},X_{i_3'},X_{i_4'})] \big) \\ &= (I) + (II) + (III),
\end{split}
\end{talign}
 where $(I)$ is the summation over $(i_1,\ldots,i_4') \in \mathsf{K}_{1}$, $(II)$ is the summation over $(i_1,\ldots,i_4') \in \mathsf{K}_{2}$, and $(III)$ is the summation over $(i_1,\ldots,i_4') \in (\mathsf{K}_{1} \cup \mathsf{K}_{2})^c$. Note that
 \begin{talign}
    |\mathsf{K}_{2}| \leq n^4 \cdot 4 \cdot 4 \cdot n^3 = 16 n^7 \qtext{and}
    |(\mathsf{K}_{1} \cup \mathsf{K}_{2})^c| \leq n^4 \cdot 4 \cdot 3 \cdot 4 \cdot 3 \cdot n^2 = 144 n^6.
 \end{talign}
 We also remark that for $(i_1,\ldots,i_4') \in \mathsf{K}_{1}$, $\mE[\hin(X_{i_1},X_{i_2},X_{i_3},X_{i_4}) \hin(X_{i_1'},X_{i_2'},X_{i_3'},X_{i_4'})] = \mE[\hin(X_{i_1},X_{i_2},X_{i_3},X_{i_4})] \mE[\hin(X_{i_1'},X_{i_2'},X_{i_3'},X_{i_4'})]$,
 which implies that $(I) = 0$. 
 
 By the definition \cref{Eq: definition of psi prime functions} of $\psi_{1}'$, we have that $(II) \leq \frac{1}{n^8}|\mathsf{K}_{2}| \psi_{1}' \leq \frac{16 \psi_{1}'}{n}$.
 By the nonnegativity of $\hbarin$, Cauchy-Schwarz, Jensen's inequality, and the definition \cref{Eq: definition of psi prime functions} of $\psi_{2}'$, we have that each summand of $(III)$ is upper-bounded by
 \begin{talign}
 \begin{split}
    &\mE[\hbarin(X_{i_1},X_{i_2},X_{i_3},X_{i_4})\hbarin(X_{i_1'},X_{i_2'},X_{i_3'},X_{i_4'})] 
    \\ &\leq \big( \mE[\hbarin(X_{i_1},X_{i_2},X_{i_3},X_{i_4})^2 ] \big)^{1/2} \big( \mE[\hbarin(X_{i_1'},X_{i_2'},X_{i_3'},X_{i_4'})^2] \big)^{1/2}
    \\ &\leq \big( \mE[\hin(X_{i_1},X_{i_2},X_{i_3},X_{i_4})^2 ] \big)^{1/2} \big( \mE[\hin(X_{i_1'},X_{i_2'},X_{i_3'},X_{i_4'})^2] \big)^{1/2} 
    \leq \psi_{2}'.
 \end{split}
 \end{talign}
 Hence, $(III) \leq \frac{1}{n^8}|(\mathsf{K}_{1} \cup \mathsf{K}_{2})^c| \psi_{2}' \leq \frac{144 \psi_{2}'}{n^2}$, which concludes the proof.

\subsection{\pcref{lem:exp_variance_ind}} \label{subsec:exp_variance_ind_proof}
Throughout we use the shorthand
$\mathfrak{i} \defeq (i_1,i_2,j_1,j_2,i_1',i_2',j_1',j_2')$.
\subsubsection{Proof of \cref{eq:exp_var_full_ind_bound}}\label{proof:exp_var_full_ind_bound}
Our approach is similar to that employed in \cite[Thm.~5.1]{kim2022minimax} to study  independence U-statistics.
We begin by defining the convenient shorthand 
\begin{talign}
\mathfrak{s}(\mathfrak{i}) &\defeq |\{i_1,i_2,i_3,i_4,n\!+\!1\!-\!i_1,n\!+\!1\!-\!i_2,n\!+\!1\!-\!i_3,n\!+\!1\!-\!i_4\}\cap \{i_1',i_2',i_3',i_4'\} |,\\
    \mathsf{K}_{\text{total}} &\defeq [n]^8, \quad
    \mathsf{K}_{1} \defeq \{ \mathfrak{i} \in \mathsf{K}_{\text{total}} :  \mathfrak{s}(\mathfrak{i}) = 0 \},  \quad
    \mathsf{K}_{2} \defeq \{ \mathfrak{i} \in \mathsf{K}_{\text{total}} : \mathfrak{s}(\mathfrak{i}) = 1 \}, \qtext{and} \\
    (\mathsf{K}_{1} \cup \mathsf{K}_{2})^c &\defeq \{ \mathfrak{i} \in \mathsf{K}_{\text{total}} : \mathfrak{s}(\mathfrak{i}) > 1 \},
    \end{talign}
     and writing 
    \begin{talign}
    	\hin(x_1,x_2,x_3,x_4) &= \hinY(y_1,y_2,y_3,y_4) \hinZ(z_1,z_2,z_3,z_4), \qtext{for} \\
        \hinY(y_1,y_2,y_3,y_4) &\defeq g_Y(y_1,y_2) + g_Y(y_3,y_4) - g_Y(y_1,y_3) - g_Y(y_2,y_4) \qtext{and} \\
        \hinZ(z_1,z_2,z_3,z_4) &\defeq g_Z(z_1,z_2) + g_Z(z_3,z_4) - g_Z(z_1,z_3) - g_Z(z_2,z_4).
    \end{talign}
    By Cauchy-Schwarz, 
    \begin{talign} 
    \begin{split} \label{eq:indep_V_stat_variance}
        &\mE [\Var (V_n^{\pi,n} | \mbb{X}) ] = \mE [\mE [(V_n^{\pi,n})^2 | \mbb{X}] - (\mE[V_n^{\pi,n} | \mbb{X}])^2 ] \leq \mE [(V_n^{\pi,n})^2 ] - (\mE[V_n^{\pi,n}])^2 
        \\ &~=~  \frac{1}{n^8}  \sum_{\mathfrak{i} \in \mathsf{K}_{\text{total}}} \mE \big[ \mathfrak{H}_Y\mathfrak{H}'_Y \hinZ(Z_{\pi_{i_1}},Z_{\pi_{i_2}},Z_{\pi_{i_3}},Z_{\pi_{i_4}})\hinZ(Z_{\pi_{i_1'}},Z_{\pi_{i_2'}},Z_{\pi_{i_3'}},Z_{\pi_{i_4'}}) \big] \\ &\qquad\qquad - \mE \big[ \mathfrak{H}_Y  \hinZ(Z_{\pi_{i_1}},Z_{\pi_{i_2}},Z_{\pi_{i_3}},Z_{\pi_{i_4}})  \big]  \mE \big[ \mathfrak{H}'_Y \hinZ(Z_{\pi_{i_1'}},Z_{\pi_{i_2'}},Z_{\pi_{i_3'}},Z_{\pi_{i_4'}}) \big] \\
    	&~= ~ (I) + (II) + (III),
    \end{split}
    \end{talign}
    where $\mathfrak{H}_Y \defeq \hinY(Y_{i_1},Y_{i_2},Y_{i_3},Y_{i_4})$, $\mathfrak{H}'_Y \defeq \hinY(Y_{i'_1},Y_{i'_2},Y_{i'_3},Y_{i'_4})$, and $(I),(II),(III)$ are the summations over $\mathfrak{i} \in \mathsf{K}_{1}$,  $\mathfrak{i} \in \mathsf{K}_{2}$, and  $\mathfrak{i} \in (\mathsf{K}_{1} \cup \mathsf{K}_{2})^c$ respectively.

    Note that for $\mathfrak{i} \in \mathsf{K}_{1}$,
    \begin{talign}
    \begin{split}
        &\mE \big[ \mathfrak{H}_Y \mathfrak{H}_Y'  \hinZ(Z_{\pi_{i_1}},Z_{\pi_{i_2}},Z_{\pi_{i_3}},Z_{\pi_{i_4}})\hinZ(Z_{\pi_{i_1'}},Z_{\pi_{i_2'}},Z_{\pi_{i_3'}},Z_{\pi_{i_4'}}) \big]
        \\ &= \mE \big[ \mathfrak{H}_Y \mathfrak{H}_Y'  \hinZ(Z_{\pi_{i_1}},Z_{\pi_{i_2}},Z_{\pi_{i_3}},Z_{\pi_{i_4}})   \hinZ(Z_{\pi_{i_1'}},Z_{\pi_{i_2'}},Z_{\pi_{i_3'}},Z_{\pi_{i_4'}}) \big]
        \\ &= \mE \big[ \mathfrak{H}_Y  \hinZ(Z_{\pi_{i_1}},Z_{\pi_{i_2}},Z_{\pi_{i_3}},Z_{\pi_{i_4}})  \big] \mE \big[ \mathfrak{H}_Y'  \hinZ(Z_{\pi_{i_1'}},Z_{\pi_{i_2'}},Z_{\pi_{i_3'}},Z_{\pi_{i_4'}}) \big],  %
    \end{split}
    \end{talign}
    so that $(I) = 0$.
    Also, note that 
    \begin{talign}
        |\mathsf{K}_{2}| \leq n^4 \cdot 2 \cdot 4 \cdot 4 \cdot n^3 = 32 n^7 \qtext{and}
        |(\mathsf{K}_{1} \cup \mathsf{K}_{2})^c| \leq n^4 \cdot 2 \cdot 4 \cdot 3 \cdot 2 \cdot 4 \cdot 3 \cdot n^2 = 576 n^6.
    \end{talign}
    
For $i,j \in [4]$, let us define $\bar{\mathsf{K}}_{2,i,j}$ as the set of indices $\mathfrak{i} \in \mathsf{K}_{2}$ with $|\mathfrak{i} | = 7$ and either $i_i = i'_j$ or $i_i = n\!+\!1\!-\!i'_j$. 
We further define $\bar{\mathsf{K}}_{2}$ as the union of these 16 sets. 
Note that $\mathsf{K}_{2} \setminus \tilde{\mathsf{K}}_{2}$ can be characterized as the set of $\mathfrak{i} \in \mathsf{K}_{2}$ such that, for exactly three indices $i, j, k$ of the eight indices, the pairs $\{ i, n\!+\!1\!-\!i\}$, $\{ j, n\!+\!1\!-\!j\}$ and $\{ k, n\!+\!1\!-\!k\}$ are equal. This characterization allows us to write 
\begin{talign}
    |\mathsf{K}_{2} \setminus \bar{\mathsf{K}}_{2}| = 4 \cdot 4 \cdot 6 \cdot 2^2 \cdot n^6 = 384 n^6. %
\end{talign}
Note that if $\mathfrak{i} \in \bar{\mathsf{K}}_{2}$ with $i_1 = i'_1$, then the corresponding summand from \cref{eq:indep_V_stat_variance} equals %
\begin{talign}
    &\mE \big[ \big( \hinY(Y_{1},Y_{2},Y_{3},Y_{4})  \hinZ(Z_{\pi_{1}},Z_{\pi_{2}},Z_{\pi_{3}},Z_{\pi_{4}}) \\ &\qquad\qquad - \E \big[ \hinY(Y_{1},Y_{2},Y_{3},Y_{4})  \hinZ(Z_{\pi_{1}},Z_{\pi_{2}},Z_{\pi_{3}},Z_{\pi_{4}}) \big] \big) \\ &\qquad \times \big( \hinY(Y_{1},Y_{5},Y_{6},Y_{7})  \hinZ(Z_{\pi_{1}},Z_{\pi_{5}},Z_{\pi_{6}},Z_{\pi_{7}}) \\ &\qquad\qquad - \E \big[ \hinY(Y_{1},Y_{5},Y_{6},Y_{7})  \hinZ(Z_{\pi_{1}},Z_{\pi_{5}},Z_{\pi_{6}},Z_{\pi_{7}}) \big] \big) \big]
    \\ &= \E \big[ \Var(  \hinY(Y_{1},Y_{2},Y_{3},Y_{4})  \hinZ(Z_{\pi_{1}},Z_{\pi_{2}},Z_{\pi_{3}},Z_{\pi_{4}}) \mid Y_1, \pi_1, Z_{\pi_1}) %
    \big] = \tilde{\psi}'_1,
\end{talign}
where the last equality holds since the distribution of $\hbarin((Y_1,Z_{\pi_1}),(Y_2,Z_{\pi_2}),(Y_3,Z_{\pi_3}),(Y_4,Z_{\pi_4}))$ conditional on $(Y_1,Z_{\pi_1}, \pi_1)$ is invariant to the order of the four arguments of $\hbarin$ and hence %
\begin{talign}
    &\E[\hin((Y_1,Z_{\pi_1}),(Y_2,Z_{\pi_2}),(Y_3,Z_{\pi_3}),(Y_4,Z_{\pi_4})) \mid (Y_1,Z_{\pi_1}, \pi_1)] \\ &= \E[\hbarin((Y_1,Z_{\pi_1}),(Y_2,Z_{\pi_2}),(Y_3,Z_{\pi_3}),(Y_4,Z_{\pi_4})) \mid (Y_1,Z_{\pi_1}, \pi_1)].
\end{talign}

Similarly, if $\mathfrak{i} \in \bar{\mathsf{K}}_{2}$ with $i_1 = n+1-i'_1$, 
then the corresponding summand of \cref{eq:indep_V_stat_variance} equals%
\begin{talign}
    &\mE \big[ \big( \hinY(Y_{1},Y_{2},Y_{3},Y_{4})  \hinZ(Z_{\pi_{1}},Z_{\pi_{2}},Z_{\pi_{3}},Z_{\pi_{4}}) \\ 
    &\qquad\qquad - \E \big[ \hinY(Y_{1},Y_{2},Y_{3},Y_{4})  \hinZ(Z_{\pi_{1}},Z_{\pi_{2}},Z_{\pi_{3}},Z_{\pi_{4}}) \big] \big) \\ &\qquad \times \big( \hinY(Y_{n},Y_{5},Y_{6},Y_{7})  \hinZ(Z_{\pi_{n}},Z_{\pi_{5}},Z_{\pi_{6}},Z_{\pi_{7}}) \\ &\qquad\qquad - \mE \big[ \hinY(Y_{n},Y_{5},Y_{6},Y_{7})  \hinZ(Z_{\pi_{n}},Z_{\pi_{5}},Z_{\pi_{6}},Z_{\pi_{7}}) \big] \big) \big]
    \\ &\leq \half\E[\Var(\hinY(Y_{1},Y_{2},Y_{3},Y_{4})  \hinZ(Z_{\pi_{1}},Z_{\pi_{2}},Z_{\pi_{3}},Z_{\pi_{4}})\mid Y_1,\pi_1, Z_{\pi_1})] 
    \\ &+ \half\E[\Var(\hinY(Y_{n},Y_{5},Y_{6},Y_{7})  \hinZ(Z_{\pi_{n}},Z_{\pi_{5}},Z_{\pi_{6}},Z_{\pi_{7}})\mid Y_n,\pi_n, Z_{\pi_n})]
    = \tilde{\psi}'_1,
\end{talign}
by Cauchy-Schwarz and the arithmetic-geometric mean inequality.

Moreover, for any $\mathfrak{i} \in (\mathsf{K}_{1} \cup \mathsf{K}_{2})^c \cup (\mathsf{K}_{2} \setminus \bar{\mathsf{K}}_{2})$, the corresponding summand of \cref{eq:indep_V_stat_variance} equals
\begin{talign} \label{eq:psi_prime_upper_in_2}
    &\mE \big[ \big( \hinY(Y_{i_1},Y_{i_2},Y_{i_3},Y_{i_4})  \hinZ(Z_{\pi_{i_1}},Z_{\pi_{i_2}},Z_{\pi_{i_3}},Z_{\pi_{i_4}}) \\ &\qquad\qquad - \mE \big[ \hinY(Y_{i_1},Y_{i_2},Y_{i_3},Y_{i_4})  \hinZ(Z_{\pi_{i_1}},Z_{\pi_{i_2}},Z_{\pi_{i_3}},Z_{\pi_{i_4}}) \big] \big) \\ &\qquad \times \big( \hinY(Y_{i_1'},Y_{i_2'},Y_{i_3'},Y_{i_4'}) \hinZ(Z_{\pi_{i_1'}},Z_{\pi_{i_2'}},Z_{\pi_{i_3'}},Z_{\pi_{i_4'}}) \\ &\qquad\qquad - \mE \big[ \hinY(Y_{i_1'},Y_{i_2'},Y_{i_3'},Y_{i_4'}) \hinZ(Z_{\pi_{i_1'}},Z_{\pi_{i_2'}},Z_{\pi_{i_3'}},Z_{\pi_{i_4'}}) \big] \big) \big] \\
    &\leq \big( \mE \big[ \big( \hinY(Y_{i_1},Y_{i_2},Y_{i_3},Y_{i_4})  \hinZ(Z_{\pi_{i_1}},Z_{\pi_{i_2}},Z_{\pi_{i_3}},Z_{\pi_{i_4}}) \big)^2 \big] \big)^{1/2} \\ &\qquad \times \big( \mE \big[ \big( \hinY(Y_{i_1'},Y_{i_2'},Y_{i_3'},Y_{i_4'}) \hinZ(Z_{\pi_{i_1'}},Z_{\pi_{i_2'}},Z_{\pi_{i_3'}},Z_{\pi_{i_4'}}) %
    \big)^2 \big] \big)^{1/2} \leq \psi_2'
\end{talign}
by Cauchy-Schwarz. 
Thus, $(II)$ is upper-bounded by $\psi_2' \frac{|\mathsf{K}_{2} \setminus \bar{\mathsf{K}}_{2}|}{n^8} + \tilde{\psi}_1' \frac{|\bar{\mathsf{K}}_{2}|}{n^8}$, %
and we find that
\begin{talign}
        \Var(V_n^{\pi,n}) &\leq \psi_2' \frac{|(\mathsf{K}_{1} \cup \mathsf{K}_{2})^c| + |\mathsf{K}_{2} \setminus \bar{\mathsf{K}}_{2}|}{n^8} + \tilde{\psi}_1' \frac{|\bar{\mathsf{K}}_{2}|}{n^8} 
        \leq \frac{960 \psi_2'}{n^2} + \frac{32 \tilde{\psi}_1'}{n}.
    \end{talign}
Finally, by Jensen's inequality, 
\begin{talign}
    \Var(\mathbb{E}[V_n^{\pi,n} | \mathbb{X}]) &= \mE [ (\mathbb{E}[V_n^{\pi,n} | \mathbb{X}])^2 ] - (\mE [ \mathbb{E}[V_n^{\pi,n} | \mathbb{X}] ])^2 \\ &\leq \mE [ \mathbb{E}[(V_n^{\pi,n})^2 | \mathbb{X}] ] - (\mE [ \mathbb{E}[V_n^{\pi,n} | \mathbb{X}] ])^2 
    = \Var(V_n^{\pi,n}).
\end{talign}
\subsubsection{Proof of \cref{eq:exp_var_cheap_ind_bound}}
Our argument is analogous to that of \cref{proof:exp_var_full_ind_bound} with $s$ substituted for $n$ and $\Hin$ substituted for $\hin$.
We begin by defining the convenient shorthand 
\begin{talign}
\mathfrak{s}(\mathfrak{i}) &\defeq |\{i_1,i_2,i_3,i_4,s\!+\!1\!-\!i_1,s\!+\!1\!-\!i_2,s\!+\!1\!-\!i_3,s\!+\!1\!-\!i_4\}\cap \{i_1',i_2',i_3',i_4'\} |,\\
    \mathsf{K}_{\text{total}} &\defeq [s]^8, \quad
    \mathsf{K}_{1} \defeq \{ \mathfrak{i} \in \mathsf{K}_{\text{total}} :  \mathfrak{s}(\mathfrak{i}) = 0 \},  \quad
    \mathsf{K}_{2} \defeq \{ \mathfrak{i} \in \mathsf{K}_{\text{total}} : \mathfrak{s}(\mathfrak{i}) = 1 \}, \qtext{and} \\
    (\mathsf{K}_{1} \cup \mathsf{K}_{2})^c &\defeq \{ \mathfrak{i} \in \mathsf{K}_{\text{total}} : \mathfrak{s}(\mathfrak{i}) > 1 \},
\end{talign}
and define $\bar{K}_2$ as the set of indices $\mathfrak{i} \in \mathsf{K}_{2}$ with $|\mathfrak{i} | = 7$ and either $i_i = i'_j$ or $i_i = n\!+\!1\!-\!i'_j$ for some $i,j \in [4]$. 
As in \cref{proof:exp_var_full_ind_bound} case we have 
\begin{talign}
    &|\mathsf{K}_{2}| \leq 32 s^7, %
    \qquad |(\mathsf{K}_{1} \cup \mathsf{K}_{2})^c| \leq 576 s^6,
    \qquad |\mathsf{K}_{2} \setminus \bar{\mathsf{K}}_{2}| = 384 s^6.
\end{talign}

Next, we define
$\tilde{\mathsf{K}}_{2}$ as the set of $\mathfrak{i} \in \mathsf{K}_{2}$ such that for exactly three indices $i, j, k$ of the eight indices, the pairs $\{ i, s\!+\!1\!-\!i\}$, $\{ j, s\!+\!1\!-\!j\},$ and $\{ k, s\!+\!1\!-\!k\}$ are equal, and the rest of such pairs are all different. We then have 
\begin{talign}
    |\tilde{\mathsf{K}}_{2}| &= 4 \cdot 4 \cdot 6 \cdot 2^2 \cdot s (s-2) (s-4) (s-6) (s-8) (s-10) \\ &= 384 s (s-2) (s-4) (s-6) (s-8) (s-10) \qtext{and} \\ %
    |\mathsf{K}_{2} \setminus (\bar{\mathsf{K}}_{2} \cup \tilde{\mathsf{K}}_{2})| &= |\mathsf{K}_{2} \setminus \bar{\mathsf{K}}_{2}| - |\tilde{\mathsf{K}}_{2}| \\ &\leq 384 (s^6 - s (s-2) (s-4) (s-6) (s-8) (s-10)) \leq 384 \cdot 5 \cdot 6 s^5 = 11520 s^5.
\end{talign}

We next define $\mathsf{K}_{3}$ as the set of $\mathfrak{i} \in (\mathsf{K}_{1} \cup \mathsf{K}_{2})^c$ such that for exactly four indices $i, j, k, r$ of the eight indices, we have that $\{ i, s\!+\!1\!-\!i\} = \{ j, s\!+\!1\!-\!j\}$ and $\{ k, s\!+\!1\!-\!k\} = \{ r, s\!+\!1\!-\!r\}$, and the rest of such pairs are all distinct. Note that 
\begin{talign}
    &|\mathsf{K}_{3}| = 4 \cdot 3 \cdot 2 \cdot 4 \cdot 3 \cdot 2 s (s-2) (s-4) (s-6) (s-8) (s-10) \\ 
    &\quad= 576 s (s-2) (s-4) (s-6) (s-8) (s-10) \qtext{and}\\
    &|(\mathsf{K}_{1} \cup \mathsf{K}_{2} \cup \mathsf{K}_{3})^c| = |(\mathsf{K}_{1} \cup \mathsf{K}_{2})^c| - |\mathsf{K}_{3}| \\ 
    &\quad\leq 576 (s^6 - s (s-2) (s-4) (s-6) (s-8) (s-10)) \leq 576 \cdot 5 \cdot 6 \cdot s^5 = 17280 s^5.
\end{talign}

Finally, for $\mathfrak{K} \defeq (k_1,k_2,k_3,k_4,k'_1,k'_2,k'_3,k'_4)$ and $\mathfrak{s}(\mathfrak{K})\defeq |\{k_1,k_2,k_3,k_4\}\cap \{k_1',k_2',k_3',k_4'\} |$, we define the index sets
\begin{talign}
\mathsf{L}_{\text{total}} &= [m]^8, \quad
\mathsf{L}_{1} = \{ \mathfrak{K} \in \mathsf{L}_{\text{total}} : %
\mathfrak{s}(\mathfrak{K}) = 0 \}, 
\quad
\mathsf{L}_{2} = \{ \mathfrak{K} \in \mathsf{L}_{\text{total}} : \mathfrak{s}(\mathfrak{K})  = 1\}, 
\qtext{and}\\
(\mathsf{L}_{1} \cup \mathsf{L}_{2})^c &= \{ \mathfrak{K} \in \mathsf{L}_{\text{total}} : \mathfrak{s}(\mathfrak{K}) >  1\} 
\end{talign}
and note that 
\begin{talign}
    |\mathsf{L}_{2}| &\leq m^4 \cdot 4 \cdot 4 \cdot m^3 = 16 m^7
    \qtext{and} %
    |(\mathsf{L}_{1} \cup \mathsf{L}_{2})^c| \leq m^4 \cdot 4 \cdot 3 \cdot 4 \cdot 3 \cdot m^2 = 144 m^6.
\end{talign}

With this notation in hand, we use Cauchy-Schwarz to write 
\begin{talign} 
\begin{split} \label{eq:E_Var_indep_cheap}
    &\mE [\Var (V_n^{\pi,s} | \mbb{X}) ] 
    = \mE [ \mE [(V_n^{\pi,s})^2 | \mbb{X}] - (\mE[V_n^{\pi,s} | \mbb{X}])^2 ] 
    \leq \mE [(V_n^{\pi,s})^2] - (\mE[V_n^{\pi,s} ])^2 \\
    &~=~  \frac{1}{n^8}  \sum_{\mathfrak{i} \in \mathsf{K}_{\text{total}}} \sum_{\mathfrak{K} \in \mathsf{L}_{\text{total}}} \bigg( \mE \big[ \hinY(Y_{k_1}^{(i_1)},Y_{k_2}^{(i_2)},Y_{k_3}^{(i_3)},Y_{k_4}^{(i_4)})\hinY(Y_{k'_1}^{(i'_1)},Y_{k'_2}^{(i'_2)},Y_{k'_3}^{(i'_3)},Y_{k'_4}^{(i'_4)}) \\
    &\qquad \times  \hinZ(Z_{k_1}^{(\pi_{i_1})},Z_{k_2}^{(\pi_{i_2})},Z_{k_3}^{(\pi_{i_3})},Z_{k_4}^{(\pi_{i_4})})\hinZ(Z_{k'_1}^{(\pi_{i'_1})},Z_{k'_2}^{(\pi_{i'_2})},Z_{k'_3}^{(\pi_{i'_3})},Z_{k'_4}^{(\pi_{i'_4})})  \big] \\ &\qquad\qquad - \mE \big[ \hinY(Y_{k_1}^{(i_1)},Y_{k_2}^{(i_2)},Y_{k_3}^{(i_3)},Y_{k_4}^{(i_4)})  \hinZ(Z_{k_1}^{(\pi_{i_1})},Z_{k_2}^{(\pi_{i_2})},Z_{k_3}^{(\pi_{i_3})},Z_{k_4}^{(\pi_{i_4})})   \big] \\ &\qquad\qquad\quad \times \mE \big[ \hinY(Y_{k'_1}^{(i'_1)},Y_{k'_2}^{(i'_2)},Y_{k'_3}^{(i'_3)},Y_{k'_4}^{(i'_4)})  \hinZ(Z_{k'_1}^{(\pi_{i'_1})},Z_{k'_2}^{(\pi_{i'_2})},Z_{k'_3}^{(\pi_{i'_3})},Z_{k'_4}^{(\pi_{i'_4})})  \big] \bigg) \\[.5em]
    &~= ~ (I') + (II') + (III'),
\end{split}
\end{talign}
where $(I')$ is the summation over $\mathfrak{i} \in \mathsf{K}_{1}$, $(II')$ is the summation over $\mathfrak{i} \in \mathsf{K}_{2}$, %
and $(III')$ is the summation over $\mathfrak{i} \in (\mathsf{K}_{1} \cup \mathsf{K}_{2})^c$. Note that $(I') = 0$ by the same argument that we used to show that $(I) = 0$ in \eqref{eq:indep_V_stat_variance}. 

To bound $(II')$, we will further divide the corresponding summands into subcases:
\begin{itemize}
    \item  $\mathfrak{i} \in \bar{\mathsf{K}}_{2}$: Suppose without loss of generality that $i_1 = i'_1$. The only non-zero summands in the sum over $\mathfrak{K}$ have $k_1 = k'_1$. There are $m^7$ such summands, and each contributes at most $\tilde{\psi}'_1$ to the sum.
    \item $\mathfrak{i} \in \tilde{\mathsf{K}}_{2}$: 
    Consider the three indices of $\mathfrak{i}$ that satisfy the defining $\tilde{\mathsf{K}}_{2}$ property. In the summation over $\mathfrak{K} \in \mathsf{L}_{\text{total}}$, a summand contributes at most $\tilde{\psi}'_1$ when exactly two of the three $\mathfrak{i}$ indices share the same $k$-index, and there are at most $3m^7$ such summands. The only other non-zero summands contribute at most $\psi_2'$ when all three $\mathfrak{i}$ indices share the same $k$-index, and there are are most $m^6$ such summands.
    \item $\mathfrak{i} \in \mathsf{K}_{2} \setminus (\bar{\mathsf{K}}_{2} \cup \tilde{\mathsf{K}}_{2})$: Considering the summation over $\mathfrak{K} \in \mathsf{L}_{\text{total}}$, there will be a contribution bounded by $\tilde{\psi}'_1$ when $\mathfrak{K} \in \mathsf{L}_{2}$, and there are at most  $16m^7$ such summands, The only other non-zero contributions are bounded by $\psi_2'$ and occur when $\mathfrak{K} \in (\mathsf{L}_{1} \cup \mathsf{L}_{2})^c$, so the number of such summands is bounded by $144m^6$.
\end{itemize}
Thus, we have that $(II')$ is upper-bounded by
\begin{talign}
     &\tilde{\psi}'_1 \frac{|\bar{\mathsf{K}}_{2}| m^7}{n^8} + \tilde{\psi}'_1 \frac{|\tilde{\mathsf{K}}_{2}| \cdot 3 m^7}{n^8} + \tilde{\psi}'_1 \frac{|\mathsf{K}_{2} \setminus (\bar{\mathsf{K}}_{2} \cup \tilde{\mathsf{K}}_{2})| \cdot 16 m^7}{n^8} + \psi_2' \frac{|\tilde{\mathsf{K}}_{2}| \cdot m^6}{n^8} + \psi_2' \frac{|\mathsf{K}_{2} \setminus (\bar{\mathsf{K}}_{2} \cup \tilde{\mathsf{K}}_{2})| \cdot |(\mathsf{L}_{1} \cup \mathsf{L}_{2})^c|}{n^8}
    \\ &\leq \tilde{\psi}_1' \big( \frac{32s^7 m^7}{n^8} + \frac{384 s^6 \cdot 3 m^7}{n^8} + \frac{11520 s^5 \cdot 16 m^7}{n^8} \big) + \psi_2' \big( \frac{384 s^6 m^6}{n^8} + \frac{11520 s^5 \cdot 144 m^6}{n^8} \big).
\end{talign}

Similarly, to bound $(III')$, we divide the corresponding summands into subcases:
\begin{itemize}
    \item $\mathfrak{i} \in \mathsf{K}_{3}$: 
    Consider the two pairs of indices of $\mathfrak{i}$ satisfying the defining property of $\mathsf{K}_{3}$.
    When each pair respectively shares the same $k$-index, the summand contributes at most $\psi'_2$, and there are at most $m^6$ such summands.
    The only other non-zero summands have exactly one of the two pairs sharing a $k$-index; these summands contribute at most  $\tilde{\psi}'_1$, and there are at most $2m^7$ such summands.
    \item $\mathfrak{i} \in (\mathsf{K}_{1} \cup \mathsf{K}_{2} \cup \mathsf{K}_{3})^c$: In the summation over $\mathfrak{K} \in \mathsf{L}_{\text{total}}$, a summand will contribute at most $\tilde{\psi}'_1$ when $\mathfrak{K} \in \mathsf{L}_{2}$, and there are at most $16m^7$ such summands. The only other non-zero summands are bounded by $\psi'_2$ and occur when $\mathfrak{K} \in (\mathsf{L}_{1}\cup\mathsf{L}_{2})^c$, so there are at most $144m^6$ such summands.
\end{itemize}
Thus, $(III')$ is upper-bounded by
\begin{talign} 
    &\tilde{\psi}'_1 \frac{|\mathsf{K}_{3}|\cdot2m^7}{n^8} + \tilde{\psi}'_1 \frac{|(\mathsf{K}_{1} \cup \mathsf{K}_{2} \cup \mathsf{K}_{3})^c|\cdot|\mathsf{L}_{2}|}{n^8}
    + \psi_2' \frac{|\mathsf{K}_{3}| \cdot m^6}{n^8} + \psi_2' \frac{|(\mathsf{K}_{1} \cup \mathsf{K}_{2} \cup \mathsf{K}_{3})^c| \cdot |(\mathsf{L}_{1} \cup \mathsf{L}_{2})^c|}{n^8} \\ &\leq \tilde{\psi}'_1 \big(\frac{576 s^6 \cdot 2 m^7}{n^8} + \frac{17280 s^5 \cdot 16 m^7}{n^8} \big) + \psi_2' \big( \frac{576 s^6 \cdot m^6}{n^8} + \frac{17280 s^5 \cdot 144m^6}{n^8} \big)
\end{talign}
We conclude that 
\begin{talign}
    \Var(V_n^{\pi,s})
    &\leq %
    \tilde{\psi}_1' \big( \frac{32}{n} + \frac{2304}{n s} + \frac{460800}{n s^2} \big) + \psi_2' \big( \frac{960}{n^2} + \frac{4147200}{n^2 s} \big). 
\end{talign}
Finally, by Jensen's inequality, 
\begin{talign}
    \Var(\mathbb{E}[V_n^{\pi,s} | \mathbb{X}]) &= \mE [ (\mathbb{E}[V_n^{\pi,s} | \mathbb{X}])^2 ] - (\mE [ \mathbb{E}[V_n^{\pi,s} | \mathbb{X}] ])^2 \\ &\leq \mE [ \mathbb{E}[(V_n^{\pi,s})^2 | \mathbb{X}] ] - (\mE [ \mathbb{E}[V_n^{\pi,s} | \mathbb{X}] ])^2 
    = \Var(V_n^{\pi,s}).
\end{talign}

\subsection{\pcref{lem:exp_exp_variance_exp_ind}}
\label{subsec:exp_exp_variance_exp_ind_proof}
\subsubsection{Proof of \cref{eq:Vnpin-mean}} Define the sets of indices
\begin{talign}
\begin{split} \label{eq:J_total_1_2}
    \mathsf{J}_{\text{total}} &\defeq [n]^4, \\
    \mathsf{J}_{1} &\defeq \{ (i_1,i_2,i_3,i_4) \in \mathsf{J}_{\text{total}} : \\ &\qquad |\{i_1,i_2,i_3,i_4,\swap(i_1),\swap(i_2),\swap(i_3),\swap(i_4)\} | = 8 \},\\
    \mathsf{J}_{2} &\defeq \{ (i_1,i_2,i_3,i_4) \in \mathsf{J}_{\text{total}} : \ (i_1 = i_4) \vee (i_2 = i_3) \},
\end{split}
\end{talign}
and note that 
$|\mathsf{J}_{1}| = n(n-2)(n-4)(n-6)$, 
$|\mathsf{J}_{2}| = 2n^3$, and 
\begin{talign}
|(\mathsf{J}_{1} \cup \mathsf{J}_{2})^c| = n^4 - 
(n^4 - 12n^3 + 44 n^2 - 48 n)
- 2n^3 = 
10n^3 - 44 n^2 + 48n.
\end{talign} 
Recalling the definitions \eqref{eq:h_in_Y_Z_def_2} of $\hinY$ and $\hinZ$, we have 
\begin{talign}
    \mE [V_n^{\pi,n}] 
    &= \frac{1}{n^4} \sum_{(i_1, \dots, i_4) \in [n]^4} \mE \big[ \hinY(Y_{i_1},Y_{i_2},Y_{i_3},Y_{i_4})  \hinZ(Z_{\pi_{i_1}},Z_{\pi_{i_2}},Z_{\pi_{i_3}},Z_{\pi_{i_4}}) \big] \\ &= \frac{1}{n^4} \sum_{(i_1, \dots, i_4) \in [n]^4}  \mE \big[ \mE \big[ \hinY(Y_{i_1},Y_{i_2},Y_{i_3},Y_{i_4}) \hinZ(Z_{\pi_{i_1}},Z_{\pi_{i_2}},Z_{\pi_{i_3}},Z_{\pi_{i_4}}) \mid \pi \big] \big] \\ &\leq \frac
    {n(n-2)(n-4)(n-6)}
    {n^4} \mE[\mathcal{U}_{\pi}] + \frac
    {10n^3 - 44 n^2 + 48n}
    {n^4} \tilde{\xi} \leq \mE[\mathcal{U}_{\pi}] + 
    \frac{10n^2 - 44 n + 48}{n^3}
    \tilde{\xi} \qtext{for}\\
\mathcal{U}_{\pi} 
    &\defeq \mE \big[\hinY(Y_{1},Y_{2},Y_{3},Y_{4}) \hinZ(Z_{\pi_{1}},Z_{\pi_{2}},Z_{\pi_{3}},Z_{\pi_{4}}) \mid \pi \big]. 
\end{talign}
The penultimate inequality follows from the following facts:
\begin{itemize}
\item For $(i_1, \dots, i_4) \in \mathsf{J}_{1}$, $\mE \big[ \hinY(Y_{i_1},Y_{i_2},Y_{i_3},Y_{i_4}) \hinZ(Z_{\pi_{i_1}},Z_{\pi_{i_2}},Z_{\pi_{i_3}},Z_{\pi_{i_4}}) | \pi \big] = \mathcal{U}_{\pi}$.
\item For $(i_1, \dots, i_4) \in \mathsf{J}_{2}$, $\mE \big[ \hinY(Y_{i_1},Y_{i_2},Y_{i_3},Y_{i_4}) \hinZ(Z_{\pi_{i_1}},Z_{\pi_{i_2}},Z_{\pi_{i_3}},Z_{\pi_{i_4}}) | \pi \big] = 0$.
\item For $(i_1, \dots, i_4) \in (\mathsf{J}_{1} \cup \mathsf{J}_{2})^c$, $\mE \big[ \hinY(Y_{i_1},Y_{i_2},Y_{i_3},Y_{i_4}) \hinZ(Z_{\pi_{i_1}},Z_{\pi_{i_2}},Z_{\pi_{i_3}},Z_{\pi_{i_4}}) | \pi \big] \leq \tilde{\xi}$, where $\tilde{\xi}$ is defined in \eqref{Eq: mean component}.
\end{itemize}

Finally, equation \cref{eq:E_V_pi} of the following lemma %
shows that $\E[\mathcal{U}_{\pi}] = \quarter\E[\mathcal{U}]$ completing the proof.
\begin{lemma}[Range and expectation of $\mathcal{U}_{\pi}$] \label{lem:bar_k_bound}
Instantiate the assumptions of \cref{thm: Full Independence Tests}, fix 
any $i_1, i_2, i_3, i_4 \in [n]$ satisfying 
\begin{talign}
|\{i_1,i_2,i_3,i_4,\swap(i_1), \swap(i_2), \swap(i_3), \swap(i_4)\}| = 8,   
\end{talign}
and let $\pi$ be any permutation on $[n]$ with 
\begin{talign}
(\pi_{i_j},\pi_{\swap(i_j)}) \dist \Unif(\{ (i_j, \swap(i_j)), (\swap(i_j), i_j)\})
\qtext{for each}
j\in[4].
\end{talign}
Then we always have
\begin{talign} 
\begin{split}
    \mathcal{U}_{\pi} 
    &\defeq \mE \big[\hinY(Y_{i_1},Y_{i_2},Y_{i_3},Y_{i_4}) \hinZ(Z_{\pi_{i_1}},Z_{\pi_{i_2}},Z_{\pi_{i_3}},Z_{\pi_{i_4}}) | \pi \big] \\ 
    &\in 
    \mE \big[\hin(X_{i_1},X_{i_2},X_{i_3},X_{i_4}) ]\{0, \frac{1}{4}, \frac{1}{2}, 1\}
\label{eq:V_pi}
\end{split}
\end{talign}
with
\begin{talign}
\label{eq:E_V_pi_5}
    \mE[\mathcal{U}_{\pi}] &\leq \frac{1}{2} \mE \big[\hin(X_{i_1},X_{i_2},X_{i_3},X_{i_4})].    
\end{talign}
If ($\pi_{i_1}$, $\pi_{i_2}$, $\pi_{i_3}$, $\pi_{i_4}$) are independent, then
\begin{talign}
    \mE[\mathcal{U}_{\pi}] &= \frac{1}{4} \mE \big[\hin(X_{i_1},X_{i_2},X_{i_3},X_{i_4})]. %
    \label{eq:E_V_pi}
\end{talign} 
If $\pi_{i_1} = i_1\iff\pi_{i_4} = i_4$ 
with independent $(\pi_{i_1},\pi_{i_2},\pi_{i_3})$ or $\pi_{i_2} = i_2 \iff \pi_{i_3} = i_3$ 
with   independent $(\pi_{i_1},\pi_{i_2},\pi_{i_4})$, then
\begin{talign} \
    \mE[\mathcal{U}_{\pi}] &= \frac{1}{4} \mE \big[\hin(X_{i_1},X_{i_2},X_{i_3},X_{i_4})]. %
    \label{eq:E_V_pi_2}
\end{talign}
If $\pi_{i_1} = i_1\iff\pi_{i_2} = i_2$ with independent $(\pi_{i_1},\pi_{i_3},\pi_{i_4})$; $\pi_{i_1} = i_1\iff\pi_{i_3} = i_3$ with independent $(\pi_{i_1},\pi_{i_2},\pi_{i_4})$; $\pi_{i_2} = i_2\iff\pi_{i_4} = i_4$ with independent $(\pi_{i_1},\pi_{i_2},\pi_{i_3})$; or $\pi_{i_3} = i_3\iff\pi_{i_4} = i_4$ with independent $(\pi_{i_1},\pi_{i_2},\pi_{i_3})$, then
\begin{talign} \
    \mE[\mathcal{U}_{\pi}] &= \frac{3}{8} \mE \big[\hin(X_{i_1},X_{i_2},X_{i_3},X_{i_4})]. %
    \label{eq:E_V_pi_3}
\end{talign}
\end{lemma}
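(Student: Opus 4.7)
The plan is to first exploit the distinctness of the 8 indices $\{i_1,\dots,i_4,\swap(i_1),\dots,\swap(i_4)\}$ to reduce $\mathcal{U}_\pi$ to the expectation of $\hin$ under a simple product law indexed by $\epsilon\in\{0,1\}^4$, and then to collapse the resulting 16-term expansion via a centered-kernel identity. Conditional on $\pi$, the distinctness assumption makes the four pairs $(Y_{i_j},Z_{\pi_{i_j}})$ mutually independent for $j\in[4]$, each with distribution $D_{\epsilon_j}$ where $\epsilon_j \defeq \indic{\pi_{i_j}=i_j}$, $D_1 \defeq \pjnt$, and $D_0 \defeq \P\times\Q$. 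Thus $\mathcal{U}_\pi = \E_{\bigotimes_j D_{\epsilon_j}}[\hin(X_1,X_2,X_3,X_4)]$, and the problem reduces to computing this expectation as a function of the binary vector $\epsilon$.

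The key simplification uses the centered kernels $\tilde{g}_Y(y_1,y_2) \defeq g_Y(y_1,y_2)-\bar{g}_Y(y_1)-\bar{g}_Y(y_2)+\alpha$ and its $Z$-analogue, where $\bar{g}_Y(y) \defeq \E_{Y'\sim\P}[g_Y(y,Y')]$ and $\alpha \defeq \E_{Y'\sim\P}[\bar{g}_Y(Y')]$. I would first verify two crucial facts: (i) $\hinY = \sum_P \sigma_P\tilde{g}_Y(Y_P)$ on the pair family $\mathcal{P} \defeq \{\{1,2\},\{3,4\},\{1,3\},\{2,4\}\}$ with signs $\sigma=(+,+,-,-)$, which holds because each index $j\in\{1,2,3,4\}$ appears once with sign $+$ and once with sign $-$ in $\mathcal{P}$ so the $\bar{g}_Y$ correction terms cancel; and (ii) $\E_{Y'\sim\P}[\tilde{g}_Y(y,Y')]=0$ for every $y$. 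Substituting the centered expansions into $\hin = \hinY\hinZ$ and taking expectation under $\bigotimes_j D_{\epsilon_j}$, property (ii) kills every term $\sigma_P\sigma_Q\tilde{g}_Y(Y_P)\tilde{g}_Z(Z_Q)$ except those with $P=Q=\{a,b\}$ and $\epsilon_a=\epsilon_b=1$: whenever a coordinate appears in exactly one of $Y_P,Z_Q$ or sits in an $\epsilon=0$ block that decouples $Y$ from $Z$, marginalizing it via (ii) yields zero. Each surviving term contributes $\tilde{A}\defeq\E_{\pjnt^{\otimes 2}}[\tilde{g}_Y(Y_1,Y_2)\tilde{g}_Z(Z_1,Z_2)]$, and specializing to $\epsilon\equiv 1$ (i.e., the identity permutation) identifies $\mathcal{U}=4\tilde{A}$. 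Hence the central identity
\begin{talign}
\mathcal{U}_\pi \;=\; \tilde{A}\sum_{\{a,b\}\in\mathcal{P}}\epsilon_a\epsilon_b \;=\; \frac{\mathcal{U}}{4}(\epsilon_1+\epsilon_4)(\epsilon_2+\epsilon_3).
\end{talign}

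All remaining conclusions follow mechanically. The range claim $\mathcal{U}_\pi\in\mathcal{U}\cdot\{0,\tfrac14,\tfrac12,1\}$ is immediate since $(\epsilon_1+\epsilon_4)(\epsilon_2+\epsilon_3)\in\{0,1,2,4\}$. For the bound $\E[\mathcal{U}_\pi]\leq\mathcal{U}/2$, apply Cauchy--Schwarz together with $X^2\leq 2X$ for $X\in\{0,1,2\}$ to get $\E[(\epsilon_1+\epsilon_4)(\epsilon_2+\epsilon_3)]\leq\sqrt{2\E[\epsilon_1+\epsilon_4]\cdot 2\E[\epsilon_2+\epsilon_3]}=2$, using the marginal constraint $\E[\epsilon_j]=1/2$. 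The specific-dependence equalities are routine moment computations: when the $\epsilon_j$ are fully independent, or when $\epsilon_1=\epsilon_4$ (respectively $\epsilon_2=\epsilon_3$) with the remaining three independent, the two diagonal factors $\epsilon_1+\epsilon_4$ and $\epsilon_2+\epsilon_3$ are independent with mean one, so their product has mean one and $\E[\mathcal{U}_\pi]=\mathcal{U}/4$; the remaining adjacent-pair cases are handled by direct expansion of $\E[(\epsilon_1+\epsilon_4)(\epsilon_2+\epsilon_3)]$ using $\E[\epsilon_j^2]=\E[\epsilon_j]=1/2$ and the stated independence structure. The main obstacle is the centered-kernel reduction: justifying rigorously that 12 of the 16 expansion terms vanish requires careful case analysis of which indices are marginalized under each $\epsilon$ configuration and each overlap pattern between $P$ and $Q$.
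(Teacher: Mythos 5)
Your reduction is sound and lands on exactly the same central identity as the paper, namely $\mathcal{U}_\pi = \frac{1}{4}(\epsilon_1+\epsilon_4)(\epsilon_2+\epsilon_3)\,\E[\hin(X_{i_1},\dots,X_{i_4})]$ with $\epsilon_j=\indic{\pi_{i_j}=i_j}$, but by a different route: the paper writes $\hin$ as a double integral against the signed measures $\delta_{(Y_{i_1},Z_{\pi_{i_1}})}+\delta_{(Y_{i_4},Z_{\pi_{i_4}})}-\delta_{(Y_{i_1},Z_{\pi_{i_4}})}-\delta_{(Y_{i_4},Z_{\pi_{i_1}})}$ (and its $(i_2,i_3)$ analogue), takes conditional expectations factor by factor to get $(\epsilon_{i_1}+\epsilon_{i_4})(\pjnt-\P\times\Q)$ and $(\epsilon_{i_2}+\epsilon_{i_3})(\pjnt-\P\times\Q)$, and factors immediately; you instead pass to the product law $\bigotimes_j D_{\epsilon_j}$ and run a sixteen-term centered-kernel cancellation. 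Both are valid, and your cancellation does go through (condition on all coordinates except the one appearing in only one of $P$, $Q$, or except the $Y$-half of an $\epsilon=0$ block, then apply your property (ii)), but the paper's signed-measure factorization obtains the identity without the case analysis you yourself flag as the main obstacle. Your Cauchy--Schwarz argument for \cref{eq:E_V_pi_5}, the independent case, and the $\epsilon_1=\epsilon_4$ (resp.\ $\epsilon_2=\epsilon_3$) case are all correct.

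The concrete problem is the final case, which you defer as a "routine" expansion. Carrying it out for, say, $\epsilon_1=\epsilon_2$ with $(\epsilon_1,\epsilon_3,\epsilon_4)$ independent gives $\E[(\epsilon_1+\epsilon_4)(\epsilon_2+\epsilon_3)]=\E[\epsilon_1^2]+3\cdot\frac14=\frac12+\frac34=\frac54$, hence $\E[\mathcal{U}_\pi]=\frac{5}{16}\,\E[\hin(X_{i_1},\dots,X_{i_4})]$, not $\frac{3}{8}$. So your method, completed, contradicts \cref{eq:E_V_pi_3} as stated; the constant $\frac38$ appears to be an arithmetic slip in the paper (its own displayed enumeration $\frac{1}{2^3}(0\cdot3+\frac14\cdot2+\frac12\cdot2+1\cdot1)$ also evaluates to $\frac{5}{16}$). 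This is harmless downstream, since the lemma is invoked only to upper-bound term $(III)$ in the mean bound for $V_n^{\pi,s}$ and $\frac{5}{16}\le\frac38$ when $\E[\hin]\ge0$, but you cannot assert the $\frac38$ equality from your (correct) identity; you should either report $\frac{5}{16}$ or state an inequality. A minor shared caveat: \cref{eq:E_V_pi_5} with the stated direction implicitly requires $\E[\hin(X_{i_1},\dots,X_{i_4})]\ge0$, since your bound is really on $\E[(\epsilon_1+\epsilon_4)(\epsilon_2+\epsilon_3)]$.
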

\begin{proof}
We will use the shorthand
\begin{talign}
P_{i,j} \defeq 
\begin{cases}
    \pjnt &\text{if } i = j \\
    \P \times \Q &\text{otherwise},
\end{cases}
\end{talign}
and the notation $(\P^{(i)},\Q^{(i)})$ to denote copies of $(\P,\Q)$ while keeping track of indices to make the derivation easier to follow.
We begin by proving \cref{eq:V_pi}. 
$\mathcal{U}_{\pi}$ may be rewritten as
\begin{talign} 
\begin{split} \label{eq:exp_P_inner_prod}
\mathcal{U}_{\pi} 
&= \iint 
    g_Y(y,y') g_Z(z,z') \, d(P_{i_1,\pi_{i_1}} + P_{i_4,\pi_{i_4}} - \P \times \Q^{(\pi_{i_4})} - \P \times \Q^{(\pi_{i_1})})(y,z) \\ &\qquad\qquad\qquad d(P_{i_2,\pi_{i_2}} + P_{i_3,\pi_{i_3}} - \P \times \Q^{(\pi_{i_3})} - \P \times \Q^{(\pi_{i_2})})(y',z'). %
\end{split}
\end{talign}
Thus, if we define the $\Unif(\{0,1\})$ random variables $\epsilon_{i_k} = \indic{\pi_{i_k} = i_k}$, we can write 
\begin{talign} \label{eq:U_pi_Rademacher}
\mathcal{U}_{\pi} &= \iint 
    g_Y(y,y') g_Z(z,z') \, d( \epsilon_{i_1} (\pjnt  - \P \times \Q) + \epsilon_{i_4}(\pjnt  - \P \times \Q))(y,z) \\ &\qquad\qquad\qquad d(\epsilon_{i_2} (\pjnt  - \P \times \Q) + \epsilon_{i_3}(\pjnt  - \P \times \Q))(y',z')
    \\ &= \big( \epsilon_{i_1} + \epsilon_{i_4} \big) \big( \epsilon_{i_2} + \epsilon_{i_3} \big) \iint 
    g_Y(y,y') g_Z(z,z') \, d(\pjnt - \P \times \Q )(y,z) \\ &\qquad\qquad\qquad d( \pjnt - \P \times \Q)(y',z')
    \\ &= 
\frac{1}{4}\big( \epsilon_{i_1} + \epsilon_{i_4} \big) \big( \epsilon_{i_2} + \epsilon_{i_3} \big) \mE \big[\hin(X_{1},X_{2},X_{3},X_{4})].
\end{talign}
Setting $\epsilon_{i_1}, \epsilon_{i_2}, \epsilon_{i_3}, \epsilon_{i_4}$ to $\{0,1\}$ implies that all the possible values $\mathcal{U}_{\pi}$ can take are $\{0, \frac{1}{4}, \frac{1}{2}, 1 \} \mE \big[\hin(X_{1},X_{2},X_{3},X_{4})]$.

To show \eqref{eq:E_V_pi_5}, we will prove that for any joint distribution over the $\Unif(\{0,1\})$  variables $\epsilon_{i_1}, \epsilon_{i_2}, \epsilon_{i_3}, \epsilon_{i_4}$, we have $\mathbb{E}[\big( \epsilon_{i_1} + \epsilon_{i_4} \big) \big( \epsilon_{i_2} + \epsilon_{i_3} \big)] \leq 2$. Indeed, by Cauchy-Schwarz,
\begin{talign}
    \mathbb{E}\big[\big( \epsilon_{i_1} + \epsilon_{i_4} \big) \big( \epsilon_{i_2} + \epsilon_{i_3} \big) \big] \leq \mathbb{E} \big[\big( \epsilon_{i_1} + \epsilon_{i_4} \big)^2 \big]^{1/2} \mathbb{E} \big[\big( \epsilon_{i_2} + \epsilon_{i_3} \big)^2 \big]^{1/2},
\end{talign}
and $\mathbb{E} \big[\big( \epsilon_{i_1} + \epsilon_{i_4} \big)^2 \big]$ is maximized when $\epsilon_{i_1} = \epsilon_{i_4}$ almost surely, with maximum value $0 \times \frac{1}{2} + 4 \times \frac{1}{2} = 2$. Thus, $\mathbb{E}\big[\big( \epsilon_{i_1} + \epsilon_{i_4} \big) \big( \epsilon_{i_2} + \epsilon_{i_3} \big) \big]$ has maximum value 2. %

To show \cref{eq:E_V_pi}, we compute the probability that $\mathcal{U}_{\pi}$ takes each of its possible values when  $(\pi_1,\dots,\pi_4)$ are independent: 
\begin{talign}
    &\mE[\mathcal{U}_{\pi}] = \frac{1}{2^4} \big( 0 \times {4 \choose 0} + 0 \times (2 + {4 \choose 1}) + %
    \frac{1}{4} \mE \big[\hin(X_{i_1},X_{i_2},X_{i_3},X_{i_4})] \times 4 \\ &\qquad + %
    \frac{1}{2} \mE \big[\hin(X_{i_1},X_{i_2},X_{i_3},X_{i_4})] \times {4 \choose 3} + %
    \mE \big[\hin(X_{i_1},X_{i_2},X_{i_3},X_{i_4})] \times {4 \choose 4} \big) \\ &= \frac{1}{16} %
    \big( \mE \big[\hin(X_{i_1},X_{i_2},X_{i_3},X_{i_4})] + 2 
    \mE \big[\hin(X_{i_1},X_{i_2},X_{i_3},X_{i_4})] + \mE \big[\hin(X_{i_1},X_{i_2},X_{i_3},X_{i_4})] \big)
    \\ &= \frac{1}{4} %
    \mE \big[\hin(X_{i_1},X_{i_2},X_{i_3},X_{i_4})].
\end{talign}

Similarly, when $\pi_{i_1} = i_1 \iff\pi_{i_4} = i_4$ and $(\pi_{i_1},\pi_{i_2},\pi_{i_3})$ are independent,
\begin{talign}
    \mE[\mathcal{U}_{\pi}] &= \frac{1}{2^3} \big( 0 \times 4 + 0 \times 1 + 
    \frac{1}{2} \mE \big[\hin(X_{i_1},X_{i_2},X_{i_3},X_{i_4})] \times 2 + 
    \mE \big[\hin(X_{i_1},X_{i_2},X_{i_3},X_{i_4})] \times 1 \big) \\ &= \frac{1}{4} \mE \big[\hin(X_{i_1},X_{i_2},X_{i_3},X_{i_4})].
\end{talign}

Alternatively, when 
$\pi_{i_1} = i_1\iff\pi_{i_2} = i_2$ and $(\pi_{i_1},\pi_{i_3},\pi_{i_4})$ are independent,
\begin{talign}
    \mE[\mathcal{U}_{\pi}] &= \frac{1}{2^3} \big( 0 \times 3 + 
    \frac{1}{4} \mE \big[\hin(X_{i_1},X_{i_2},X_{i_3},X_{i_4})] \times 2 + 
    \frac{1}{2} \mE \big[\hin(X_{i_1},X_{i_2},X_{i_3},X_{i_4})] \times 2 \\ &\quad + 
    \mE \big[\hin(X_{i_1},X_{i_2},X_{i_3},X_{i_4})] \times 1 \big) = \frac{3}{8} \mE \big[\hin(X_{i_1},X_{i_2},X_{i_3},X_{i_4})].
\end{talign}
\end{proof}

\subsubsection{Proof of \cref{eq:Vnpis-mean}}
In the cheap permutation case, the permutation values $\pi_{i}$, $\pi_{j}$ are not independent when $i$ and $j$ belong to the same bin, which requires a finer analysis. In the following, for $1 \leq i \leq s$, we let $\swap_s(i) \defeq i + \frac{s}{2} - s\indic{i  > \frac{s}{2}}$ and recall that $\swap(i) \defeq i + \frac{n}{2} - n\indic{i  > \frac{n}{2}}$. 
We have that 
\begin{talign}
    &\mE [V_n^{\pi,s} ] \\ &= \frac{1}{n^4} \sum_{(i_1, \dots, i_4) \in [s]^4} \sum_{(j_1,\dots,j_4) \in \pjnt_{k=1}^{4} \bin_{i_k}
    } \mE \big[ \hinY(Y_{j_1},Y_{j_2},Y_{j_3},Y_{j_4})  \hinZ(Z_{\pi_{j_1}},Z_{\pi_{j_2}},Z_{\pi_{j_3}},Z_{\pi_{j_4}}) \big] \\ &= (I) + (II) + (III) + (IV) + (V), 
\end{talign}
where 
\begin{itemize}
    \item $(I)$ includes the terms for which the bin indices 
    $i_1,\dots,i_4,\swap_{s}(i_1),\dots,\swap_{s}(i_4)$
    are distinct. For such terms, $\mE \big[ \hinY(Y_{j_1},Y_{j_2},Y_{j_3},Y_{j_4})  \hinZ(Z_{\pi_{j_1}},Z_{\pi_{j_2}},Z_{\pi_{j_3}},Z_{\pi_{j_4}})  \big] = \frac{1}{4} \mE \big[\hin(X_{1},X_{2},X_{3},X_{4})] = 
    \frac{1}{4} \mathcal{U}$ by equation \eqref{eq:E_V_pi} of \cref{lem:bar_k_bound}. The number of terms in $(I)$ is $s(s-2)(s-4)(s-6)n^4$.
    \item $(II)$ includes the terms 
    for which there are exactly two pairs of repeated bin indices among $i_1,\dots,i_4,\swap_{s}(i_1),\dots,\swap_{s}(i_4)$, and the two pairs are one of the following: $\{(i_1,i_4),(\swap_{s}(i_1),\swap_{s}(i_4))\}$, $\{(i_1,\swap_{s}(i_4)),(\swap_{s}(i_1),i_4)\}$ (so that $\pi_{i_1} = i_1\iff\pi_{i_4} = i_4$), or $\{ (i_2,i_3), (\swap_{s}(i_2),\swap_{s}(i_3)) \}$, $\{ (i_2,\swap_{s}(i_3)), (\swap_{s}(i_2),i_3) \}$, (so that $\pi_{i_2} = i_2\iff\pi_{i_3} = i_3$). For all such terms, equation \eqref{eq:E_V_pi_2} of \cref{lem:bar_k_bound}  implies $\mE \big[ \hinY(Y_{j_1},Y_{j_2},Y_{j_3},Y_{j_4}) \hinZ(Z_{\pi_{j_1}},Z_{\pi_{j_2}},Z_{\pi_{j_3}},Z_{\pi_{j_4}}) \big] = 
    \frac{1}{4} \mathcal{U}$ when $j_1 \neq j_4$ (respectively, $j_2 \neq j_3$), and $\mE \big[ \hinY(Y_{j_1},Y_{j_2},Y_{j_3},Y_{j_4}) \hinZ(Z_{\pi_{j_1}},Z_{\pi_{j_2}},Z_{\pi_{j_3}},Z_{\pi_{j_4}}) \big] = 0$ when $j_1 \neq j_4$ (respectively, $j_2 \neq j_3$), since $\hinY(Y_{j_1},Y_{j_2},Y_{j_3},Y_{j_4}) = 0$. The number of terms in $(II)$ is $4s(s-2)(s-4)n^4$.
    \item $(III)$ includes the terms for which all indices $j_1,\dots,j_4,\swap(j_1),\dots,\swap(j_4)$ are different and 
    for which there are exactly two pairs of repeated bin indices among $i_1,\dots,i_4,\swap_s(i_1),\dots,\swap_s(i_4)$, and these two pairs are one of the following: $\{(i_1,i_2),(\swap_s(i_1),\swap_s(i_2)) \}$, $\{(i_1,\swap_s(i_2)), (\swap_s(i_1),i_2) \}$ (so that $\pi_{i_1} = i_1 \iff \pi_{i_2} = i_2$); $\{ (i_1,i_3), (\swap_s(i_1),\swap_s(i_3)) \}$, $\{ (i_1,\swap_s(i_3)), (\swap_s(i_1),i_3) \}$ (both of which imply that $\pi_{i_1} = i_1 \iff \pi_{i_2} = i_2$);  $\{(i_2,i_4), (\swap_s(i_2),\swap_s(i_4))\}$, $\{(i_2,\swap_s(i_4)),(\swap_s(i_2),i_4)\}$  (both of which imply that $\pi_{i_2} = i_2 \iff \pi_{i_4} = i_4$); or $\{(i_3,i_4),(\swap_s(i_3),\swap_s(i_4))\}$,  $\{(i_3,\swap_s(i_4)),(\swap_s(i_3),i_4)\}$ (both of which imply that $\pi_{i_3} = i_3 \iff \pi_{i_4} = i_4$). By equation \eqref{eq:E_V_pi_2} of \cref{lem:bar_k_bound}, for all such terms we can write $\mE \big[ \hinY(Y_{j_1},Y_{j_2},Y_{j_3},Y_{j_4}) \mE \big[ \hinZ(Z_{\pi_{j_1}},Z_{\pi_{j_2}},Z_{\pi_{j_3}},Z_{\pi_{j_4}}) \big] \big] = 
    \frac{3}{8} \mathcal{U}
    $. The number of terms in $(III)$ is $8s(s-2)(s-4)n^3(n-1) \leq 8s(s-2)(s-4)n^4$.
    \item $(IV)$ includes the terms for which all indices $j_1,\dots,j_4,\swap(j_1),\dots,\swap(j_4)$ are different and which do not belong to either $(I)$, $(II)$ or $(III)$.
    For such terms, we have that $\mE \big[ \hinY(Y_{j_1},Y_{j_2},Y_{j_3},Y_{j_4}) \mE \big[ \hinZ(Z_{\pi_{j_1}},Z_{\pi_{j_2}},Z_{\pi_{j_3}},Z_{\pi_{j_4}}) \big] \big] \leq
    \frac{1}{2} \mathcal{U}$, by equation \eqref{eq:E_V_pi_5}.
    \item $(V)$ includes all terms for which $(j_1,\dots,j_4)$ belongs to $(\mathsf{J}_{1} \cup \mathsf{J}_{2})^c$, where $\mathsf{J}_{1}$ and $\mathsf{J}_{2}$ are defined in \eqref{eq:J_total_1_2}. The total contribution of such terms can be upper-bounded by $\frac{4 n^2 - 11 n + 6}{n^3} \tilde{\xi}$ by the same argument as in the standard permutation case.
\end{itemize}
We conclude that
\begin{talign}
    \mE [V_n^{\pi,s}] &\leq \underbrace{
    \frac
    {s(s-2)(s-4)(s-6)}
    {s^4} \times \frac{1}{4}\mathcal{U}}_{(I)} + \underbrace{\frac
    {4 s (s-2)(s-4)}
    {s^4}  \times \frac{1}{4}\mathcal{U}}_{(II)}
    + \underbrace{\frac
    {8 s (s-2)(s-4)}
    {s^4} \times \frac{3}{8}\mathcal{U}}_{(III)} \\ &\quad + \underbrace{\frac
    {s^4 - s(s-2)(s-4)(s-6) - 12 s (s-2)(s-4)}
    {s^4} \times 
    \frac{1}{2}\mathcal{U}}_{(IV)} 
    + \underbrace{\frac
    {10n^2 - 44 n + 48}
    {n^3} \tilde{\xi}}_{(V)} 
    \\ &= \frac{1}{4} \big( 1 + \frac{s^2 + s - 4}{s^3} \big) \mathcal{U} + \frac
    {10n^2 - 44 n + 48}
    {n^3} \tilde{\xi}.
\end{talign}
\section{\pcref{cor:psd_gs}}
\label{subsec:cor_psd_ind}
To establish the standard testing claim, we will show that \cref{eq:psd_ind_full} implies the sufficient power condition \cref{eq:ind_full_suff_app} in the proof of \cref{thm: Full Independence Tests}. 
Our proof makes use of three lemmas, proved in \cref{proof:tilde_psi_1_prime_bound,,proof:E_V_n_mmd,,proof:psi_1_prime_bound} respectively, that relate parameters of \cref{eq:ind_full_suff_app} to $\mmd(\pjnt,\P \times \Q)$.

\begin{lemma}[{$\E[V_n]$} bound] \label{lem:E_V_n_mmd}
Under the assumptions of \cref{cor:psd_gs},
\begin{talign}
\mE[V_n] 
    \geq 
4 \mmd^2(\pjnt,\P \times \Q)
    =
\mathcal{U}.
\end{talign}
\end{lemma}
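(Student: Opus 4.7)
The plan is to route the conclusion through the RKHS representation of HSIC. With $\kernel = g_Y g_Z$ and the feature maps $\phi,\psi$ of $g_Y, g_Z$, \cref{ex:hsic} already gives $\tfrac{1}{4}V_n = \mmd^2(\PX, \PY \times \PZ) = \|\hat{C}\|_{HS}^2$, where $\hat{C} \defeq \PX \kernel - (\PY \times \PZ)\kernel$ is the empirical cross-covariance operator in the tensor-product RKHS. The same expansion used to compute $\mathcal{U}$ in \cref{subsec:proof_independence_finite_variance} gives the identity $\mathcal{U} = 4\|C\|_{HS}^2$ for the population cross-covariance $C \defeq \pjnt\kernel - (\P\times\Q)\kernel$, so the claim reduces to the operator inequality $\E[\|\hat{C}\|_{HS}^2] \geq \|C\|_{HS}^2$.

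Next I would rewrite $\hat{C}$ as a V-statistic over pairs,
\begin{talign}
\hat{C} = \frac{1}{2n^2} \sum_{i,k=1}^n (\phi(Y_i) - \phi(Y_k)) \otimes (\psi(Z_i) - \psi(Z_k)),
\end{talign}
and verify that the associated U-statistic $\hat{C}_U \defeq \frac{1}{2n(n-1)} \sum_{i \neq k} (\phi(Y_i) - \phi(Y_k)) \otimes (\psi(Z_i) - \psi(Z_k))$ is unbiased for $C$, since $\E[(\phi(Y_1) - \phi(Y_2)) \otimes (\psi(Z_1) - \psi(Z_2))] = 2C$ when $(Y_1, Z_1), (Y_2, Z_2) \distiid \pjnt$. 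From the exact pointwise identity $\hat{C} = \tfrac{n-1}{n}\hat{C}_U$, Jensen's inequality in the Hilbert--Schmidt norm yields $\E[\|\hat{C}_U\|_{HS}^2] \geq \|\E[\hat{C}_U]\|_{HS}^2 = \|C\|_{HS}^2$, and hence $\E[\|\hat{C}\|_{HS}^2] \geq \tfrac{(n-1)^2}{n^2}\|C\|_{HS}^2$.

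The hard part will be promoting this Jensen bound to the sharp constant $\|C\|_{HS}^2$, i.e.\ closing the $O(1/n)$ deficit $1 - \tfrac{(n-1)^2}{n^2} = \tfrac{2n-1}{n^2}$. For this I would use the Hoeffding decomposition of $\hat{C}_U$, which splits its fluctuations orthogonally into first- and second-order projection pieces; the first-order projection piece has variance of order $1/n$ and contributes a strictly positive correction to $\E[\|\hat{C}_U\|_{HS}^2]$ that, once multiplied by $\tfrac{(n-1)^2}{n^2}$ and combined with the bias contribution from the rank-one term $(\bar{\phi} - \mu_Y) \otimes (\bar{\psi} - \mu_Z)$ in the direct expansion of $\hat{C}$, is sufficient to close the gap for $n \geq 14$ as assumed in \cref{cor:psd_gs}. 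The delicate accounting lies in separating these non-negative Hoeffding contributions from the bias correction that distinguishes $\hat{C}$ from an iid average of $(\phi(Y_i) - \mu_Y) \otimes (\psi(Z_i) - \mu_Z)$; this is the step where the hypothesis on $n$ enters and where the arithmetic requires the most care.
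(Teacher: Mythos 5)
Your first two steps are essentially the paper's own argument: the paper proves this lemma by writing $\mmd^2(\pjnt,\P\times\Q)$ as the expected $\kernel$-inner product of $(\PX-\PY\times\PZ)\kernel$ with an independent copy and applying Cauchy--Schwarz twice, which is your Jensen inequality in the Hilbert--Schmidt norm in different clothing. The instructive difference is that your bookkeeping is more careful: the identity $\hat C=\frac{n-1}{n}\hat C_U$ correctly records that the $n$ diagonal terms of $\PY\times\PZ$ have law $\pjnt$ rather than $\P\times\Q$, so $\E[\hat C]=\frac{n-1}{n}C$ and Jensen honestly delivers only $\E[V_n]\geq\frac{(n-1)^2}{n^2}\,\mathcal{U}$. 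The paper's first displayed equality silently treats $\hat C$ as exactly unbiased for $C$, which is precisely where that factor disappears.

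The genuine gap is your final paragraph: the Hoeffding-decomposition plan for upgrading the constant $\frac{(n-1)^2}{n^2}$ to $1$ is not carried out, and it cannot be, because the first-order projection variance of $\hat C_U$ admits no lower bound in terms of $\|C\|_{\mathrm{HS}}^2$. Concretely, take $g_Y(y,y')=yy'$ and $g_Z(z,z')=zz'$ on $\{\pm1\}$ (valid positive-definite kernels) with $Z=Y\sim\Unif(\{\pm1\})$. Then $\hin$ collapses to $(Y_{i_1}-Y_{i_4})^2(Y_{i_2}-Y_{i_3})^2$, so $V_n=4(1-\bar Y^2)^2$ while $\mathcal{U}=4$ and $\mmd^2(\pjnt,\P\times\Q)=1$; hence $\E[V_n]=4\big(1-\frac{2}{n}+\frac{3}{n^2}-\frac{2}{n^3}\big)<\mathcal{U}$ for every $n\geq 2$, so the assumption $n\geq 14$ does not rescue the sharp constant. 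The deferred ``delicate accounting'' therefore has no successful resolution: the correct endpoint of your argument is $\E[V_n]\geq\frac{(n-1)^2}{n^2}\mathcal{U}$, which is what the Jensen step gives and which is enough for the downstream separation rates, but not the constant asserted in the lemma.
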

\begin{lemma}[$\psi_{1}'$ bound] \label{lem:psi_1_prime_bound}
    Under the assumptions of \cref{cor:psd_gs}, 
    \begin{talign}
        \psi_{1}' \leq %
        4 \mmd^2(\pjnt, \P \times \Q) \xi.
    \end{talign}
    for $\psi'_1$ as defined in equation \cref{Eq: definition of psi prime functions} and $\xi$ as defined in \cref{cor:psd_gs}. 
\end{lemma}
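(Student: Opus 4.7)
The plan is to mirror the argument of \cref{lem:bounds_sigma} from the homogeneity setting by exploiting the RKHS inner-product representation of $\hin$. First I would use the factorization $\kernel = g_Yg_Z$ to write
\[
\hin(X_1,X_2,X_3,X_4) = \kinner{\mu_A \kernel}{\mu_B \kernel},
\]
where $\mu_A \defeq \delta_{(Y_1,Z_1)} + \delta_{(Y_4,Z_4)} - \delta_{Y_1}\times\delta_{Z_4} - \delta_{Y_4}\times\delta_{Z_1}$ and $\mu_B$ is the analogous zero-total-mass signed measure built from $(X_2,X_3)$. Taking the conditional expectation over the $\pjnt$-distributed pair $X_4$, which is independent of $X_1$, gives $\E[\mu_A \mid X_1] = \nu(X_1) \defeq \delta_{(Y_1,Z_1)} + \pjnt - \delta_{Y_1}\times\Q - \P\times\delta_{Z_1}$ and $\E[\mu_B] = 2(\pjnt - \P\times\Q)$. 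Because $\mu_A$ depends only on $(X_1,X_4)$ and $\mu_B$ only on $(X_2,X_3)$, bilinearity of the inner product together with the independence of these groups given $X_1$ lets the conditional expectation factor:
\[
\E[\hin(X_1,X_2,X_3,X_4) \mid X_1] = \kinner{\nu(X_1)\kernel}{2(\pjnt-\P\times\Q)\kernel}.
\]

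Next I would invoke Cauchy-Schwarz in $\rkhs$ along with $\knorm{(\pjnt-\P\times\Q)\kernel} = \mmd(\pjnt, \P\times\Q)$ to obtain the pointwise bound $|\E[\hin(X_1,X_2,X_3,X_4)\mid X_1]| \leq 2\knorm{\nu(X_1)\kernel}\,\mmd(\pjnt, \P\times\Q)$. Repeating the computation with $X_1$ placed in any of the other three argument positions yields the same bound: this follows from the invariance of $\hin$ under the swaps $(1\!\leftrightarrow\!4)$, $(2\!\leftrightarrow\!3)$, and $\{1,4\}\leftrightarrow\{2,3\}$, which collapse the twenty-four permutations into a single representative case for our purposes. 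Averaging over the symmetrizing permutations and applying the triangle inequality then yields $|\E[\hbarin(X_1,X_2,X_3,X_4)\mid X_1]| \leq 2\knorm{\nu(X_1)\kernel}\,\mmd(\pjnt, \P\times\Q)$.

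To identify the constant, I would apply the same inner-product argument to $\hin(X_1,X_1,X_3,X_4)$. Setting the second argument equal to $X_1$ leaves $\mu_A$ unchanged while replacing $\mu_B$ by a version whose conditional expectation given $X_1$ is also $\nu(X_1)$. The independence of $X_3$ and $X_4$ then yields $\E[\hin(X_1,X_1,X_3,X_4)\mid X_1] = \knorm{\nu(X_1)\kernel}^2$, so $\xi = \E[\knorm{\nu(X_1)\kernel}^2]$. Squaring the pointwise bound and taking expectations finally gives
\[
\psi_1' = \Var(\E[\hbarin \mid X_1]) \leq \E\big[\E[\hbarin \mid X_1]^2\big] \leq 4\,\xi\,\mmd^2(\pjnt, \P\times\Q),
\]
as claimed. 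The principal obstacle I anticipate is the case analysis over the four positions of $X_1$ within $\hin$; the three stabilizer symmetries of $\hin$ reduce this to a single representative computation, and a secondary point is the careful justification of the inner-product factorization, which relies only on bilinearity and the pairwise-independence structure of the arguments.
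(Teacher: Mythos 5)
Your proposal is correct and follows essentially the same route as the paper: both arguments express $\E[\hin(X_1,X_2,X_3,X_4)\mid X_1]$ as the RKHS inner product $\kinner{\nu(X_1)\kernel}{2(\pjnt-\P\times\Q)\kernel}$ with $\nu(X_1) = \delta_{X_1} + \pjnt - \delta_{Y_1}\times\Q - \P\times\delta_{Z_1}$, apply Cauchy--Schwarz, identify $\E[\knorm{\nu(X_1)\kernel}^2] = \E[\hin(X_1,X_1,X_3,X_4)] = \xi$, and bound the variance by the second moment. The only cosmetic difference is that you handle the symmetrization of $\hbarin$ by a pointwise triangle inequality over the $4!$ permutations, whereas the paper uses Jensen's inequality on the variance of the average before reducing to a single representative term; both are valid.
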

\begin{lemma}[$\tilde{\psi}_{1}'$ bound]\label{lem:tilde_psi_1_prime_bound}
Under the assumptions of \cref{cor:psd_gs}, 
\begin{talign}
    \tilde{\psi}_1' \leq 4 \wildxi \mmd^2(\pjnt, \P \times \Q) + \mmd^4(\pjnt, \P \times \Q),
\end{talign}
for $\tilde{\psi}'_1$ as defined in equation \cref{Eq: definition of psi prime functions} and $\xi$ as defined in \cref{cor:psd_gs}. 
\end{lemma}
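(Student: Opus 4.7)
The plan is to bound $\tilde\psi_1'$ by $\E[\cdot^2]$, factor the resulting conditional expectation via the bilinear RKHS representation of $\hin$, apply Cauchy--Schwarz in $\rkhs$, and reduce the remaining MMD to $\wildxi$ via a convexity argument.

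First, reduce the symmetrized kernel to an unsymmetrized one. Conditional on $(Y_1, Z_{\pi_1}, \pi_1)$, the pairs $(Y_i, Z_{\pi_i})$ for $i = 2, 3, 4$ are i.i.d.\ from the mixture $\mu \defeq \tfrac{1}{2}\pjnt + \tfrac{1}{2}(\P \times \Q)$ and independent of $(Y_1, Z_{\pi_1})$; here one uses $n \geq 8$ to ensure the swap pairs $\{i, \swap(i)\}_{i=1}^4$ are pairwise disjoint and the underlying $Y_i$'s and $Z_j$'s truly factorize. A short computation using the bilinear representation below then shows that the conditional expectation of $\hin$ is the same whichever of the four slots $(Y_1, Z_{\pi_1})$ occupies, so $\hbarin$ and $\hin$ agree after conditioning:
\begin{talign*}
\E[\hbarin \mid Y_1, Z_{\pi_1}, \pi_1] = \E[\hin((Y_1, Z_{\pi_1}), (Y_2, Z_{\pi_2}), (Y_3, Z_{\pi_3}), (Y_4, Z_{\pi_4})) \mid Y_1, Z_{\pi_1}, \pi_1].
\end{talign*}

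Next, write $\hin$ via the product kernel $\kernel = g_Y g_Z$ as $\hin = \langle \mu_{14}'\kernel,\, \mu_{23}'\kernel\rangle_\kernel$ with
\begin{talign*}
\mu_{14}' \defeq \delta_{(Y_1, Z_{\pi_1})} + \delta_{(Y_4, Z_{\pi_4})} - \delta_{(Y_1, Z_{\pi_4})} - \delta_{(Y_4, Z_{\pi_1})}
\end{talign*}
and analogously $\mu_{23}'$ supported on indices in $\{2, \swap(2), 3, \swap(3)\}$. Since $\mu_{14}'$ and $\mu_{23}'$ are built from disjoint index blocks, they are conditionally independent given $(Y_1, Z_{\pi_1}, \pi_1)$, so the conditional expectation of the inner product factorizes. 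Direct computation gives
\begin{talign*}
\E[\mu_{14}' \mid Y_1, Z_{\pi_1}, \pi_1] = \delta_{(Y_1, Z_{\pi_1})} + \mu - \delta_{Y_1} \times \Q - \P \times \delta_{Z_{\pi_1}}, \quad \E[\mu_{23}'] = \pjnt - \P \times \Q,
\end{talign*}
and Cauchy--Schwarz in $\rkhs$ then yields
\begin{talign*}
(\E[\hin \mid Y_1, Z_{\pi_1}, \pi_1])^2 \leq 4\, \mmd^2\!\Big(\tfrac{\delta_{(Y_1, Z_{\pi_1})} + \mu}{2},\; \tfrac{\delta_{Y_1} \times \Q + \P \times \delta_{Z_{\pi_1}}}{2}\Big)\, \mmd^2(\pjnt, \P \times \Q).
\end{talign*}

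Finally, control the leading MMD by $\wildxi$. Using $\mu = \tfrac{1}{2}\pjnt + \tfrac{1}{2}(\P \times \Q)$ and convexity of $\mmd^2(\cdot,\nu)$ (which follows from the triangle inequality for the seminorm $\mmd$ and the convexity of squaring), split
\begin{talign*}
\mmd^2\!\Big(\tfrac{\delta_{(Y_1, Z_{\pi_1})} + \mu}{2},\, \nu\Big) \leq \tfrac{1}{2}\mmd^2\!\Big(\tfrac{\delta_{(Y_1, Z_{\pi_1})} + \pjnt}{2},\, \nu\Big) + \tfrac{1}{2}\mmd^2\!\Big(\tfrac{\delta_{(Y_1, Z_{\pi_1})} + \P \times \Q}{2},\, \nu\Big).
\end{talign*}
Under the wild-bootstrap law, $(Y_1, Z_{\pi_1}) \sim \pjnt$ when $\pi_1 = 1$ and $(Y_1, Z_{\pi_1}) \sim \P \times \Q$ when $\pi_1 = \swap(1)$, each with probability $\tfrac{1}{2}$; averaging over $\pi_1$ thus converts each term into an instance of the $\wildxi$ template $\E_{(Y,Z)\sim\mu_0}\mmd^2(\tfrac{\delta_{(Y,Z)} + \nu_0}{2},\; \tfrac{\delta_Y \times \Q + \P \times \delta_Z}{2})$ with $(\mu_0,\nu_0) \in \{\pjnt,\P\times\Q\}^2$, each bounded by $\wildxi$. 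Chaining the estimates yields $\tilde\psi_1' \leq \E[(\E[\hin \mid Y_1, Z_{\pi_1}, \pi_1])^2] \leq 4\wildxi\, \mmd^2(\pjnt, \P \times \Q)$, which trivially implies the advertised bound since $\mmd^4(\pjnt, \P \times \Q) \geq 0$. The main obstacle will be the bookkeeping in the factorization step: carefully tracking which variables feed into $\mu_{14}'$ versus $\mu_{23}'$, verifying conditional independence under the wild-bootstrap structure, correctly evaluating the four-term conditional expectation $\E[\mu_{14}' \mid Y_1, Z_{\pi_1}, \pi_1]$, and matching each piece of the subsequent convexity expansion to one of the four $\wildxi$ configurations.
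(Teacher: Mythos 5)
Your proposal is correct and follows essentially the same route as the paper: both bound the variance by the second moment of the conditional expectation, exploit the bilinear RKHS representation of $\hin$ with Cauchy--Schwarz, and reduce the resulting MMD term to the four configurations defining $\wildxi$ (the paper's Lemma on permuted $\hin$ moments). The only cosmetic difference is that you average over $\pi_4$ first, producing the mixture $\tfrac{1}{2}\pjnt+\tfrac{1}{2}(\P\times\Q)$ and handling it by convexity of $\mmd^2$, whereas the paper conditions on the full permutation and applies a final Cauchy--Schwarz over its randomness; both yield the same (in fact slightly stronger) bound $4\wildxi\,\mmd^2(\pjnt,\P\times\Q)$.
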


Since $\E[V_n] \geq 4 \mmd^2(\pjnt, \P \times \Q) %
= \mathcal{U}$ by \cref{lem:E_V_n_mmd}, 
equation \cref{eq:ind_full_suff_app} holds whenever
\begin{talign}
\begin{split}
    3 \mmd^2(\pjnt, \P \times \Q) &\geq
    \sqrt{(\frac{3}{\beta} - 1) \big(\frac{16 \psi_1'}{n} + \frac{144 \psi_2'}{n^2} \big)}
    + \sqrt{\big( \frac{12}{\astar \beta} - 2\big) \big( \frac{32 \tilde{\psi}_1'}{n} + \frac{960 \psi_2'}{n^2} \big)} + \frac{10}{n} \tilde{\xi}.
\end{split}
\end{talign}
Next, by \cref{lem:psi_1_prime_bound,lem:tilde_psi_1_prime_bound}, we can upper-bound the right-hand side of this expression by 
\begin{talign}
\begin{split}
    &\sqrt{(\frac{3}{\beta} - 1) \big(\frac{64 \mmd^2(\pjnt, \P \times \Q) \xi}{n} + \frac{144 \psi_{2}'}{n^2} \big)}
    \\ &\qquad + \sqrt{\big( \frac{12}{\astar \beta} - 2\big) \big(\frac{128 \wildxi \mmd^2(\pjnt, \P \times \Q) + 32 \mmd^4(\pjnt, \P \times \Q)}{n} + \frac{960 \psi_2'}{n^2} \big)}
    + \frac{10}{n} \tilde{\xi} \\ &\leq \sqrt{ 
    \big( \frac{12}{\astar \beta} - 2\big) \frac{(256 \wildxi + 32 \xi) \mmd^2(\pjnt, \P \times \Q)}{n}
    } + \sqrt{ 
    \big( \frac{12}{\astar \beta} - 2\big) \frac{32 \mmd^4(\pjnt, \P \times \Q)}{n}
    } \\ &\qquad + \sqrt{
    \big( \frac{12}{\astar \beta} - 2\big) \frac{960 \psi_2'}{n^2}
    }
    + \frac{10}{n} \tilde{\xi}, 
\end{split}
\end{talign}
where we used that $\frac{3}{\beta} - 1 \leq \frac{1}{4} \big( \frac{12}{\astar \beta} - 2\big)$.
Hence, for \cref{eq:ind_full_suff_app} to hold, it suffices to have
\begin{talign}
    a x^2 &- bx - c \geq 0, \\
    \text{with } \qquad  x &= \mmd(\pjnt, \P \times \Q), \qquad 
    a = 3 - \sqrt{\big( \frac{12}{\astar \beta} - 2\big) \frac{32}{n}},
    \\ 
    b &= \sqrt{ \big( \frac{12}{\astar \beta} - 2\big) \frac{(256 \wildxi + 32 \xi)}{n}}, 
    \qquad
    c = \sqrt{\big( \frac{12}{\astar \beta} - 2\big) \frac{960 \psi_2'}{n^2}} + \frac{10}{n} \tilde{\xi}.
\end{talign}
Equation \cref{eq:ind_full_suff_app} therefore holds when $x\geq \frac{b}{a} + \sqrt{\frac{c}{a}} \geq \frac{b + \sqrt{b^2 + 4 a c}}{2a}$, concluding the proof of \cref{eq:psd_ind_full}.
To establish the cheap testing claim, we will show that \cref{eq:psd_ind_cheap} implies the sufficient power condition  \cref{eq:ind_cheap_suff_app}
in the proof of \cref{thm: Full Independence Tests}. 
Since $\E[V_n] \geq \mathcal{U}$ by  \cref{lem:E_V_n_mmd}, 
\cref{eq:ind_cheap_suff_app} holds when 
\begin{talign}
    \big( 3 \! - \!
    \frac{s+1}{s^2}
    \big) \mmd^2(\pjnt, &\P \times \Q) \! \geq \!
    \sqrt{(\frac{3}{\beta} - 1) \big(\frac{16 \psi_1'}{n} + \frac{144 \psi_2'}{n^2} \big)}
    \\ &+ \! \sqrt{\big( \frac{12}{\astar \beta} - 2\big)
    \big(\tilde{\psi}_1' \big( 
    \frac{32}{n} + \frac{2304}{n s} + \frac{460800}{n s^2}
    \big) + \psi_2' \big(
    \frac{960}{n^2} + \frac{4147200}{n^2 s}
    \big) \big)} \! + \! \frac{10}{n} \tilde{\xi}. 
\end{talign}
Next, by \cref{lem:psi_1_prime_bound,lem:tilde_psi_1_prime_bound}, we can upper-bound the right-hand side of this expression by 
\begin{talign}
    &\sqrt{(\frac{3}{\beta} - 1) \big(\frac{64 \mmd^2(\pjnt, \P \times \Q) \xi}{n} + \frac{144 \psi_{2}'}{n^2} \big)} 
    + \frac{10}{n} \tilde{\xi} \\ &\quad + 
    \big( \frac{12}{\astar \beta} - 2\big)^{1/2}
    \big(
    (4 \wildxi \mmd^2(\pjnt, \P \times \Q) + \mmd^4(\pjnt, \P \times \Q) ) (\frac{32}{n} + \frac{2304}{n s} + \frac{460800}{n s^2})
    \\ &\qquad\qquad\qquad\qquad + \psi_2' \big( \frac{960}{n^2} + \frac{4147200}{n^2 s} \big) \big)^{1/2}
    \\ &\leq 
    (3-\frac{s+1}{s^2}-a)\mmd^2(\pjnt, \P \times \Q)
    + b \mmd(\pjnt, \P \times \Q) + c 
    \qtext{for}\\
    a
        &\defeq 
    3-\frac{s+1}{s^2}-\sqrt{
    \big( \frac{12}{\astar \beta} - 2\big)\big( \frac{32}{n} + \frac{2304}{n s} + \frac{460800}{n s^2} \big)},    
        \\
    b
        &\defeq
    \sqrt{
    \big( \frac{12}{\astar \beta} - 2\big)
     \big(\wildxi (\frac{256}{n} + \frac{18432}{n s} + \frac{3686400}{n s^2}) + \frac{32\xi}{n} \big)}, 
     \qtext{and}  \\ 
    c
        &\defeq 
    \sqrt{
    \big( \frac{12}{\astar \beta} - 2\big)
    \psi_2' \big( \frac{960}{n^2} + \frac{4147200}{n^2 s} \big)} + \frac{10}{n} \tilde{\xi}.  
\end{talign}
The sufficiency of \cref{eq:psd_ind_cheap} now follows from the sufficiency of the quadratic inequality $ax^2-bx-c\geq0$ with $x = \mmd(\pjnt, \P \times \Q)$, exactly as in the standard case, and the triangle inequality.

\subsection{\pcref{lem:E_V_n_mmd}}\label{proof:E_V_n_mmd}
    Let 
    $(\PX', \PY', \PZ')$ represent an independent copy of $(\PX,\PY,\PZ)$.
    Then, two applications of Cauchy-Schwarz and \cite[App.~B]{schrab2022efficient} imply that
    \begin{talign}
        &\mmd^2(\pjnt,\P \times \Q) = \mE \big[ \iint \kernel(x,x') \, d(\PX - \PY \times \PZ)(x) \, d(\PX' - \PY' \times \PZ')(x') \big] \\ &\leq \mE \big[ \knorm{\int \kernel(x,\cdot) \, d(\PX - \PY \times \PZ)(x) } \knorm{\int \kernel(x,\cdot) \, d(\PX' - \PY' \times \PZ')(x') } \big] 
        \\ &\leq \mE \big[ \knorm{\int \kernel(x,\cdot) \, d(\PX - \PY \times \PZ)(x) }^2 \big] = \frac{1}{4} \mE[V_n].
    \end{talign}
Moreover, we have
\begin{talign} \label{eq:mmd_to_marginals}
\mathcal{U} 
    &= 
\mE[\hin(X_{i_1},X_{i_2},X_{i_3},X_{i_4})] \\
    &=
\mE\big[ \big(g_Y(Y_1,Y_2) + g_Y(Y_3,Y_4)  - g_Y(Y_1,Y_3) - g_Y(Y_2,Y_4) \big) \\
    &\quad \times  \big(g_Z(Z_1,Z_2) + g_Z(Z_3,Z_4) - g_Z(Z_1,Z_3) - g_Z(Z_2,Z_4) \big) \big] \\
    &=
4(\pjnt\times\pjnt)(g_Yg_Z) 
    +
4((\P \times \Q)\times(\P \times \Q))(g_Yg_Z) \\
    &\quad-
4(\pjnt\times(\P\times\Q))(g_Yg_Z) 
    -
4((\P\times\Q)\times\pjnt)(g_Yg_Z) 
    =
4\mmd^2(\pjnt,\P \times \Q). 
\end{talign}

\subsection{\pcref{lem:psi_1_prime_bound}}\label{proof:psi_1_prime_bound}

By Jensen's inequality and
Cauchy Schwarz, 
\begin{talign} 
\begin{split} \label{eq:psi_1_prime_bound}
\psi_{1}' &= \Var(\mE[ \hbarin(X_1,X_2,X_3,X_4)\mid X_1]) \\ &\leq \frac{1}{4!} \sum_{(i_1,i_2,i_3,i_4) \in \mathbf{i}_4^{4}} \Var(\mE[\hin(X_{i_1},X_{i_2},X_{i_3},X_{i_4})\mid X_1]) \\ &= \Var(\mE[\hin(X_1,X_2,X_3,X_4)\mid X_1]) \leq \mE[(\mE[\hin(X_1,X_2,X_3,X_4)\mid X_1])^2] \\ 
&= \E[(2\iint \kernel(x,x') \, d(\delta_{X_1} + \pjnt - \delta_{Y_1} \times \Q - \P \times \delta_{Z_1})(x)  d(\pjnt - \P \times \Q)(x'))^2] \\
&\leq
4\E[\knorm{ \int \kernel(x,\cdot) \, d(\delta_{X_1} + \pjnt - \delta_{Y_1} \times \Q - \P \times \delta_{Z_1})(x)}^2] \knorm{ \int \kernel(x,\cdot) \, d(\pjnt - \P \times \Q)(x) }^2.
\end{split}
\end{talign}
The advertised result now follows from the observation that 
\begin{talign}
    \big\| \int \kernel(x,\cdot) \, d(\pjnt - \P \times \Q)(x) \big\|_{\kernel} =  \mmd(\pjnt,\P \times \Q),
\end{talign}
and that
\begin{talign}
    &\E[\big\| \int \kernel(x,\cdot) \, d(\delta_{X_1} + \pjnt - \delta_{Y_1} \times \Q - \P \times \delta_{Z_1})(x) \big\|_{\kernel}^2] \\ &= \E[\iint \kernel(x,x') \, d(\delta_{X_1} + \pjnt - \delta_{Y_1} \times \Q - \P \times \delta_{Z_1})(x)\, d(\delta_{X_1} + \pjnt - \delta_{Y_1} \times \Q - \P \times \delta_{Z_1})(x')] \\
    &=
\E[\E[\hin(X_1,X_2,X_3,X_4)\mid X_1]] = \xi.
\end{talign}
\subsection{\pcref{lem:tilde_psi_1_prime_bound}}\label{proof:tilde_psi_1_prime_bound}
By the definition \cref{Eq: definition of psi prime functions} of $\tilde{\psi}_1'$, we can write
\begin{talign}
    \tilde{\psi}_1' %
    &= 
    \mE \big[ \hinY(Y_{1},Y_{2},Y_{3},Y_{4})  \hinZ(Z_{\pi_{1}},Z_{\pi_{2}},Z_{\pi_{3}},Z_{\pi_{4}}) \\ &\qquad\qquad \times \hinY(Y_{1},Y_{5},Y_{6},Y_{7})  \hinZ(Z_{\pi_{1}},Z_{\pi_{5}},Z_{\pi_{6}},Z_{\pi_{7}}) \big] \\ &\qquad - \E \big[ \hinY(Y_{1},Y_{2},Y_{3},Y_{4})  \hinZ(Z_{\pi_{1}},Z_{\pi_{2}},Z_{\pi_{3}},Z_{\pi_{4}}) \big] \\ &\qquad\qquad \times \E \big[ \hinY(Y_{1},Y_{5},Y_{6},Y_{7})  \hinZ(Z_{\pi_{1}},Z_{\pi_{5}},Z_{\pi_{6}},Z_{\pi_{7}}) \big].
\end{talign}
The result now follows immediately from the following lemma.
\begin{lemma}[Permuted $\hin$ moments] \label{lem:F4_PD}
    Instantiate the assumptions of \cref{cor:psd_gs}, and suppose $\pi$ is an independent wild bootstrap permutation of $[n]$.  Then 
    \begin{talign} \label{eq:lem_F4_PD_1}
    \E \big[ \hinY(Y_{1},Y_{2},Y_{3},Y_{4})  \hinZ(Z_{\pi_{1}},Z_{\pi_{2}},Z_{\pi_{3}},Z_{\pi_{4}}) \big]
        = \mmd^2(\pjnt, \P \times \Q).
    \end{talign}
    If, in addition, $n \geq 14$, then
    \begin{talign}
    \begin{split} \label{eq:bound_exp_P_pi}
    &\mE \big[\hinY(Y_{1},Y_{2},Y_{3},Y_{4}) \hinY(Y_{1},Y_{5},Y_{6},Y_{7}) \\ &\quad 
    \times \hinZ(Z_{\pi_{1}},Z_{\pi_{2}},Z_{\pi_{3}},Z_{\pi_{4}}) \hinZ(Z_{\pi_{1}},Z_{\pi_{5}},Z_{\pi_{6}},Z_{\pi_{7}}) \big]
    \leq 4 \wildxi \mmd^2(\pjnt, \P \times \Q).
    \end{split}
    \end{talign}
\end{lemma}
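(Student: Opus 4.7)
The plan is to exploit the fact that, for sufficiently large $n$, a wild bootstrap permutation makes the paired variables $X_i^{\pi}\defeq(Y_i,Z_{\pi_i})$ i.i.d.\ draws from the mixture $\rho\defeq\half(\pjnt+\P\times\Q)$. Concretely, when $n\geq 8$ (for \cref{eq:lem_F4_PD_1}) or $n\geq 14$ (for \cref{eq:bound_exp_P_pi}), the indices $\{1,\dots,7\}$ together with their partners $\{\swap(1),\dots,\swap(7)\}$ are pairwise distinct, and since the wild bootstrap swaps are independent across swap pairs, the $X_i^\pi$ are jointly independent with marginal $\rho$. The marginals of $\rho$ are $\P$ and $\Q$, so $\rho-\P\times\Q=\half(\pjnt-\P\times\Q)$ and, in particular, $\mmd^2(\rho,\P\times\Q)=\quarter\mmd^2(\pjnt,\P\times\Q)$.

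For \cref{eq:lem_F4_PD_1}, I invoke the integral representation
\begin{talign}
\hin(X_1,X_2,X_3,X_4)=\iint \kernel\,d\nu\,d\nu',
\end{talign}
with $\nu=\delta_{X_1}+\delta_{X_4}-\delta_{(Y_1,Z_4)}-\delta_{(Y_4,Z_1)}$ and the analogous $\nu'$ on indices $\{2,3\}$. Under i.i.d.\ draws from $\rho$, linearity and independence give $\E[\nu]=\E[\nu']=2(\rho-\P\times\Q)$, and since $\nu,\nu'$ depend on disjoint index sets they are independent. Thus $\E[\hin(X_1^\pi,\dots,X_4^\pi)]=4\mmd^2(\rho,\P\times\Q)=\mmd^2(\pjnt,\P\times\Q)$, establishing \cref{eq:lem_F4_PD_1}.

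For \cref{eq:bound_exp_P_pi}, I condition on $X_1^\pi$. Because indices $\{2,\dots,7\}$ lie in swap pairs disjoint from $\{1,\swap(1)\}$, the variables $X_2^\pi,\dots,X_7^\pi$ remain i.i.d.\ from $\rho$ and independent of $X_1^\pi$, so the two $\hin$ factors are conditionally i.i.d.\ given $X_1^\pi$, and the joint expectation equals $\E\big[\E[\hin(X_1^\pi,X_2^\pi,X_3^\pi,X_4^\pi)\mid X_1^\pi]^2\big]$. Using the integral representation again, $\E[\nu\mid X_1^\pi]=\delta_{X_1^\pi}+\rho-\delta_{Y_1}\times\Q-\P\times\delta_{Z_{\pi_1}}$ and $\E[\nu'\mid X_1^\pi]=2(\rho-\P\times\Q)$ are conditionally independent, and Cauchy-Schwarz in $\rkhs$ yields
\begin{talign}
\E[\hin\mid X_1^\pi]^2
  \leq
4\mmd^2(\pjnt,\P\times\Q)\,\mmd^2\Big(\tfrac{\delta_{X_1^\pi}+\rho}{2},\tfrac{\delta_{Y_1}\times\Q+\P\times\delta_{Z_{\pi_1}}}{2}\Big),
\end{talign}
after using $4\mmd^2(\rho,\P\times\Q)=\mmd^2(\pjnt,\P\times\Q)$.

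The final step is to show $\E_{X_1^\pi\sim\rho}\big[\mmd^2\big(\tfrac{\delta_{X_1^\pi}+\rho}{2},\tfrac{\delta_{Y_1}\times\Q+\P\times\delta_{Z_{\pi_1}}}{2}\big)\big]\le\wildxi$. Writing $\tfrac{\delta_X+\rho}{2}=\half\tfrac{\delta_X+\pjnt}{2}+\half\tfrac{\delta_X+\P\times\Q}{2}$ and $\E_{X\sim\rho}=\half\E_{X\sim\pjnt}+\half\E_{X\sim\P\times\Q}$, and applying the convexity inequality $\mmd^2(\tfrac{\alpha+\beta}{2},c)\leq\half\mmd^2(\alpha,c)+\half\mmd^2(\beta,c)$ (which follows from the RKHS triangle inequality and $(\tfrac{a+b}{2})^2\leq\tfrac{a^2+b^2}{2}$), the target expectation is bounded by the average of the four quantities appearing in the definition of $\wildxi$, each at most $\wildxi$. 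Combining the pieces delivers $4\wildxi\mmd^2(\pjnt,\P\times\Q)$. The main obstacle is arranging the convexity step so that every resulting MMD matches one of the four admissible $(\mu,\nu)$ configurations used in $\wildxi$; the rest is careful bookkeeping of the wild-bootstrap independence structure and the integral representation of $\hin$.
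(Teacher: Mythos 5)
Your proof is correct and follows essentially the same route as the paper: exploit the disjointness of the wild-bootstrap swap pairs for $n\geq 14$, condition on the shared pair $(Y_1,Z_{\pi_1})$, apply Cauchy--Schwarz in the RKHS to extract $\mmd^2(\pjnt,\P\times\Q)$, and bound the remaining expectation by $\wildxi$. The only (valid) variations are cosmetic: you work with the deterministic mixture $\rho=\tfrac12(\pjnt+\P\times\Q)$ and the exact conditional-i.i.d.\ identity $\E[F_1F_2]=\E[\E[F_1\mid X_1^{\pi}]^2]$, paying for it with a final convexity step to split $\rho$ into the four $(\mu,\nu)$ configurations of $\wildxi$, whereas the paper conditions on $(\pi,Y_1,Z_{\pi_1})$, keeps the random components $\mu_4,\mu_7$, and uses a tensor-product Cauchy--Schwarz followed by a second Cauchy--Schwarz; both yield the same constant $4$.
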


\begin{proof}
Since $n\geq 8$, $(\pi_1,\pi_2,\pi_3,\pi_4)$ are independent. Hence, 
equation \eqref{eq:lem_F4_PD_1} follows from equation \eqref{eq:E_V_pi} of \cref{lem:bar_k_bound} and \cref{lem:E_V_n_mmd}.

Now suppose that $n \geq 14$, so that $|[7]\cup\{\swap(i) : i \in[7]\}|=14$, and define the $\Unif(\{0,1\})$ variables $\eps_{i}\defeq\indic{i=\pi_i}$ for $i\in[7]$ and 
$\mu_{i} \defeq \eps_{i}\pjnt + 
(1-\eps_{i})\P\times\Q$.
We have
\begin{talign}
\begin{split} \label{eq:bound_by_tensor_prod_mmd}
&\E[
\hinY(Y_{1},Y_{2},Y_{3},Y_{4}) \hinY(Y_{1},Y_{5},Y_{6},Y_{7}) \\ &\qquad\qquad 
    \times \hinZ(Z_{\pi_{1}},Z_{\pi_{2}},Z_{\pi_{3}},Z_{\pi_{4}}) \hinZ(Z_{\pi_{1}},Z_{\pi_{5}},Z_{\pi_{6}},Z_{\pi_{7}}) \mid \pi,Y_1,Z_{\pi_1}] \\
    &= \iiiint \kernel((y,z),(y',z')) \kernel((y'',z''),(y''',z''')) \\ &\qquad\qquad d(\delta_{(Y_{1},Z_{\pi_{1}})} + \mu_{4} - \delta_{Y_{1}} \times \Q - \P \times \delta_{Z_{\pi_{1}}})(y,z) \\ &\qquad\qquad d(\mu_{2} + \mu_{3} - \P \times \Q - \P \times \Q)(y',z') \\ &\qquad\qquad d(\delta_{(Y_{1},Z_{\pi_{1}})} + \mu_{7} - \delta_{Y_{1}} \times \Q - \P \times \delta_{Z_{\pi_{1}}})(y'',z'') \\ &\qquad\qquad d(\mu_{5} + \mu_{6} - \P \times \Q - \P \times \Q)(y''',z''') 
    \\ &\leq \big\| \iint \kernel((y,z),\cdot) \kernel((y'',z''),\cdot) \, d(\delta_{(Y_{1},Z_{\pi_{1}})} + \mu_{4} - \delta_{Y_{1}} \times \Q - \P \times \delta_{Z_{\pi_{1}}})(y,z) \\ &\qquad\qquad\qquad d(\delta_{(Y_{1},Z_{\pi_{1}})} + \mu_{7} - \delta_{Y_{1}} \times \Q - \P \times \delta_{Z_{\pi_{1}}})(y'',z'') \big\|_{\kernel \otimes \kernel}
    \\ &\qquad \times \big\| \iint \kernel((y',z'),\cdot) \kernel((y''',z'''),\cdot) \, d(\mu_{2} + \mu_{3} - \P \times \Q - \P \times \Q)(y',z') \\ &\qquad\qquad\qquad d(\mu_{5} + \mu_{6} - \P \times \Q - \P \times \Q)(y''',z''') \big\|_{\kernel \otimes \kernel} 
    \\ &= 4\mmd\bigg(\frac{\delta_{(Y_{1},Z_{\pi_{1}})} + \mu_{4}}{2},\frac{\delta_{Y_{1}} \times \Q + \P \times \delta_{Z_{\pi_{1}}}}{2} \bigg) \mmd\bigg(\frac{\delta_{(Y_{1},Z_{\pi_{1}})} + \mu_{7}}{2},\frac{\delta_{Y_{1}} \times \Q + \P \times \delta_{Z_{\pi_{1}}}}{2} \bigg) \\
    &\qquad\quad
\times ( \epsilon_{2} + \epsilon_{3} ) ( \epsilon_{5} + \epsilon_{6} ) \mmd^2(\pjnt, \P \times \Q)
\end{split}
\end{talign}
by Cauchy-Schwarz. Here, $\| \cdot \|_{\kernel \otimes \kernel}$ denotes the RKHS norm of the tensor product RKHS $\rkhs \otimes \rkhs$.
The result now follows by taking expectations and applying Cauchy-Schwarz.
\end{proof}

\section{\pcref{thm:final_ind}
}\label{sec:proof_thm:final_ind}

We will establish the claim using the quantile comparison method (\cref{Lemma: Generic Two Moments Method}) by identifying complementary quantile bounds (CQBs) $\Phi_{n}, \Psi_{\X}, \Psi_{n}$ satisfying the tail bounds \cref{eq:Phi_P_n,,eq:Psi_X_n,,eq:Psi_P_n} for the test statistic $V_{n}$, auxiliary sequence  $(V_{n}^{\pi_b,s})_{b=1}^\numperm$, and population parameter $\tconst = 4 \mmd^2(\Pi, \P \times \Q)$.

\subsection{Bounding the fluctuations of the test statistic}
We begin by identifying a CQB $\Phi_{n}$ satisfying the test statistic tail bound \cref{eq:Phi_P_n}. 
Our proof makes use of two lemmas. The first shows that functions of independent variables concentrate around their mean when coordinatewise function differences are bounded.

\begin{lemma}[Bounded differences inequality {\citep[Thm.~D.8]{mohri2012foundations}}] \label{thm:mcdiarmid_bounded}
Suppose that, for all $x_{1}\in {\mathcal {X}}_{1},\,x_{2}\in {\mathcal {X}}_{2},\,\ldots ,\,x_{n}\in {\mathcal {X}}_{n}$ and $i\in[n]$, a function $f:{\mathcal {X}}_{1}\times {\mathcal {X}}_{2}\times \cdots \times {\mathcal {X}}_{n}\rightarrow \mathbb {R}$  satisfies
\begin{talign}
    \sup_{x_i' \in \mathcal{X}_i} |f(x_1,\dots,x_{i-1}, x_i,x_{i+1},\dots,x_n)-f(x_1,\dots,x_{i-1}, x_i',x_{i+1},\dots,x_n)| \leq c_i.
\end{talign}
If $X_{1},X_{2},\dots ,X_{n}$ are independent with each  
$X_{i}\in {\mathcal {X}}_{i}$ then, 
with probability at least $1-\delta$,
\begin{talign}
    f(X_{1},\ldots ,X_{n}) - \mathbb{E}[f(X_{1},\ldots ,X_{n})] < \sqrt{\half\log(1/\delta)\sum_{i=1}^nc_i^2}.
\end{talign}
\end{lemma}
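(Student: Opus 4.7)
The plan is to reproduce the standard Doob-martingale argument that underlies McDiarmid's inequality. Throughout, let $X \defeq (X_1,\ldots,X_n)$ and write $\mathbb{E}_{<i}[\cdot] \defeq \mathbb{E}[\cdot\mid X_1,\ldots,X_{i-1}]$ and $\mathbb{E}_{\leq i}[\cdot] \defeq \mathbb{E}[\cdot\mid X_1,\ldots,X_i]$, with $\mathbb{E}_{<1}[\cdot] \defeq \mathbb{E}[\cdot]$. The first step is to form the telescoping sum
\begin{talign}
f(X) - \mathbb{E}[f(X)] = \sum_{i=1}^n V_i, \qquad V_i \defeq \mathbb{E}_{\leq i}[f(X)] - \mathbb{E}_{<i}[f(X)],
\end{talign}
so that $(V_i)_{i=1}^n$ is a martingale difference sequence with respect to the filtration generated by $X_1,\ldots,X_n$.

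The second step is to show that each increment is conditionally bounded in an interval of length at most $c_i$. Define
\begin{talign}
L_i \defeq \inf_{x_i\in\mathcal{X}_i} \mathbb{E}[f(X_1,\ldots,x_i,X_{i+1},\ldots,X_n)\mid X_1,\ldots,X_{i-1}] - \mathbb{E}_{<i}[f(X)]
\end{talign}
and $U_i$ analogously with a supremum. The bounded differences hypothesis together with the independence of $X_i$ from $(X_1,\ldots,X_{i-1})$ implies $U_i - L_i \leq c_i$ almost surely, and by construction $L_i \leq V_i \leq U_i$. By Hoeffding's lemma applied conditionally on $X_1,\ldots,X_{i-1}$, we therefore obtain
\begin{talign}
\mathbb{E}_{<i}[e^{\lambda V_i}] \leq e^{\lambda^2 c_i^2/8} \qquad\text{for all } \lambda\in\mathbb{R}.
\end{talign}

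The third step is to iterate this bound using the tower property. Conditioning on $X_1,\ldots,X_{n-1}$ and peeling off $V_n$ yields
\begin{talign}
\mathbb{E}[e^{\lambda(f(X) - \mathbb{E}[f(X)])}] = \mathbb{E}\bigl[e^{\lambda \sum_{i=1}^{n-1} V_i} \mathbb{E}_{<n}[e^{\lambda V_n}]\bigr] \leq e^{\lambda^2 c_n^2/8}\, \mathbb{E}[e^{\lambda \sum_{i=1}^{n-1} V_i}],
\end{talign}
and repeating this peeling gives $\mathbb{E}[e^{\lambda(f(X)-\mathbb{E}[f(X)])}] \leq \exp(\lambda^2 \sum_{i=1}^n c_i^2/8)$. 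Applying Markov's inequality to $e^{\lambda(f(X)-\mathbb{E}[f(X)])}$ at threshold $e^{\lambda \varepsilon}$ and optimizing over $\lambda>0$ (the optimum is $\lambda = 4\varepsilon/\sum_i c_i^2$) yields the Chernoff tail bound
\begin{talign}
\Pr\bigl(f(X)-\mathbb{E}[f(X)] \geq \varepsilon\bigr) \leq \exp\Bigl(-\tfrac{2\varepsilon^2}{\sum_{i=1}^n c_i^2}\Bigr).
\end{talign}
Setting the right-hand side equal to $\delta$ and solving for $\varepsilon$ produces the advertised bound.

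The only genuinely non-trivial step is the second one, where one must argue that the martingale increment $V_i$ lies in an interval of length at most $c_i$; the key observation there is that $\mathbb{E}_{<i}[f(X)]$ equals the average over $X_i$ of $\mathbb{E}[f(X)\mid X_1,\ldots,X_i]$ by independence, so $V_i$ is sandwiched between the conditional infimum and supremum over $x_i$, and the bounded differences hypothesis ensures their gap is at most $c_i$. Everything else is routine MGF manipulation.
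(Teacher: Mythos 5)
Your argument is the standard Doob-martingale (Azuma--Hoeffding) proof of McDiarmid's inequality, and it is correct: the increment bound via independence and Hoeffding's lemma, the iterated MGF bound, and the optimized Chernoff step all check out, and setting $\exp(-2\varepsilon^2/\sum_i c_i^2)=\delta$ recovers exactly the stated threshold $\sqrt{\tfrac12\log(1/\delta)\sum_i c_i^2}$. The paper does not reprove this result but cites \citet[Thm.~D.8]{mohri2012foundations}, whose proof is precisely this martingale argument, so your route matches the intended one.
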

The second lemma uses \cref{thm:mcdiarmid_bounded} to derive a suitable CQB for $V_n$.
\begin{lemma}[Bounded independence quantile bound]\label{bounded_independence_quantile_bound}
Under the assumptions of \cref{thm:final_ind}, 
    \begin{talign} \label{eq:tail_bound_V_n_2}
    \mathrm{Pr} \big( |4\mmd^2(\pjnt,\P \times \Q) - V_n| \geq \Phi_{n}(\beta)  \big) \leq \beta
    \qtext{for all}
    \beta\in(0,1)
    \end{talign}
    where 
    $\Phi_{n}(\beta) \defeq \Lambda^2(\beta) + 4 \mmd(\pjnt,\P \times \Q) \Lambda(\beta)$
    and
    $\Lambda(\beta) 
    \defeq \frac{6\sqrt{K}+6\sqrt{2K\log(1/\beta)}}{\sqrt{n}}$.
\end{lemma}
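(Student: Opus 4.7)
The plan is to first control the deviation of $f(\X) \defeq \mmd(\PX, \PY \times \PZ)$ from $D \defeq \mmd(\pjnt, \P \times \Q)$, then square, using
\[
|4D^2 - 4f(\X)^2| = 4\,|D - f(\X)|\cdot|D + f(\X)| \;\leq\; 4\,|D - f(\X)|\bigl(|D - f(\X)| + 2D\bigr).
\]
Since $V_n = 4f(\X)^2$, it suffices to show that $|f(\X) - D| \leq \Lambda(\beta)/2$ with probability at least $1-\beta$, for then $|4D^2 - V_n| \leq \Lambda^2(\beta) + 4D\,\Lambda(\beta) = \Phi_n(\beta)$.

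To control $|f(\X) - D|$, I would split via the triangle inequality as $|f(\X) - \mathbb{E}[f(\X)]| + |\mathbb{E}[f(\X)] - D|$. For the concentration piece, I show that $f$ has bounded differences with constant $c = 6\sqrt{K}/n$: replacing one sample $X_k = (Y_k, Z_k)$ perturbs $\mu_\PX$ by at most $2\sqrt{K}/n$ in RKHS norm, while $\mu_{\PY \times \PZ} = \frac{1}{n^2}\sum_{i,j}\kernel((Y_i, Z_j), \cdot)$ has $2n - 1$ affected summands (those with $i = k$ or $j = k$), each changing by at most $2\sqrt{K}/n^2$, for a total perturbation of at most $4\sqrt{K}/n$; the triangle inequality combines these to yield $|f(\X) - f(\X')| \leq 6\sqrt{K}/n$. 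Applying \cref{thm:mcdiarmid_bounded} in each direction with a union bound then supplies the $\log(1/\beta)$ term in $\Lambda(\beta)/2$. For the bias piece, the MMD triangle inequality and Jensen give
\[
|\mathbb{E}[f(\X)] - D| \;\leq\; \sqrt{\mathbb{E}\,\mmd^2(\PX, \pjnt)} + \sqrt{\mathbb{E}\,\mmd^2(\PY \times \PZ, \P \times \Q)}.
\]
The first square root is at most $\sqrt{K/n}$ via the identity $\mathbb{E}\,\mmd^2(\PX, \pjnt) = \tfrac{1}{n}(\mathbb{E}\,\kernel(X_1,X_1) - \|\mu_\pjnt\|_\kernel^2)$; the second follows from the tensor decomposition $\mu_\PY \otimes \mu_\PZ - \mu_\P \otimes \mu_\Q = (\mu_\PY - \mu_\P)\otimes\mu_\PZ + \mu_\P\otimes(\mu_\PZ - \mu_\Q)$ followed by Cauchy--Schwarz, together with a V-statistic moment expansion bounded using $\kernel((y,z),(y',z')) \leq K$.

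The main obstacle is the bias term $\mathbb{E}\,\mmd^2(\PY \times \PZ, \P \times \Q)$: because $(Y_i, Z_i) \sim \pjnt$ makes $Y_i$ and $Z_i$ dependent, $\PY \times \PZ$ is not an i.i.d.\ mean, so natural factorizations such as $\mathbb{E}[\|\mu_\PY - \mu_\P\|\cdot\|\mu_\PZ - \mu_\Q\|] = \mathbb{E}[\|\mu_\PY - \mu_\P\|]\,\mathbb{E}[\|\mu_\PZ - \mu_\Q\|]$ fail. A careful case analysis on index partitions in the resulting fourth-order V-statistic, together with the observation that $K = \sup_x \kernel(x,x) < \infty$ only constrains the \emph{product} $g_Y(y,y)\,g_Z(z,z)$ (and not each factor individually), is where the constants in $\Lambda(\beta)$ become delicate; the remaining steps are routine applications of the triangle inequality and the squaring identity above.
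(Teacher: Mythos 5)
Your overall strategy is the paper's: bound the deviation of $\mmd(\PX,\PY\times\PZ)$ from $\mmd(\pjnt,\P\times\Q)$ at the MMD level via a bounded-differences argument with total coordinate-wise constant $6\sqrt{K}/n$ and a mean bound of $3\sqrt{K/n}$, then convert to the squared quantity with the identity $|a^2-b^2|\le|a-b|(|a-b|+2|a|)$. The bounded-difference constants you compute ($2\sqrt{K}/n$ for the joint embedding, $4\sqrt{K}/n$ for the product embedding) and the bias bounds match the paper's.

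There is, however, one concrete step that fails to deliver the stated constant. You decompose $|f(\X)-D|\le|f(\X)-\E[f(\X)]|+|\E[f(\X)]-D|$ and then invoke \cref{thm:mcdiarmid_bounded} ``in each direction with a union bound.'' That inequality is one-sided, so a two-sided bound on $|f(\X)-\E[f(\X)]|$ at level $\beta$ forces $\delta=\beta/2$ in each tail, giving a deviation of $\tfrac{3\sqrt{2K\log(2/\beta)}}{\sqrt{n}}$ rather than the $\tfrac{3\sqrt{2K\log(1/\beta)}}{\sqrt{n}}$ available in $\Lambda(\beta)/2$ after subtracting the bias term; your argument therefore proves the lemma only with $\log(2/\beta)$ in place of $\log(1/\beta)$. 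The paper avoids this by never centering $f$ at its own mean: it bounds $|f(\X)-D|\le\Delta(\X)+\Delta'(\X)$ with $\Delta(\X)\defeq\mmd(\pjnt,\PX)$ and $\Delta'(\X)\defeq\mmd(\P\times\Q,\PY\times\PZ)$, a \emph{nonnegative} surrogate whose upper tail alone controls the two-sided deviation, so a single one-sided application of McDiarmid at level $\beta$ suffices. Separately, the ``main obstacle'' you flag for $\E\,\mmd^2(\PY\times\PZ,\P\times\Q)$ is not actually one: after the tensor decomposition $(\mu_{\PY}-\mu_{\P})\otimes\mu_{\PZ}+\mu_{\P}\otimes(\mu_{\PZ}-\mu_{\Q})$, you bound the norm of one factor \emph{deterministically} by $\sqrt{\sup_z g_Z(z,z)}$ (resp.\ $\sqrt{\sup_y g_Y(y,y)}$) and take expectations only of the i.i.d.\ marginal term, and since $\sup_{y,z}g_Y(y,y)g_Z(z,z)=\sup_y g_Y(y,y)\cdot\sup_z g_Z(z,z)\le K$ for nonnegative diagonal kernel values, the products you need are controlled by $\sqrt{K}$ without any fourth-order case analysis.
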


\begin{proof}
Let $\tilde{X}=(\tilde{Y},\tilde{Z})$ be an independent draw from $\pjnt$.
By 
\citep[App.~B]{schrab2022efficient},
\begin{talign} \label{Eq: V-statistic for independence testing 2}
V_n &=  \frac{1}{n^4} \sum_{(i_1,i_2,i_3,i_4) \in [n]^4} \hin(X_{i_1},X_{i_2},X_{i_3},X_{i_4}) = 4\mmd^2(\PX,\PY \times \PZ).
\end{talign}
By the triangle inequality, we have the following sure inequalities for each $k\in[n]$:
\begin{talign}
&|\mmd(\pjnt, \P\times\Q) - \mmd(\PX, \PY\times\PZ)| 
    \leq
\mmd(\pjnt, \PX) + \mmd(\P\times\Q, \PY\times\PZ) \\
    &\qquad\defeq 
\Delta(\X) + \Delta'(\X),  \\ 
&|\Delta(\X)
    -
\Delta((\X_{-k},\tilde{X}))|
    \leq
\frac{1}{n}\mmd(\delta_{X_k},\delta_{\tilde{X}}) 
    \leq
\frac{2\sqrt{K}}{n}, 
    \qtext{and} \\
&|\Delta'(\X)
    -
\Delta'((\X_{-k},\tilde{X}))|
    \\
    &\qquad\leq
\frac{1}{n}\mmd[g_Y](\delta_{Y_k},\delta_{\tilde{Y}}) \sqrt{(\PZ\times\PZ)g_Z}
    +
\frac{1}{n}\mmd[g_Z](\delta_{Z_k},\delta_{\tilde{Z}}) \sqrt{\sup_y g_Y(y,y)}
    \leq
\frac{4\sqrt{K}}{n}.
\end{talign}
Furthermore, by Jensen's inequality, we have
\begin{talign}\label{eq:mean-DeltaX}
\E[\Delta(\X)]
    &\leq
\sqrt{\E[\mmd(\pjnt, \PX)^2]}  
    = 
\sqrt{\E[ \frac{1}{n^2} \sum_{i=1}^{n} ((\delta_{X_i}-\pjnt)\times (\delta_{X_i}-\pjnt))\kernel]} \\
    &\leq
\sqrt{\E[ \frac{1}{n^2} \sum_{i=1}^{n} \kernel(X_i,X_i)]}
    \leq
\sqrt{\frac{K}{n}}.
\end{talign}
The triangle inequality, Cauchy-Schwarz, and analogous reasoning also yield
\begin{talign}
\begin{split} \label{eq:mean-DeltaXprime}
\E[\Delta'(\X)]
    &\leq
\E[\mmd(\P\times\Q,\P\times\PZ)]
+
\E[\mmd(\P\times\PZ,\PY\times\PZ)]
    \\
    &=
\E[\sqrt{(\P\times\P)g_Y}\mmd[g_Z](\Q,\PZ)]
+
\E[\sqrt{(\PZ\times\PZ)g_Z}\mmd[g_Y](\P,\PY)] \\
&=
\sqrt{\E[(\P\times\P)g_Y]\E[\mmd[g_Z]^2(\Q,\PZ)]}
+
\sqrt{\E[(\PZ\times\PZ)g_Z\mmd[g_Y]^2(\P,\PY)]}  
    \leq
2\sqrt{\frac{K}{n}}.
\end{split} 
\end{talign}
Hence, by \cref{thm:mcdiarmid_bounded}, with probability at least $1-\beta$,
\begin{talign}
|2\mmd(\pjnt, \P\times\Q) - 2\mmd(\PX, \PY\times\PZ)| 
    <
\Lambda_{V}(\beta).
\end{talign}
Since $|a^2-b^2|=|a-b| |b+a| \leq |a-b| (|b-a|+2|a|)$ for all real $a,b$ the result follows.
\end{proof}

\subsection{Alternative representation of the independence V-statistic} 
\label{subsec:alternative_V_statistic}
We next define an alternative, equivalent way to represent the independence V-statistic (\cref{def:independence_v_statistic}) and its permuted versions, due to 
\citep{gretton2007akernel}. 
We will leverage this view of the V-statistic when deriving CQBs for permuted V-statistics in \cref{subsec:bounded_ind_fluctuation}.
For $i\in [n]$ and $j \in [n]$, we define
\begin{talign}
    \bar{K}_{ij} &\defeq g_Y(Y_i, Y_j) - \frac{1}{n} \sum_{j'=1}^{n} g_Y(Y_i, Y_{j'}) \qtext{and} 
    \bar{L}_{ij} \defeq g_Z(Z_i, Z_j) - \frac{1}{n} \sum_{j'=1}^{n} g_Z(Z_i, Z_{j'}).
\end{talign}
For $i, j \in [s]$, we also define 
\begin{talign}
\label{eq:Rijkl}
    (R_{ijji},\, R_{ij(\swaps(j))(\swaps(i))}) = \sum_{a\in\bin_i,b\in\bin_j} (\bar{K}_{ab} \bar{L}_{ba},\, \bar{K}_{ab} \bar{L}_{\swap(b)\swap(a)}) \\
    (R_{ij(\swaps(j))i},\, R_{ijj(\swaps(i))}) = \sum_{a\in\bin_i,b\in\bin_j} (\bar{K}_{ab} \bar{L}_{\swap(b)a},\, \bar{K}_{ab} \bar{L}_{b\swap(a)})
\end{talign}
where the bins $\bin_1, \ldots, \bin_s$ and the mapping $\swap$ are defined in \cref{algo:cheap_independence_permutation_test}. Note the similarity between $R_{ijkl}$ and the quantities $S_{ijkl}$ from \cref{algo:cheap_independence_permutation_test}.
Now define the V-statistic
\begin{talign}\label{eq:Vtilde}
\tilde{V}_n
    \defeq
    \frac{1}{n^2} \sum_{i,j=1}^{n} \bar{K}_{ij} \bar{L}_{ji} = \frac{1}{s^2} \sum_{i,j=1}^{s} R_{ijji} 
\end{talign}
and its cheaply permuted version for any  wild bootstrap permutation $\pi$ on $[s]$,  
\begin{talign}\label{eq:Vtildepi}
\tilde{V}^{\pi,s}_n
    \defeq
    \frac{1}{s^2} \sum_{i,j=1}^{s} R_{ij\pi_{j}\pi_{i}}.
\end{talign}
Our next result shows the close relationship between $V$ and $\tilde{V}$.
\begin{lemma}[V-statistic relationship]\label{v-stat-relationship}
In the notation of \cref{eq:Vtilde}, \cref{eq:Vtildepi}, and \cref{def:independence_v_statistic}  
\begin{talign}
\tilde{V}_n = \quarter V_n
    \qtext{and}
\tilde{V}^{\pi,s}_n = \quarter V^{\pi,s}_n
\end{talign} 
for any wild bootstrap permutation $\pi$ on $[s]$.
\end{lemma}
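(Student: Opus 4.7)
The plan is to first establish the unpermuted identity $\tilde{V}_n = \tfrac14 V_n$ by a direct expansion and then reduce the permuted identity to the same calculation applied to a relabeled sample.

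First, I would introduce the shorthand $A_{ij} \defeq g_Y(Y_i,Y_j)$, $B_{ij} \defeq g_Z(Z_i,Z_j)$, $\alpha_i \defeq \sum_j A_{ij}$, and $\beta_i \defeq \sum_j B_{ij}$, and expand
\[
h_{\itag}((Y_{i_1},Z_{i_1}),\ldots,(Y_{i_4},Z_{i_4})) = (A_{i_1i_2}+A_{i_3i_4}-A_{i_1i_3}-A_{i_2i_4})(B_{i_1i_2}+B_{i_3i_4}-B_{i_1i_3}-B_{i_2i_4})
\]
into $16$ signed products $A_{i_ai_b}B_{i_ci_d}$. Summing over $(i_1,\ldots,i_4)\in[n]^4$ and grouping the terms by the intersection $\{i_a,i_b\}\cap\{i_c,i_d\}$, they fall into three classes: (i) four ``matched'' terms with $\{i_a,i_b\}=\{i_c,i_d\}$, each contributing $+n^2\sum_{i,j}A_{ij}B_{ij}$; (ii) four ``disjoint'' terms, each contributing $+(\sum_{i,j}A_{ij})(\sum_{i,j}B_{ij})$; and (iii) eight ``single-overlap'' terms, each with sign $-$ contributing $n\sum_i\alpha_i\beta_i$. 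This yields an explicit three-term closed form for $V_n$. Expanding $\bar{K}_{ij}\bar{L}_{ji}=(A_{ij}-\alpha_i/n)(B_{ji}-\beta_j/n)$ and summing over $(i,j)\in[n]^2$ using the symmetry of $A$ and $B$ produces exactly $\tfrac14$ of the same three-term form, proving $\tilde{V}_n=\tfrac14 V_n$.

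Second, for the permuted identity, the key observation is that the wild bootstrap permutation $\pi$ on $[s]$ induces a permutation $\tilde{\pi}$ on $[n]$ that sends each $j$ to itself or to $\swap(j)$, according to whether $\pi$ fixes or swaps the bin containing $j$. Unfolding $H_{\itag}$ inside \cref{eq:cheaply-permuted-independence-V} reveals
\[
V_n^{\pi,s} = \frac{1}{n^4}\sum_{j_1,\ldots,j_4\in[n]} h_{\itag}\bigl((Y_{j_1},Z_{\tilde{\pi}(j_1)}),\ldots,(Y_{j_4},Z_{\tilde{\pi}(j_4)})\bigr),
\]
so $V_n^{\pi,s}$ is the ordinary independence V-statistic evaluated on the relabeled sample $(Y_j,Z_{\tilde{\pi}(j)})_{j=1}^n$. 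Applying the first identity to this relabeled sample yields $V_n^{\pi,s} = \frac{4}{n^2}\sum_{a,b}\bar{K}_{ab}\tilde{L}_{ba}$, where $\tilde{L}$ is the row-centered Gram matrix of the permuted $Z$-sample. Because $\tilde{\pi}$ is a permutation, reindexing $k=\tilde{\pi}(j')$ gives $\frac{1}{n}\sum_{j'=1}^n g_Z(Z_{\tilde{\pi}(i)},Z_{\tilde{\pi}(j')})=\frac{1}{n}\sum_{k=1}^n g_Z(Z_{\tilde{\pi}(i)},Z_k)$, so $\tilde{L}_{ij} = \bar{L}_{\tilde{\pi}(i)\tilde{\pi}(j)}$.

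Third, I would unpack $R_{ij\pi(j)\pi(i)}$ using \cref{eq:Rijkl}. Since every wild bootstrap permutation satisfies $\pi(i)\in\{i,\swaps(i)\}$, the subscript pattern $(ij\pi(j)\pi(i))$ realizes exactly one of the four cases defining $R$, and in each case one verifies
\[
R_{ij\pi(j)\pi(i)} = \sum_{a\in\bin_i,\,b\in\bin_j}\bar{K}_{ab}\,\bar{L}_{\tilde{\pi}(b)\tilde{\pi}(a)}.
\]
Summing over $(i,j)\in[s]^2$ collapses the bin partition into $\sum_{a,b\in[n]}\bar{K}_{ab}\bar{L}_{\tilde{\pi}(b)\tilde{\pi}(a)}$, which matches the expression derived for $\tfrac{n^2}{4}V_n^{\pi,s}$ after accounting for the normalization in \cref{eq:Vtildepi}, completing the second identity. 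The main obstacle is the bookkeeping in the first step: tracking signs, index coincidences, and powers of $n$ through the $16$ expanded terms and verifying that the $\bar{K}\bar{L}$ expansion reproduces precisely the same three contributions. Once this algebraic identity is secured, the permuted case follows mechanically from the induced relabeling $\tilde{\pi}$ and the case analysis of the $R$ patterns.
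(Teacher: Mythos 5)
Your proposal is correct and follows essentially the same route as the paper: the core step in both is expanding the $16$ signed products in $h_{\itag}$, using the free summation indices to collapse them into the product of row-centered Gram entries $\bar{K}_{i_1i_2}\bar{L}_{\pi_{i_2}\pi_{i_1}}$, with the permuted case handled via the induced relabeling of $[n]$. The only difference is organizational — the paper computes $V_n^{\pi,s}$ directly in one pass, while you prove the unpermuted identity first and then make explicit the relabeling $\tilde{\pi}$ and the correspondence between the four $R_{ij\pi(j)\pi(i)}$ patterns and $\bar{L}_{\tilde{\pi}(b)\tilde{\pi}(a)}$, details the paper leaves implicit but which you verify correctly.
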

\begin{proof}
    It suffices to show the result for $\tilde{V}^{\pi,s}_n$. 
    We expand $V^{\pi,s}_n$:
    \begin{talign}
        V^{\pi,s}_n &= \frac{1}{n^4} \sum_{(i_1,i_2,i_3,i_4) \in [n]^4} 
        \big(g_Y(Y_{i_1},Y_{i_2}) + g_Y(Y_{i_3},Y_{i_4})  - g_Y(Y_{i_1},Y_{i_3}) - g_Y(Y_{i_2},Y_{i_4}) \big) \\
        &
        \qquad\qquad \times  \big(g_Z(Z_{\pi_{i_1}},Z_{\pi_{i_2}}) + g_Z(Z_{\pi_{i_3}},Z_{\pi_{i_4}}) - g_Z(Z_{\pi_{i_1}},Z_{\pi_{i_3}}) - g_Z(Z_{\pi_{i_2}},Z_{\pi_{i_4}}) \big) \\
        &= \frac{4}{n^2} \sum_{(i_1,i_2) \in [n]^2} \bigg( g_Y(Y_{i_1},Y_{i_2}) g_Z(Z_{\pi_{i_1}},Z_{\pi_{i_2}}) - g_Y(Y_{i_1},Y_{i_2}) \big( \frac{1}{n}\sum_{i_3 \in [n]} g_Z(Z_{\pi_{i_2}},Z_{\pi_{i_3}}) \big) \\ &\qquad\qquad\qquad - \big( \frac{1}{n} \sum_{i_4 \in [n]} g_Y(Y_{i_1},Y_{i_4}) \big) g_Z(Z_{\pi_{i_1}},Z_{\pi_{i_2}}) \\ &\qquad\qquad\qquad + \big( \frac{1}{n}\sum_{i_3 \in [n]} g_Z(Z_{\pi_{i_2}},Z_{\pi_{i_3}}) \big) \big( \frac{1}{n} \sum_{i_4 \in [n]} g_Y(Y_{i_1},Y_{i_4}) \big) \bigg) \\ &= \frac{4}{n^2} 
        \sum_{(i_1,i_2) \in [n]^2} \big( g_Y(Y_{i_1},Y_{i_2}) - \frac{1}{n} \sum_{i_4 \in [n]} g_Y(Y_{i_1},Y_{i_4}) \big) \\ &\qquad\qquad\qquad \times \big( g_Z(Z_{\pi_{i_2}},Z_{\pi_{i_1}}) - \frac{1}{n} \sum_{i_3 \in [n]} g_Z(Z_{\pi_{i_2}},Z_{\pi_{i_3}}) \big) \\ &= \frac{4}{n^2} 
        \sum_{(i_1,i_2) \in [n]^2} \bar{K}_{i_1,i_2} \bar{L}_{\pi_{i_2},\pi_{i_1}} = 4 \tilde{V}^{\pi,s}_n.
    \end{talign}
\end{proof}

\subsection{Bounding the fluctuations of the threshold} \label{subsec:bounded_ind_fluctuation}
We next identify CQBs $\Psi_{\X,s}$ and $\Psi_{n,s}$ that satisfy the properties
\begin{talign}
    &\Pr \left( V^{\pi,s}_{n} %
        \geq \Psi_{\X,s}(\alpha) \mid \X \right) \leq \alpha, \qquad \forall \alpha \in (0,1),
        \qtext{and} \label{eq:Psi_X_n_ts_app_ind} \\
        &\Pr ( %
        \Psi_{\X,s}(\alpha) \geq \Psi_{n,s}(\alpha,\beta)) \leq \beta, \qquad \forall \alpha, \beta \in (0,1) \label{eq:Psi_P_n_ts_app_ind}
\end{talign}
for the cheaply permuted V-statistic $V^{\pi,s}_{n}$ \cref{eq:cheaply-permuted-independence-V}.
Since the alternative V-statistic representation $\tilde{V}^{\pi,s}_{n}$ \cref{eq:Vtildepi}, satisfies $\tilde{V}^{\pi,s}_{n} = \frac{1}{4} V^{\pi,s}_{n}$ by \cref{v-stat-relationship}, we will find $\tilde{\Psi}_{n,s}$ and $\tilde{\Psi}_{\X,s}$ satisfying
\begin{talign}
    &\mathrm{Pr} ( \tilde{V}^{\pi,s}_{n} 
    \geq \tilde{\Psi}_{\X,s}(\alpha) \mid \X ) \leq \alpha, \qquad \forall \alpha \in (0,1), \qtext{and}\label{eq:Psi_X_n_ts_app_ind_2} \\
    &\mathrm{Pr} ( %
    \tilde{\Psi}_{\X,s}(\alpha) \geq \tilde{\Psi}_{n,s}(\alpha,\beta)) \leq \beta, \qquad \forall \alpha, \beta \in (0,1), \label{eq:Psi_P_n_ts_app_ind_2}
\end{talign}
and set $\Psi_{\X,s}(\alpha) = 4 \tilde{\Psi}_{\X,s}(\alpha)$ and $\Psi_{n,s}(\alpha,\beta) = 4 \tilde{\Psi}_{n,s}(\alpha,\beta)$.
Our derivation makes use of four lemmas. 
The first, proved in \cref{proof:coefficients_A_b}, rewrites $\tilde{V}^{\pi,s}_{n}$ in terms of independent Rademacher random variables.
Below,

\begin{lemma}[Rademacher representation of $\tilde{V}_n^{\pi,s}$] \label{prop:coefficients_A_b}
Instantiate the assumptions of \cref{thm:final_ind}, and let 
$\fronorm{\cdot}$ represent  the Frobenius norm, $\boldone\in\reals^s$ be a vector of ones, and \begin{talign}
\swap_s(i) \defeq i + \frac{s}{2} - s\indic{i  > \frac{s}{2}}   
\end{talign}
for each $i\in[s]$.
If $\pi$ is a wild bootstrap permutation on $[s]$ drawn independently of $\X$, then the alternative V-statistic representation 
\cref{eq:Vtildepi} satisfies 
    \begin{talign}
        \tilde{V}_n^{\pi,s} = \eps^{\top} A \eps + b^{\top} \eps + c,
    \end{talign}
    where $\eps\dist\Unif(\{\pm1\}^{s/2})$ 
    is independent of $\X$, and %
    $A \in \R^{s/2 \times s/2}$, $b \in \R^{s/2}$, and $c \in \R$ satisfy 
    \begin{talign}
    &\big\|A- \frac{\mmd^2(\pjnt, \P \times \Q)}{s^2} \boldone \boldone^{\top} \big\|_{F}^2 
    		\leq 
    \frac{(\mathcal{A}^{(1)})^2 (\mathcal{A}^{(2)})^2}{4 s^4}
    + \frac{\mmd^2(\pjnt, \P \times \Q) ( (\mathcal{A}^{(1)})^2 + (\mathcal{A}^{(2)})^2 )}{4 s^3},
     \\
     &\mathrm{Tr}\big( A- \frac{\mmd^2(\pjnt, \P \times \Q)}{s^2} \boldone \boldone^{\top} \big) 
     	\leq 
    \frac{\mathcal{A}^{(1)} \mathcal{A}^{(2)}}{2s^2}
    + \frac{\mmd(\pjnt, \P \times \Q) ( \mathcal{A}^{(1)} + \mathcal{A}^{(2)} )}{4s^{3/2}}, \\
        &\big|c - \frac{\mmd^2(\pjnt, \P \times \Q)}{4} \big| 
        		\leq 
		\frac{1}{4} \big( \frac{(\mathcal{A}^{(3)})^2}{s} + \frac{2 \mathcal{A}^{(3)} \mmd(\pjnt, \P \times \Q)}{\sqrt{s}}  \big), \stext{and} 
	\\
    &\big\|b - \frac{\mmd^2(\pjnt, \P \times \Q)}{s} \boldone \big\|_{2}^2 
    		\leq 
        \frac{3(\mathcal{A}^{(2)})^2 (\mathcal{A}^{(3)})^2 + (\mathcal{A}^{(1)})^2 (\mathcal{A}^{(4)})^2}{4 s^3}
			\\ &\qquad\qquad\qquad\qquad\qquad + 
        \frac{3\mmd^2(\pjnt, \P \times \Q)( (\mathcal{A}^{(1)})^2 +(\mathcal{A}^{(2)})^2 + (\mathcal{A}^{(3)})^2 + (\mathcal{A}^{(4)})^2 )}{4s^2}
    \end{talign}
    for the random variables
    \begin{talign} 
        \mathcal{A}^{(1)} &\defeq  \sqrt{\sum_{i=1}^{s} \| (\mu_i - \pjnt + \P \times \Q)\kernel \|_{\kernel}^2}, 
        \quad
        \mathcal{A}^{(2)} \defeq  \sqrt{\sum_{i=1}^{s} \| (\tilde{\mu}_i - \pjnt + \P \times \Q)\kernel \|_{\kernel}^2}, \\
        \mathcal{A}^{(3)} &\defeq  \sqrt{s} \| (\mu - \pjnt + \P \times \Q)\kernel \|_{\kernel}, 
        \qtext{and}
        \mathcal{A}^{(4)} \defeq  \sqrt{s} \| (\tilde{\mu} - \pjnt + \P \times \Q)\kernel \|_{\kernel}
	\label{eq:A_tilde_A_B_tilde_B}
    \end{talign}
    defined in terms of the random signed measures
    \begin{talign} 
        \mu_i \defeq  \delta_{\mathbb{X}^{(i)}} - \delta_{\mathbb{Y}} \times \delta_{\mathbb{Z}^{(i)}} - \delta_{\mathbb{Y}^{(i)}} \times \delta_{\mathbb{Z}^{(\swaps(i))}} + \delta_{\mathbb{Y}} \times \delta_{\mathbb{Z}^{(\swaps(i))}}, 
        \ \ 
        \tilde{\mu}_i \defeq  \delta_{\mathbb{X}^{(i)}} - \delta_{\mathbb{Y}^{(i)}} \times \delta_{\mathbb{Z}^{(\swaps(i))}}, \\
        \mu \defeq {\delta_{\mathbb{X}}} - 2{\delta_{\mathbb{Y}}} \times{\delta_{\mathbb{Z}}} + \frac{1}{s} \sum_{j=1}^{s}{\delta_{\mathbb{Y}^{(j)}}} \times {\delta_{\mathbb{Z}^{(\swaps(j))}}}, 		
        \sstext{and}
        \tilde{\mu} \defeq  {\delta_{\mathbb{X}}} - \frac{1}{s} \sum_{j=1}^{s} {\delta_{\mathbb{Y}^{(j)}}} \times {\delta_{\mathbb{Z}^{(\swaps(j))}}}.
    \label{eq:chi_defs}
    \end{talign}
\end{lemma}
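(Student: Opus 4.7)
The plan is to write $\tilde{V}_n^{\pi,s}$ as a quadratic polynomial in Rademacher variables that parametrize the wild bootstrap, identify the coefficients with RKHS inner products of the signed measures $\mu_i,\tilde\mu_i,\mu,\tilde\mu$, and then apply Cauchy--Schwarz to bound deviations from the population quantity $\mmd^2(\pjnt,\P\times\Q)$.

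Concretely, I would first parametrize $\pi$ as follows. For each $i\in[s/2]$ set $\eps_i = +1$ if $\pi_i=i$ and $\eps_i=-1$ if $\pi_i=\swaps(i)$, and extend by $\eps_{\swaps(i)}\defeq\eps_i$. By the definition of a wild bootstrap permutation, the $(\eps_i)_{i\in[s/2]}$ are i.i.d.\ uniform on $\{\pm1\}$ and independent of $\X$. Using $\ind\{\pi_k = k\} = (1+\eps_k)/2$ and $\ind\{\pi_k = \swaps(k)\} = (1-\eps_k)/2$, each summand of \cref{eq:Vtildepi} expands as
\begin{talign}
R_{ij\pi_j\pi_i} = \tfrac{1}{4}\bigl[(1{+}\eps_i)(1{+}\eps_j)R_{ijji} &+ (1{-}\eps_i)(1{-}\eps_j)R_{ij\swaps(j)\swaps(i)} \\
 + (1{+}\eps_i)(1{-}\eps_j)R_{ij\swaps(j)i}&+ (1{-}\eps_i)(1{+}\eps_j)R_{ijj\swaps(i)}\bigr].
\end{talign}
Summing over $(i,j)\in[s]^2$, grouping by the monomials $1,\eps_i,\eps_i\eps_j$, and using $\eps_i^2=1$ produces the decomposition $\tilde{V}_n^{\pi,s}=\eps^\top A\eps+b^\top\eps+c$ with explicit entries of $A,b,c$ expressed as sums of the four $R$-quantities.

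Next I would translate these entries into RKHS inner products. The four-term combination $R_{ijji}-R_{ij\swaps(j)i}-R_{ijj\swaps(i)}+R_{ij\swaps(j)\swaps(i)}$ that appears as the $\eps_i\eps_j$ coefficient is, by the reproducing property and the definitions \cref{eq:Rijkl}, precisely $\langle\mu_i\kernel,\mu_j\kernel\rangle_\kernel$ — this is the reason $\mu_i$ is defined the way it is in \cref{eq:chi_defs}. Analogous matching identifies the linear coefficients involving $\eps_i$ alone (where one of the two indices is permuted and the other is summed out) with inner products of $\tilde\mu_i\kernel$ against the global mixtures $\mu\kernel$ or $\tilde\mu\kernel$, and the constant with squared norms of $\mu\kernel$. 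Taking population limits inside each inner product yields the leading terms $\mmd^2(\pjnt,\P\times\Q)/s^2$ in each entry of $A$, $\mmd^2/s$ in each entry of $b$, and $\mmd^2/4$ in $c$.

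Finally, for each entry I would write the deviation from its leading value by linearizing
\begin{talign}
\langle\mu_i\kernel,\mu_j\kernel\rangle_\kernel-\mmd^2 = \langle(\mu_i-\bar\mu)\kernel,\mu_j\kernel\rangle_\kernel + \langle\bar\mu\kernel,(\mu_j-\bar\mu)\kernel\rangle_\kernel
\end{talign}
for $\bar\mu\defeq\pjnt-\P\times\Q$, and apply Cauchy--Schwarz at the level of the double sum $\sum_{i,j}$ to convert these into products of $\mathcal{A}^{(1)}$ (from $\sum_i\|(\mu_i-\bar\mu)\kernel\|_\kernel^2$) and $\mathcal{A}^{(2)}$ (from $\sum_i\|(\tilde\mu_i-\bar\mu)\kernel\|_\kernel^2$), yielding the advertised Frobenius and trace bounds on $A$. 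The bounds on $b$ and $c$ follow analogously, with $\mathcal{A}^{(3)},\mathcal{A}^{(4)}$ tracking the global signed measures $\mu,\tilde\mu$ that appear when exactly one of the two summation indices carries a Rademacher variable.

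The main obstacle will be step three: verifying that the polynomial expansion in $\eps$ collapses \emph{exactly} to the bespoke signed measures $\mu_i,\tilde\mu_i,\mu,\tilde\mu$ requires careful bookkeeping, because the four $R$-quantities differ only by which of the indices are mapped through $\swap$, and the linear part mixes within-bin signed measures ($\tilde\mu_i$) with globally averaged ones ($\mu,\tilde\mu$) in a way that depends on whether the ``spectator'' index is also swapped. Once the correct identifications are made, the remaining Cauchy--Schwarz bookkeeping in step four is routine.
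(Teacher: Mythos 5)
Your overall strategy is the same as the paper's: parametrize the wild bootstrap by Rademacher variables $\eps_i$ with $\eps_{\swaps(i)}=\eps_i$, expand each $R_{ij\pi_j\pi_i}$ via $\indic{\pi_k=k}=(1+\eps_k)/2$, collect the $1,\eps_i,\eps_i\eps_j$ monomials into $c,b,A$, identify each coefficient with an RKHS inner product of the signed measures, and finish with the bilinearity-plus-Cauchy--Schwarz decomposition around $\bar\mu\defeq\pjnt-\P\times\Q$. The folding to the $s/2$-dimensional $\eps$ and the final bookkeeping also match the paper.

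The one concrete error is in your identification of the quadratic coefficient. You assert that
\begin{talign}
R_{ijji}-R_{ij\swaps(j)i}-R_{ijj\swaps(i)}+R_{ij\swaps(j)\swaps(i)} \propto \langle \mu_i\kernel,\mu_j\kernel\rangle_{\kernel},
\end{talign}
but the correct identification is the \emph{mixed} inner product $\langle \mu_j\kernel,\tilde\mu_i\kernel\rangle_{\kernel}/(4s^2)$ for the entry $\tilde a_{ij}$. The asymmetry is forced by the structure of $R_{ijkl}$: the first slot of the tensor product comes from the centered $\bar K_{ab}$ (whose subtraction is a global average $\delta_{\Y}$ over all $n$ points) together with the choice $j$ versus $\swaps(j)$ in the $Z$-index, and telescoping the four terms in that slot yields the four-term measure $\mu_j$; the second slot telescopes instead to the two-term measure $\tilde\mu_i=\delta_{\X^{(i)}}-\delta_{\Y^{(i)}}\times\delta_{\Z^{(\swaps(i))}}$ because the global-average pieces cancel there. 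This is not cosmetic: with your identification the Frobenius and trace bounds would come out in terms of $(\mathcal{A}^{(1)})^4$ and $(\mathcal{A}^{(1)})^2$ alone, whereas the lemma's bounds contain the products $(\mathcal{A}^{(1)})^2(\mathcal{A}^{(2)})^2$ and $\mathcal{A}^{(1)}\mathcal{A}^{(2)}$, which can only arise from the mixed pairing. The same care is needed for $b$: the two contributions to $\tilde b_i$ are $\langle\mu\kernel,\tilde\mu_i\kernel\rangle_{\kernel}$ and $\langle\mu_i\kernel,\tilde\mu\kernel\rangle_{\kernel}$ (local $\tilde\mu_i$ against global $\mu$, and local $\mu_i$ against global $\tilde\mu$), which is why both $\mathcal{A}^{(3)}$ and $\mathcal{A}^{(4)}$ appear in the bound on $\|b-\frac{\mmd^2}{s}\boldone\|_2^2$. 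You flagged this bookkeeping as the main obstacle, and indeed it is exactly where your sketch goes wrong; once the mixed identifications are made, the rest of your argument goes through as in the paper.
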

Our second lemma, proved in \cref{proof:expectation_A_i}, bounds the expectation of each quantity $\mathcal{A}^{(i)}$ introduced in \cref{prop:coefficients_A_b}.
\begin{lemma}[$\mathcal{A}^{(i)}$ mean bounds] \label{lem:expectation_A_i}
Under the assumptions and notation of \cref{prop:coefficients_A_b} with $m\defeq\frac{n}{s}$, 
\begin{talign}
\mathbb{E}\big[\mathcal{A}^{(1)} \big] 
    &\leq 
D_1 
    \defeq  
\sqrt{Ks} \big(\frac{5}{\sqrt{m}} + \frac{2}{\sqrt{n}}\big),
    \quad
\mathbb{E}\big[\mathcal{A}^{(2)}\big] 
    \leq 
D_2 
    \defeq  
\sqrt{K s}\, \frac{3}{\sqrt{m}}, 
\\
\mathbb{E}\big[\mathcal{A}^{(3)} \big] 
    &\leq 
D_3 
    \defeq  
\sqrt{K s}\, \frac{5+\sqrt{8}}{\sqrt{n}},
    \qtext{and}
\mathbb{E}\big[\mathcal{A}^{(4)} \big] 
    \leq 
D_4 
    \defeq  
\sqrt{K s} \, \frac{1+\sqrt{8}}{\sqrt{n}} .
\end{talign}
\end{lemma}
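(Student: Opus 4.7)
The plan is, for each $\mathcal{A}^{(i)}$, to apply Jensen's inequality to move the expectation inside the square root, giving $\mathbb{E}[\mathcal{A}^{(i)}]\leq\sqrt{\mathbb{E}[(\mathcal{A}^{(i)})^2]}$, then exchange sum with expectation (for $\mathcal{A}^{(1)},\mathcal{A}^{(2)}$) or pull the constant $\sqrt{s}$ outside (for $\mathcal{A}^{(3)},\mathcal{A}^{(4)}$). This reduces each claim to controlling $\mathbb{E}\|\nu\kernel\|_\kernel^2$ for the relevant mean-zero signed measure $\nu$ (i.e., $\mu_j-\pjnt+\P\times\Q$, $\tilde{\mu}_j-\pjnt+\P\times\Q$, $\mu-\pjnt+\P\times\Q$, or $\tilde{\mu}-\pjnt+\P\times\Q$). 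I then decompose each $\nu$ into simple building blocks of two types: joint-sample terms $\delta_{\mathbb{X}^{(j)}}-\pjnt$ or $\delta_{\mathbb{X}}-\pjnt$, and product-sample differences $\delta_A\times\delta_B-\P\times\Q$, which I further split via the bilinear identity
\begin{talign}
\delta_A\times\delta_B-\P\times\Q = (\delta_A-\P)\times\delta_B + \P\times(\delta_B-\Q).
\end{talign}
Applying Minkowski's inequality in $L^2$ (the triangle inequality for $\sqrt{\mathbb{E}\|\cdot\|^2}$) to the resulting expansions then bounds $\sqrt{\mathbb{E}\|\nu\kernel\|_\kernel^2}$ by a sum of $L^2$ norms of the individual building blocks.

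Each building block is handled using the tensor factorization $\|\mu_Y\times\mu_Z\|_\kernel^2=\|\mu_Y\|_{g_Y}^2\|\mu_Z\|_{g_Z}^2$, the deterministic bound $\|\eta\|_{g}^2\leq\sup_x g(x,x)$ valid for any probability measure $\eta$, the independence of $\mathbb{Y}^{(j)}$ from $\mathbb{Z}^{(\swaps(j))}$ across distinct bins, and the standard MMD variance estimate $\mathbb{E}\|\delta_A-\mu\|_g^2\leq\sup_x g(x,x)/|A|$ for $|A|$ iid samples from $\mu$. Since $\sup_y g_Y(y,y)\cdot\sup_z g_Z(z,z)=K$, each such block contributes an $L^2$ norm of order $\sqrt{K/m}$ or $\sqrt{K/n}$ depending on whether the empirical measure lives on a single bin or on the full sample. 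Counting the components: $\mu_j-\pjnt+\P\times\Q$ yields one joint-sample piece plus three product-sample pieces (each splitting into two further terms), producing after the $\sqrt{s}$ multiplier five bin-level contributions ($5/\sqrt{m}$) and two full-sample contributions ($2/\sqrt{n}$), hence $D_1$; the analogous enumeration for $\tilde{\mu}_j-\pjnt+\P\times\Q$ gives only $3/\sqrt{m}$, hence $D_2$.

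The main obstacle lies in the cheap-permutation cross-term $\frac{1}{s}\sum_j\delta_{\mathbb{Y}^{(j)}}\times\delta_{\mathbb{Z}^{(\swaps(j))}}$ that appears in both $\mu$ and $\tilde{\mu}$: a naive per-$j$ Minkowski bound on this piece gives only $\sqrt{K}/m$, which is strictly weaker than the target $\sqrt{K/n}$ rate whenever $s>\sqrt{n}$. To overcome this, I will apply the identity
\begin{talign}
\tfrac{1}{s}\!\sum_j\!\delta_{\mathbb{Y}^{(j)}}\!\times\!\delta_{\mathbb{Z}^{(\swaps(j))}}-\P\!\times\!\Q
  = \tfrac{1}{s}\!\sum_j(\delta_{\mathbb{Y}^{(j)}}\!-\!\P)\!\times\!(\delta_{\mathbb{Z}^{(\swaps(j))}}\!-\!\Q)+(\delta_{\mathbb{Y}}\!-\!\P)\!\times\!\Q+\P\!\times\!(\delta_{\mathbb{Z}}\!-\!\Q),
\end{talign}
which uses $\frac{1}{s}\sum_j\delta_{\mathbb{Y}^{(j)}}=\delta_{\mathbb{Y}}$ and the bijectivity of $\swaps$. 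The last two pieces on the right are at the target $\sqrt{K/n}$ rate, and for the remaining centered cross-term $\frac{1}{s}\sum_j X_j$ with $X_j\defeq(\delta_{\mathbb{Y}^{(j)}}-\P)\times(\delta_{\mathbb{Z}^{(\swaps(j))}}-\Q)$ I will expand $\mathbb{E}\|\frac{1}{s}\sum_j X_j\|_\kernel^2=\frac{1}{s^2}\sum_{j,j'}\mathbb{E}\langle X_j,X_{j'}\rangle_\kernel$ and use that $\mathbb{Y}^{(j)}\indep\mathbb{Z}^{(\swaps(j'))}$ whenever the bins are distinct to kill every cross-term except the $O(s)$ pairs with $\{j',\swaps(j')\}=\{j,\swaps(j)\}$; those surviving contributions are each bounded by $K/m^2$ via Cauchy-Schwarz, giving a net $O(\sqrt{K/(sm^2)})=O(\sqrt{K/(nm)})\leq O(\sqrt{K/n})$ bound that supplies the extra $\sqrt{8}$ factor appearing in $D_3$ and $D_4$. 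The $\mathcal{A}^{(4)}$ bound then follows by the same enumeration applied to $\tilde{\mu}-\pjnt+\P\times\Q$, which lacks the $2\delta_{\mathbb{Y}}\times\delta_{\mathbb{Z}}$ piece and hence shaves the $4/\sqrt{n}$ contribution that distinguishes $D_3$ from $D_4$.
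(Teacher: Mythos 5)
Your overall strategy is sound and, for $\mathcal{A}^{(1)}$ and $\mathcal{A}^{(2)}$, essentially coincides with the paper's: the paper applies the triangle inequality to the $\ell^2$-aggregated MMD pathwise and then Jensen/Cauchy--Schwarz to each piece, whereas you apply Jensen first and Minkowski in $L^2$ second, but the resulting term count ($5$ bin-level plus $2$ full-sample pieces for $D_1$, $3$ bin-level pieces for $D_2$) and the constants are identical.

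For $\mathcal{A}^{(3)}$ and $\mathcal{A}^{(4)}$, however, your treatment of the cross-term $\nu \defeq \frac{1}{s}\sum_j \delta_{\mathbb{Y}^{(j)}}\times\delta_{\mathbb{Z}^{(\swaps(j))}} - \P\times\Q$ does not recover the stated constants. Your three-way identity is algebraically correct, but after Minkowski it contributes $2\sqrt{K/n}$ from the two marginal pieces plus $\sqrt{2K/(nm)}$ from the doubly-centered sum, i.e.\ $(2+\sqrt{2/m})\sqrt{K/n}$ in total. For $m\in\{1,2\}$ this exceeds the paper's $\sqrt{8K/n}$ (e.g.\ $2+\sqrt{2}\approx 3.41$ versus $2\sqrt{2}\approx 2.83$ at $m=1$), so you would end up with $D_3\propto 7+\sqrt{2}$ and $D_4\propto 3+\sqrt{2}$ rather than $5+\sqrt{8}$ and $1+\sqrt{8}$; your attribution of the $\sqrt{8}$ to the doubly-centered piece alone is therefore not right. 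The paper avoids this leakage by bounding the second moment of $\nu$ \emph{without} first splitting off the marginal terms: it partitions $[s]$ into the two halves $j\le s/2$ and $j>s/2$, notes that within a half the summands $W_j\defeq\delta_{\mathbb{Y}^{(j)}}\times\delta_{\mathbb{Z}^{(\swaps(j))}}-\P\times\Q$ are independent with mean zero (since $\mathbb{Y}^{(j)}$ and $\mathbb{Z}^{(\swaps(j))}$ occupy disjoint bins), so only the $2s$ pairs with $\{j',\swaps(j')\}=\{j,\swaps(j)\}$ survive, each bounded by $\E\|W_j\kernel\|_{\kernel}^2\le 4K/m$ (not $K/m^2$, since there is no double centering), giving $\frac{2s}{s^2}\cdot\frac{4K}{m}=\frac{8K}{n}$ and hence the single contribution $\sqrt{8K/n}$. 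Substituting that computation for your three-way split closes the gap; everything else in your argument, including the independence bookkeeping and the use of $\sup_y g_Y(y,y)\sup_z g_Z(z,z)=K$, is correct.
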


Our third lemma, proved in \cref{proof:hp_bounds_A_i}, uses \cref{lem:expectation_A_i} and the bounded differences inequality (\cref{thm:mcdiarmid_bounded}) to provide CQBs for each quantity $\mathcal{A}^{(i)}$ introduced in \cref{prop:coefficients_A_b}.
\begin{lemma}[$\mathcal{A}^{(i)}$ complementary quantile bounds] \label{prop:hp_bounds_A_i}
Instantiate the assumptions and notation of \cref{lem:expectation_A_i}, and fix any $i\in[4]$. 
With probability at least $1-\delta$, 
\begin{talign}
\mathcal{A}^{(i)} 
    &\leq 
D_i + C_i \sqrt{\frac{\log(1/\delta)}{2}} 
    \qtext{for}
(C_1,C_2,C_3,C_4)
    =
(\frac{10\sqrt{Ks}}{\sqrt{m}}
    +
\frac{2\sqrt{Ks}}{\sqrt{n}},
\frac{6\sqrt{Ks}}{\sqrt{m}},
\frac{14 \sqrt{Ks}}{\sqrt{n}},
\frac{6\sqrt{Ks}}{\sqrt{n}}).
    \end{talign}
\end{lemma}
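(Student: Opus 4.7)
The plan is to apply the bounded differences inequality (\cref{thm:mcdiarmid_bounded}) to each $\mathcal{A}^{(i)}$ viewed as a function of the $n$ independent datapoints $X_1,\dots,X_n$, then combine the resulting concentration inequality with the mean bound $\mathbb{E}[\mathcal{A}^{(i)}]\leq D_i$ supplied by \cref{lem:expectation_A_i}. What remains is to identify bounded difference constants $(c_{i,j})_{j=1}^n$ such that $\sqrt{2\sum_{j}c_{i,j}^2}=C_i$.

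For each $i\in[4]$, I would realize $\mathcal{A}^{(i)}$ as an RKHS norm: writing $\vec{\nu}^{(1)}\defeq((\mu_k-\pjnt+\P\times\Q)\kernel)_{k=1}^s$ and analogously $\vec{\nu}^{(2)}$, we have $\mathcal{A}^{(1)}=\|\vec{\nu}^{(1)}\|_{\rkhs^s}$ and $\mathcal{A}^{(2)}=\|\vec{\nu}^{(2)}\|_{\rkhs^s}$, while $\mathcal{A}^{(3)},\mathcal{A}^{(4)}$ are scaled RKHS norms of $(\mu-\pjnt+\P\times\Q)\kernel$ and $(\tilde\mu-\pjnt+\P\times\Q)\kernel$. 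The reverse triangle inequality reduces bounding $|\mathcal{A}^{(i)}-\mathcal{A}^{(i)\prime}|$ (for $\mathcal{A}^{(i)\prime}$ the version with $X_j$ swapped for an independent $X_j'$) to bounding the RKHS norm of the difference of the underlying signed measures applied to $\kernel$. Exploiting the tensor-product kernel factorization $\kernel=g_Y\otimes g_Z$, any product signed measure $(\alpha\times\beta)\kernel$ has RKHS norm $\|\alpha g_Y\|_{g_Y}\|\beta g_Z\|_{g_Z}$, so each point-mass MMD contribution is controlled by $2\sqrt{K}$.

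The main obstacle is the bookkeeping for $\mathcal{A}^{(1)}$ and $\mathcal{A}^{(2)}$: changing a single $X_j$ perturbs $\mu_k$ directly when $j\in\idx_k\cup\idx_{\swaps(k)}$ (via $\delta_{\X^{(k)}}$, $\delta_{\Y^{(k)}}$, and $\delta_{\Z^{(k)}},\delta_{\Z^{(\swaps(k))}}$), and also perturbs every $\mu_k$ indirectly through the global empirical $\delta_\Y$ (a factor $\tfrac{1}{n}$ smaller). I will split into the two cases $j\in\idx_k\cup\idx_{\swaps(k)}$ and $j\notin\idx_k\cup\idx_{\swaps(k)}$. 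In the first case, $\|\nu^{(1)}_k-\nu^{(1)\prime}_k\|_\kernel$ is of order $\sqrt{K}/m$ (with constant absorbed into the four summands of $\mu_k$), and there are $O(m)$ such $(j,k)$ pairs per bin $k$, contributing $O(Ks/m)$ to the sum; in the second case, the contribution is of order $K/n^2$, and summing over the $O(ns)$ such pairs contributes $O(Ks/n)$. Taking the square root yields the advertised constant $C_1=\tfrac{10\sqrt{Ks}}{\sqrt{m}}+\tfrac{2\sqrt{Ks}}{\sqrt{n}}$ (the numerical constants $10$ and $2$ will come from collecting the four summands of $\mu_k$ and from applying the triangle inequality inside $\|\cdot\|_{\rkhs^s}$). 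An analogous but simpler case analysis for $\mathcal{A}^{(2)}$ (whose $\tilde\mu_i$ involves no global $\delta_\Y$) yields $C_2=\tfrac{6\sqrt{Ks}}{\sqrt{m}}$.

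For $\mathcal{A}^{(3)}$ and $\mathcal{A}^{(4)}$, the analysis is more uniform since $\mu$ and $\tilde\mu$ are single signed measures rather than bin-indexed families. Changing $X_j$ perturbs $\delta_\X$, $\delta_\Y$, $\delta_\Z$, and the relevant $\delta_{\Y^{(k)}}\times\delta_{\Z^{(\swaps(k))}}$ term, each by $O(1/n)$ after the tensor-product factorization. Summing the squared perturbations over $j\in[n]$ contributes $O(Ks/n)$, which after square-root and a careful count of coefficients gives $C_3=\tfrac{14\sqrt{Ks}}{\sqrt{n}}$ and $C_4=\tfrac{6\sqrt{Ks}}{\sqrt{n}}$. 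Finally, applying \cref{thm:mcdiarmid_bounded} with these bounded difference constants and the mean bounds from \cref{lem:expectation_A_i} yields the advertised inequality $\mathcal{A}^{(i)}\leq D_i+C_i\sqrt{\log(1/\delta)/2}$ with probability at least $1-\delta$.
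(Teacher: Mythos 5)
Your proposal is correct and takes essentially the same route as the paper: view each $\mathcal{A}^{(i)}$ as an $\ell^2$-vector of RKHS norms, bound the coordinatewise change under replacement of $X_j$ via the reverse triangle inequality and the tensor-product structure $\kernel = g_Y\otimes g_Z$, then combine the resulting bounded-difference constants with the mean estimates from \cref{lem:expectation_A_i} in \cref{thm:mcdiarmid_bounded}. The only cosmetic difference is organizational: the paper bounds $|\mathcal{A}^{(i)}(\X)-\mathcal{A}^{(i)}(\tilde\X)|$ for a fixed replaced coordinate by a direct five-term triangle inequality over the constituent signed measures of $\mu_k$ and $\mu$ (yielding a uniform per-coordinate constant $c_i$, so $C_i = \sqrt{n}\,c_i$), whereas you propose a case split on whether the replaced index lives in $\idx_k\cup\idx_{\swaps(k)}$ and sum the squared perturbations over $(j,k)$ pairs. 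The two bookkeeping styles produce the same asymptotics and, after tracking the four summands of each $\mu_k$ and the global $\delta_{\mathbb Y}$ contribution, the same constants. One small caution in writing it up: the factorization $\|(\alpha\times\beta)\kernel\|_\kernel = \|\alpha g_Y\|_{g_Y}\|\beta g_Z\|_{g_Z}$ applies to the genuine product-measure summands of $\mu_k$ (handled via $\alpha\times\beta - \alpha'\times\beta' = \alpha\times(\beta-\beta') + (\alpha-\alpha')\times\beta'$), while the $\delta_{\mathbb X^{(k)}}$ term is not a product measure and should be handled directly via $\mmd(\delta_{X_j},\delta_{\tilde X_j})\leq 2\sqrt K$; your plan implicitly distinguishes these but the final write-up should make the distinction explicit to keep the constants honest.
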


Our fourth lemma, proved in \cref{proof:ind_perm_thresh_quantile_bound}, uses the estimates of \cref{prop:hp_bounds_A_i} to verify the CQB properties \cref{eq:Psi_X_n_ts_app_ind_2,eq:Psi_P_n_ts_app_ind_2} for a particular pairing of $(\tilde{\Psi}_{\X,s},\tilde{\Psi}_{n,s})$.
\newcommand{\psitildeb}[2]{\frac{\sqrt{K}}{\sqrt{n}}
    \big( 30 + 22 \sqrt{\log(#2/\beta)} \big) 
    \big( \half + \frac{1+12\log(3/#1)}{6\sqrt{s}}
    + \sqrt{\frac{3}{2}\log(3/#1)} 
    \big)}
\newcommand{\psitildec}[2]{\frac{K}{n}(196+148\sqrt{\log(#2/\beta)}+48\log(#2/\beta))
    \big(\frac{3}{4}+
    \frac{5\log(3/#1)}{3} +
    \sqrt{\frac{3}{2}\log(3/#1)}\big)}
\begin{lemma}[Quantile bound for independence permutation threshold]\label{ind_perm_thresh_quantile_bound} 
Instantiate the assumptions and notation of \cref{prop:hp_bounds_A_i}.
Then, for all $\alpha, \beta \in (0,1)$, 
\begin{talign}
\quarter\Psi_{n,s}(\alpha,\beta) 
    &\defeq 
\tilde{\Psi}_{n,s}(\alpha,\beta) \\
    &\defeq
\psitildec{\alpha}{4} \\ 
    & + 
\mmd(\pjnt,\P\times\Q)\psitildeb{\alpha}{4}
    \\ 
    &+ 
\mmd^2(\pjnt, \P \times \Q) \big( 
    \frac{1}{4} + \frac{\log (6/\alpha)}{s} + \sqrt{\frac{\log (6/\alpha)}{s}} 
    \big)
\end{talign}
satisfies $ \Pr (\tilde{\Psi}_{\X,s}(\alpha) \geq \tilde{\Psi}_{n,s}(\alpha,\beta)) \leq \beta$ for $\tilde{\Psi}_{\X,s}(\alpha)$ defined in equation \eqref{eq:tilde_V_psi_def} and satisfying $\Pr ( \tilde{V}^{\pi,s}_{n} \geq \tilde{\Psi}_{\X,s}(\alpha) \mid \X ) \leq \alpha$.
\end{lemma}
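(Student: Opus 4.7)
The key structural input is Lemma~\ref{prop:coefficients_A_b}, which writes $\tilde V_n^{\pi,s}=\varepsilon^\top A\varepsilon+b^\top\varepsilon+c$ for a Rademacher vector $\varepsilon$ independent of $\X$. I will centre each coefficient at its ``MMD-driven'' limiting value by writing $A=A'+\frac{\mmd^2(\pjnt,\P\times\Q)}{s^2}\mathbf{1}\mathbf{1}^\top$, $b=b'+\frac{\mmd^2(\pjnt,\P\times\Q)}{s}\mathbf{1}$, $c=c'+\tfrac14\mmd^2(\pjnt,\P\times\Q)$. A short direct calculation, using $\varepsilon_i^2=1$ and setting $T\defeq\mathbf{1}^\top\varepsilon$, yields the identity
\begin{talign}
\tilde V_n^{\pi,s}=\varepsilon^\top A'\varepsilon+b'^\top\varepsilon+c'+\mmd^2(\pjnt,\P\times\Q)\Bigl(\tfrac{T}{s}+\tfrac12\Bigr)^2,
\end{talign}
which cleanly isolates the non-vanishing MMD$^2$ contribution and matches the shape of the third summand in $\tilde\Psi_{n,s}$.

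\textbf{Step 1 (conditional bound on $\tilde V_n^{\pi,s}$).} Condition on $\X$ and apply three tail inequalities with budget $\alpha/3$ each: (i) the Hanson--Wright inequality (Lemma~\ref{lem:hanson_wright}) to $\varepsilon^\top(A')^{\mathrm{od}}\varepsilon$, giving $\varepsilon^\top(A')^{\mathrm{od}}\varepsilon\le\tfrac{20\log(3/\alpha)}{3}\|A'\|_F$; (ii) Hoeffding's inequality to $b'^\top\varepsilon$, giving $b'^\top\varepsilon\le\sqrt{2\log(3/\alpha)}\,\|b'\|_2$; (iii) Hoeffding's inequality to the Rademacher sum $T$, giving $|T|\le\sqrt{s\log(6/\alpha)}$ so that $\bigl(\tfrac{T}{s}+\tfrac12\bigr)^2\le\tfrac14+\sqrt{\log(6/\alpha)/s}+\log(6/\alpha)/s$. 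Together with $\varepsilon^\top A'\varepsilon=\mathrm{Tr}(A')+\varepsilon^\top(A')^{\mathrm{od}}\varepsilon$ and a union bound, this produces the desired conditional complementary quantile bound
\begin{talign}
\tilde\Psi_{\X,s}(\alpha)\defeq c'+\mathrm{Tr}(A')+\tfrac{20\log(3/\alpha)}{3}\|A'\|_F+\sqrt{2\log(3/\alpha)}\,\|b'\|_2+\mmd^2(\pjnt,\P\times\Q)\bigl(\tfrac14+\sqrt{\tfrac{\log(6/\alpha)}{s}}+\tfrac{\log(6/\alpha)}{s}\bigr),
\end{talign}
satisfying $\Pr(\tilde V_n^{\pi,s}\ge\tilde\Psi_{\X,s}(\alpha)\mid\X)\le\alpha$.

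\textbf{Steps 2--3 (marginal bound).} Plug the deterministic Lemma~\ref{prop:coefficients_A_b} bounds on $|c'|$, $|\mathrm{Tr}(A')|$, $\|A'\|_F$ and $\|b'\|_2$ into $\tilde\Psi_{\X,s}(\alpha)$, using $\sqrt{u+v}\le\sqrt u+\sqrt v$ to split each control into an ``$\mathcal{A}^{(i)}$-only'' piece and an ``$\mmd$-scaled $\mathcal{A}^{(i)}$'' piece. Then apply Lemma~\ref{prop:hp_bounds_A_i} with $\delta=\beta/4$ to each of $\mathcal{A}^{(1)},\dots,\mathcal{A}^{(4)}$ and union-bound to get $\mathcal{A}^{(i)}\le D_i+C_i\sqrt{\log(4/\beta)/2}$ simultaneously with probability at least $1-\beta$. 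Since $D_i,C_i$ are of order $\sqrt{Ks/n}$ or $s\sqrt{K/n}$ (Lemma~\ref{lem:expectation_A_i} and Lemma~\ref{prop:hp_bounds_A_i}), the $\mathcal{A}^{(i)}$-only terms collapse to the $K/n$ scale and form $\psitildec{\alpha}{4}$, while the $\mmd$-scaled terms collapse to the $\sqrt{K/n}$ scale and form $\mmd(\pjnt,\P\times\Q)\cdot\psitildeb{\alpha}{4}$. Combined with the unchanged $\mmd^2$ term from Step~1, this gives $\tilde\Psi_{n,s}(\alpha,\beta)$; the statement's $\Psi_{n,s}=4\tilde\Psi_{n,s}$ then follows from $V_n^{\pi,s}=4\tilde V_n^{\pi,s}$ (Lemma~\ref{v-stat-relationship}).

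\textbf{Main obstacle.} The probabilistic content is entirely captured by the Hanson--Wright and Hoeffding applications in Step~1 and the union bound over the $\mathcal{A}^{(i)}$ in Step~3; the real work is the bookkeeping in Step~4. One has to multiply and add the four products $D_iD_j$, $D_iC_j\sqrt{\log(4/\beta)/2}$ and $C_iC_j\log(4/\beta)/2$ that arise from bounding $\mathcal{A}^{(1)}\mathcal{A}^{(2)}/s^2$, $\mathcal{A}^{(2)}\mathcal{A}^{(3)}/s^{3/2}$, $\mathcal{A}^{(1)}\mathcal{A}^{(4)}/s^{3/2}$, etc., and then use $m=n/s$ and the explicit numerical constants from Lemma~\ref{lem:expectation_A_i} and Lemma~\ref{prop:hp_bounds_A_i} to reproduce the coefficients $30$, $22$, $196$, $148$, $48$ that appear in the stated $\psitildeb{\alpha}{4}$ and $\psitildec{\alpha}{4}$. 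This is lengthy but purely algebraic; no new inequalities are needed.
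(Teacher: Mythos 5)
Your proposal is correct and follows essentially the same route as the paper: decompose $A$, $b$, $c$ into their MMD-centered parts plus $\frac{\mmd^2}{s^2}\boldone\boldone^\top$, $\frac{\mmd^2}{s}\boldone$, $\frac{\mmd^2}{4}$; apply Hanson--Wright to the off-diagonal quadratic form, Hoeffding to $\bar b^\top\eps$ and to $\boldone^\top\eps$ with budget $\alpha/3$ each; then control $\|\bar A\|_F$, $\mathrm{Tr}(\bar A)$, $\twonorm{\bar b}$, $|\bar c|$ via the deterministic bounds of \cref{prop:coefficients_A_b} and a union bound over the four $\mathcal{A}^{(i)}$ at level $\beta/4$ each. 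The only (harmless) cosmetic difference is that you complete the square to write the $\mmd^2$ contribution as $(\frac{T}{s}+\frac12)^2$, whereas the paper bounds $\frac14+\frac{T^2}{s^2}+\frac{T}{s}$ directly; both yield the identical final term.
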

\subsection{Putting everything together}
Fix any $\beta\in(0,1)$, and suppose that 
\begin{talign}\label{eq:separation_mmd_bounded}
\tconst 
=
4\mmd^2(\Pi,\P \times \Q) \geq \Psi_{n,s}(\astar,\beta/3) + \Phi_{n}(\beta/3),
\end{talign}
for $\Psi_{n,s}$ and $\Phi_{n}$ defined in \cref{ind_perm_thresh_quantile_bound,bounded_independence_quantile_bound}. 
By \cref{ind_perm_thresh_quantile_bound,v-stat-relationship,bounded_independence_quantile_bound,Lemma: Generic Two Moments Method}, a cheap wild bootstrap test based on $V_n$ has power at least $1-\beta$. 
We now show that the assumed condition of \cref{thm:final_ind} is sufficient to imply \cref{eq:separation_mmd_bounded}.
Indeed, we see that \cref{eq:separation_mmd_bounded} holds whenever $a x^2 - bx - c \geq 0$ for $x = \mmd(\pjnt, \P \times \Q),$
\begin{talign}
a 
    &= 
\frac{3}{4} - \frac{\log (6/\astar)}{s} - \sqrt{\frac{\log (6/\astar)}{s}},
\\ 
b 
    &=
\frac{\sqrt{K}}{\sqrt{n}}
    \big( 30 + 22 \sqrt{\log(12/\beta)} \big) 
    \big( 3 + \frac{1+12\log({3}{/\astar})}{6\sqrt{s}}
    + \sqrt{\frac{3}{2}\log(3/\astar)} 
    \big)
    \\
    &\geq
\psitildeb{\astar}{12} 
    +
\frac{6\sqrt{K}(1+\sqrt{2\log(3/\beta)})}{\sqrt{n}}
\\
c   
    &=
\frac{K}{n}(14+7\sqrt{\log(12/\beta)})^2
    \big(\sqrt{\frac{7}{8}}+
    \sqrt{\frac{5}{3}\log(3/\astar)}\big)^2
\\
    &\geq
\frac{K}{n}(14+7\sqrt{\log(12/\beta)})^2
    \big(\frac{7}{8}+
    \frac{5\log({3}{/\astar})}{3} +
    \sqrt{\frac{3}{2}\log(3/\astar)}\big)
\\
    &\geq 
\psitildec{\astar}{12} \\ 
&\qquad + \frac{3K}{n}(1+\sqrt{2\log(3/\beta)})^2
\end{talign}
Since $a \leq 1$, it suffices to have 
$x \geq \frac{1}{a} (b+\sqrt{c}) \geq \frac{b}{a}+\sqrt{\frac{c}{a}} \geq \frac{b+\sqrt{b^2+4ac}}{2a}$.
The advertised result now follows as 
$3+\half\sqrt{\frac{7}{8}}\leq \frac{7}{2}$ and $\sqrt{\frac{3}{2}}+\half\sqrt{\frac{5}{3}}\leq 2$. 
\subsection{\pcref{prop:coefficients_A_b}}\label{proof:coefficients_A_b}
Recall the definition \cref{eq:Rijkl} of $R_{ijkl}$, and, for each $i\in[s]$, define the Rademacher vector $\teps_i = 2\indic{i=\pi_i} - 1$. Observe that by construction, $\teps_i = \teps_{\swaps(i)}$ for all $i \in [s/2]$. Then
\begin{talign}
    R_{i j \pi_j \pi_i} &= (\frac{1+\teps_i}{2})(\frac{1+\teps_j}{2}) R_{i j j i} + (\frac{1+\teps_i}{2})(\frac{1-\teps_j}{2}) R_{i j (\swaps(j)) i} \\ &\quad + (\frac{1-\teps_i}{2})(\frac{1+\teps_j}{2}) R_{i j j (\swaps(i))} + (\frac{1-\teps_i}{2})(\frac{1-\teps_j}{2}) R_{i j (\swaps(j)) (\swaps(i))} \\ &= \frac{\teps_i \teps_j}{4} \big( R_{i j j i} - R_{i j (\swaps(j)) i} - R_{i j j (\swaps(i))} + R_{i j (\swaps(j)) (\swaps(i))} \big) \\ &\quad + \frac{\teps_i}{4} \big( R_{i j j i} + R_{i j (\swaps(j)) i} - R_{i j j (\swaps(i))} - R_{i j (\swaps(j)) (\swaps(i))} \big) \\ &\quad + \frac{\teps_j}{4} \big( R_{i j j i} - R_{i j (\swaps(j)) i} + R_{i j j (\swaps(i))} - R_{i j (\swaps(j)) (\swaps(i))} \big) \\ &\quad + \frac{1}{4} \big( R_{i j j i} + R_{i j (\swaps(j)) i} + R_{i j j (\swaps(i))} + R_{i j (\swaps(j)) (\swaps(i))} \big).
\end{talign}
Hence,
\begin{talign}
    \tilde{V}_n^{\pi,s} &= \frac{1}{n^2} \sum_{i,j=1}^{s} \big( \frac{\teps_i \teps_j}{4} \big( R_{i j j i} - R_{i j (\swaps(j)) i} - R_{i j j (\swaps(i))} + R_{i j (\swaps(j)) (\swaps(i))} \big) \\ &\qquad\qquad + \frac{\teps_i}{4} \big( R_{i j j i} + R_{i j (\swaps(j)) i} - R_{i j j (\swaps(i))} - R_{i j (\swaps(j)) (\swaps(i))} \big) \\ &\qquad\qquad + \frac{\teps_j}{4} \big( R_{i j j i} - R_{i j (\swaps(j)) i} + R_{i j j (\swaps(i))} - R_{i j (\swaps(j)) (\swaps(i))} \big) \\ &\qquad\qquad + \frac{1}{4} \big( R_{i j j i} + R_{i j (\swaps(j)) i} + R_{i j j (\swaps(i))} + R_{i j (\swaps(j)) (\swaps(i))} \big) \big) 
    \\ &\defeq  \teps^{\top} A \teps + b^{\top} \teps + c,
\end{talign}
where $\tilde{A} \in \R^{s \times s}$, $\tilde{b} \in \R^{s}$, and $\tilde{c} \in \R$ have the components
\begin{talign}
\begin{split} \label{eq:a_ij_b_i_c}
    \tilde{a}_{ij} &= \frac{1}{4 n^2} \big( R_{i j j i} - R_{i j (\swaps(j)) i} - R_{i j j (\swaps(i))} + R_{i j (\swaps(j)) (\swaps(i))} \big), \\
    \tilde{b}_{i} &= \frac{1}{4 n^2} \sum_{j=1}^{n} \big( R_{i j j i} + R_{i j (\swaps(j)) i} - R_{i j j (\swaps(i))} - R_{i j (\swaps(j)) (\swaps(i))} \\ &\qquad\qquad\quad + R_{j i i j} + R_{j i (\swaps(i)) j} - R_{j i i (\swaps(j))} - R_{j i (\swaps(i)) (\swaps(j))} \big), \\
    \tilde{c} &= \frac{1}{4 n^2} \sum_{i,j=1}^{s} \big( R_{i j j i} + R_{i j (\swaps(j)) i} + R_{i j j (\swaps(i))} + R_{i j (\swaps(j)) (\swaps(i))} \big).
\end{split}
\end{talign}

We now rewrite the terms $R_{ijji}$, $R_{ijqi}$, $R_{ijjr}$, and $R_{ijqr}$ in terms of kernel expectations: %
\begin{talign} 
\begin{split} \label{eq:hat_M_1}
    \frac{1}{m^2} R_{i j j i} &= \frac{1}{m^2} \sum_{k,l = 1}^{m} \big( g_Y(Y_{m(i-1)+k},Y_{m(j-1)+l}) - n^{-1} \sum_{j'=1}^{n} g_Y(Y_{m(i-1)+k},Y_{j'}) \big) \\ &\qquad\qquad \times \big( g_Z(Z_{m(j-1)+k},Z_{m(i-1)+l}) - n^{-1} \sum_{i'=1}^{n} g_Z(Z_{m(j-1)+k},Z_{i'}) \big) \\ 
    &= 
((\delta_{\mathbb{X}^{(j)}} - \delta_{\mathbb{Y}} \times \delta_{\mathbb{Z}^{(j)}} )\times (\delta_{\mathbb{X}^{(i)}} - \delta_{\mathbb{Y}^{(i)}} \times \delta_{\mathbb{Z}} ))\kernel,
\end{split}
\end{talign}
\begin{talign} 
\begin{split} \label{eq:hat_M_2}
\frac{1}{m^2} R_{i j q i} 
    &= 
\frac{1}{m^2} \sum_{k,l = 1}^{m} \big( g_Y(Y_{m(i-1)+k},Y_{m(j-1)+l}) - n^{-1} \sum_{j'=1}^{n} g_Y(Y_{m(i-1)+k},Y_{j'}) \big) \\ 
    &\qquad\qquad \times 
\big( g_Z(Z_{m(q-1)+k},Z_{m(i-1)+l}) - n^{-1} \sum_{i'=1}^{n} g_Z(Z_{m(q-1)+k},Z_{i'}) \big) \\ 
    &= 
((\delta_{\mathbb{Y}^{(j)}} \times \delta_{\mathbb{Z}^{(q)}} - \delta_{\mathbb{Y}} \times \delta_{\mathbb{Z}^{(q)}} )\times(\delta_{\mathbb{X}^{(i)}} - \delta_{\mathbb{Y}^{(i)}} \times \delta_{\mathbb{Z}} ))\kernel,
\end{split}
\end{talign}
\begin{talign} 
\begin{split} \label{eq:hat_M_3}
    \frac{1}{m^2} R_{i j j r} &= \frac{1}{m^2} \sum_{k,l = 1}^{m} \big( g_Y(Y_{m(i-1)+k},Y_{m(j-1)+l}) - n^{-1} \sum_{j'=1}^{n} g_Y(Y_{m(i-1)+k},Y_{j'}) \big) \\ &\qquad\qquad \times \big( g_Z(Z_{m(j-1)+k},Z_{m(r-1)+l}) - n^{-1} \sum_{i'=1}^{n} g_Z(Z_{m(j-1)+k},Z_{i'}) \big) \\ 
    &= 
((\delta_{\mathbb{X}^{(j)}} - \delta_{\mathbb{Y}} \times \delta_{\mathbb{Z}^{(j)}} )\times (\delta_{\mathbb{Y}^{(i)}} \times \delta_{\mathbb{Z}^{(r)}} - \delta_{\mathbb{Y}^{(i)}} \times \delta_{\mathbb{Z}} ))\kernel, 
    \qtext{and}
\end{split}
\end{talign}
\begin{talign} 
\begin{split} \label{eq:hat_M_4}
    \frac{1}{m^2} R_{i j q r} &= \frac{1}{m^2} \sum_{k,l = 1}^{m} \big( g_Y(Y_{m(i-1)+k},Y_{m(j-1)+l}) - n^{-1} \sum_{j'=1}^{n} g_Y(Y_{m(i-1)+k},Y_{j'}) \big) \\ &\qquad\qquad \times \big( g_Z(Z_{m(q-1)+k},Z_{m(r-1)+l}) - n^{-1} \sum_{i'=1}^{n} g_Z(Z_{m(q-1)+k},Z_{i'}) \big) \\ 
    &= 
((\delta_{\mathbb{Y}^{(j)}} \times \delta_{\mathbb{Z}^{(q)}} - \delta_{\mathbb{Y}} \times \delta_{\mathbb{Z}^{(q)}} )\times  (\delta_{\mathbb{Y}^{(i)}} \times \delta_{\mathbb{Y}^{(r)}} - \delta_{\mathbb{Y}^{(i)}} \times \delta_{\mathbb{Z}} ))\kernel.
\end{split}
\end{talign}
Plugging $q = \swaps(j)$ and $r= \swaps(i)$ in \cref{eq:hat_M_1}, \cref{eq:hat_M_2}, \cref{eq:hat_M_3}, \cref{eq:hat_M_4}, we obtain
\begin{talign}
\begin{split}
    \tilde{a}_{ij} &= \frac{1}{4s^2} \big( \frac{R_{i j j i}}{m^2} - \frac{R_{i j (\swaps(j)) i}}{m^2} - \frac{R_{i j j (\swaps(i))}}{m^2} + \frac{R_{i j (\swaps(j)) (\swaps(i))}}{m^2} \big)  
    =  
\frac{1}{4s^2}  (\mu_j\times\tilde{\mu}_i) \kernel.
\end{split}
\end{talign}
We bound the following quantity using the Cauchy-Schwarz inequality:
\begin{talign}
\begin{split} \label{eq:bar_a_ij}
    &\big|\tilde{a}_{ij} - \frac{1}{4s^2} \mmd^2(\pjnt, \P \times \Q) \big| \\ &= \frac{1}{4s^2} \big| \langle \mu_j \kernel, \tilde{\mu}_i \kernel \rangle_{\kernel} - \langle (\pjnt - \P \times \Q )\kernel, (\pjnt - \P \times \Q )\kernel \rangle_{\kernel} \big| \\ &= \frac{1}{4s^2} \big| \langle \mu_j \kernel - (\pjnt - \P \times \Q )\kernel, \tilde{\mu}_i \kernel \rangle_{\kernel} \\ &\qquad\qquad - \langle (\pjnt - \P \times \Q )\kernel - \tilde{\mu}_i \kernel, (\pjnt - \P \times \Q )\kernel \rangle_{\kernel} \big| \\ &= \frac{1}{4s^2} \big| \langle \mu_j \kernel - (\pjnt - \P \times \Q )\kernel, \tilde{\mu}_i \kernel - (\pjnt - \P \times \Q )\kernel \rangle_{\kernel} \\ &\qquad\qquad + \langle \mu_j \kernel - (\pjnt - \P \times \Q )\kernel, (\pjnt - \P \times \Q )\kernel \rangle \\ &\qquad\qquad - \langle (\pjnt - \P \times \Q )\kernel - \tilde{\mu}_i \kernel, (\pjnt - \P \times \Q )\kernel \rangle_{\kernel} \big| \\ &\leq \frac{1}{4s^2} \big( \| (\mu_j - \pjnt + \P \times \Q )\kernel \|_{\kernel} \| (\tilde{\mu}_i - \pjnt + \P \times \Q )\kernel \|_{\kernel} \\ &+ \mmd(\pjnt, \P \times \Q) \big( \| (\mu_j - \pjnt + \P \times \Q )\kernel \|_{\kernel} + \| (\tilde{\mu}_i - \pjnt + \P \times \Q )\kernel \|_{\kernel} \big) \big).
\end{split}
\end{talign}

Plugging $q = \swaps(j)$ and $r= \swaps(i)$ into \cref{eq:hat_M_1}, \cref{eq:hat_M_2}, \cref{eq:hat_M_3}, \cref{eq:hat_M_4}, we find that
\begin{talign}
\begin{split}
    \tilde{b}_i &= \frac{1}{4 s^2} \sum_{j=1}^{s} \big( \frac{R_{i j j i}}{m^2} + \frac{R_{i j (\swaps(j)) i}}{m^2} - \frac{R_{i j j (\swaps(i))}}{m^2} - \frac{R_{i j (\swaps(j)) (\swaps(i))}}{m^2} \\ &\qquad\qquad\quad + \frac{R_{j i i j}}{m^2} + \frac{R_{j i (\swaps(i)) j}}{m^2} - \frac{R_{j i i (\swaps(j))}}{m^2} - \frac{R_{j i (\swaps(i)) (\swaps(j))}}{m^2} \big) 
     \\ &= \frac{1}{4 s^2} \big(  \big\langle \sum_{j=1}^{s} (\delta_{\mathbb{X}^{(j)}} - \delta_{\mathbb{Y}} \times \delta_{\mathbb{Z}^{(j)}} + \delta_{\mathbb{Y}^{(j)}} \times \delta_{\mathbb{Z}^{(\swaps(j))}} - \delta_{\mathbb{Y}} \times \delta_{\mathbb{Z}^{(\swaps(j))}} )\kernel,  \\ &\qquad\qquad\qquad (\delta_{\mathbb{X}^{(i)}} - \delta_{\mathbb{Y}^{(i)}} \times \delta_{\mathbb{Z}} - \delta_{\mathbb{Y}^{(i)}} \times \delta_{\mathbb{Z}^{(\swaps(i))}} + \delta_{\mathbb{Y}^{(i)}} \times \delta_{\mathbb{Z}} )\kernel \big\rangle_{\kernel} \\ &\qquad\qquad\quad +  \big\langle (\delta_{\mathbb{X}^{(i)}} - \delta_{\mathbb{Y}} \times \delta_{\mathbb{Z}^{(i)}} + \delta_{\mathbb{Y}^{(i)}} \times \delta_{\mathbb{Z}^{(\swaps(i))}} - \delta_{\mathbb{Y}} \times \delta_{\mathbb{Z}^{(\swaps(i))}} )\kernel,  \\ &\qquad\qquad\qquad \sum_{j=1}^{s} (\delta_{\mathbb{X}^{(j)}} - \delta_{\mathbb{Y}^{(j)}} \times \delta_{\mathbb{Z}} - \delta_{\mathbb{Y}^{(j)}} \times \delta_{\mathbb{Z}^{(\swaps(j))}} + \delta_{\mathbb{Y}^{(j)}} \times \delta_{\mathbb{Z}} )\kernel \big\rangle_{\kernel} \big) \\ 
     &= \frac{1}{4s} ( \langle \mu \kernel, \tilde{\mu}_i \kernel \rangle_{\kernel} + \langle \mu_i \kernel, \tilde{\mu} \kernel \rangle_{\kernel} ).
\end{split}
\end{talign}
We also bound the following using the Cauchy-Schwarz inequality:
\begin{talign}
\begin{split} \label{eq:bar_b_i}
    &\big| \tilde{b}_i - \frac{1}{2s} \mmd^2(\pjnt, \P \times \Q) \big| \\ &= \frac{1}{4s} \big| \langle \mu \kernel, \tilde{\mu}_i \kernel \rangle_{\kernel} + \langle \mu_i \kernel, \tilde{\mu} \kernel \rangle_{\kernel} - 2 \langle (\pjnt - \P \times \Q )\kernel, (\pjnt - \P \times \Q )\kernel \rangle_{\kernel} \big| \\ &\leq \frac{1}{4s} \big( \| (\mu - \pjnt + \P \times \Q )\kernel \|_{\kernel} \| (\tilde{\mu}_i - \pjnt + \P \times \Q )\kernel \|_{\kernel} \\ &\qquad\qquad + \| (\mu_i - \pjnt + \P \times \Q )\kernel \|_{\kernel} \| (\tilde{\mu} - \pjnt + \P \times \Q)\kernel \|_{\kernel} \\ &+ \mmd(\pjnt, \P \times \Q) \big( \| (\mu - \pjnt + \P \times \Q)\kernel \|_{\kernel} + \| (\tilde{\mu}_i - \pjnt + \P \times \Q)\kernel \|_{\kernel} \\ &\qquad\qquad\qquad\qquad + \| (\mu_i - \pjnt + \P \times \Q)\kernel \|_{\kernel} + \| (\tilde{\mu} - \pjnt + \P \times \Q)\kernel \|_{\kernel} \big) \big).
\end{split}
\end{talign}

Plugging $q = \swaps(j)$ and $r= \swaps(i)$ into \cref{eq:hat_M_1}, \cref{eq:hat_M_2}, \cref{eq:hat_M_3}, \cref{eq:hat_M_4}, we find that
\begin{talign}
\begin{split}
    \tilde{c} &= \frac{1}{4 s^2} \sum_{i,j=1}^{s} \big( \frac{R_{i j j i}}{m^2} + \frac{R_{i j (\swaps(j)) i}}{m^2} + \frac{R_{i j j (\swaps(i))}}{m^2} + \frac{R_{i j (\swaps(j)) (\swaps(i))}}{m^2} \big) \\ 
    &= \frac{1}{4} \big\| \big(\delta_{\mathbb{X}} - 2 \delta_{\mathbb{Y}} \times \delta_{\mathbb{Z}} + \sum_{i=1}^{s} \delta_{\mathbb{Y}^{(i)}} \times \delta_{\mathbb{Z}^{(\swaps(i))}} \big)\kernel \big\|_{\kernel}^2 = \frac{1}{4} \| \mu \kernel \|_{\kernel}^2
\end{split}
\end{talign}
We also bound the scalar $\tilde{c} - \frac{1}{4} \mmd^2(\pjnt, \P \times \Q)$ using Cauchy-Schwarz:
\begin{talign}
\begin{split} \label{eq:bar_c}
    &\big|\tilde{c} - \frac{1}{4} \mmd^2(\pjnt, \P \times \Q) \big| = \big| \frac{1}{4} \| \mu \kernel \|_{\kernel}^2 - \frac{1}{4} \langle (\pjnt - \P \times \Q)\kernel, (\pjnt - \P \times \Q)\kernel \rangle_{\kernel} \big| \\ &= \frac{1}{4} \big| \langle (\mu - \pjnt + \P \times \Q)\kernel, \mu \kernel \rangle_{\kernel} - \langle (\pjnt - \P \times \Q - \mu)\kernel, (\pjnt - \P \times \Q)\kernel \rangle_{\kernel} \big|
    \\ &= \frac{1}{4} \big| \| (\mu - \pjnt + \P \times \Q)\kernel \|^2_{\kernel} - 2 \langle (\pjnt - \P \times \Q - \mu)\kernel, (\pjnt - \P \times \Q)\kernel \rangle_{\kernel} \big| \\ &\leq \frac{1}{4} \big( \| (\mu - \pjnt + \P \times \Q)\kernel \|_{\kernel}^2 + 2 \| (\mu - \pjnt + \P \times \Q)\kernel \|_{\kernel} \mmd(\pjnt, \P \times \Q) \big) \\ &= \frac{1}{4} \big( \frac{(\mathcal{A}^{(3)})^2}{s} + \frac{2 \mathcal{A}^{(3)} \mmd(\pjnt, \P \times \Q)}{\sqrt{s}}  \big).
\end{split}
\end{talign}
At this point, we further rewrite the expression $\tilde{V}_n^{\pi,s} = \teps^{\top} \tilde{A} \teps + \tilde{b}^{\top} \teps + \tilde{c}$ in terms of $\teps_{1:s/2}$. Since $\teps_{s/2:s} = \teps_{1:s/2}$, we have that
\begin{talign}
    \tilde{V}_n^{\pi,s} &= \teps_{1:s/2}^{\top} \tilde{A}_{1:s/2,1:s/2} \teps_{1:s/2} + \teps_{s/2:s}^{\top} \tilde{A}_{s/2:s,1:s/2} \teps_{1:s/2} \\ &\quad + \teps_{1:s/2}^{\top} \tilde{A}_{1:s/2,s/2:s} \teps_{s/2:s} + \teps_{s/2:s}^{\top} \tilde{A}_{s/2:s,s/2:s} \teps_{s/2:s} \\ &\quad + \tilde{b}_{1:s/2}^{\top} \teps_{1:s/2} + \tilde{b}_{s/2:s}^{\top} \teps_{s/2:s} + \tilde{c} \\ &= \teps_{1:s/2}^{\top} \big( \tilde{A}_{1:s/2,1:s/2} + \tilde{A}_{s/2:s,1:s/2} + \tilde{A}_{1:s/2,s/2:s} + \tilde{A}_{s/2:s,s/2:s} \big) \teps_{1:s/2} \\ &\quad + \big( \tilde{b}_{1:s/2} + \tilde{b}_{s/2:s} \big)^{\top} \teps_{1:s/2} + \tilde{c}.
\end{talign}
Thus, if we define 
\begin{talign} \label{eq:eps_A_b_c}
\eps &= \teps_{1:s/2} \dist\Unif(\{\pm1\}^{s/2}), \\
A &= \tilde{A}_{1:s/2,1:s/2} + \tilde{A}_{s/2:s,1:s/2} + \tilde{A}_{1:s/2,s/2:s} + \tilde{A}_{s/2:s,s/2:s} \in \mathbb{R}^{s/2 \times s/2}, \\
b &= \tilde{b}_{1:s/2} + \tilde{b}_{s/2:s} \in \mathbb{R}^{s/2}, \qtext{and}
c = \tilde{c} \in \mathbb{R}, 
\end{talign}
we have that
\begin{talign}
    \tilde{V}_n^{\pi,s} = \eps^{\top} A \eps + b^{\top} \eps + c.
\end{talign}
If we let $a_{ij}$ and $b_i$ be the components of $A$ and $b$, we have that
\begin{talign}
    a_{ij} &= \tilde{a}_{ij} + \tilde{a}_{(\swaps(i))j} + \tilde{a}_{i(\swaps(j))} + \tilde{a}_{(\swaps(i))(\swaps(j))}
    \qtext{and}
    b_{i} = \tilde{b}_{i} + \tilde{b}_{\swaps(i)}.
\end{talign}
Next, define 
\begin{talign} 
\begin{split} \label{eq:bar_A_b_c}
    \bar{A} &= A - \frac{1}{s^2} \mmd^2(\pjnt, \P \times \Q) \boldone \boldone^{\top}, \\ 
    \bar{b} &= b- \frac{\mmd^2(\pjnt, \P \times \Q)}{s} \boldone \boldone^{\top}, \qtext{and} \\
    \bar{c} &= c - \frac{1}{4} \mmd^2(\pjnt, \P \times \Q). 
\end{split}
\end{talign}
If we let $\bar{a}_{ij}$ and $\bar{b}_i$ be the components of $\bar{A}$ and $\bar{b}$, by equations \cref{eq:bar_a_ij} and \cref{eq:bar_b_i}, we have that
\begin{talign}
\big| \bar{a}_{ij} \big| 
    &\leq \big| \tilde{a}_{ij} - \frac{1}{4s^2} \mmd^2(\pjnt, \P \times \Q) \big| + \big| \tilde{a}_{(\swaps(i))j} - \frac{1}{4s^2} \mmd^2(\pjnt, \P \times \Q) \big| \\ 
    & + \big| \tilde{a}_{i(\swaps(j))} - \frac{1}{4s^2} \mmd^2(\pjnt, \P \times \Q) \big| + \big| \tilde{a}_{(\swaps(i))(\swaps(j))} - \frac{1}{4s^2} \mmd^2(\pjnt, \P \times \Q) \big| 
    \\ 
    &\leq \frac{1}{4s^2} \big( \big( \| (\mu_j - \pjnt + \P \times \Q )\kernel \|_{\kernel} + \| (\mu_{\swaps(j)} - \pjnt + \P \times \Q )\kernel \|_{\kernel} \big) \\ 
    &\qquad\quad \times \big( \| (\tilde{\mu}_i - \pjnt + \P \times \Q )\kernel \|_{\kernel} + \| (\tilde{\mu}_{\swaps(i)} - \pjnt + \P \times \Q )\kernel \|_{\kernel} \big) \\ &\quad + \mmd(\pjnt, \P \times \Q) \big( \| (\mu_j - \pjnt + \P \times \Q )\kernel \|_{\kernel} + \| (\tilde{\mu}_i - \pjnt + \P \times \Q )\kernel \|_{\kernel} \\ 
    &\qquad\quad + \| (\mu_{\swaps(j)} - \pjnt + \P \times \Q )\kernel \|_{\kernel} + \| (\tilde{\mu}_{\swaps(i)} - \pjnt + \P \times \Q )\kernel \|_{\kernel} \big) \big) 
\end{talign}
and%
\begin{talign}
    \big| \bar{b}_{i} \big| 
    &\leq \big| \tilde{b}_i - \frac{1}{2s} \mmd^2(\pjnt, \P \times \Q) \big| \big| + \big| \tilde{b}_{\swaps(i)} - \frac{1}{2s} \mmd^2(\pjnt, \P \times \Q) \big| \\ 
    &\leq \frac{1}{4s} \big( \| (\mu - \pjnt + \P \times \Q )\kernel \|_{\kernel} \big( \| (\tilde{\mu}_i - \pjnt + \P \times \Q )\kernel \|_{\kernel} + \| (\tilde{\mu}_{\swaps(i)} - \pjnt + \P \times \Q )\kernel \|_{\kernel} \big) \\ 
    &\quad + \big( \| (\mu_i - \pjnt + \P \times \Q )\kernel \|_{\kernel} + \| (\mu_{\swaps(i)} - \pjnt + \P \times \Q )\kernel \|_{\kernel} \big) \| (\tilde{\mu} - \pjnt + \P \times \Q)\kernel \|_{\kernel} \\ &\quad + \mmd(\pjnt, \P \times \Q) \big( 2 \| (\mu - \pjnt + \P \times \Q)\kernel \|_{\kernel} + 2 \| (\tilde{\mu} - \pjnt + \P \times \Q)\kernel \|_{\kernel} \\ &\qquad\qquad\qquad\qquad + \| (\tilde{\mu}_i - \pjnt + \P \times \Q)\kernel \|_{\kernel} + \| (\tilde{\mu}_{\swaps(i)} - \pjnt + \P \times \Q)\kernel \|_{\kernel} \\ &\qquad\qquad\qquad\qquad + \| (\mu_i - \pjnt + \P \times \Q)\kernel \|_{\kernel} + \| (\mu_{\swaps(i)} - \pjnt + \P \times \Q)\kernel \|_{\kernel} \big) \big).
\end{talign}
We now turn to bounding  $\|\bar{A}\|^2_{F}$ in terms of $\mathcal{A}^{(1)}$ and $\mathcal{A}^{(2)}$ using Jensen's inequality:
\begin{talign}
\begin{split}
    &16s^4 \sum_{i,j=1}^{s/2} \bar{a}_{ij}^2 \\ &\leq \sum_{i,j=1}^{s/2} 
    \big( \big( \| (\mu_j - \pjnt + \P \times \Q )\kernel \|_{\kernel} + \| (\mu_{\swaps(j)} - \pjnt + \P \times \Q )\kernel \|_{\kernel} \big) \\ &\qquad\qquad\quad \times \big( \| (\tilde{\mu}_i - \pjnt + \P \times \Q )\kernel \|_{\kernel} + \| (\tilde{\mu}_{\swaps(i)} - \pjnt + \P \times \Q )\kernel \|_{\kernel} \big) \\ &\quad + \mmd(\pjnt, \P \times \Q) \big( \| (\mu_j - \pjnt + \P \times \Q )\kernel \|_{\kernel} + \| (\tilde{\mu}_i - \pjnt + \P \times \Q )\kernel \|_{\kernel} \\ &\qquad\qquad\qquad\qquad\quad + \| (\mu_{\swaps(j)} - \pjnt + \P \times \Q )\kernel \|_{\kernel} + \| (\tilde{\mu}_{\swaps(i)} - \pjnt + \P \times \Q )\kernel \|_{\kernel} \big) \big)^2
    \\ &\leq \sum_{i,j=1}^{s/2}  \big( 2 \big( 2 \| (\mu_j - \pjnt + \P \times \Q )\kernel \|_{\kernel}^2 + 2 \| (\mu_{\swaps(j)} - \pjnt + \P \times \Q )\kernel \|_{\kernel}^2 \big) \\ &\qquad\qquad\quad \times \big( 2 \| (\tilde{\mu}_i - \pjnt + \P \times \Q )\kernel \|_{\kernel}^2 + 2 \| (\tilde{\mu}_{\swaps(i)} - \pjnt + \P \times \Q )\kernel \|_{\kernel}^2 \big) \\ &\quad + 2 \mmd(\pjnt, \P \times \Q) \big( 4 \| (\mu_j - \pjnt + \P \times \Q )\kernel \|_{\kernel}^2 + 4 \| (\tilde{\mu}_i - \pjnt + \P \times \Q )\kernel \|_{\kernel}^2 \\ &\qquad\qquad\qquad\qquad\quad + 4 \| (\mu_{\swaps(j)} - \pjnt + \P \times \Q )\kernel \|_{\kernel}^2 + 4 \| (\tilde{\mu}_{\swaps(i)} - \pjnt + \P \times \Q )\kernel \|_{\kernel}^2 \big) \big)
    \\ &\leq 4 \big( \sum_{j=1}^{s} \| (\mu_j - \pjnt + \P \times \Q )\kernel \|_{\kernel}^2 \big)  \big( \sum_{i=1}^{s} \| (\tilde{\mu}_i - \pjnt + \P \times \Q )\kernel \|_{\kernel}^2 \big) \\ &\qquad + 
    4 s \mmd(\pjnt, \P \times \Q) \big( \sum_{j=1}^{s} \| (\mu_j - \pjnt + \P \times \Q )\kernel \|_{\kernel}^2 + \sum_{i=1}^{s} \| (\tilde{\mu}_i - \pjnt + \P \times \Q )\kernel \|_{\kernel}^2 \big)
    \\ 
    &=
    4 (\mathcal{A}^{(1)})^2 (\mathcal{A}^{(2)})^2 + 4 s \mmd^2(\pjnt, \P \times \Q) \big( (\mathcal{A}^{(1)})^2 + (\mathcal{A}^{(2)})^2 \big).
\end{split}
\end{talign}
Next, we bound $\mathrm{Tr}(\bar{A})$ in terms of $\mathcal{A}^{(1)}$ and $\mathcal{A}^{(2)}$ using Cauchy-Schwarz:
\begin{talign}
\begin{split}
    &4 s^2 \sum_{i=1}^{s/2} \bar{a}_{ii} \leq \sum_{i=1}^{s/2} 
    \big( \big( \| (\mu_i - \pjnt + \P \times \Q )\kernel \|_{\kernel} + \| (\mu_{\swaps(i)} - \pjnt + \P \times \Q )\kernel \|_{\kernel} \big) \\ &\qquad\qquad\quad \times \big( \| (\tilde{\mu}_i - \pjnt + \P \times \Q )\kernel \|_{\kernel} + \| (\tilde{\mu}_{\swaps(i)} - \pjnt + \P \times \Q )\kernel \|_{\kernel} \big) \\ &\quad + \mmd(\pjnt, \P \times \Q) \big( \| (\mu_i - \pjnt + \P \times \Q )\kernel \|_{\kernel} + \| (\tilde{\mu}_i - \pjnt + \P \times \Q )\kernel \|_{\kernel} \\ &\qquad\qquad\qquad\qquad\quad + \| (\mu_{\swaps(i)} - \pjnt + \P \times \Q )\kernel \|_{\kernel} + \| (\tilde{\mu}_{\swaps(i)} - \pjnt + \P \times \Q )\kernel \|_{\kernel} \big) \big)
    \\ &\leq 2 \big( \sum_{i=1}^{s} \| (\mu_i - \pjnt + \P \times \Q)\kernel \|_{\kernel}^2 \big)^{1/2} \big( \sum_{i=1}^{s} \| (\tilde{\mu}_i - \pjnt + \P \times \Q)\kernel \|_{\kernel} \big)^{1/2} \\ &+ \sqrt{s} \mmd(\pjnt, \P \times \Q) \big( \sqrt{ \sum_{i=1}^{s} \| (\mu_i \! - \! \pjnt \! + \! \P \times \Q)\kernel \|_{\kernel}^2 } \! + \! \sqrt{ \sum_{i=1}^{s} \| (\tilde{\mu}_i \! - \! \pjnt \! + \! \P \times \Q)\kernel \|_{\kernel}^2 } \big) \\ &= 2 \mathcal{A}^{(1)} \mathcal{A}^{(2)} +  \sqrt{s} \mmd(\pjnt, \P \times \Q) \big( \mathcal{A}^{(1)} + \mathcal{A}^{(2)} \big).
\end{split}
\end{talign}

Finally, we bound $\statictwonorm{\bar{b}}^2$ in terms of $\mathcal{A}^{(1)}$, $\mathcal{A}^{(2)}$, $\mathcal{A}^{(3)}$ and $\mathcal{A}^{(4)}$:
\begin{talign}
\begin{split}
    &16 s^2 \sum_{i=1}^{s/2} \bar{b}_i^2 
    \\ &\leq \sum_{i=1}^{s/2} \big( \| (\mu - \pjnt + \P \times \Q )\kernel \|_{\kernel} \big( \| (\tilde{\mu}_i - \pjnt + \P \times \Q )\kernel \|_{\kernel} + \| (\tilde{\mu}_{\swaps(i)} - \pjnt + \P \times \Q )\kernel \|_{\kernel} \big) \\ &\qquad\quad + \big( \| (\mu_i - \pjnt + \P \times \Q )\kernel \|_{\kernel} + \| (\mu_{\swaps(i)} - \pjnt + \P \times \Q )\kernel \|_{\kernel} \big) \| (\tilde{\mu} - \pjnt + \P \times \Q)\kernel \|_{\kernel} \\ &\quad + \mmd(\pjnt, \P \times \Q) \big( 2 \| (\mu - \pjnt + \P \times \Q)\kernel \|_{\kernel} + 2 \| (\tilde{\mu} - \pjnt + \P \times \Q)\kernel \|_{\kernel} \\ &\qquad\qquad\qquad\qquad + \| (\tilde{\mu}_i - \pjnt + \P \times \Q)\kernel \|_{\kernel} + \| (\tilde{\mu}_{\swaps(i)} - \pjnt + \P \times \Q)\kernel \|_{\kernel} \\ &\qquad\qquad\qquad\qquad + \| (\mu_i - \pjnt + \P \times \Q)\kernel \|_{\kernel} + \| (\mu_{\swaps(i)} - \pjnt + \P \times \Q)\kernel \|_{\kernel} \big) \big)^2
    \\ &\leq 12 \sum_{i=1}^{s/2} \big( \| (\mu - \pjnt + \P \times \Q )\kernel \|_{\kernel}^2 \big( \| (\tilde{\mu}_i - \pjnt + \P \times \Q )\kernel \|_{\kernel}^2 + \| (\tilde{\mu}_{\swaps(i)} - \pjnt + \P \times \Q )\kernel \|_{\kernel}^2 \big) \\ &\qquad\quad + \big( \| (\mu_i - \pjnt + \P \times \Q )\kernel \|_{\kernel}^2 + \| (\mu_{\swaps(i)} - \pjnt + \P \times \Q )\kernel \|_{\kernel}^2 \big) \| (\tilde{\mu} - \pjnt + \P \times \Q)\kernel \|_{\kernel}^2 \\ &\quad + \mmd(\pjnt, \P \times \Q)^2 \big( 2 \| (\mu - \pjnt + \P \times \Q)\kernel \|_{\kernel}^2 + 2 \| (\tilde{\mu} - \pjnt + \P \times \Q)\kernel \|_{\kernel}^2 \\ &\qquad\qquad\qquad\qquad + \| (\tilde{\mu}_i - \pjnt + \P \times \Q)\kernel \|_{\kernel}^2 + \| (\tilde{\mu}_{\swaps(i)} - \pjnt + \P \times \Q)\kernel \|_{\kernel}^2 \\ &\qquad\qquad\qquad\qquad + \| (\mu_i - \pjnt + \P \times \Q)\kernel \|_{\kernel}^2 + \| (\mu_{\swaps(i)} - \pjnt + \P \times \Q)\kernel \|_{\kernel}^2 \big) \big)
    \\ &= \frac{12 (\mathcal{A}^{(2)})^2 (\mathcal{A}^{(3)})^2}{s} + \frac{12 (\mathcal{A}^{(1)})^2 (\mathcal{A}^{(4)})^2}{s} \\ &\qquad+ 12 \mmd^2(\pjnt, \P \times \Q) \big( (\mathcal{A}^{(1)})^2 +(\mathcal{A}^{(2)})^2 + (\mathcal{A}^{(3)})^2 + (\mathcal{A}^{(4)})^2 \big).
\end{split}
\end{talign}
\subsection{\pcref{lem:expectation_A_i}} \label{proof:expectation_A_i}
\subsubsection{Bounding $\E[\mathcal{A}^{(1)}]$ and $\E[\mathcal{A}^{(2)}]$}
By the triangle inequality, %
\begin{talign}
\begin{split} \label{eq:expectation_A}
&\mathbb{E}\big[\mathcal{A}^{(1)} \big] 
    =
\mathbb{E}\big[\sqrt{\sum_{i=1}^{s} \| (\mu_i - \pjnt + \P \times \Q)\kernel \|_{\kernel}^2} \big] \\
    &\leq
\mathbb{E}\big[\sqrt{\sum_{i=1}^{s} \mmd(\delta_{\mathbb{X}^{(i)}},\pjnt)^2}\big]
    +
\mathbb{E}\big[\sqrt{\sum_{i=1}^{s} \mmd({\delta_{\mathbb{Y}^{(i)}}} \!\!\times\! {\delta_{\mathbb{Z}^{(\swaps(i))}}}, \P\!\times\!\Q)^2} \big]\\
    &+
\mathbb{E}\big[\sqrt{\sum_{i=1}^{s} \mmd({\delta_{\mathbb{Y}}} \times {\delta_{\mathbb{Z}^{(i)}}},  \P\times\Q)^2}\big]
    +
\mathbb{E}\big[\sqrt{\sum_{i=1}^{s} \mmd({\delta_{\mathbb{Y}}}\! \times\! {\delta_{\mathbb{Z}^{(\swaps(i))}}}, \P\!\times\!\Q)^2} \big].
\end{split}
\end{talign}
Cauchy-Schwarz and the expected squared MMD bound of \cref{eq:mean-DeltaX} now yield
\begin{talign}
\mathbb{E}\big[\sqrt{\sum_{i=1}^{s} \mmd(\delta_{\mathbb{X}^{(i)}},\pjnt)^2}\big]
    \leq
\sqrt{\mathbb{E}\big[\sum_{i=1}^{s} \mmd(\delta_{\mathbb{X}^{(i)}},\pjnt)^2\big]}
    \leq 
\sqrt{\frac{sK}{m}}.
\end{talign}
Similarly, the triangle inequality, Cauchy-Schwarz, and the expected squared MMD bounds of \cref{eq:mean-DeltaXprime} yield
\begin{talign}
&\mathbb{E}\big[\sqrt{\sum_{i=1}^{s} \mmd({\delta_{\mathbb{Y}}} \!\times\! {\delta_{\mathbb{Z}^{(i)}}},  \P\!\times\!\Q)^2}\big] \\
    &\leq
\mathbb{E}\big[\sqrt{\sum_{i=1}^{s} \mmd({\delta_{\mathbb{Y}}} \!\times\! {\delta_{\mathbb{Z}^{(i)}}},  \delta_{\mathbb{Y}}\!\times\!\Q)^2}\big] 
    +
\mathbb{E}\big[\sqrt{\sum_{i=1}^{s} \mmd({\delta_{\mathbb{Y}}} \!\times\! {\Q},  \P\!\times\!\Q)^2}\big] \\
    &\leq
\sqrt{\mathbb{E}\big[\sum_{i=1}^{s} \mmd({\delta_{\mathbb{Y}}} \!\times\! {\delta_{\mathbb{Z}^{(i)}}},  \delta_{\mathbb{Y}}\!\times\!\Q)^2\big]}
    +
\sqrt{\mathbb{E}\big[\sum_{i=1}^{s} \mmd({\delta_{\mathbb{Y}}} \!\times\! {\Q},  \P\!\times\!\Q)^2\big]}\\
    &\leq
\sqrt{\frac{sK}{m}}+\sqrt{\frac{sK}{n}}.
\end{talign}
Parallel reasoning yields the bounds
\begin{talign}
\mathbb{E}\big[\sqrt{\sum_{i=1}^{s} \mmd({\delta_{\mathbb{Y}}}\! \times\! {\delta_{\mathbb{Z}^{(\swaps(i))}}}, \P\!\times\!\Q)^2} \big]
    &\leq
\sqrt{\frac{sK}{m}}+\sqrt{\frac{sK}{n}} 
    \qtext{and} \\
\mathbb{E}\big[\sqrt{\sum_{i=1}^{s} \mmd(\delta_{\mathbb{Y}^{(i)}} \!\!\times\! \delta_{\mathbb{Z}^{(\swaps(i))}}, \P\!\times\!\Q)^2} \big]
    &\leq
2\sqrt{\frac{sK}{m}}. 
\end{talign}
Hence $\E[\mathcal{A}^{(1)}] \leq D_1$ as advertised, and $\E[\mathcal{A}^{(2)}] \leq D_2$ follows from parallel reasoning.

\subsubsection{Bounding $\E[\mathcal{A}^{(3)}]$ and $\E[\mathcal{A}^{(4)}]$}
Since $\delta_{\mathbb{Y}^{(j)}} \!\!\times\! \delta_{\mathbb{Z}^{(\swaps(j))}}=\P\times\Q$, independence and our prior MMD moment bounds imply that 
\begin{talign}
&\E[\mmd(\frac{2}{s}\sum_{j=1}^{s/2}\delta_{\mathbb{Y}^{(j)}} \!\!\times\! \delta_{\mathbb{Z}^{(\swaps(j))}},\P\!\times\!\Q)^2]
    =
\frac{4}{s^2}\sum_{j=1}^{s/2}\E[\mmd(\delta_{\mathbb{Y}^{(j)}} \!\!\times\! \delta_{\mathbb{Z}^{(\swaps(j))}},\P\!\times\!\Q)^2] \\
    &\leq
\frac{8}{s^2}\sum_{j=1}^{s/2}\E[\mmd(\delta_{\mathbb{Y}^{(j)}} \!\!\times\! \Q,\P\!\times\!\Q)^2]
    +
\frac{8}{s^2}\sum_{j=1}^{s/2}\E[\mmd(\delta_{\mathbb{Y}^{(j)}} \!\!\times\! \Q,\P\!\times\!\Q)^2]
    \leq
\frac{8K}{n}.
\end{talign}
Hence, by the triangle inequality, Cauchy-Schwarz, and our prior MMD moment bounds
\begin{talign}
&\mathbb{E}[\mathcal{A}^{(3)}] 
    = 
\sqrt{s} \mathbb{E}\big[\| (\mu - \pjnt + \P \times \Q)\kernel \|_{\kernel} \big] 
    \leq
\sqrt{s}\E[\mmd(\delta_{\X},\pjnt)
    +
2\mmd(\delta_{\Y}\times\delta_{\Z}, \P\times\Q) \\
    &+
\half\mmd(\frac{2}{s}\sum_{j=1}^{s/2}\delta_{\mathbb{Y}^{(j)}} \!\!\times\! \delta_{\mathbb{Z}^{(\swaps(j))}},\P\!\times\!\Q)
    +
\half\mmd(\frac{2}{s}\sum_{j=\frac{s}{2}+1}^{s}\delta_{\mathbb{Y}^{(j)}} \!\!\times\! \delta_{\mathbb{Z}^{(\swaps(j))}},\P\!\times\!\Q)
] \\
    &\leq
\sqrt{\frac{sK}{n}}
    +
4\sqrt{\frac{sK}{n}}
    +
\sqrt{\frac{8sK}{n}}.
\end{talign}
Analogous reasoning gives
$\mathbb{E}[\mathcal{A}^{(4)}] 
    \leq
D_4$, completing the proof.

\subsection{\pcref{prop:hp_bounds_A_i}} \label{proof:hp_bounds_A_i}

Fix any $k\in[n]$, let $\tilde{X}=(\tilde{Y},\tilde{Z})$ be an independent draw from $\pjnt$, and let $\tilde{\X} = (X_1,\dots,X_{k-1},\tilde{X},X_k,\dots,X_n) = (\tilde{\Y},\tilde{\Z})$.
Viewing $\mathcal{A}^{(1)}$ as a function of $\X$ and using the triangle inequality, we can write
\begin{talign}
&|\mathcal{A}^{(1)}(\X)
    -
\mathcal{A}^{(1)}(\tilde{\X})|
    \leq
\frac{1}{m}\mmd(\delta_{X_k},\delta_{\tilde{X}}) 
    +
\frac{2}{m}\mmd[g_Z](\delta_{Z_k},\delta_{\tilde{Z}})\sqrt{ (\delta_{\tilde{\Y}}\times\delta_{\tilde{\Y}})g_Y} \\
    &+
\frac{1}{m}\mmd[g_Y](\delta_{Y_k},\delta_{\tilde{Y}})\sqrt{\sup_z g_Z(z,z)}%
    +
\frac{1}{m}\mmd[g_Z](\delta_{Z_k},\delta_{\tilde{Z}})\sqrt{\sup_y g_Y(y,y)}\\ %
    &+
\frac{1}{n}\mmd[g_Y](\delta_{Y_k},\delta_{\tilde{Y}})\sqrt{\sum_{i=1}^s \mmd[g_Z]^2(\delta_{\Z^{(i)}},\delta_{\Z^{(\swaps(i))}})} 
    \leq
\frac{10\sqrt{K}}{m}+\frac{2\sqrt{sK}}{n}.
\end{talign}
Viewing $\mathcal{A}^{(3)}$ as a function of $\X$ and using the triangle inequality, we can also write
\begin{talign}
&\frac{1}{\sqrt{s}}|\mathcal{A}^{(3)}(\X)
    -
\mathcal{A}^{(3)}(\tilde{\X})|
    \leq
\frac{1}{n}\mmd(\delta_{X_k},\delta_{\tilde{X}}) 
    +
\frac{2}{n}\mmd[g_Z](\delta_{Z_k},\delta_{\tilde{Z}})\sqrt{ (\delta_{\tilde{\Y}}\times\delta_{\tilde{\Y}})g_Y} \\
    &+
\frac{2}{n}\mmd[g_Y](\delta_{Y_k},\delta_{\tilde{Y}})\sqrt{ (\delta_{\Z}\times\delta_{\Z})g_Y}
    +
\frac{1}{sm}\mmd[g_Y](\delta_{Y_k},\delta_{\tilde{Y}})\sqrt{\sup_zg_Z(z,z)}\\ %
    &+
\frac{1}{sm}\mmd[g_Z](\delta_{Z_k},\delta_{\tilde{Z}})\sqrt{\sup_yg_Y(y,y)} %
    \leq
\frac{14\sqrt{K}}{n}.
\end{talign}
Parallel reasoning additionally yields the bounds
\begin{talign}
&|\mathcal{A}^{(2)}(\X)
    -
\mathcal{A}^{(2)}(\tilde{\X})|
    \leq
\frac{6\sqrt{K}}{m}
    \qtext{and}
|\mathcal{A}^{(4)}(\X)
    -
\mathcal{A}^{(4)}(\tilde{\X})|
    \leq
\frac{6\sqrt{Ks}}{n}
.
\end{talign}
The advertised result now follows from the bounded differences inequality (\cref{thm:mcdiarmid_bounded}) and the mean estimates of \cref{lem:expectation_A_i}.

\subsection{\pcref{ind_perm_thresh_quantile_bound}} \label{proof:ind_perm_thresh_quantile_bound}
\subsubsection{Bounding $\tilde{V}^{\pi,s}$ in terms of $\tilde{\Psi}_{\X,s}(\alpha)$}
Fix any $\alpha,\beta\in(0,1)$.
By \cref{prop:coefficients_A_b}, %
\begin{talign}
\tilde{V}_n^{\pi,s} 
    &= 
\eps^{\top} A \eps + b^{\top} \eps + c \\ 
    &= 
\eps^{\top} \bar{A} \eps + 
\frac{\mmd^2(\pjnt, \P \times \Q)}{s^2} \eps^{\top} \boldone \boldone^{\top} \eps 
+ \bar{b}^{\top} \eps + 
\frac{\mmd^2(\pjnt, \P \times \Q)}{s} \boldone^{\top} \eps
    + 
\bar{c} + \frac{\mmd^2(\pjnt, \P \times \Q)}{4} \\ 
    &\leq 
|\eps^{\top} \bar{A} \eps -\mathrm{Tr}(\bar{A})| + \mathrm{Tr}(\bar{A}) + \bar{b}^{\top} \eps + \big|\bar{c} \big| 
    + 
\mmd^2(\pjnt, \P \times \Q) \big( \frac{1}{4} 
+ \frac{\eps^{\top} \boldone \boldone^{\top} \eps}{s^2} + \frac{\boldone^{\top} \eps}{s} 
\big).
\end{talign}
where $\eps\sim\Unif(\{\pm1\}^{s/2})$ is independent of $\X$, $A$, $b$, and $c$ were defined in \cref{eq:eps_A_b_c},
and $\bar{A}$, $\bar{b}$, and $\bar{c}$ were defined in \cref{eq:bar_A_b_c}.

Now fix any $\delta'\in(0,0.86)$ and $\delta'',\delta'''>0$. 
By equation \cref{eq:hanson_inverted_2} of the Hanson-Wright inequality (\cref{lem:hanson_wright}), with probability at least $1-\delta'$,
\begin{talign}
    |\eps^{\top} \bar{A} \eps - \mathrm{Tr}(\bar{A})| = |\eps^{\top} \bar{A} \eps - \E[\eps^{\top} \bar{A} \eps]| \leq \frac{ %
    20 \|\bar{A}\|_{\text{F}} \log(1/\delta')}{3}.
\end{talign}
Meanwhile, Hoeffding's inequality \citep[Thm.~2]{hoeffding1963probability} implies that 
\begin{talign}
\bar{b}^{\top} \eps 
    \leq 
\twonorm{\bar{b}}\sqrt{2 \log(1/\delta'')}
    &\qtext{with probability at least}
1-\delta'' 
    \qtext{and} \\
|\boldone^\top \eps| 
    \leq 
\sqrt{s \log(2/\delta''')}
    &\qtext{with probability at least}
1-\delta'''.
\end{talign}
Hence, with probability at least $1-\delta'''$,
\begin{talign} \label{eq:bound_rademacher_var}
    \frac{1}{4} 
    + \frac{\eps^{\top} \boldone \boldone^{\top} \eps}{s^2} + \frac{\boldone^{\top} \eps}{s}
    \leq \frac{1}{4} 
    + \frac{s \log (2/\delta''')}{s^2} + \sqrt{\frac{\log (1/\delta''')}{s}}
    = \frac{1}{4} + \frac{\log (2/\delta''')}{s} + \sqrt{\frac{\log (2/\delta''')}{s}}.
\end{talign}

Moreover, \cref{prop:coefficients_A_b} guarantees that
\begin{talign} 
\label{eq:bound_bar_c}
    &| \bar{c} | \leq \frac{1}{4} \big( \frac{(\mathcal{A}^{(3)})^2}{s} + \frac{2 \mathcal{A}^{(3)} \mmd(\pjnt, \P \times \Q)}{\sqrt{s}}  \big) \\
\begin{split} \label{eq:bounds_bar_A}
    &\|\bar{A} \|_{F}^2 \leq 
    \frac{(\mathcal{A}^{(1)})^2 (\mathcal{A}^{(2)})^2}{4 s^4}
    + \frac{\mmd^2(\pjnt, \P \times \Q) ( (\mathcal{A}^{(1)})^2 + (\mathcal{A}^{(2)})^2 )}{4 s^3},
    \\
    &\mathrm{Tr}(\bar{A}) \leq 
    \frac{\mathcal{A}^{(1)} \mathcal{A}^{(2)}}{2s^2} 
    + \frac{\mmd(\pjnt, \P \times \Q) ( \mathcal{A}^{(1)} + \mathcal{A}^{(2)} )}{4s^{3/2}}, 
    \qtext{and}
\end{split}\\ \label{eq:bound_bar_b}
    &\twonorm{\bar{b}}^2 \leq 
    \frac{3(\mathcal{A}^{(2)})^2 (\mathcal{A}^{(3)})^2 + (\mathcal{A}^{(1)})^2 (\mathcal{A}^{(4)})^2}{4 s^3}
    +
    \frac{3\mmd^2(\pjnt, \P \times \Q)( (\mathcal{A}^{(1)})^2 +(\mathcal{A}^{(2)})^2 + (\mathcal{A}^{(3)})^2 + (\mathcal{A}^{(4)})^2 )}{4s^2}.
\end{talign}
Hence, if we choose $\delta' = \delta'' = \delta''' = \alpha/3$, then, by the union bound and the triangle inequality, we have, with probability at least $1-\alpha$,
\begin{talign}
\begin{split} \label{eq:tilde_V_psi_def}
    \tilde{V}_n^{\pi,s} &\leq \tilde{\Psi}_{\X,s}(\alpha) \defeq \frac{\mathcal{A}^{(1)} \mathcal{A}^{(2)}}{s^2} \big( \frac{1}{2} + \frac{ 
    10\log(3/\alpha)}{3} \big) +  \frac{ \sqrt{\frac{3}{2}\log(3/\alpha)}(\mathcal{A}^{(2)} \mathcal{A}^{(3)} + \mathcal{A}^{(1)} \mathcal{A}^{(4)})}{s^{3/2}} + \frac{(\mathcal{A}^{(3)})^2}{4s} \\ & + 
    \mmd(\pjnt, \P \times \Q) \big( \frac{\mathcal{A}^{(1)} + \mathcal{A}^{(2)}}{s^{3/2}} \big( \frac{1}{4} \! + \! \frac{ 
    10\log(3/\alpha)}{3} \big) \! + \! \frac{\sqrt{\frac{3}{2}\log(3/\alpha)} ( \mathcal{A}^{(1)} \! + \! \mathcal{A}^{(2)} + \mathcal{A}^{(3)} \! + \! \mathcal{A}^{(4)})}{s} + \frac{\mathcal{A}^{(3)}}{2\sqrt{s}} \big)
    \\ & + \mmd^2(\pjnt, \P \times \Q) \big( 
    \frac{1}{4} + \frac{\log (6/\alpha)}{s} + \sqrt{\frac{\log (6/\alpha)}{s}} 
    \big).
\end{split}
\end{talign}

\subsubsection{Bounding $\tilde{\Psi}_{\X,s}(\alpha)$ in terms of $\tilde{\Psi}_{n,s}(\alpha,\beta)$}
Recall the definitions of $(D_i)_{i=1}^4$ and $(C_i)_{i=1}^4$  in \cref{lem:expectation_A_i,prop:hp_bounds_A_i} respectively.
We have 
\begin{talign} 
\begin{split} \label{eq:bounds_C_i}
\frac{C_1}{s} 
    &= 
\frac{10\sqrt{K}}{\sqrt{sm}}
    +
\frac{2\sqrt{K}}{\sqrt{sn}} 
    \leq 
\frac{12\sqrt{K}}{\sqrt{n}},
    \ \ 
\frac{C_2}{s} 
    = 
\frac{6\sqrt{K}}{\sqrt{sm}} 
    =
\frac{6\sqrt{K}}{\sqrt{n}}, 
    \ \ 
\frac{C_3}{\sqrt{s}} 
    = 
\frac{14 \sqrt{K}}{\sqrt{n}}, 
    \sstext{and}
\frac{C_4}{\sqrt{s}} 
    = 
\frac{6\sqrt{K}}{\sqrt{n}},
\end{split}
\end{talign}
\begin{talign} 
\begin{split} \label{eq:bounds_D_i}
\frac{D_1}{s} 
    &= 
\frac{5\sqrt{K}}{\sqrt{ns}} + \frac{2\sqrt{K}}{\sqrt{sm}}  
    \leq 
\frac{7\sqrt{K}}{\sqrt{n}}, 
    \qquad
\frac{D_2}{s} 
    = 
\frac{3\sqrt{K}}{\sqrt{n}}, \\
\frac{D_3}{\sqrt{s}} 
    &= 
\frac{(5+\sqrt{8})\sqrt{K}}{\sqrt{n}}
    \leq
\frac{8\sqrt{K}}{\sqrt{n}}, 
    \qtext{and}
\frac{D_4}{\sqrt{s}} 
    = 
\frac{(1+\sqrt{8})\sqrt{K}}{\sqrt{n}}
    \leq
\frac{4\sqrt{K}}{\sqrt{n}}.
\end{split}
\end{talign}
By \cref{prop:hp_bounds_A_i}, with probability at least $1-\beta$, the following $\mathcal{A}^{(i)}$-based  bounds all hold simultaneously.
First, 
\newcommand{\firstAibound}{\frac{K}{n} \big( 21 + 56 \sqrt{\log(4/\beta)} + 36 \log(4/\beta) \big)}
\begin{talign}
&\frac{\mathcal{A}^{(1)} \mathcal{A}^{(2)}}{s^2} 
    \leq 
\frac{1}{s^2} \big( D_1 + C_1 \sqrt{\frac{\log(4/\beta)}{2}} \big) \big( D_2 + C_2 \sqrt{\frac{\log(4/\beta)}{2}} \big) 
    \\ 
    &\leq 
\big( \frac{7\sqrt{K}}{\sqrt{n}} + \frac{12 \sqrt{K}}{\sqrt{2n}} \sqrt{\log(4/\beta)} \big) \times \big( \frac{3\sqrt{K}}{\sqrt{n}} + \frac{6 \sqrt{K}}{\sqrt{2n}} \sqrt{\log(4/\beta)} \big) \\ 
    &\leq 
\firstAibound.
\end{talign}
Second,
\newcommand{\secondAibound}{\frac{K}{n} \big( 84 + 148 \sqrt{\log (4/\beta)} + 48 \log (4/\beta) \big)}
\begin{talign}
\frac{\mathcal{A}^{(2)} \mathcal{A}^{(3)} + \mathcal{A}^{(1)} \mathcal{A}^{(4)}}{s^{3/2}}  
    &\leq 
\frac{\big( D_2 + C_2 \sqrt{\frac{\log(4/\beta)}{2}} \big) \big( D_3 + C_3 \sqrt{\frac{\log(4/\beta)}{2}} \big) + \big( D_1 + C_1 \sqrt{\frac{\log(4/\beta)}{2}} \big) \big( D_4 + C_4 \sqrt{\frac{\log(4/\beta)}{2}} \big)}{s^{3/2}} 
    \\ 
    &\leq 
\big( \frac{3\sqrt{K}}{\sqrt{n}} + \frac{6 \sqrt{K}}{\sqrt{n}} \sqrt{\frac{\log(4/\beta)}{2}} \big) 
    \times
\big( \frac{14\sqrt{K}}{\sqrt{n}} + \frac{8\sqrt{K}}{\sqrt{n}} \sqrt{\frac{\log(4/\beta)}{2}} \big) \\ 
    &\qquad 
+ \big( \frac{7\sqrt{K}}{\sqrt{n}} + \frac{12\sqrt{K}}{\sqrt{n}} \sqrt{\frac{\log(4/\beta)}{2}} \big) 
    \times    
\big( \frac{6\sqrt{K}}{\sqrt{n}} + \frac{4\sqrt{K}}{\sqrt{n}} \sqrt{\frac{\log(4/\beta)}{2}} \big) \\ 
    &\leq 
\secondAibound.
\end{talign}
Third,
\newcommand{\thirdAibounda}{\frac{\sqrt{K}}{\sqrt{n}} \big( 14 + 8 \sqrt{\frac{\log(4/\beta)}{2}} \big)}
\newcommand{\thirdAiboundb}{\frac{K}{n} \big(196 + 112 \sqrt{\log(4/\beta)} + 64 \log(4/\beta) \big)}
\begin{talign}
\frac{\mathcal{A}^{(3)}}{\sqrt{s}} 
    &\leq 
\frac{D_3}{\sqrt{s}} + \frac{C_3}{\sqrt{s}} \sqrt{\frac{\log(4/\beta)}{2}} 
    \leq 
\thirdAibounda 
    \qtext{and hence} \\
\frac{(\mathcal{A}^{(3)})^2}{s} 
    &\leq 
\thirdAiboundb.
\end{talign}
Fourth,
\newcommand{\fourthAibound}{\frac{\sqrt{K}}{\sqrt{n s}} \big( 10 + 13 \sqrt{\log(4/\beta)} \big)}
\begin{talign}
\frac{\mathcal{A}^{(1)} + \mathcal{A}^{(2)}}{s^{3/2}} 
    &\leq 
\frac{1}{\sqrt{s}} \big(\frac{D_1}{s} + \frac{C_1}{s} \sqrt{\frac{\log(4/\beta)}{2}} + \frac{D_2}{s} + \frac{C_2}{s} \sqrt{\frac{\log(4/\beta)}{2}} \big) 
    \leq 
\fourthAibound.
\end{talign}
Fifth,
\newcommand{\fifthAibound}{\frac{\sqrt{K}}{\sqrt{n}} \big( 30  + 22 \sqrt{\log(4/\beta)} \big)}
\begin{talign}
&\frac{\mathcal{A}^{(1)} + \mathcal{A}^{(2)} + \mathcal{A}^{(3)} + \mathcal{A}^{(4)}}{s} \\ 
    &\leq 
\frac{D_1}{s} + \frac{C_1}{s} \sqrt{\frac{\log(4/\beta)}{2}} + \frac{D_2}{s} + \frac{C_2}{s} \sqrt{\frac{\log(4/\beta)}{2}} 
    + 
\frac{1}{\sqrt{s}} \big( \frac{D_3}{\sqrt{s}} + \frac{C_s}{\sqrt{s}} \sqrt{\frac{\log(4/\beta)}{2}} + \frac{D_4}{\sqrt{s}} + \frac{C_4}{\sqrt{s}} \sqrt{\frac{\log(4/\beta)}{2}} \big) \\ 
    &= 
\frac{\sqrt{K}}{\sqrt{n}} \big( 30  + 22 \sqrt{\log(4/\beta)} \big).
\end{talign}
Hence, with probability at least $1-\beta$, the quantity $\tilde{\Psi}_{\X,s}(\alpha)$ \cref{eq:tilde_V_psi_def} satisfies 
\begin{talign}
\tilde{\Psi}_{\X,s}(\alpha) 
    &\leq 
\firstAibound
    \times
\big( \frac{1}{2} + \frac{10\log(3/\alpha)}{3} \big) \\    
    &\qquad +  
\secondAibound \times \sqrt{\frac{3}{2}\log(3/\alpha)} \\ 
    &\qquad + 
\thirdAiboundb \\ 
    &\quad + 
\mmd(\pjnt, \P \times \Q) \Big( \fourthAibound \times \big( \frac{1}{4} + \frac{10\log(3/\alpha)}{3} \big) \\ 
    &\qquad + 
\fifthAibound \sqrt{\frac{3}{2}\log(3/\alpha)}
    + 
\thirdAibounda \Big) 
    \\ &\quad + 
\mmd^2(\pjnt, \P \times \Q) \big( 
\frac{1}{4} + \frac{\log (6/\alpha)}{s} + \sqrt{\frac{\log (6/\alpha)}{s}} 
\big)
    \leq
\tilde{\Psi}_{n,s}(\alpha,\beta).
\end{talign}

\section{\pcref{Lemma: Generic Two Moments Method}} \label{Section: Proof of Lemma: Two Moments Method SubG}
Let ${(T_{(b)})}_{b=1}^{\numperm}$ represent the auxiliary values ${(T_{b})}_{b=1}^{\numperm}$ 
ordered increasingly.  
We begin by relating the random critical value 
$T_{(b_{\alpha})}$ to the conditional quantiles of $T_1$ given $\X$.
\begin{lemma}[High probability bound on critical value {\citep[Lem.~6, Cor.~1]{domingo2023compress}}] \label{prop:mathfrak_q_bound} 
Under the assumptions of \cref{Lemma: Generic Two Moments Method}, let $F_{\X}$ be the cumulative distribution function of $T_1$ given $\X$ and let 
\begin{talign} \label{eq:q_def}
q_{1-\alpha(\delta)}(\mbb X) 
    \defeq 
\inf \big\{x \in \R : 1-\alpha(\delta) \leq F_{\X}(x) \big\}
    \sstext{for}
\alpha(\delta)
    \defeq
(\frac{\delta}{{\numperm \choose \floor{\alpha(\numperm+1)}}} )^{\frac{1}{\floor{\alpha(\numperm+1)}}} \geq \frac{\alpha}{2e} \delta^{\frac{1}{\floor{\alpha(\numperm+1)}}}
\end{talign} 
be the $1-\alpha(\delta)$ quantile of $T_1$ given $\X$.
Then, with probability at least $1-\delta$, 
\begin{talign}
    T_{(b_{\alpha})} \leq q_{1-\alpha(\delta)}(\X).
\end{talign}
\end{lemma}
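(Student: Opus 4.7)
The plan is to reduce the order-statistic tail event $\{T_{(b_\alpha)} > q_{1-\alpha(\delta)}(\X)\}$ to a binomial-tail bound. First, I would observe that since $T_{(b_\alpha)}$ is the $b_\alpha$-th smallest of the $\numperm$ auxiliary values, this event is equivalent to the statement ``at least $k$ of the $T_b$'s strictly exceed $q_{1-\alpha(\delta)}(\X)$,'' where $k \defeq \numperm - b_\alpha + 1$. A short arithmetic check using $N - \ceil{x} = \floor{N-x}$ for integer $N$ gives $k = \floor{\alpha(\numperm+1)}$, the exponent appearing throughout the statement.

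Next, I would condition on $\X$ and exploit the conditional i.i.d.~structure of $(T_b)_{b=1}^{\numperm}$. Right-continuity of $F_{\X}$ ensures $F_{\X}(q_{1-\alpha(\delta)}(\X)) \geq 1 - \alpha(\delta)$, so each $T_b$ conditionally exceeds the quantile with probability at most $\alpha(\delta)$. A union bound over the $\binom{\numperm}{k}$ size-$k$ index subsets then yields
\begin{talign}
\Pr\!\big(T_{(b_\alpha)} > q_{1-\alpha(\delta)}(\X) \,\big|\, \X\big) \;\leq\; \binom{\numperm}{k}\, \alpha(\delta)^k,
\end{talign}
and the prescribed choice of $\alpha(\delta)$ makes the right-hand side equal exactly $\delta$. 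Taking the outer expectation over $\X$ removes the conditioning and delivers the advertised unconditional bound.

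Finally, for the quantitative lower bound $\alpha(\delta) \geq \frac{\alpha}{2e}\,\delta^{1/k}$, I would apply the standard estimate $\binom{\numperm}{k} \leq (e\numperm/k)^k$, rearrange to $\binom{\numperm}{k}^{1/k} \leq e\numperm/k$, and combine with $k \geq \alpha(\numperm+1)/2$, which holds whenever $\alpha(\numperm+1) \geq 1$---precisely the regime $\numperm \geq 1/\alpha - 1$ invoked in the parent proposition. No individual step is deep; the only subtlety to watch for is potential atoms in $F_\X$, which the right-continuity argument at the per-trial step cleanly absorbs into the bound $\Pr(T_b > q_{1-\alpha(\delta)}(\X)\mid \X) \leq \alpha(\delta)$.
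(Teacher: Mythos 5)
Your proof is correct and is essentially the standard argument underlying the result: the paper itself does not reprove the lemma but defers to \citet[Lem.~6, Cor.~1]{domingo2023compress}, whose proof uses exactly your reduction of $\{T_{(b_\alpha)} > q_{1-\alpha(\delta)}(\X)\}$ to ``at least $k=\floor{\alpha(\numperm+1)}$ conditionally i.i.d.\ exceedances,'' followed by a union bound over size-$k$ subsets and the choice of $\alpha(\delta)$ that makes $\binom{\numperm}{k}\alpha(\delta)^k=\delta$. All the auxiliary steps check out, including the identity $\numperm - b_\alpha + 1 = \floor{\alpha(\numperm+1)}$, the right-continuity argument handling atoms, and the bound $\binom{\numperm}{k}^{1/k}\le e\numperm/k$ combined with $\floor{x}\ge x/2$ for $x\ge 1$ under the standing assumption $\numperm\ge 1/\alpha-1$.
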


\begin{proof}
Lem.~6 and Cor.~1 of \citep{domingo2023compress} establish this result for the special case of permuted MMD coreset test statistics, but the proof for the general case is identical. 
\end{proof}
Since $\Psi_{\X}(\alpha)\geq q_{1-\alpha(\beta/3)}(\X)$ almost surely by assumption \cref{eq:Psi_X_n}, \cref{prop:mathfrak_q_bound} implies that
\begin{talign}
\Pr(\mathcal{A}_1^c) \leq \beta/3
    \qtext{for}
\mathcal{A}_1 \defeq \{ T_{(b_{\alpha})} \leq %
    \Psi_{\X}( \astar) \}.
\end{talign}
Moreover, our assumption \cref{eq:Psi_P_n} on $\Psi_n$ ensures that 
\begin{talign}
\Pr(\mathcal{A}_2^c) \leq \beta/3
    \qtext{for}
\mathcal{A}_2 \defeq 
\{ \Psi_{\X}(\astar) \leq 
\Psi_{n}(\astar,\beta/3) \}. 
\end{talign}
Since $\Delta(\X)$ rejects when $T(\X)>T_{(b_{\alpha})}$, the acceptance probability is upper-bounded by
\begin{talign}
\Pr (T(\X) \leq T_{(b_{\alpha})}) 
&=
\Pr (T(\X) \leq T_{(b_{\alpha})}, \ \mathcal{A}_1 %
\cap \mathcal{A}_2)  + \Pr (T(\X) \leq T_{(b_{\alpha})}, \ \mathcal{A}_1^c %
\cup \mathcal{A}_2^c) \\
&\leq \Pr (T(\X) \leq \Psi_{n}(\astar,\beta/3)) 
+  \Pr (\mathcal{A}_1^c \cup \mathcal{A}_2^c) \\
&\leq
\Pr(T(\X) \leq 
\tconst
- \Phi_{n}(\frac{\beta}{3})) 
+  \Pr (\mathcal{A}_1^c) + \Pr(\mathcal{A}_2^c) 
\leq \beta,
\end{talign}
where the final two inequalities used the union bound and our assumptions \cref{eq:separation,eq:Phi_P_n}. 
\section{\pcref{Lemma: Refined Two Moments Method}} 
\label{Section: Proof of Lemma: Refined Two Moments Method}
Our choices of $\Phi_n$ and $\Psi_{\X}$ satisfy the requirements \cref{eq:Phi_P_n,eq:Psi_X_n} by Cantelli's inequality \citep[(18)]{cantelli1929sui}.
Moreover, Markov's inequality \citep{markov1884certain} and Cantelli's inequality imply
\begin{talign}
    &\Pr ( \sqrt{(\alpha^{-1} - 1)\Var(T_1 | \X)} \geq  \sqrt{(\alpha^{-1} - 1)\beta^{-1} \mE[ \Var (T_1 | \X) ] } \mid \X ) \\ 
&\quad= \Pr ( (\alpha^{-1} -1) \Var(T_1 | \X) \geq  (\alpha^{-1} -1) \beta^{-1} \mE[ \Var (T_1 | \X) ] \mid \X ) \leq \beta 
\qtext{and} \\ 
&\Pr ( \mE[T_1 | \X] - \mE [T_1] \geq \sqrt{(2\beta^{-1} -1) \Var (\mE[T_1 | \X])}  ) \leq \beta/2
\end{talign}
for all $\beta\in(0,1]$. 
Hence, our $\Psi_n$ choice \cref{eq:refined_Psi_n_zero} satisfies \cref{eq:Psi_P_n} when $\E[T_1\mid\X]=0$ almost surely and, by the union bound, our $\Psi_n$ choice \cref{eq:refined_Psi_n_nonzero} satisfies \cref{eq:Psi_P_n} more generally.
\section{Proofs of minimax optimality results}

\subsection{\pcref{Proposition: Multinomial Two-Sample Testing}} \label{subsec:pf_minimax_homogeneity_multinomial}
Introduce the shorthand $t\defeq s\sqrt[3]{\nratio}$, $u\defeq  \frac{\beta\astar}{24(1-\astar)}$, and $v \defeq \sqrt{t^3u}-1$. 
\cref{thm: Full Two Sample Tests} and 
the facts that $\psi_{Y,1}, \psi_{Z,1} \leq 4 \sqrt{b_{(1)}} \twonorm{\P - \Q}^2$ and $\psi_{YZ,2} \leq b_{(1)}$ by \citep[App.~G]{kim2022minimax} imply that the cheap discrete homogeneity permutation test has power at least $1-\beta$ whenever
\begin{talign}
    \twonorm{\P - \Q}^2 \! = \! \mathbb{E}[U_{n_1,n_2}] &\geq \bigg( 
    \sqrt{\frac{16(3-\beta)\sqrt{b_{(1)}} \twonorm{\P - \Q}^2}{\beta}\big(\frac{1}{n_1} \! + \! \frac{1}{n_2} \big)}
    + \sqrt{\frac{1-\astar}{\astar}\frac{(36 n_1^2 + 36 n_2^2 + 198 n_1 n_2) b_{(1)}}{\beta n_1 (n_1 - 1) n_2 (n_2 - 1)}} \bigg) \\ &\qquad \times \! ( 1 \! + \! \frac{1}{v} ) 
    + \frac{
\sqrt{\frac{201s^2}{4n^2} b_{(1)} + {\frac{48 s}{n} \sqrt{b_{(1)}} \twonorm{\P - \Q}^2}{}}
}{ v}.
\end{talign}
By the triangle inequality, it suffices to have  $a x^2 - b x - c \geq 0$ for $x = \twonorm{\P - \Q}$, $a=1$,
\begin{talign}
    b &= \sqrt{\frac{16(3-\beta)\sqrt{b_{(1)}}}{\beta}\big(\frac{1}{n_1} \! + \! \frac{1}{n_2} \big)} \! \times \! ( 1 \! + \! \frac{1}{v} ) + \sqrt{\frac{48\sqrt{b_{(1)}}}{n\sqrt[3]{\nratio}}}\, 
    \frac{\sqrt{t}}{ v}, 
    \qtext{and} \\
    c &= \sqrt{\frac{1-\astar}{\astar}\frac{(36 n_1^2 + 36 n_2^2 + 198 n_1 n_2) b_{(1)}}{\beta n_1 (n_1 - 1) n_2 (n_2 - 1)}}\! \times \!(1+\frac{1}{v}) + \frac{\sqrt{201 b_{(1)}}}{2n\sqrt[3]{\nratio}}\, 
    \frac{t}{ v}.
\end{talign}
Hence, by the quadratic formula, it suffices to have $x\geq b+\sqrt{c} \geq (b+\sqrt{b^2+4ac})/(2a)$.
Finally, since $1/(n\sqrt[3]{\nratio}) = O(1/n_1)$, there exist universal constants $C,C'$ such that
\begin{talign}
b + \sqrt{c} 
    &\leq 
\frac{C' b_{(1)}^{1/4}}{\sqrt{\min(\beta,\astar)\, n_1}}
( 1 + \frac{\sqrt{t}}{
\min(v,\sqrt{v})}) 
    =
\frac{C' b_{(1)}^{1/4}}{\sqrt{\min(\beta,\astar)\, n_1}}
( 1 + \frac{(1+v)^{1/3}}{u^{1/6}
\min(v,\sqrt{v})}) \\
    &\leq
\frac{C b_{(1)}^{1/4}}{\sqrt[6]{\beta\astar}\sqrt{\min(\beta,\astar)\, n_1}}
( 1 + \frac{1}{v}).
\end{talign}

\subsection{\pcref{Proposition: Two-Sample Testing for Holder Densities}}
\label{subsec:pf_minimax_homogeneity_holder}
Let $\tilde{\P}$ be a discrete distribution over $\{B_1,\dots,B_K\}$ with probability $ \int_{B_k} d\P(x)$ of selecting bin $B_k$ for each $k\in[K]$. Define $\tilde{\Q}$ analogously based on $\Q$, and let $\kappa_{(1)} = \lfloor n_1^{2/(4\holderorder+d)} \rfloor$.
By \cite[Lem.~3]{arias2018remember}, 
\begin{talign} \label{eq:l2_norms_inequality}
    \statictwonorm{\tilde{\P}-\tilde{\Q}}^2 \geq C_1 \kappa_{(1)}^{-d} \twonorm{\P-\Q}^2,
\end{talign}
for a universal constant  $C_1$. 
Furthermore, by the argument in \citep[App.~I]{kim2022minimax}, 
\begin{talign} \label{eq:bound_b_1}
    b_{(1)} \defeq \max \{ \twonorm{\P}^2, \twonorm{\Q}^2 \} \leq L \kappa_{(1)}^{-d}.
\end{talign}
These bounds, together with \cref{Proposition: Multinomial Two-Sample Testing}, yield the sufficient power condition
\begin{talign}
\frac{C' L^{1/4} \kappa_{(1)}^{-d/4}}{\sqrt[6]{\beta\astar}\sqrt{\min(\beta,\astar)\, n_1}}
\bigg(1+\frac{1}{
\sqrt{s^3\frac{\beta\astar}{24(1-\astar)} \nratio}-1}\bigg)
    \leq
\sqrt{C_1}\kappa_{(1)}^{-d/2} \twonorm{\P-\Q} 
    \leq
\statictwonorm{\tilde{\P}-\tilde{\Q}}
\end{talign}
for a universal constant $C'$. 
The result now follows from the upper estimate $n_1^{2/(4\holderorder+d)}\geq \kappa_{(1)}$.

\subsection{\pcref{Proposition: Multinomial Independence Testing}}
\label{subsec:pf_minimax_independence_multinomial}
Note that the augmentation variables $(A_i)_{i=1}^n,(B_i)_{i=1}^n\distiid\Unif((0,1))$ are, with probability $1$, all distinct. 
Therefore, $g_Y((Y_i,A_i),(Y_i,A_i)) = g_Z((Z_i,B_i),(Z_i,B_i))= 0$ for all $i\in[n]$.
The  arguments of 
\citep[Prop.~5.3]{kim2022minimax} imply the variance component \cref{Eq: definition of psi prime functions} bounds 
\begin{talign} 
\label{eq:psi_prime_bounds_minimax}
\psi'_1 
    &\leq 
C_1 \sqrt{b_{(2)}} \twonorm{\Pi - \P \times \Q}^2,
    \quad
\psi'_2 
    \leq 
C_2 b_{(2)}, 
    \qtext{and} 
\tilde{\psi}'_1 
    \leq 
C_3 \sqrt{b_{(2)}} \twonorm{\Pi - \P \times \Q}^2
\end{talign}
for universal constants $C_1$, $C_2$, and $C_3$. Additionally, we can bound the mean component $\tilde{\xi}$  \cref{Eq: mean component} by bounding the following expression for arbitrary $i_1,\dots,i_4,i'_1,\dots,i'_4 \in [8]$.
\begin{talign}
&\big|\E\big[\hin\big(((Y_{i_1},A_{i_1}),(Z_{i'_1},B_{i'_1})),((Y_{i_2},A_{i_2}),(Z_{i'_2},B_{i'_2})), \\ 
    &\qquad\quad 
((Y_{i_3},A_{i_3}),(Z_{i'_3},B_{i'_3})),((Y_{i_4},A_{i_4}),(Z_{i'_4},B_{i'_4})\big)\big]\big| \\ 
    &= 
\big|\E\big[ \big( g_Y((Y_{i_1},A_{i_1}),(Y_{i_2},A_{i_2})) +g_Y((Y_{i_3},A_{i_3}),(Y_{i_4},A_{i_4})) \\  
    &\qquad 
-g_Y((Y_{i_1},A_{i_1}),(Y_{i_3},A_{i_3})) -g_Y((Y_{i_2},A_{i_2}),(Y_{i_4},A_{i_4})) \big) \\ 
    &\qquad\qquad 
\times \big( g_Z((Z_{i'_1},B_{i'_1}),(Z_{i'_2},B_{i'_2})) +g_Z((Z_{i'_3},B_{i'_3}),(Z_{i'_4},B_{i'_4})) \\ 
    &\qquad\qquad\quad 
-g_Z((Z_{i'_1},B_{i'_1}),(Z_{i'_3},B_{i'_3})) -g_Z((Z_{i'_2},B_{i'_2}),(Z_{i'_4},B_{i'_4})) \big) \big]\big| \\
    &\stackrel{(i)}{=} 
\E\big[ \big( \indic{Y_{i_1} = Y_{i_2}, i_1 \neq i_2} + \indic{Y_{i_3} = Y_{i_4}, i_3 \neq i_4} \\ 
    &\qquad 
- \indic{Y_{i_1} = Y_{i_3}, i_1 \neq i_3} -\indic{Y_{i_2} = Y_{i_4}, i_2 \neq i_4} \big) \\ 
    &\qquad\qquad 
\times \big( \indic{Z_{i'_1} = Z_{i'_2}, i'_1 \neq i'_2} + \indic{Z_{i'_3} = Z_{i'_4}, i'_3 \neq i'_4} \\ 
    &\qquad\qquad\quad 
-\indic{Z_{i'_1} = Z_{i'_3}, i'_1 \neq i'_3} -\indic{Z_{i'_2} = Z_{i'_2}, i'_3 \neq i'_4} \big) \big] 
    \\ 
    &\stackrel{(ii)}{\leq} 
16 \max \big\{ \E[\indic{Y_{1} = Y_{2}} \indic{Z_{i} = Z_{j}}] : (i,j) \in \{(1,2),(1,3),(3,4)\} \big\} \\ 
    &\stackrel{(iii)}{=} 
16 \max \big\{ \E[\indic{Y_{1} = Y_{2}, A_{1} \neq A_{2}} \indic{Z_{i} = Z_{j}, B_{i} \neq B_{j}}] : (i,j) \in \{(1,2),(1,3),(3,4)\} \big\} \\ 
    &= 
16 \max \big\{ \E[g_Y((Y_{1},A_{1}),(Y_{2},A_{2}))^2 g_Z((Z_{i},B_{i}),(Z_{j},B_{j}))^2 ] : (i,j) \in \{(1,2),(1,3),(3,4)\} \big\} \\ 
    &= 
\psi'_2.
\end{talign}
Above, equality (i) holds because, with probability 1, $A_{i} = A_{j} \iff i = j$ and $B_{i} = B_{j} \iff i = j$. Inequality (ii) holds because, after expanding the product, the only terms that remain are of the form $\indic{Y_{i} = Y_{j}} \indic{Z_{k} = Z_{l}}$ with $i \neq j$ and $k \neq l$. Equality (iii) holds because $A_1 \neq A_2$ and $B_i \neq B_j$ almost surely. We conclude that
\begin{talign}
    \tilde{\xi} \leq \psi'_2 \leq C_2 b_{(2)}.
\end{talign}

Since $\mathcal{U}=4\twonorm{\Pi - \P \times \Q}^2$, \cref{thm: Full Independence Tests}, our mean and variance component bounds, and the triangle inequality imply that the cheap discrete independence test has power at least $1-\beta$ whenever $a x^2 - b x - c \geq 0$ for $x = \twonorm{\Pi - \P \times \Q}$, $a=1 - \frac{s+1}{3s^2}$,
\begin{talign} 
\frac{3}{4}b 
    &= 
\sqrt{(\frac{3}{\beta} \!-\! 1) \frac{16 C_1 \sqrt{b_{(2)}}}{n}}
    \!+\! 
\sqrt{C_3 \sqrt{b_{(2)}}} 
    \big(
\sqrt{\big( \frac{12}{\astar \beta} \!- \!2\big) \frac{32}{n}}
    + 
\sqrt{\big( \frac{12}{\astar \beta} - 2\big)
\big( \frac{2304}{n s} + \frac{460800}{n s^2}
\big)}\big), 
    \text{ and}\\
\frac{3}{4}c 
    &= 
\sqrt{C_2 b_{(2)}}
    \big(
\sqrt{(\frac{3}{\beta} \!-\! 1) \big(\frac{144}{n^2} \big)}
    \!+\! 
\sqrt{\big( \frac{12}{\astar \beta} \!- \!2\big) \big(\!\frac{960}{n^2} \big)}
    + 
\frac{24}{n} 
    + 
\frac{16}{n} 
    +
\sqrt{\big( \frac{12}{\astar \beta} - 2\big)\frac{4147200}{n^2 s} }
    \big).
\end{talign}
Since $a\leq 1$, it therefore suffices to have $x \geq \frac{1}{a} (b+\sqrt{c}) \geq \frac{b}{a} +\sqrt{\frac{c}{a}} \geq \frac{b+\sqrt{b^2+4ac}}{2a}$ 
by the quadratic formula. The claim now follows as 
$\frac{1}{a} (b+\sqrt{c}) \leq \frac{C\,b_{(2)}^{1/4}}{\sqrt{\beta\astar n}}$ for a universal constant $C$.
\subsection{\pcref{Proposition: Independence Testing for Holder Densities}}
\label{subsec:pf_minimax_independence_holder}
Let $\tilde{\P}$ be a discrete distribution over $\{B_{1,1},\dots,B_{1,K_1}\}$ with probability $ \int_{B_{1,k}} d\P(y)$ of selecting bin $B_{1,k}$ for each $k\in[K_1]$. Define $\tilde{\Q}$ analogously as a discrete distribution over $\{B_{2,1},\dots,B_{2,K_2}\}$  based on $\Q$ and $\tilde{\pjnt}$ 
analogously as a discrete distribution over $\{(B_{1,k_1},B_{2,k_2}) : k_1\in[K_1], k_2\in[K_2]\}$ based on $\pjnt$.
Let $\kappa_{(2)} = \lfloor n^{2/(4\holderorder+d_1+d_2)} \rfloor$.
By \cite[Lem.~3]{arias2018remember},\begin{talign} \label{eq:l2_norms_inequality_ind}
    \statictwonorm{\tilde{\Pi} - \tilde{\P} \times \tilde{\Q}}^2 \geq C_1 \kappa_{(2)}^{-(d_1+d_2)} \twonorm{\Pi - \P \times \Q}^2,
\end{talign}
for a universal constant  $C_1$. 
Furthermore, by the argument in \citep[App.~M]{kim2022minimax}, 
\begin{talign} \label{eq:bound_b_2}
    b_{(2)} \defeq \max \{ \twonorm{\Pi}^2, \twonorm{\P \times \Q}^2 \} \leq L \kappa_{(2)}^{-(d_1 + d_2)},
\end{talign}
These bounds, together with \cref{Proposition: Multinomial Independence Testing}, yield the sufficient power condition
\begin{talign}
\frac{C'L^{1/4} \kappa_{(2)}^{-(d_1 + d_2)/4} }{\sqrt{\beta\astar n}}
    \leq
\sqrt{C_1}\kappa_{(2)}^{-(d_1+d_2)/2} \twonorm{\Pi - \P \times \Q}
    \leq
\statictwonorm{\tilde{\Pi} - \tilde{\P} \times \tilde{\Q}}
\end{talign}
for a universal constant $C'$. 
The result now follows from the  estimate $n^{2/(4\holderorder+d_1+d_2)}\geq \kappa_{(2)}$.

\section{Supplementary experiment details}
\label{sec:additional_plots}

All independence experiments were run with Python 3.13.0 on a single AMD EPYC 7V13 CPU, with operating system Ubuntu 22.04.5 LTS. 
All Higgs boson testing experiments were run with Python 3.11.10 on a single AMD EPYC 7V13 CPU, with operating system Ubuntu 22.04.5 LTS.
All other homogeneity experiments were run with Python 3.11.10 on a single Intel Xeon Platinum 8268 CPU, with operating system Red Hat Enterprise Linux 9.2 (Plow). 

\subsection{Selection of the number of permutations $\numperm$} \label{subsec:num_permutations_plot}
To select our common permutation count of $\numperm = 1279$ for the homogeneity testing experiments, we evaluated the power of the standard MMD test with total sample size $n = 16384$ and varying permutation count of the form $\numperm = 20\times 2^a-1$ for $a\geq 0$ in the \textsc{MMD setting} of \cref{subsec:homogeneity_experiments}.
We followed the same procedure to select the permutation count for  independence testing experiments, only using the \textsc{HSIC setting} of \cref{subsec:independence_experiments} with $n=2048$ in place of the MMD setting.
In both cases, we found that power saturated after $\numperm=1279$ as shown in \cref{fig:n_permutations_homogeneity}.

\begin{figure}[tb]
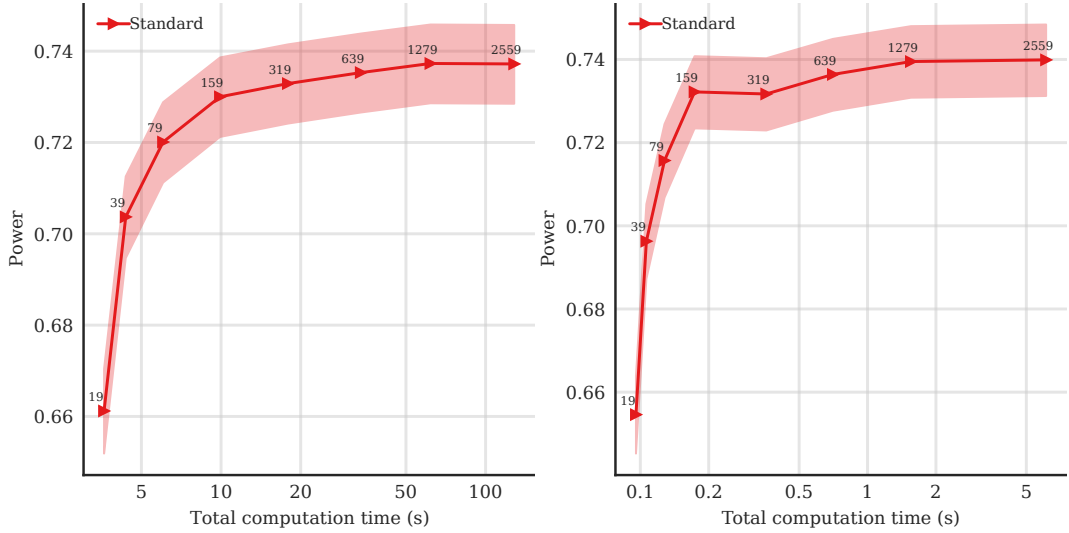

    \centering
    \includegraphics[width=0.49\textwidth]{rejection_probability_gaussians_10_8192_0.05_10000_0.06_complete_log_time_scale_wilson.pdf}
    \includegraphics[width=0.49\textwidth]{rejection_probability_gaussians_cov_10_2048_0.05_10000_0.047_ind_complete_WB_log_time_scale_wilson.pdf}
    \caption{Power and runtime of standard MMD permutation tests of homogeneity (left) and standard HSIC wild bootstrap tests of independence (right) as number of permutations $\numperm$ varies.  %
    }
    \label{fig:n_permutations_homogeneity}
\end{figure}

\subsection{Cheap permutation tests for feature-based kernels}
\label{sec:feature_cheap}
When a homogeneity U-statistic (\cref{def:homogeneity_u_stat}) has a rank-$r$ base function of the form \begin{talign}\label{eq:homogeneity_low_rank}
g(y,z) = \sum_{j=1}^r \feat_j(y)\feat_j(z),
\end{talign} 
each test statistic can be computed in time $\Theta(c_\feat nr)$ where $c_\feat$ is the maximum time required to evaluate a single feature function $\feat_j$, and cheap permutation can be carried out in $\Theta(\numperm s r)$ additional time with $\Theta(sr)$ memory, as described in \cref{algo:cheap_permutation_wild_bootstrap_two_sample_features}.
\begin{algorithm2e}[h!]
\small{

    \KwIn{Samples $(Y_i)_{i=1}^{n_1}$, $(Z_i)_{i=1}^{n_2}$, bin count $s$, feature maps $f=(f_j)_{j=1}^r$ \cref{eq:homogeneity_low_rank}, level $\alpha$, permutation count $\numperm$}
   \vspace{5pt}
  
  Define $(X_i)_{i=1}^{n} \defeq (Y_1, \ldots, Y_{n_1}, Z_1, \ldots, Z_{n_2})$ for $n \gets n_1+n_2$, $m \gets n/s$, and $s_1 \gets {s\, n_1/}{n}$ \\ 

  \textit{// Compute sufficient statistics using $\Theta(c_\feat n r)$ time and $\Theta(sr)$ memory} \\
  \lFor{$i=1, 2, \ldots, s$}{
    $\Feat_{i} \gets \sum_{k=1}^{m} \feat(X_{(i-1)m + k})$
  }

    \textit{// Compute original and  permuted test statistics using $\Theta(\numperm s r)$ elementary operations}\\    
  \For{$b=0, 1, 2, \ldots, \numperm$}
    {
    $\pi \gets$ identity permutation if $b=0$ else 
    random uniform permutation of $[s]$ \\ 
   $T_{b} \gets \frac{1}{n^2}\twonorm{ \sum_{i=1}^{s} (2\,\indicator(\pi_b(i) \leq s_1) - 1) \Feat_{i}}^2$
   }
    \textit{// Return rejection probability} \\
   $R \gets 1+$ number of permuted statistics $(T_{b})_{b=1}^{\numperm}$
   smaller than $T_0$ if ties are broken at random 
   \\
    \KwRet{$\Delta(\X) \defeq \min(1,\max(0,R \!-\! (1\!-\!\alpha)(\numperm\!+\!1)))$}
}
  \caption{Cheap 
  homogeneity testing with feature kernels }
  \label{algo:cheap_permutation_wild_bootstrap_two_sample_features}
\end{algorithm2e}

\end{appendix}

\end{document}